\let\stdthebibliography\thebibliography
\let\stdendthebibliography\endthebibliography
\tikzset{
	symbol/.style={
		draw=none,
		every to/.append style={
			edge node={node [sloped, allow upside down, auto=false]{$#1$}}}
	}
}
\newtheorem{theorem}{Theorem}[section]
\newtheorem{proposition}[theorem]{Proposition}
\newtheorem{lemma}[theorem]{Lemma}
\newtheorem{corollary}[theorem]{Corollary}
\theoremstyle{definition}
\newtheorem{definition}[theorem]{Definition}
\theoremstyle{remark}
\newtheorem{remark}[theorem]{Remark}
\newtheorem{claim}[theorem]{Claim}
\numberwithin{equation}{section}
\newcommand{\agamma}{\vec{\gamma}}
\newcommand{\RR}{\mathbb{R}}
\DeclareMathOperator{\Mod}{Mod}
\newcommand{\PML}{\mathcal{PML}}
\newcommand{\ML}{\mathcal{ML}}
\newcommand{\T}{\mathcal{T}}
\newcommand{\M}{\mathcal{M}}
\DeclareMathOperator{\Stab}{Stab}
\newcommand{\cN}{\mathcal{N}}
\newcommand{\cO}{\mathcal{O}}
\newcommand{\PoM}{\mathcal{P}^1\mathcal{M}}
\newcommand{\MRG}{\mathcal{MRG}}
\newcommand{\bfL}{\mathbf{L}}
\newcommand{\RGmetric}{\mathbf{x}}
\newcommand{\vg}{\vec{\gamma}}
\newcommand{\mrg}{\mathbf{x}}
\newcommand{\mrgy}{\mathbf{y}}
\newcommand{\WP}{\mathrm{wp}}
\DeclareMathOperator{\arctanh}{arctanh}
\DeclareMathOperator{\sech}{sech}
\begin{document}

\title[The shapes of complementary subsurfaces]{The shapes of complementary subsurfaces to simple closed hyperbolic multi-geodesics}

\author{Francisco Arana--Herrera}

\email{farana@ias.edu}

\address{Institute for Advanced Study, 1 Einstein Dr, Princeton, NJ 08540, USA}
	
\author{Aaron Calderon}

\email{aaroncalderon@uchicago.edu}

\address{Department of Mathematics, The University of Chicago, 5734 S. University Ave, Chicago, IL 60637, USA}


\date{\today}

\begin{abstract}
Cutting a hyperbolic surface $X$ along a simple closed multi-geodesic results in a hyperbolic structure on the complementary subsurface.
We study the distribution of the shapes of these subsurfaces in moduli space as boundary lengths go to infinity, showing that they equidistribute to the Kontsevich measure on a corresponding moduli space of metric ribbon graphs.
In particular, random subsurfaces look like random ribbon graphs, a law which does not depend on the initial choice of $X$.
This result strengthens Mirzakhani's famous simple closed multi-geodesic counting theorems for hyperbolic surfaces.
\end{abstract}

\maketitle

\thispagestyle{empty}

\tableofcontents

\section{Introduction}

In \cite{Mir08b}, Mirzakhani showed that the number of simple closed multi-geodesics of a given topological type and length $\leq L$ on a closed, connected, oriented hyperbolic surface of genus $g \geq 2$ is asymptotic to a polynomial of degree $6g-6$. Each one of these multi-geodesics gives rise to a (potentially disconnected) compact, oriented hyperbolic surface with totally geodesic boundary obtained by cutting the original hyperbolic surface along the multi-geodesic.
The geometry of these subsurfaces can be encoded by their spine, represented as a metric ribbon graph \cite{Luo}.

In this paper we show that these metric ribbon graphs, appropriately rescaled, equidistribute to the Kontsevich measure (see \S\ref{sec:prelim} for a definition) on the corresponding moduli space, thus strengthening Mirzkahani's original result.
See Theorem \ref{theo:main} below.
We emphasize that the limiting distribution of shapes of subsurfaces does not depend on our initial choice of hyperbolic surface.

\begin{figure}
    \centering
    \includegraphics[scale=.5]{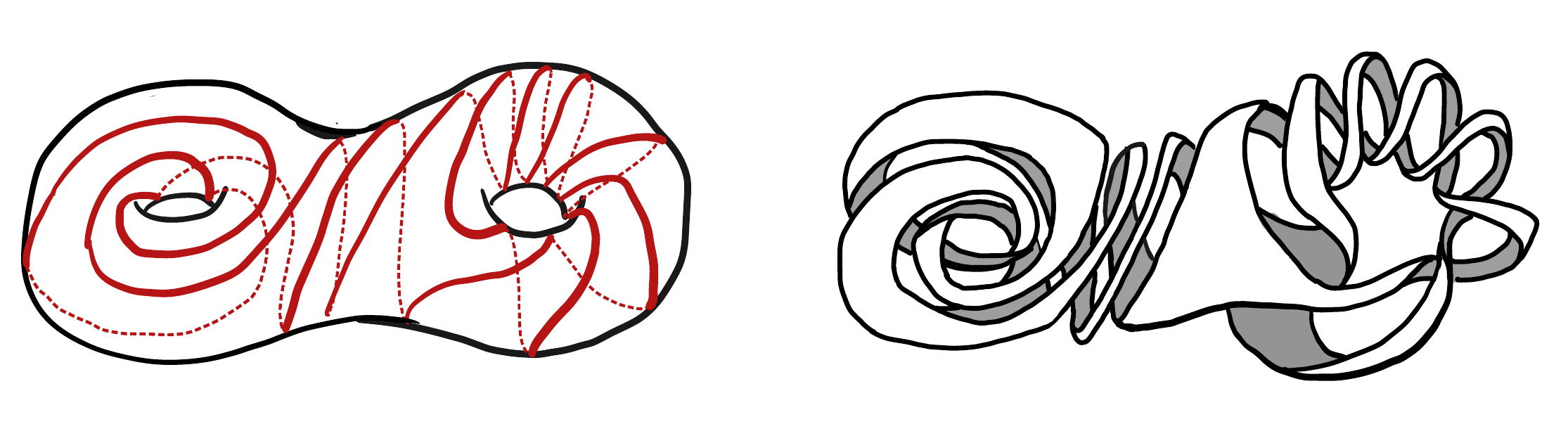}
    \caption{A long curve and its complementary subsurface.}
    \label{fig:genus2}
\end{figure}

The first result along these lines can be found in work of Mirzakhani \cite{Mir16}, where she studies the distribution of the lengths of individual components of pants decompositions.
In that case, the geometry of the complementary subsurfaces, pairs of pants, is completely determined by the lengths of their boundary components.
More general results concerning the distribution of individual lengths of simple closed multi-curves can be found in \cite{Liu19,Ara20a,ES20}.
None of these results can be directly used to study the geometry of complementary subsurfaces.

Another motivation for our result comes from homogeneous dynamics.  There is a general analogy between moduli spaces of Riemann surfaces and spaces of unimodular lattices (two theories that coincide when the surfaces have genus 1 and the lattices have rank 2) which has yielded extraordinary results; see for example \cite{MW02, EM, EMM}. In \cite{AES16a,AES16b,ERW19}, Aka, Einsiedler, Shapira, Rühr, and Wirth studied the equidistribution properties of integer points on spheres and their orthogonal lattices, generalizing famous results of Linnik \cite{Linnik} and Duke \cite{Duke}. The main result of this paper can be seen as a generalization in the same spirit in the context of moduli spaces of Riemann surfaces.

The main difficulty of the problem at hand comes from the fact that on moduli spaces of metric ribbon graphs there is no access to an ergodic flow that can be used to study the equidistribution of points via standard methods. Our approach is based on a series of intricate reductions that reduce the problem to an equidistribution question concerning the dynamics of the earthquake flow on moduli spaces of hyperbolic surfaces. These reductions combine the coarse geometry of train tracks with Margulis's well-known averaging and unfolding techniques \cite{Mar04}.

Our result also provides a surprising
new perspective on the structure and distribution of simple closed multi-geodesics on hyperbolic surfaces.
By work of Birman and Series \cite{BS85} and Erlandsson and Souto \cite{ES21}, simple closed geodesics on a hyperbolic surface $X$ are sparsely distributed on $X$ and in its unit tangent bundle.
By work of Huang, Ohshika, and Papadopoulos \cite{HOP}, the way that simple closed geodesics wind around $X$ (specifically, the shape of the ball of unit-length measured laminations on $X$) completely determines $X$ as a point in $\T_g$.
The main result of this paper guarantees that, regardless of this sparsity and independently of the geometry of the initial hyperbolic surface, simple closed multi-geodesics wind around the surface in such a way that there is no bias in the shape of their complementary subsurfaces.

This result also provides a new procedure for sampling random metric ribbon graphs. In particular, the following fundamental principle can be deduced as a direct consequence of our work: the geometry of random metric ribbon graphs is completely reflected in the hyperbolic geometry of any single hyperbolic surface.

\subsection*{Statement of the main theorem.} To avoid overwhelming the reader with lots of notation, for the moment we only state our main result in the (highly non-trivial) case of non-separating simple closed curves.
The statement of the general case, and the description of the necessary background, is postponed to \S \ref{sec:general}.

For the rest of this discussion fix an integer $g \geq 2$ and denote by $S_g$ a closed, connected, oriented surface of genus $g$. Let $\mathrm{Mod}_g$ be the mapping class group of $S_g$, $\mathcal{T}_g$ be the Teichmüller space of marked hyperbolic structures on $S_g$, and $\mathcal{M}_g$ be the moduli space of hyperbolic structures on $S_g$. Free homotopy classes of unoriented simple closed curves on $S_g$ will be refered to as simple closed curves. 
Given a simple closed curve $\alpha$ on $S_g$ and a marked hyperbolic structure $X \in \mathcal{T}_g$, denote by $\ell_\alpha(X) > 0$ the length of the unique geodesic representative of $\alpha$ with respect to $X$.

Let $\gamma$ be a non-separating simple closed curve on $S_g$ and $X \in \mathcal{T}_g$ be a marked hyperbolic structure on $S_g$. For every $L > 0$ consider the counting function
\[
s(X,\gamma,L) := \# \{\alpha \in \mathrm{Mod}_g \cdot \gamma \ | \ \ell_\alpha(X) \leq L \}.
\]
This function does not depend on the marking of $X \in \mathcal{T}_g$ but only on its underlying hyperbolic strucure $X \in \mathcal{M}_g$. Indeed, it is equal to the number of non-separating simple closed geodesics on $X$ of length $\leq L$.
By Mirzakhani's seminal work \cite{Mir08b}, there exist constants $c(\gamma) > 0$, $B(X) > 0$, and $b_g > 0$ such that
\begin{equation*}
\lim_{L \to \infty} \frac{s(X,\gamma,L)}{L^{6g-6}} = \frac{c(\gamma) \cdot B(X)}{b_g}.
\end{equation*}

Denote by $\mathcal{MRG}_{g-1,2}(1,1)$ the moduli space of metric ribbon graphs of genus $g-1$ with two boundary components, each of length $1$. Given a non-separating simple closed curve $\alpha$ on $S_g$ and a marked hyperbolic structure $X \in \mathcal{T}_g$, denote by $\mathrm{RSC}_\alpha(X) \in \mathcal{MRG}_{g-1,2}(1,1)$ the metric ribbon graph obtained by $(\mathrm{C})$ cutting $X$ along the geodesic representative of $\alpha$, $(\mathrm{S})$ constructing the spine of the resulting hyperbolic surface with totally geodesic boundary, and $(\mathrm{R})$ rescaling this spine so that each of the boundary components has length $1$.
The resulting metric ribbon graph $\mathrm{RSC}_\alpha(X) \in \mathcal{MRG}_{g-1,2}(1,1)$ encodes the geometry of the complementary subsurface of $\alpha$ in $X$.
See \S \ref{sec:prelim} for more details on this construction.

On $\mathcal{MRG}_{g-1,2}(1,1)$ consider the counting measure
\[
\eta_{X,\gamma}^L := \sum_{\alpha \in \mathrm{Mod}_g \cdot \gamma} \mathbbm{1}_{[0,L]}(\ell_\alpha(X)) \cdot \delta_{\mathrm{RSC}_\alpha(X)}.
\]
Just like the counting function $s(X,\gamma,L)$, this does not depend on the marking but only on the underlying hyperbolic structure.
Denote by $\eta_\mathrm{Kon}$ the measure on $\mathcal{MRG}_{g-1,2}(1,1)$ arising from the top power of Kontsevich's symplectic form (see Section \ref{sec:prelim}) and by $c_g := \eta_\mathrm{Kon}(\mathcal{MRG}_{g-1,2}(1,1))> 0$ its total mass.
The following theorem, which shows that the complementary subsurfaces of simple closed non-separating geodesics equidistribute over $\mathcal{MRG}_{g-1,2}(1,1)$, is an instance of the main result of this paper. For the general version see Theorem \ref{theo:main_2_new} and also Theorem \ref{theo:main_3_new} for an even stronger version corcerning simutanenous equidistribution.

\begin{theorem}
\label{theo:main}
Let $\gamma$ be a non-separating simple closed curve on $S_g$ and $X \in \mathcal{M}_g$. Then, with respect to the weak-$\star$ topology for measures on $\mathcal{MRG}_{g-1,2}(1,1)$,
\[
\lim_{L \to \infty} \frac{\eta_{X,\gamma}^L}{s(X,\gamma,L)} = \frac{\eta_{\mathrm{Kon}}}{c_g}.
\]
\end{theorem}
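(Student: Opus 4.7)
I would test the weak-$\star$ convergence against an arbitrary $f \in C_c(\MRG_{g-1,2}(1,1))$ and, after dividing by Mirzakhani's asymptotic $s(X,\gamma,L) \sim c(\gamma) B(X) L^{6g-6}/b_g$, reduce the theorem to showing that
\[
\lim_{L\to\infty}\frac{1}{L^{6g-6}}\sum_{\substack{\alpha\in\Mod_g\cdot\gamma\\ \ell_\alpha(X)\le L}} f(\mathrm{RSC}_\alpha(X)) \;=\; \frac{c(\gamma) B(X)}{b_g\, c_g}\int_{\MRG_{g-1,2}(1,1)} f\,d\eta_{\mathrm{Kon}}.
\]
The strategy is a two-stage refinement: first establish the analogous asymptotic after averaging this weighted count over $X\in\M_g$ against the Weil--Petersson measure $\mu_{\WP}$, and second upgrade to a pointwise statement at the given $X$ via a Margulis-style averaging/mixing argument.

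For the integrated stage, I would apply Mirzakhani's unfolding formula to the cut-and-twist parametrization along $\gamma$, obtaining
\[
\int_{\M_g}\sum_\alpha f(\mathrm{RSC}_\alpha(X))\,\mathbbm{1}_{[0,L]}(\ell_\alpha(X))\,d\mu_{\WP}(X) \;=\; C_\gamma\int_0^L \ell\int_{\M_{g-1,2}(\ell,\ell)} f\bigl(\ell^{-1}\cdot\mathrm{spine}(Y)\bigr)\, d\mu_{\WP}(Y)\,d\ell,
\]
with $C_\gamma$ depending only on $\Stab(\gamma)$. The inner integral can be transported under Luo's spine homeomorphism $\M_{g-1,2}(\ell,\ell)\cong \MRG_{g-1,2}(\ell,\ell)$ and then rescaled to $\MRG_{g-1,2}(1,1)$. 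Mirzakhani's identification of the top-degree coefficient of the volume polynomial $V_{g-1,2}(\ell,\ell)$ with Kontsevich intersection numbers---together with the compatibility of the symplectic forms under the spine map---implies that this rescaled pushforward converges weakly to $\eta_{\mathrm{Kon}}$ as $\ell\to\infty$. Integrating against $\ell\, d\ell$ on $[0,L]$ and dividing by $L^{6g-6}$ then extracts the desired leading order.

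The heart of the proof lies in de-averaging. Following the authors' indication in the introduction, I would organize the orbit $\Mod_g\cdot\gamma$ according to a finite collection of train tracks carrying $\gamma$, so that within each combinatorial piece the sum can be re-expressed in terms of the action of an earthquake flow on $\M_g$---which varies the gluing twist along $\gamma$ while fixing the complementary subsurface and hence its rescaled spine. Margulis's technique then applies: smooth the counting measure at $X$ into an average over a small ball in $\T_g$, compare the two via equidistribution of earthquake orbits on $\M_g$ (Mirzakhani), and combine with the integrated statement above.

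I expect the main obstacle to be obtaining uniform control in this de-averaging step. The earthquake equidistribution must be applied uniformly across the infinite combinatorial family of train track types appearing in the orbit decomposition, and one must simultaneously confront the non-compactness of $\MRG_{g-1,2}(1,1)$, which corresponds to degenerations where the complementary hyperbolic subsurface develops a short systole. Taming these non-uniformities through the coarse geometry of train track neighborhoods of measured laminations is, I expect, where the bulk of the technical work of the paper lies.
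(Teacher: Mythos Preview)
Your overall architecture matches the paper: reduce to the weighted counting function $c(X,\gamma,f,L)$, average it over a small neighborhood of $X$, unfold the averaged count into an integral of a bump function against an ``RSC-horoball'' measure on $\M_g$, and show these horoball measures equidistribute toward $B(X)\,d\widehat{\mu}_{\mathrm{wp}}/b_g$ using earthquake ergodicity. Your integrated computation is exactly the paper's total-mass calculation for these horoballs (their Proposition~\ref{prop:total_mass}), and it does rely on the Mondello--Do convergence of rescaled Weil--Petersson measures to $\eta_{\mathrm{Kon}}$.

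However, you have mislocated the central obstacle. Earthquake equidistribution is not applied ``uniformly across train track types'': it is applied \emph{once}, to a single horoball measure, after averaging and unfolding are already done (and the proof of that equidistribution is a fairly standard ergodicity argument, \S\ref{sec:RSCequi}). The real difficulty is earlier, in justifying the averaging step itself. To replace $c(X,\gamma,f,L)$ by its average over $X'$ in a small ball, one must show that $\mathrm{RSC}_\alpha(X)$ and $\mathrm{RSC}_\alpha(X')$ are close \emph{uniformly in $\alpha$}. A priori the spine construction is discontinuous---its combinatorial type jumps across facets of $\MRG_{g-1,2}$---and rescaling by $\ell_\alpha(X)$ magnifies any additive error as $\alpha$ gets long. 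The paper devotes two sections to this: \S\ref{sec:asymptriv} uses geometric train tracks, but not as an orbit decomposition; rather, it proves a power-saving bound showing that the curves $\alpha$ whose spine lies near a non-maximal facet number only $O(L^{6g-7})$ and can be discarded. \S\ref{sec:weightsvary} then shows, via bi-Lipschitz hyperbolic geometry and a quantitative Morse lemma, that for the remaining ``good'' $\alpha$ the spines $SC_\alpha(X)$ and $SC_\alpha(X')$ have the same combinatorics and each edge weight differs by a multiplicative factor $e^{\pm\varepsilon}$ plus a bounded additive constant $K$. After rescaling by $\ell_\alpha(X)\ge\sqrt{L}$ this additive error becomes $O(L^{-1/2})$, which is precisely what makes the comparison $f(\mathrm{RSC}_\alpha(X))\approx f(\mathrm{RSC}_\alpha(X'))$ go through.

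Your proposed train-track decomposition of the orbit does not address this: earthquaking along $\gamma$ fixes $\mathrm{RSC}_\gamma(X)$, but the averaging requires moving $X$ in a full open set of $\T_g$, not just along the twist direction, and it is exactly this transverse variation of the spine that must be controlled.
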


\begin{remark}
As will be discussed in Corollary \ref{cor:identity}, the constants $c(\gamma)$ and $c_g$ are related by the following identity:
\[
c(\gamma) = \frac{c_g}{12g-12}.
\]
\end{remark}

\subsection*{Main ideas of the proof.} To prove Theorem \ref{theo:main} we consider the following equivalent reformulation.
Let $f \colon \mathcal{MRG}_{g-1,2}(1,1) \to \mathbf{R}_{\geq 0}$ be a non-zero, non-negative, continuous, compactly supported function.
Then for any non-separating simple closed curve $\gamma$, any $X \in \mathcal{T}_g$, and any $L > 0$ define the $f$-weighted counting function 
\begin{align*}
c(X,\gamma,f,L) &:= \int_{\mathcal{MRG}_{g-1,2}(1,1)} f(x) \thinspace d\eta_{X,\gamma}^L(x)\\
&\phantom{:}= \sum_{\alpha \in \mathrm{Mod}_g \cdot \gamma} \mathbbm{1}_{[0,L]}(\ell_\alpha(X)) \cdot f(\mathrm{RSC}_\alpha(X)).
\end{align*}
Again, this counting function is independent of markings (and is also independent of our choice of $\gamma$ among all non-separating simple closed curves).
With this notation in place, Theorem \ref{theo:main} admits the following equivalent reformulation.

\begin{theorem}
\label{theo:red_intro}
Let $\gamma$ be a non-separating simple closed curve on $S_g$, $X \in \mathcal{M}_g$ be a hyperbolic structure on $S_g$, and $f \colon \mathcal{MRG}_{g-1,2}(1,1) \to \mathbf{R}_{\geq 0}$ be a non-zero, non-negative, continuous, compactly supported function. Then,
\[
\lim_{L \to \infty} \frac{c(X,\gamma,f,L)}{s(X,\gamma,L)} = \frac{1}{c_g} \int_{\mathcal{MRG}_{g-1,2}(1,1)} f(x) \thinspace d\eta_\mathrm{Kon}(x).
\]
\end{theorem}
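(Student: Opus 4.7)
The plan is to combine Mirzakhani's integration formula with earthquake-flow equidistribution, in the Eskin-Mirzakhani-Margulis paradigm. The key observation is that $f(\mathrm{RSC}_\alpha(X))$ depends only on the complementary bordered hyperbolic surface $Y_\alpha := X \setminus \alpha_{\mathrm{geod}}$: writing $\mathrm{Sp}(Y)$ for the spine of a bordered hyperbolic surface $Y$, we have $f(\mathrm{RSC}_\alpha(X)) = f(\mathrm{Sp}(Y_\alpha)/\ell_{\partial Y_\alpha})$. So setting
\[
F_L(Y) := \mathbbm{1}_{[0,L]}(\ell_{\partial Y}) \cdot f\bigl(\mathrm{Sp}(Y)/\ell_{\partial Y}\bigr)
\]
on the moduli space of bordered hyperbolic surfaces of signature $(g-1,2)$, one has $c(X,\gamma,f,L) = \sum_{\alpha \in \mathrm{Mod}_g \cdot \gamma} F_L(Y_\alpha)$. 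A version of Mirzakhani's integration formula — upgraded from the usual length-only weighting to weights depending on the whole cut bordered surface — then yields
\[
\int_{\mathcal{M}_g} c(X,\gamma,f,L)\, dX_{\mathrm{WP}} = \tfrac{1}{2}\int_0^L \ell \int_{\mathcal{M}_{g-1,2}(\ell,\ell)} f\bigl(\mathrm{Sp}(Y)/\ell\bigr)\, dV_{\mathrm{WP}}(Y)\, d\ell,
\]
converting the weighted count into Weil-Petersson integrals over bordered moduli.

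To pass from this integral-in-$X$ statement to a pointwise-in-$X$ statement, apply the Margulis-Mirzakhani averaging trick: instead of integrating over all of $\mathcal{M}_g$, average $c(\cdot,\gamma,f,L)$ over a small box transported by the earthquake flow through $X$, and invoke the equidistribution of long earthquake orbits with respect to the Weil-Petersson measure on $\mathcal{M}_g$ (Mirzakhani, Mirzakhani-Wright). A classical unfolding argument then recovers the asymptotic for $c(X,\gamma,f,L)$ itself, up to the density factor $B(X)$ appearing in $s(X,\gamma,L)$. This is the step the authors describe as requiring train-track coarse geometry: because $\alpha \mapsto \mathrm{RSC}_\alpha(X)$ is defined only on the countable set of simple closed curves and is not continuous on $\mathcal{ML}_g$, the averaging and unfolding arguments must be organized along train-track cones in $\mathcal{ML}_g$ on which both the Thurston measure and the ribbon-graph data behave regularly.

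Finally, identify the limit as $L \to \infty$: the push-forward of $L^{-(6g-8)}\, dV_{\mathrm{WP}}$ on $\mathcal{M}_{g-1,2}(L,L)$ under $Y \mapsto \mathrm{Sp}(Y)/L$ converges to the Kontsevich measure $\eta_{\mathrm{Kon}}$ on $\mathcal{MRG}_{g-1,2}(1,1)$ — essentially the Kontsevich/Mondello statement that large-boundary bordered hyperbolic surfaces tropicalize onto their rescaled spines, with the $L^{6g-8}$ rate reading off from the top-degree coefficient of Mirzakhani's polynomial $V_{g-1,2}(L,L)$. Combined with the factor of $\ell\, d\ell$ in the integration formula this produces an $L^{6g-6}$ asymptotic for $c(X,\gamma,f,L)$ with coefficient proportional to $\int f\, d\eta_{\mathrm{Kon}}$; dividing by $s(X,\gamma,L) \sim c(\gamma) B(X) L^{6g-6}/b_g$ and using the identity $c(\gamma) = c_g/(12g-12)$ produces precisely $\frac{1}{c_g}\int f\, d\eta_{\mathrm{Kon}}$. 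The main obstacle is the previous step — extracting pointwise-in-$X$ asymptotics from the integrated formula when the weighting is only a combinatorial function on a countable set of lattice points in $\mathcal{ML}_g$ — and this is presumably where the bulk of the paper's technical work goes.
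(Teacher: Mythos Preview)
Your overall architecture is correct and matches the paper: reduce equidistribution to counting, average the count over a neighborhood of $X$, unfold into an integral against an ``RSC-horoball'' measure on $\mathcal{M}_g$, show that measure equidistributes using earthquake ergodicity, and identify its total mass via the Mondello--Do convergence of Weil--Petersson to Kontsevich. You also correctly isolate the identity $c(\gamma)=c_g/(12g-12)$ and the $L^{6g-8}$ tropicalization rate.

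Where your account goes wrong is in diagnosing the hard step. The obstruction to averaging is not the discontinuity of $\alpha\mapsto\mathrm{RSC}_\alpha(X)$ in the $\alpha$-variable, and the paper does not organize the sum along train-track cones in $\mathcal{ML}_g$. The issue is the opposite variable: to run Margulis averaging one must know that $c(X,\gamma,f,L)$ and $c(Y,\gamma,f,L)$ are close when $Y$ is bi-Lipschitz close to $X$, which requires uniform-in-$\alpha$ control of $\mathrm{RSC}_\alpha(Y)$ versus $\mathrm{RSC}_\alpha(X)$. The paper does this in two stages. First (\S3, Proposition~\ref{prop:asymp_triv}) it shows that for all but $O(L^{6g-7})$ curves $\alpha$ of length $\le L$, the spine $\mathrm{SC}_\alpha(X)$ is trivalent with every edge weight $\ge K$; \emph{this} is where train tracks appear, only to bound the number of bad curves by a lattice-point count in weight spaces of non-maximal tracks. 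Second (\S4, Lemma~\ref{lem:edgespersist} and Proposition~\ref{prop:weightsvary}) it proves by direct hyperbolic geometry (a refined Morse lemma, centers-go-to-centers for inscribed circles) that for such generic $\alpha$ the spines $\mathrm{SC}_\alpha(X)$ and $\mathrm{SC}_\alpha(Y)$ have the same combinatorial type and edge weights satisfying $e^{-\varepsilon}|e|_X - K \le |e|_Y \le e^{\varepsilon}|e|_X + K$. Only with this comparison in hand can one write $c(X,\gamma,f,L)\le c(Y,\gamma,f^{\max}_{2\varepsilon,\delta},e^\varepsilon L)+o(L^{6g-6})$ and integrate against a bump function at $X$.

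A second, smaller imprecision: you invoke ``equidistribution of long earthquake orbits through $X$,'' but the paper does not use orbit equidistribution. The averaging is over a static bi-Lipschitz ball; after unfolding, what must be shown is that the resulting \emph{RSC-horoball} measures $\widehat{\mu}^L_{\gamma,h}$ on $\mathcal{M}_g$ (Weil--Petersson restricted to $\{\ell_\gamma\le L\}$ and weighted by $h\circ\mathrm{RSC}_\gamma$) converge, after normalization, to $B(X)\,d\widehat{\mu}_{\mathrm{wp}}/b_g$. The paper proves this (\S5) by lifting to $\mathcal{P}^1\mathcal{M}_g$ and showing any weak-$\star$ limit is earthquake-invariant, absolutely continuous with respect to $\widehat{\nu}_{\mathrm{Mir}}$, and a probability measure, then invoking ergodicity; the key technical point is a comparison (Lemma~\ref{lemma:hor_meas_comp}) with the unweighted horoball measures already handled in \cite{Ara19b}.
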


It is interesting to note that, rather than reducing a counting problem to an equidistribution question, as is more usual, the approach in this paper is to reduce the initial equidistribution question to a counting problem.

Once in this setting, we apply Margulis's ``averaging and unfolding'' techniques \cite{Mar04} to reduce the counting problem at hand to an equidistribution question over $\M_g$.
The whole point of this reduction is to reduce the original equidistribution question over moduli spaces of ribbon graphs to an equidistribution question over the dynamically richer moduli spaces of hyperbolic surfaces.

There is an important issue that arises during the ``averaging'' step: one needs uniform control over how the metric ribbon graph $\mathrm{RSC}_\alpha(X)$ varies as $X \in \mathcal{T}_g$ does. To this end we show that for most non-separating simple closed curves $\alpha$ we can attain this control; see Lemma \ref{lem:edgespersist} and Proposition \ref{prop:weightsvary}. We also show that the number of non-separating simple closed curves for which we do not attain this control is negligible for the counting problem at hand; see Proposition \ref{prop:asymp_triv}.

This control allows us to perform the averaging and unfolding step of our argument, after which our original problem reduces to a question regarding the equidistribution of certain subsets, which we call ``RSC-horoballs,'' in the moduli space of hyperbolic surfaces.
To tackle this question we use the ergodicity of the earthquake flow, a result proved by Mirzakhani in \cite{Mir08a}.
This step is reminiscent of ideas in \cite{Ara19b}, but important modifications need to be made given the more intricate nature of the horoballs at hand. For precise statements see Theorem \ref{theo:horoball_equid_1} and Corollary \ref{cor:equid}.

An important role in the proof is played by a result of Mondello \cite{Mo09} and Do \cite{Do10} showing that the Weil--Petersson volume form on moduli spaces of hyperbolic surfaces with totally geodesic boundary converges to the Kontsevich volume form on the corresponding moduli space of metric ribbon graphs as the lengths of the boundary components become infinitely large.
These results are used to establish an important relationship between the total mass of RSC-horoball measures and the Kontsevich measure; see Proposition \ref{prop:total_mass}.

\subsection*{Organization of the paper.}
In \S\ref{sec:prelim} we cover the preliminaries needed to understand the proofs of the main results of the paper, discussing moduli spaces of hyperbolic surfaces and ribbon graphs, as well as measures on them, and important links between them.
In \S\ref{sec:asymptriv} we show that most simple closed multi-geodesics of a given topological type on a hyperbolic surface have complementary subsurfaces whose spines are trivalent with long edges.
In \S\ref{sec:weightsvary} we show that the weights of the edges of such spines vary uniformly as the base hyperbolic surface varies over moduli space.
In \S\ref{sec:RSCequi} define RSC-horoballs and show that they equidistribute over moduli space.
In \S\ref{sec:avg_unfold} we use the previous results to prove the main result of this paper for non-separating simple closed geodesics.
In \S\ref{sec:general} we discuss the generalization of this result for general simple closed multi-geodesics as well as a stronger simultaneous equidistribution result.

\subsection*{Acknowledgments.}
The authors would like to thank Alex Wright and Yair Minsky for enlightening conversations on several topics related to this paper.
The authors would also like to thank Yair Minsky for supplying a more direct proof of Lemma \ref{lem:edgespersist} than was originally included and Scott Wolpert for clarifying the etymology of some of the cited results.
This work was completed while FAH was a member of the Institute for Advanced Study (IAS) and he is very grateful to the IAS for its hospitality.
This material is based upon work funded by the National Science Foundation: FAH was supported by grant DMS-1926686 and AC was supported by grants
DGE-1122492, 
DMS-2005328,
and DMS-2202703.

\section{Hyperbolic surfaces and ribbon graphs}
\label{sec:prelim}

\subsection*{Outline of this section.}
We begin this section with a brief introduction to moduli spaces of hyperbolic surfaces and their Weil-Petersson volumes. We then briefly recall the theory of measured geodesic laminations and the Thurston measure. We proceed to discuss moduli spaces of metric ribbon graphs and the Kontsevich measure. We finish by discussing the different relations between the moduli spaces of hyperbolic surfaces and metric ribbon graphs as well as the relations between the Weil-Petersson and Kontsevich measures.

\subsection*{Moduli of hyperbolic surfaces.} For a connected, oriented, closed surface $S_g$ of genus $g \ge 2$, we denote by $\mathcal{T}_g$ the Teichmüller space of marked hyperbolic structures on $S_g$.
The mapping class group of $S_g$, denoted $\mathrm{Mod}_g$, is the group of orientation preserving homeomorphisms of $S_g$ up to homotopy.
This group acts naturally on $\mathcal{T}_g$ by changing the markings.
This action is properly discontinuous, and its quotient orbifold $\mathcal{M}_g := \mathcal{T}_g / \mathrm{Mod}_g$ is the moduli space of genus $g$ hyperbolic surfaces.
By uniformization, this space can be canonically identified with the moduli space of genus $g$ Riemann surfaces.

For a compact surface $S_{g,b}$ of genus $g$ with $b$ totally geodesic boundary components, we can repeat the above discussion.
In this setting, we consider (marked) hyperbolic structures up to homotopies that fix each boundary component {\em setwise}.
\footnote{There is another standard definition in which one considers structures up to homotopies {\em pointwise} fixing the boundary.
This pointwise-fixed version is a $b$-dimensional torus bundle over the space we consider here.}
To define the mapping class group $\Mod_{g,b}$ we consider only consider those orientation-preserving homeomorphisms that also preserve each boundary component, and similarly consider them up to homotopy setwise fixing each boundary.
Both the Teichm{\"u}ller space $\T_{g,b}$ and moduli space $\M_{g,b}$ are fibered by slices with the same boundary lengths; for any vector $\bfL \in \mathbb{R}_{>0}^b$, we use $\mathcal{M}_{g,b}(\mathbf{L})$ to denote the moduli space of hyperbolic structures on $S_{g,b}$ with labeled geodesic boundary components of length $\mathbf{L}$. 

\subsection*{The Weil-Petersson symplectic form.} The Teichmüller space $\mathcal{T}_{g}$ can be endowed with a $3g-3$ dimensional complex structure. This complex structure admits a natural Kähler Hermitian structure. The associated symplectic form $\omega_{\text{wp}}$ is called the Weil-Petersson symplectic form. The Weil-Petersson volume form $v_\mathrm{wp}$ is the top exterior power of this symplectic form, 
\[v_{\text{wp}} := \frac{1}{(3g-3)!}\bigwedge^{3g-3} \omega_{\text{wp}}.\]
The Weil-Petersson measure $\mu_{\text{wp}}$ on $\mathcal{T}_{g}$ is the measure induced by $v_{\text{wp}}$. By work of Wolpert \cite{Wol85}, this measure coincides with Lebesgue measure on any set of Fenchel-Nielsen coordinates. The Weil-Petersson measure $\widehat{\mu}_{\text{wp}}$ on $\mathcal{M}_{g}$ is the local pushforward of $\mu_{\text{wp}}$ under the map $\mathcal{T}_{g} \to \mathcal{M}_{g}$.
See \cite{Hub16} for more details.

Again, when the surface has boundary one can also define a Weil--Petersson 2-form.
Restricted to each slice $\mathcal{T}_{g,b}(\mathbf{L})$ with fixed boundary lengths this form turns out to be symplectic, and we use $\widehat{\mu}_{\WP}^{\bfL}$ to denote the measure on $\mathcal{M}_{g,b}(\mathbf{L})$ coming from the associated symplectic volume form.

\subsection*{Measured geodesic laminations.} Fix a marked hyperbolic structure $X \in \mathcal{T}_g$. A geodesic lamination on $X$ is a closed subset of $X$ that can be written as a disjoint union of simple geodesics. A measured geodesic lamination is a geodesic lamination endowed with a fully supported invariant transverse measure. The transverse measure assigns a finite Borel measure to every arc transverse to the lamination. This assignment is invariant under splitting of arcs and homotopies preserving the leaves of the lamination. We denote by $\mathcal{ML}_X$ the space of measured geodesic laminations on $X$.

The different spaces of measured geodesic laminations $\mathcal{ML}_X$ obtained as $X$ varies over $\mathcal{T}_g$ can be canonically identified to each other. We denote by $\mathcal{ML}_g$ any such space and refer to it as the space of measured geodesic laminations on $S_g$. This space carries a natural $(\mathbb{R}_{>0}, \times)$ action that scales transverse measures. Simple closed multi-curves embed naturally into $\mathcal{ML}_g$ by considering their geodesic representatives endowed with transverse dirac measures. We denote by $\mathcal{ML}_g(\mathbb{Z}) \subseteq \mathcal{ML}_g$ the set of integral simple closed multi-curves on $S_g$. By work of Thurston \cite{T80}, every measured geodesic lamination $\lambda \in \mathcal{ML}_g$ has a well defined hyperbolic length $\ell_\lambda(X) > 0$ with respect to any marked hyperbolic structure $X \in \mathcal{T}_g$.

\begin{sloppypar}
\subsection*{The Thurston measure.} 
Train track coordinates induce a $6g-6$ dimensional piecewise integral linear (PIL) structure on the space $\mathcal{ML}_{g}$ of measured geodesic laminations on $S_{g}$; see \cite[\S 3.1]{PH92} for details. By work of Masur, there exists a unique (up to scaling) non-zero, locally finite, $\text{Mod}_{g}$-invariant, Lebesgue class measure on $\mathcal{ML}_{g}$ \cite[Theorem 2]{Mas85}. Several different definitions of such measure (equal up to scaling) can be found in the literature. We will consider the definition coming from the symplectic structure of $\mathcal{ML}_{g}$.
\end{sloppypar}

More precisely, consider the $\text{Mod}_{g}$-invariant symplectic form $\omega_{\text{Thu}}$ on the PIL manifold $\mathcal{ML}_{g}$ induced by train track coordinates; see \cite[\S 3.2]{PH92} for an explicit definition. This symplectic form is known as the \textit{Thurston symplectic form}. The top exterior power $$v_\text{Thu} := \frac{1}{(3g-3)!} \bigwedge_{i=1}^{3g-3} \omega_{\text{Thu}}$$ induces a non-zero, locally finite, $\text{Mod}_{g}$-invariant, Lebesgue class measure $\mu_{\text{Thu}}$ on $\mathcal{ML}_{g}$. We refer to this measure as the \textit{Thurston measure}.

\subsection*{Ribbon graphs and dual arc systems}
A ribbon graph is a (simplicial) graph equipped with a cyclic ordering of the edges incident to each vertex.
It is useful to think of a ribbon graph $\Gamma$ as encoding a deformation retraction of a surface with boundary; this process can be reversed by replacing each edge of $\Gamma$ with a ribbon and connecting the borders of the ribbons according to the cyclic ordering.
The genus and number of boundary components of $\Gamma$ are the values for the resulting topological surface. 

One may also equip a ribbon graph with a metric $\RGmetric \in \RR_{>0}^{e(\Gamma)}$ that assigns a length to each of its edges; we denote by $\MRG_{g,b}$ the space of all metric ribbon graphs with genus $g$ and $b$ distinctly-labeled boundary components, all of whose vertices have valence at least three.
Using meromorphic quadratic differentials, Jenkins \cite{Jenkins} and Strebel \cite{Strebel} proved that $\MRG_{g,b}$ is homeomorphic to the usual moduli space $\M_{g,b}$.
Other proofs were given by Penner \cite{Pen} and Bowditch and Epstein \cite{BowdEpst} using cusped hyperbolic surfaces; see also Theorem \ref{thm:Luo} below for a similar statement using the geometry of hyperbolic surfaces with boundary.

It is often helpful to consider the dual arcs to a ribbon graph.
Namely, if $\Sigma$ is a surface of genus $g$ with $b$ boundaries that deformation retracts onto a given ribbon graph $\Gamma$, then the fibers of this retraction break up into a finite union of isotopy classes of properly embedded arcs. Since $\Gamma$ is a spine for $\Sigma$, this arc system must fill $\Sigma$, i.e., cut it into disks.
As such, the combinatorics of $\Gamma$ are completely captured by the dual filling arc system.
Similarly, if $\Gamma$ is equipped with a metric $\RGmetric$, we may assign the weight $\RGmetric(e)$ to the arc dual to the edge $e$ of $\Gamma$; the length of a boundary component of $\Gamma$ then corresponds to the sum of the weights of the arcs incident to the corresponding boundary component of $\Sigma$ (counted with multiplicity).
This discussion shows that there is a homeomorphism
\[\MRG_{g,b} \cong |{\mathscr{A}_\text{fill}}(\Sigma)|_{\RR} / \Mod_{g,b}\]
where $|{\mathscr{A}_\text{fill}}(\Sigma)|_{\RR}$ denotes the space of all positively-weighted, filling arc systems on a surface $\Sigma$ of genus $g$ with $b$ boundary components.
For more on this correspondence, see \cite{Mond_handbook}.

The dual viewpoint is useful because it allows us more insight into the combinatorial structure of this moduli space.
For example, the space of all arc systems (the {\em arc complex}) ${\mathscr{A}}(\Sigma)$ has a natural simplicial structure with vertices isotopy classes of arcs and simplices corresponding to collections of pairwise disjoint arcs. Let $|{\mathscr{A}}(\Sigma)|$ denote its geometric realization.
The space of all filling arc systems $|{\mathscr{A}_\text{fill}}(\Sigma)|$ can then be realized as a subspace of $|{\mathscr{A}}(\Sigma)|$, and from the combinatorial structure of the arc complex it inherits a faceted structure.
The maximal-dimension facets of $|{\mathscr{A}_\text{fill}}(\Sigma)|$ correspond to maximal (filling) arc systems, equivalently, trivalent ribbon graphs, and higher codimension facets correspond to arc systems which are not maximal but which still fill, equivalently, ribbon graphs with higher-valence vertices.
With this description, it is apparent that the faceted structure of $|{\mathscr{A}_\text{fill}}(\Sigma)|$ is locally finite, as there are only finitely many completions of any given filling arc system.
\footnote{One should note that the simplicial topology on the arc complex and the topology coming from its geometric realization are not the same \cite{BowdEpst}, but restricted to $|{\mathscr{A}_\text{fill}}(\Sigma)|$ the two agree because of local finiteness.}

The space of weighted, filling arc systems $|{\mathscr{A}_\text{fill}}(\Sigma)|_{\RR}$ can then be identified with $|{\mathscr{A}_\text{fill}}(\Sigma)| \times \RR_{>0}$, and so it too has a faceted structure.
Moreover, it also has a natural $(\RR_{>0}, \times)$ action giving by multiplying each weight by the same factor.
This action is clearly $\Mod_{g,b}$ equivariant, and so descends to an action on the combinatorial moduli space.
Denote by $\mathrm{R}_t$ the rescaling map
\[\mathrm{R}_t:\MRG_{g,b} \to \MRG_{g,b}\]
that {\em divides} each length by a factor of $t$.

For a given tuple $\bfL = (L_1, \ldots, L_b)$ of positive numbers, we define $\MRG_{g,b}(\bfL)$ to be the moduli space of all ribbon graphs of genus $g$ which have $b$ boundary components with lengths $L_1, \ldots, L_b$.
The slices $\MRG_{g,b}(\bfL)$ piece together to form a fibration
$\MRG_{g,b} \to \RR_{>0}^{b},$
and the rescaling map restricts to a homeomorphism
\[\mathrm{R}_t: \MRG_{g,b}(t\bfL) \xrightarrow{\sim} \MRG_{g,b}(\bfL) \]
between slices with homothetic length vectors.
In later sections, we sometimes use $\mathrm{R}$ without a subscript when the target of the rescaling is fixed.

The slice $\MRG_{g,b}(\bfL)$ inherits a faceted structure from that of $\MRG_{g,b}$.
Moreover, unless $g=0$, $b=3$, and the length vector $\bfL$ satisfies 
\[L_i + L_j = L_k \text{ for } \{i,j,k\} = \{1,2,3\},\]
the top-dimensional facets of $\MRG_{g,b}(\bfL)$ correspond to trivalent ribbon graphs.
\footnote{This can be deduced by a purely combinatorial argument or by invoking the results of \cite{GT_residue}.}
In the exceptional case, the moduli space $\MRG_{0,3}(\bfL)$ is a single point corresponding to a ribbon graph with a single vertex of valence $4$.

\subsection*{The Kontsevich measure.}
In his solution of Witten's conjecture \cite{Kon92}, Kontsevich defined a piecewise $2$-form $\omega_{\mathrm{Kon}}$ on $\MRG_{g,b}$ that computes intersection numbers between tautological classes.
While this form is not symplectic (and indeed, is not even globally well-defined) on the entirety of $\MRG_{g,b}$, it turns out that when restricted to a slice $\MRG_{g,b}(\bfL)$ with fixed boundary lengths $\omega_{\mathrm{Kon}}$ is symplectic on every maximal facet.
Thus, the Kontsevich form gives rise to volume forms
\[\frac{1}{(3g-3+b)!}
\bigwedge\nolimits^{3g-3+b}
\omega_{\mathrm{Kon}}\]
on each maximal facet, which can be pasted together into a volume form on the entire slice $\MRG_{g,b}(\bfL)$. We will use $\eta_{\mathrm{Kon}}^{\bfL}$ to denote the measure associated to this volume form, which is called the {\em Kontsevich measure} on $\MRG_{g,b}(\bfL)$.

\begin{remark}
To be completely precise, one should really define the Kontsevich measure on a manifold cover of $\MRG_{g,b}$ (or better yet, on the Teichm{\"u}ller space of metric ribbon graphs) then take a local pushforward to $\MRG_{g,b}$, as was done for the Weil--Petersson measure.
Defining $\eta_{\mathrm{Kon}}^{\bfL}$ in this manner gives the correct weighting by automorphisms of the ribbon graph; compare with the discussion in Section \ref{sec:general}, especially \eqref{eqn:WPcutandglue} and Remark \ref{rmk:K_normalization}.
\end{remark}

For later reference, we also record an expression for the cohomology class of $\MRG_{g,b}(\bfL)$ represented by the Kontsevich form:
\begin{equation}\label{eqn:Konts_int}
[\omega_{\mathrm{Kon}}|_{\MRG_{g,b}(\bfL)}] = \frac{1}{2}\sum L_i^2 \psi_i
\end{equation}
where $\psi_i$ denotes the first Chern class of the circle bundle associated to the $i^\text{th}$ boundary component of the surface.
We direct the reader to \cite[Section 6]{CMS_Ksympred} for a formal definition of $\omega_{\mathrm{Kon}}$ and a thorough discussion of its symplectic properties.

In Appendix C of \cite{Kon92}, Kontsevich showed that each $\eta_{\mathrm{Kon}}^{\bfL}$ is a constant multiple of the Euclidean volume form in the $\RGmetric$ coordinates (restricted to each slice).
Integrating against $\bfL$, the volume forms on these slices fit together into a volume form defined on the entire combinatorial moduli space, yielding the following relationship between the Kontsevich and Euclidean volume forms:
\begin{equation}\label{eqn:Konts=Euclid}
d \eta_{\mathrm{Kon}}^{\bfL} \wedge \bigwedge_{i=1}^b d L_i = 2^{2g-2+b} \bigwedge_{j=1}^n d x_j
\end{equation}
as measures on $\MRG_{g,b}$.
The statement recorded above appears as Lemma 3.8 of \cite{ABC}, but was essentially proved in both \cite{Kon92} and \cite{CMS_Ksympred}.

We note that \eqref{eqn:Konts_int} and \eqref{eqn:Konts=Euclid} both imply that the Kontsevich measure scales homogeneously with respect to the rescaling map $\mathrm{R}_t$:
\begin{equation}\label{eqn:Konts_rescale}
    (\mathrm{R}_{t})_* \eta_{\mathrm{Kon}}^{t\bfL} 
    = {t^{6g-6+2b}} \eta_{\mathrm{Kon}}^{\bfL}.
\end{equation}

\begin{remark}
There are two different standard definitions of the Kontsevich form, differing by a factor of $2$.
In this paper, we follow the convention of \cite{ABC} and \cite{Do10}, which results in a different power of $2$ when comparing the Kontsevich and Lebesgue measures than is computed in \cite{Kon92} and \cite{CMS_Ksympred}.
See Theorem \ref{theo:Do} below for an explanation of why this normalization makes sense in our context.
\end{remark}

\subsection*{Spines and the orthogeodesic foliation.}
The tool that allows us to pass between the moduli spaces of hyperbolic surfaces and metric ribbon graphs is the {\em orthogeodesic foliation}. We give a summary of this construction below; for more details, the reader is directed to \cite[\S 5]{CF} and \cite[\S 2]{Mo09}.

Let $Y$ denote a finite-area hyperbolic surface with boundary; we allow $\partial Y$ to consist of both closed geodesics as well as hyperbolic crowns. Then the orthogeodesic foliation $\cO_{\partial Y}(Y)$ of $Y$ rel boundary is the (singular) foliation of $Y$ whose leaves are fibers of the closest-point projection map to $\partial Y$. See Figure \ref{fig:orthofol}.
If $X$ is a closed or cusped hyperbolic surface equipped with a geodesic lamination $\lambda$, then the orthogeodesic foliations of the (metric completions of the) components of $X \setminus \lambda$ glue together into a global singular foliation $\cO_{\lambda}(X)$ of $X$.

\begin{figure}[ht]
    \centering
    \includegraphics[scale=.7]{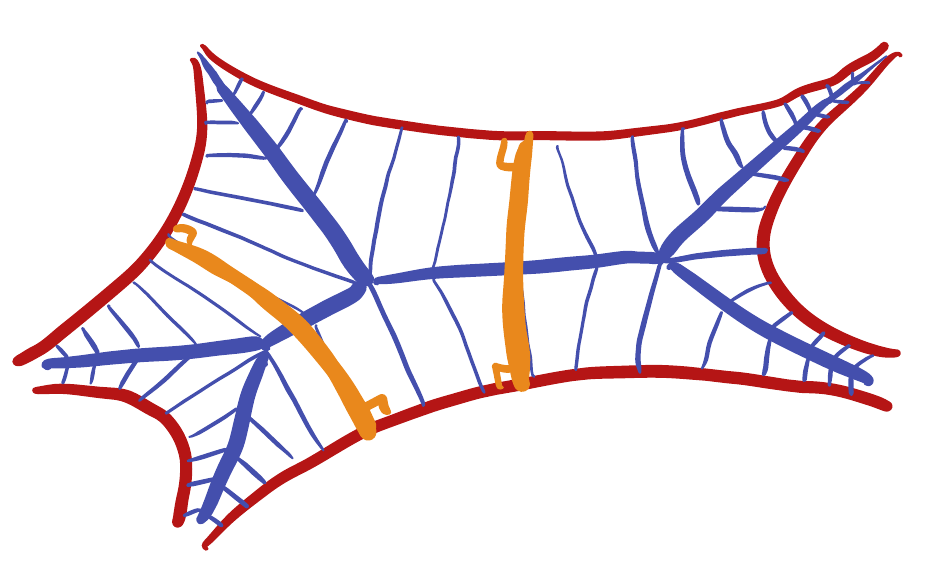}
    \caption{The orthogeodesic foliation of an ideal hyperbolic pentagon, together with its spine and dual arc system.}
    \label{fig:orthofol}
\end{figure}

We observe that if leaves of $\lambda$ on $X$ are close, then they are joined by a segment of $\cO_{\lambda}(X)$, as the following lemma shows.

\begin{lemma}\label{lem:closeimpliesarc}
Suppose that $\lambda$ is a geodesic lamination on $\mathbb{H}^2$ and $\ell_1$ and $\ell_2$ are leaves of $\lambda$ that are not separated from each other by any other leaves of $\lambda$.
Then if $\ell_1$ and $\ell_2$ are at distance less than $\log \sqrt{3}$, they are joined by a segment of the orthogeodesic foliation $\mathcal{O}_{\lambda}(\mathbb{H}^2)$.
\end{lemma}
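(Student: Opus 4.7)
My plan is to take the common perpendicular $\gamma$ between $\ell_1$ and $\ell_2$ as our candidate arc. I will show first that $\gamma$ lies in the closure of a single complementary region $R$ of $\lambda$, and second that the closest-point projection $\pi\colon\overline{R}\to\partial R$ maps every point of $\gamma$ into $\ell_1\cup\ell_2$; this says exactly that $\gamma$ is the segment of the orthogeodesic foliation going from $\ell_1$ through the ``ridge'' at the midpoint of $\gamma$ to $\ell_2$.

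For the first point: any other leaf $\ell'$ meeting the interior of $\gamma$ would have its ideal endpoints on opposite sides of the bi-infinite geodesic extending $\gamma$, and hence would separate $\ell_1$ from $\ell_2$---contradicting the hypothesis. So $\gamma\setminus\{q_1,q_2\}$ lies in a single complementary region $R$, where $q_i=\gamma\cap\ell_i$. For the second, I need to show that for every $p\in\gamma$ and every boundary leaf $\ell'$ of $R$ other than $\ell_1,\ell_2$, one has $d(p,\ell')>\min\{d(p,\ell_1),\,d(p,\ell_2)\}$.

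To make this concrete, let $\delta=d(\ell_1,\ell_2)$ and normalize coordinates so that in the upper half-plane $\ell_1=\{|z|=1\}$, $\ell_2=\{|z|=e^\delta\}$, and $\gamma$ is the imaginary segment from $i$ to $ie^\delta$. Then $\overline R$ lies in the closed annular sector $\{1\le|z|\le e^\delta,\,\Im z\ge 0\}$. Any other boundary leaf $\ell'$ is a geodesic disjoint from $\ell_1\cup\ell_2$ that does not cross $\gamma$: if $\ell'$ had ideal endpoints $a<0<b$ on opposite sides of $0$, it would cross the imaginary axis at $i\sqrt{-ab}$, a point in the interior of $\gamma$, which is impossible since $\ell'\cap R=\emptyset$. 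So (by the symmetry $z\mapsto-\overline z$) I may assume $\ell'$ is a semicircle with endpoints $1\le a<b\le e^\delta$, and a Möbius transformation sending $\ell'$ to the imaginary axis yields
\[\sinh d(ie^r,\ell')=\frac{ab+e^{2r}}{(b-a)e^r}.\]
Comparing to $\sinh d(ie^r,\ell_1)=\sinh r$, the desired inequality $d(ie^r,\ell')>r$ (for $r\le\delta/2$) reduces, after clearing denominators, to
\[2a^2+2a(b-a)+(b-a)+\bigl(2-(b-a)\bigr)e^{2r}>0.\]
Since the hypothesis $\delta<\log\sqrt3$ forces $b-a\le e^\delta-1<\sqrt3-1<2$, every term in this expression is positive and the inequality holds; the case $r\ge\delta/2$ follows by the $\ell_1\leftrightarrow\ell_2$ symmetry. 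Thus $\pi(\gamma)\subset\ell_1\cup\ell_2$, which means $\gamma$ is a segment of the orthogeodesic foliation joining $\ell_1$ to $\ell_2$.

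The main subtlety should be bookkeeping and ensuring no case of $\ell'$ is overlooked---in particular, ruling out boundary leaves whose ideal endpoints straddle $0$ uses crucially that no leaf separates $\ell_1$ and $\ell_2$, which forces $\gamma\subset\overline R$ in the first place.
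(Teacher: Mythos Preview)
Your argument is correct and explicitly identifies the common perpendicular $\gamma$ as the desired segment of $\mathcal{O}_\lambda(\mathbb{H}^2)$. The paper takes a different, more synthetic route: it uses that the inscribed circle in an ideal hyperbolic triangle has radius exactly $\log\sqrt{3}$, so three pairwise disjoint geodesics, none separating the others, cannot all lie within $\log\sqrt{3}$ of a single point. Choosing any $x$ at distance $<\log\sqrt{3}$ from both $\ell_1$ and $\ell_2$, any third leaf $\ell'$ at least as close to $x$ would then force such a forbidden triple unless $\ell'$ separates $\ell_1$ from $\ell_2$, contradicting the hypothesis; hence the closest-point projection of $x$ to $\lambda$ lands on both $\ell_1$ and $\ell_2$, exhibiting the segment. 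The paper's approach is shorter and makes the role of the constant $\log\sqrt{3}$ transparent, while yours pins down the segment concretely and trades the incircle fact for a direct half-plane computation.

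One small bookkeeping point: your reduction to endpoints $1\le a<b\le e^\delta$ tacitly uses that any boundary leaf of $R$ lies in the closed annulus, so its ideal endpoints fall in $[-e^\delta,-1]\cup[1,e^\delta]$. Leaves of $\lambda$ with both endpoints in $(-1,1)$ or both in $\{|x|>e^\delta\}\cup\{\infty\}$ lie entirely on the far side of $\ell_1$ or $\ell_2$ from $p$ and are therefore strictly farther from $p$; this is implicit in your setup but worth stating so the case analysis is visibly complete.
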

\begin{proof}
The radius of the inscribed circle in any ideal triangle in $\mathbb{H}^2$ is $\log \sqrt{3}$; in particular, this implies that if three geodesics of $\mathbb{H}^2$ all have distance $d < \log \sqrt{3}$ from a point, then some of them must intersect.

So now let $x$ be a point at at some distance $d < \log \sqrt{3}$ from both $\ell_1$ and $\ell_2$.
Consider the closest-point projection from $x$ to $\lambda$; if it does not map to $\ell_1$ or $\ell_2$, then there is some closer leaf of $\lambda$.
However, this leaf must be disjoint, and thus must separate $\ell_1$ from $\ell_2$, in contradiction to our assumption.

Thus, the closest-point projection from $x$ to $\lambda$ maps to both $\ell_1$ and $\ell_2$, demonstrating the desired segment of the orthogeodesic foliation.
\end{proof}

The combinatorics of the orthogedeodesic foliation can be used to capture the geometry of a hyperbolic structure. For the remainder of the section, we restrict ourselves to compact hyperbolic surfaces $Y$ with boundary; for the general picture, the reader is directed to \cite[Section 5]{CF}.

For any point of $Y$, its {\em valence} is the number of closest-point projections to $\partial Y$ that it has.
The set of points which have valence at least 2 form an embedded piecewise-geodesic graph which is a spine for $Y$.
Remembering that this graph is a deformation retract of $Y$ equips it with a natural ribbon structure, and we can assign each edge $e$ the weight $|e|_Y$ given by the length of either of the closest point projections of $e$ to $\partial Y$; see Figure \ref{fig:symmetry}.
Equivalently, the arcs of $\cO_{\partial Y}(Y)$ break up into a finite union of isotopy classes, each of which contains a unique shortest (orthogeodesic) representative connecting $\partial Y$ to itself. We may then weight each such arc with the length along $\partial Y$ of the band of parallel arcs of $\cO_{\partial Y}(Y)$.
This yields a map 
\[\mathrm{S}: \M_{g,b} \to \MRG_{g,b} \cong 
|{\mathscr{A}_\text{fill}}(\Sigma)|_{\RR}/\Mod(\Sigma)\]
taking $Y$ to its metric ribbon graph spine, equivalently, the weighted, filling system of dual orthogeodesic arcs.

\begin{figure}[h]
    \centering
    \includegraphics[scale=.7]{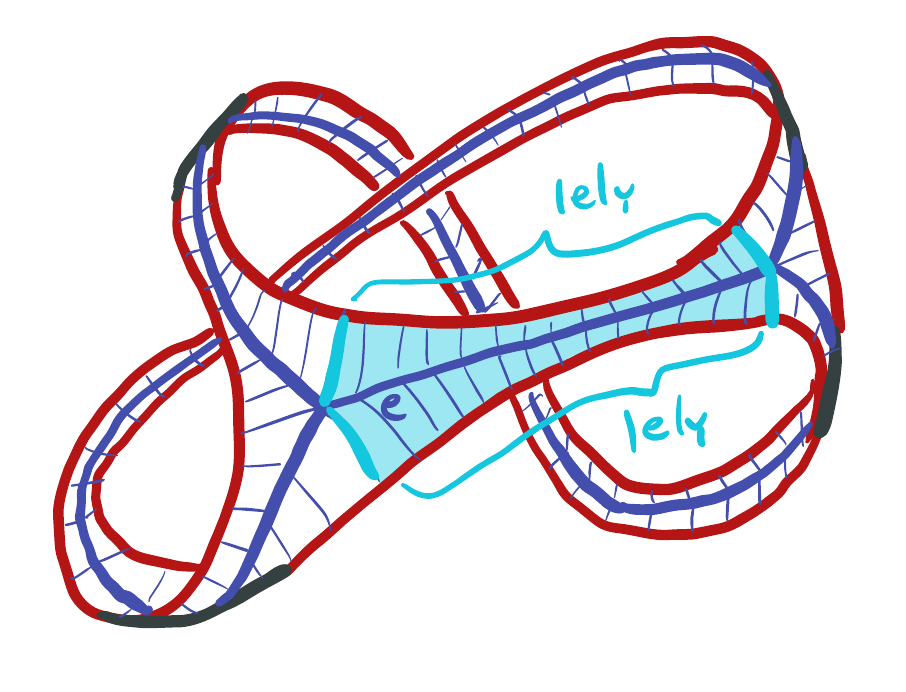}
    \caption{The metric spine of a genus 1 surface $Y$ with a totally geodesic boundary. The two projections (in blue) of the edge $e$ to the boundary $\partial Y$ have the same length $|e|_Y$.}
    \label{fig:symmetry}
\end{figure}

Luo proved that this combinatorial data completely determines the hyperbolic metric $Y$ \cite[Theorem 1.2 and Corollary 1.4]{Luo}; see also \cite{Do10} and \cite{Mo09}.

\begin{theorem}\label{thm:Luo}
For any compact surface with boundary $\Sigma = \Sigma_{g,b}$, the spine map S is a homeomorphism.
Moreover, for any $\bfL \in \RR_{>0}^n$, it restricts to a homeomorphism
\[\mathrm{S}: \M_{g,b}(\bfL) \xrightarrow{\sim} \MRG_{g,b}(\bfL)\]
between slices with fixed boundary lengths.
\end{theorem}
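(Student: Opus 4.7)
The plan is to establish continuity of $\mathrm{S}$ and construct an explicit continuous inverse using the dual picture of weighted arc systems together with a decomposition into right-angled hyperbolic hexagons. Continuity of $\mathrm{S}$ itself is comparatively routine: the closest-point projection $Y \to \partial Y$ varies continuously with the hyperbolic structure, so the spine (as the set of points of valence $\geq 2$) and the edge weights $|e|_Y$ depend continuously on $Y \in \M_{g,b}$. The substantive task is the construction of the inverse, which I would carry out first on the top-dimensional trivalent facets of $\MRG_{g,b}$.

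Given a trivalent metric ribbon graph $(\Gamma, \RGmetric) \in \MRG_{g,b}(\bfL)$, I would use its dual maximal filling arc system $\mathscr{A}$ on $\Sigma_{g,b}$. This system cuts $\Sigma_{g,b}$ into $4g-4+2b$ hexagonal disks, each containing a single segment of $\partial \Sigma_{g,b}$ and meeting three arcs of $\mathscr{A}$. I would equip each combinatorial hexagon with the hyperbolic geometry of a right-angled hexagon whose three alternating ``arc sides'' have lengths equal to the weights of the dual edges of $\Gamma$. Such a right-angled hexagon exists and is uniquely determined by any three alternating side lengths, by classical hyperbolic trigonometry, and the three remaining ``boundary sides'' depend real-analytically on the arc-side lengths.

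The hexagons are then reassembled along their arc sides according to the combinatorics of $\mathscr{A}$. Paired arc sides have matching lengths by construction, and the right angles at arc--boundary corners guarantee that the gluing produces a smooth totally geodesic boundary, yielding a hyperbolic surface $Y$ whose boundary components have lengths $\bfL$ (obtained by summing boundary-side lengths around each face of $\Gamma$). A direct inspection of the resulting hyperbolic structure shows that the cut arcs of $\mathscr{A}$ are precisely the orthogeodesic arcs dual to the spine of $Y$ (they are orthogeodesic by the right-angle construction, and their dual graph is embedded as the valence-$\geq 2$ locus by the equidistant-locus analysis within each hexagon), so $\mathrm{S}(Y) = (\Gamma, \RGmetric)$. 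Injectivity on trivalent strata then follows from uniqueness of the right-angled hexagon.

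To extend the inverse to lower-dimensional facets of $\MRG_{g,b}$ (non-trivalent ribbon graphs), I would take limits: collapsing an edge of weight tending to $0$ merges two trivalent vertices into a higher-valence vertex, and correspondingly two hexagons sharing a vanishing arc side degenerate into a single polygonal piece whose hyperbolic structure is the limit of the right-angled hexagons. Continuous dependence of each elementary piece on its prescribed sides then extends the homeomorphism across the entire faceted space, and the slice refinement $\mathrm{S}: \M_{g,b}(\bfL) \xrightarrow{\sim} \MRG_{g,b}(\bfL)$ is automatic from the built-in correspondence between boundary lengths and weighted sums of arcs. I expect the most delicate step to be tracking the ribbon (cyclic-order) structure through the gluing: matching arc lengths alone leaves a one-parameter twist ambiguity along each arc, and it is the cyclic ordering of edges around each vertex of $\Gamma$ that uniquely determines the gluing recovering the correct topological surface within the correct $\Mod_{g,b}$-orbit.
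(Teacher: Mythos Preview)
The paper does not prove this theorem; it is attributed to Luo (with parallel references to Do and Mondello) and stated without proof, so there is no in-paper argument to compare against.

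On its own merits, your construction has a real error in which hexagon sides carry the ribbon-graph weights. In the paper's convention, $|e|_Y$ is the length of the closest-point projection of $e$ to $\partial Y$ --- a length measured along the boundary --- and is \emph{not} the length $\ell_Y(a_e)$ of the dual orthogeodesic arc (these are related only through the estimate of Lemma~\ref{lem:lengthvsweight} and are typically very different). When you cut along the dual arcs into right-angled hexagons, the arc-sides are the orthogeodesics, whose lengths are not recorded by $\RGmetric$; the data $\RGmetric$ lives along the boundary. By setting the arc-side lengths equal to the weights you assemble a surface $Y$ whose spine has the correct combinatorial type $\Gamma$, but the edge weights of $\mathrm{S}(Y)$, being boundary lengths, will not return $\RGmetric$. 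So your map is not the inverse of $\mathrm{S}$. (A smaller slip: each hexagon meets $\partial\Sigma$ in three alternating segments, not one.)

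Swapping the prescription to the boundary-sides does not immediately repair things either. The three boundary-sides of the hexagon at a vertex are determined by the incident edge weights, but not by a simple one-to-one assignment, and once each hexagon is pinned down by its boundary data the matching of arc-side lengths across a shared arc becomes a nontrivial compatibility that must be verified rather than asserted ``by construction.'' Establishing this (or reorganizing the argument to avoid it) is the actual content of the theorem. Finally, your anticipated ``one-parameter twist ambiguity'' along each arc is not present: arcs have endpoints on $\partial\Sigma$, so the isometric gluing of two arc-sides is rigid up to an orientation choice fixed by the ribbon structure.
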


When there is no risk of confusion, we will use $\mathrm{S}(Y)$ to refer to both the locus of points in $Y$ which have valence at least $2$ as well as the abstract metric ribbon graph/dual arc system obtained as above.
We reserve $|e|_Y$ to denote the length of an edge of the abstract ribbon graph/the weight of a dual arc $a$, so that it always refers to a length measured along $\partial Y$, while we use $\ell_Y(a)$ to denote the length of the (unique) orthogeodesic arc in the isotopy class of $a$.

The following basic estimate is a ``collar lemma'' for arcs of the dual arc system, which allows us to show that long edges of the spine have short dual arcs and vice versa.
The proof follows from basic facts about the hyperbolic geometry of trirectangles; see \cite[Theorem 2.3.1]{Bus92} and also \cite[Lemma 6.6]{CF} (it can also be deduced by doubling $Y$ along its boundary and invoking the usual collar lemma).

We use $f(x) \sim g(x)$ to denote that the ratio of $f(x)$ and $g(x)$ tends to $1$.

\begin{lemma}\label{lem:lengthvsweight}
For any hyperbolic structure $Y$ on $\Sigma$ and any edge $e$ of $S(Y)$ with dual orthogeodesic arc $a$,
\[|e|_Y \sim \log( 1/ \ell_Y(a))\]
as $\ell_Y(a)$ becomes small or as $|e|_Y$ becomes large.
\end{lemma}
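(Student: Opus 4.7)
The plan is to read the lemma off the hyperbolic trigonometry of a single Lambert quadrilateral (trirectangle). Lift the picture to the universal cover $\mathbb{H}^2$: the orthogeodesic $a$ becomes the common perpendicular of two lifted boundary geodesics $L_1, L_2$, with foot $p_1$ on $L_1$ and midpoint $q$. The dual spine edge $e$ lifts to a geodesic segment equidistant from $L_1$ and $L_2$; by this symmetry it meets $a$ orthogonally at $q$ and extends to spine vertices $v, v'$ lying in opposite half-planes. At the endpoint $v$, drop the perpendicular from $v$ to $L_1$, meeting $L_1$ at $p_2$. The quadrilateral $p_1 p_2 v q$ is a Lambert quadrilateral with right angles at $p_1, p_2, q$ and acute angle $\varphi$ at $v$.

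Setting $d := \ell_Y(a)$, Buser's trirectangle identities \cite[Theorem 2.3.1]{Bus92} supply the pair of formulas
\[
\cos\varphi \;=\; \sinh(|p_1 p_2|)\sinh(d/2) \;=\; \tanh(|p_2 v|)\tanh(|vq|).
\]
Solving the first for the boundary side gives $|p_1 p_2| = \operatorname{arcsinh}(\cos\varphi / \sinh(d/2))$. As $d \to 0$, $\sinh(d/2) \sim d/2$ and $\operatorname{arcsinh}(x) = \log(2x) + O(x^{-2})$ for large $x$, so $|p_1 p_2| = \log(1/d) + O(1)$ as long as $\cos\varphi$ is bounded away from $0$. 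The projection weight $|e|_Y$ is built from $|p_1 p_2|$ together with the analogous quantity arising at $v'$, and hence inherits the $\log(1/d) + O(1)$ asymptotic, giving the claimed $|e|_Y \sim \log(1/\ell_Y(a))$. The equivalence of the two limiting regimes $d \to 0$ and $|e|_Y \to \infty$ is immediate from the first identity.

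The chief technical hurdle is the uniform lower bound on $\cos\varphi$ in the relevant regime, so that the $O(1)$ error stays bounded. The second identity $\cos\varphi = \tanh(|p_2 v|)\tanh(|vq|)$ reduces this to lower bounds on the two side-lengths $|p_2 v|$ (the height of the spine vertex $v$ above the boundary) and $|vq|$ (the half-length of $e$ along the spine). The hypothesis $|e|_Y \to \infty$ supplies the bound on $|vq|$, while $|p_2 v| \geq d/2$ from the triangle inequality in the trirectangle, sharpened by the observation that in the degeneration where $a$ becomes short the doubled geodesic acquires a wide collar forcing $|p_2 v|$ to stay bounded below by a universal constant. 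Once this is in hand the asymptotic is immediate from the trirectangle formula.
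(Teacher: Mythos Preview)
Your approach via the Lambert quadrilateral is exactly the route the paper indicates: it simply cites Buser's trirectangle identities \cite[Theorem 2.3.1]{Bus92} and \cite[Lemma 6.6]{CF}, with doubling plus the collar lemma mentioned as an alternative. Your setup of the trirectangle $p_1p_2vq$ is correct, the pair of identities
\[
\cos\varphi \;=\; \sinh|p_1p_2|\,\sinh(d/2) \;=\; \tanh|p_2v|\,\tanh|vq|
\]
are the right ones, and you are right that controlling $\cos\varphi$ is the crux.

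The gap is in your final paragraph. The claim that $|e|_Y\to\infty$ bounds $|vq|$ from below is not justified: $|vq|$ is only the $v$-half of the spine edge, and all of the length could sit on the $v'$-half. More seriously, the collar argument as you phrase it does not directly yield a universal lower bound on the inradius $|p_2v|$. The collar of the doubled arc $\hat a$ excludes simple closed geodesics \emph{disjoint from} $\hat a$; if the third boundary lift at $v$ happens to be another lift of $L_1$ or $L_2$ (which do cross $\hat a$), the argument as written says nothing. One can repair this by applying the collar lemma instead to the pair $\hat a,\hat a_i$ of doubled dual arcs meeting at $v$ (these \emph{are} disjoint), obtaining $|vq|+|vq_i|\ge w(\hat a)+w(\hat a_i)$, and then feeding this back through the third trirectangle relation $\sinh|p_2v|=\sinh(d/2)\cosh|vq|$; but this takes more care than your sketch supplies. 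Finally, note that since $|e|_Y=|p_1p_2|+|p_1p_2'|$ and each half is at most $\operatorname{arcsinh}(1/\sinh(d/2))$, your argument at best pins $|e|_Y$ between $\log(1/d)+O(1)$ and $2\log(1/d)+O(1)$---the symmetric pair of pants realizes the upper end---so the literal $\sim$ must be read as a coarse two-sided comparison, which is all the applications in \S\ref{sec:asymptriv}--\ref{sec:weightsvary} actually use.
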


\subsection*{Asymptotics of Weil--Petersson volumes}
The spine map gives a clear way to relate the asymptotic behavior of Weil--Petersson volumes of moduli spaces of hyperbolic surfaces to the Kontsevich volumes of spaces of ribbon graphs.

Using symplectic reduction and the normal form theorem, together with Wolpert's computation \cite{Wolp_WPclass}
of the cohomology class represented by the Weil--Petersson form on $\M_{g,b}(0,\ldots, 0)$, i.e., the moduli space of hyperbolic metrics with $b$ cusps, in \cite{Mir07c} Mirzakhani gave an expression for the class of the Weil--Petersson symplectic form on $\M_{g,b}(\bfL)$:
\begin{align*}
[\omega_{\WP}|_{\M_{g,b}(\bfL)}]
& = 2\pi^2 \kappa_1 + \frac{1}{2}\sum L_i^2 \psi_i.
\end{align*}
The class $\kappa_1$ is the first Mumford tautological class and the identification of the $2\pi^2 \kappa_1$ term is due to Wolpert \cite{Wolp_WPclass}; what is relevant in this paper is just that it does not depend on $\bfL$.
As a consequence, she deduced that the Weil--Petersson volume of $\M_{g,b}(\bfL)$ is a polynomial in the $L_i$'s \cite[Theorem 1.1]{Mir07c}.

Comparing this formula with \eqref{eqn:Konts_int}, it also follows that the leading asymptotics of the Weil--Petersson volume of $\M_{g,b}(t \bfL)$ and the Kontsevich volume of $\MRG_{g,b}(t\bfL)$ are the same as $t \to \infty$.
Since the Kontsevich volume form rescales homogeneously \eqref{eqn:Konts_rescale}, this coefficient is just the Kontsevich volume of $\MRG_{g,b}(\bfL)$.
We record this as follows; compare with \cite[p. 18]{Do10}.

\begin{corollary}\label{cor:WPtop=Kont}
The top degree part of the Weil--Petersson volume polynomial of $\M_{g,b}(\bfL)$ equals the Kontsevich volume of $\MRG_{g,b}(\bfL)$.
\end{corollary}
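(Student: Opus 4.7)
The proof is a direct comparison of cohomology classes. The plan is to expand the top power of Mirzakhani's cohomological formula for $[\omega_{\WP}|_{\M_{g,b}(\bfL)}]$, extract the term of highest degree in $\bfL$, and recognize it via Kontsevich's formula for $[\omega_{\mathrm{Kon}}|_{\MRG_{g,b}(\bfL)}]$.

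More concretely, I would begin by writing the Weil--Petersson volume as
\[
\mathrm{Vol}_{\WP}(\M_{g,b}(\bfL)) = \frac{1}{(3g-3+b)!} \int_{\M_{g,b}(\bfL)} [\omega_{\WP}|_{\M_{g,b}(\bfL)}]^{3g-3+b},
\]
then substitute Mirzakhani's formula $[\omega_{\WP}|_{\M_{g,b}(\bfL)}] = 2\pi^2 \kappa_1 + \frac{1}{2}\sum_i L_i^2 \psi_i$ and multinomially expand the $(3g{-}3{+}b)^\text{th}$ power. Each resulting monomial is a product of some number of $\kappa_1$ factors with a product of $\psi_i$ factors, multiplied by a polynomial in the $L_i^2$'s whose degree is exactly the number of $\psi_i$ factors. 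Since the total number of factors is $3g-3+b$, the maximum degree in the $L_i$'s achievable in the volume polynomial is $2(3g-3+b) = 6g-6+2b$, and this top degree arises from precisely the term with no $\kappa_1$ factor. This isolates
\[
\text{top degree in }\bfL\text{ part} = \frac{1}{(3g-3+b)!} \int_{\M_{g,b}(\bfL)} \left(\tfrac{1}{2}\sum_i L_i^2 \psi_i\right)^{3g-3+b}.
\]

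Next, I would invoke Luo's theorem (Theorem \ref{thm:Luo}): the spine map $\mathrm{S}:\M_{g,b}(\bfL) \xrightarrow{\sim} \MRG_{g,b}(\bfL)$ is a homeomorphism, and under this identification the $\psi_i$ class on $\M_{g,b}(\bfL)$ pulls back to the $\psi_i$ class on $\MRG_{g,b}(\bfL)$ (both are the first Chern class of the circle bundle at the $i^\text{th}$ boundary). Hence the integral above equals the same integral over $\MRG_{g,b}(\bfL)$. By Kontsevich's formula \eqref{eqn:Konts_int}, this is precisely
\[
\frac{1}{(3g-3+b)!}\int_{\MRG_{g,b}(\bfL)} [\omega_{\mathrm{Kon}}|_{\MRG_{g,b}(\bfL)}]^{3g-3+b} = \mathrm{Vol}_{\mathrm{Kon}}(\MRG_{g,b}(\bfL)),
\]
which completes the identification.

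I do not expect a serious obstacle; the proof is essentially bookkeeping once the two cohomological formulas are in hand. The only subtle point worth emphasizing is the compatibility of the $\psi$ classes under the spine map, which is why both cohomology formulas refer to ``the first Chern class of the circle bundle associated to the $i^\text{th}$ boundary component'' in the same way. A consistency check is the homogeneity relation \eqref{eqn:Konts_rescale}: the top degree in $\bfL$ scales as $t^{6g-6+2b}$ under $\bfL \mapsto t\bfL$, matching the rescaling behavior of the Kontsevich volume.
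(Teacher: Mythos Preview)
Your proposal is correct and follows essentially the same approach as the paper. The paper's argument is the short paragraph immediately preceding the corollary: compare Mirzakhani's formula $[\omega_{\WP}] = 2\pi^2\kappa_1 + \tfrac{1}{2}\sum L_i^2\psi_i$ with Kontsevich's formula \eqref{eqn:Konts_int}, observe that the leading asymptotics as $\bfL \to t\bfL$, $t\to\infty$ agree, and invoke the homogeneity \eqref{eqn:Konts_rescale} to identify the leading coefficient with the Kontsevich volume of $\MRG_{g,b}(\bfL)$.

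Your version is simply a more explicit rendering of the same idea: you spell out the multinomial expansion to isolate the top-degree term and you make the identification of $\psi$-classes explicit via the spine map, whereas the paper leaves this implicit (both cohomology formulas use the same $\psi_i$, so the integrals of $\psi$-monomials over either model of the moduli space give the same intersection numbers). The invocation of Theorem~\ref{thm:Luo} is not strictly needed---any of the homeomorphisms $\M_{g,b}\cong\MRG_{g,b}$ mentioned in the paper suffices to match the tautological classes---but it does no harm.
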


In what follows, we will need a more precise asymptotic convergence result in order to compare the Weil--Petersson and Kontsevich measures, not just their total masses.
The following result, a direct consequence of independent work of Mondello \cite[Corollary 4.4]{Mo09} and Do \cite[Theorem 2]{Do10}, is crucial for our arguments in Section \ref{sec:RSCequi}.

\begin{theorem} \label{theo:Do}
Fix an $\bfL \in \RR^b_{>0}$. Then
\[
\lim_{t \to \infty} \frac{{(\mathrm{R}_t\mathrm{S})}_*\widehat{\mu}_{\mathrm{wp}}^{t\bfL}}{t^{6g-6+2b}} = \eta^{\bfL}_\mathrm{Kon}
\]
with respect to the weak-$\star$ topology for measures on $\MRG_{g,b}(\bfL)$.
\end{theorem}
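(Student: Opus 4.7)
The plan is to combine Mirzakhani's expression for the Weil--Petersson cohomology class with Kontsevich's formula \eqref{eqn:Konts_int} to identify the top-degree asymptotics, and then upgrade the resulting cohomological statement to weak-$\star$ convergence of measures by working locally on the top-dimensional cells of $\MRG_{g,b}(\bfL)$.

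First I would push the Weil--Petersson measure to the combinatorial side via the spine homeomorphism $\mathrm{S}$ of Theorem \ref{thm:Luo}, and fix a top-dimensional cell $\mathcal{C}_\Gamma \subset \MRG_{g,b}(\bfL)$ corresponding to a trivalent ribbon graph $\Gamma$, with edge-length coordinates $\{\RGmetric(e)\}_{e \in E(\Gamma)}$. By \eqref{eqn:Konts=Euclid} the Kontsevich measure is a constant multiple of Lebesgue measure in these coordinates, so it suffices to show that the density of $(\mathrm{R}_t \mathrm{S})_* \widehat{\mu}_{\mathrm{wp}}^{t\bfL}/t^{6g-6+2b}$ with respect to Lebesgue measure converges locally uniformly to the Kontsevich density. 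By Mirzakhani's formula, the pulled-back form $\mathrm{S}^* \omega_{\mathrm{wp}}^{t\bfL}$ lies in the class $2\pi^2 \kappa_1 + (t^2/2)\sum L_i^2 \psi_i$, and applying $\mathrm{R}_t$ (which divides edge lengths by $t$) and then dividing by $t^2$ produces a closed 2-form in the cohomology class $[\omega_{\mathrm{Kon}}^\bfL] + (2\pi^2/t^2)\kappa_1$. Upgrading this cohomological statement to a pointwise one is the content of the explicit analysis of the orthogeodesic foliation in \cite{Mo09} and the asymptotic expansion in \cite{Do10}, which together show that the rescaled Weil--Petersson form equals the Kontsevich form up to an error of size $O(t^{-2})$ that is uniform on compact subsets of $\mathcal{C}_\Gamma$; taking top exterior powers then yields pointwise convergence of the densities.

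The main obstacle is upgrading this cell-by-cell density convergence to weak-$\star$ convergence on the full stratified combinatorial moduli space: the lower-dimensional strata, corresponding to ribbon graphs with vertices of valence $\geq 4$, are null for the Kontsevich measure, but one must rule out that the Weil--Petersson pushforwards accumulate mass there in the limit. To this end I would combine the locally uniform density convergence on the top cells with the total-mass matching supplied by Corollary \ref{cor:WPtop=Kont}: since the total masses agree in the limit and the densities converge locally uniformly on the top cells, any accumulation on lower strata would force a mass deficiency on the top cells, contradicting the pointwise convergence. This yields the desired weak-$\star$ convergence against any continuous test function.
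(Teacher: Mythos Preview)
The paper does not give its own proof of this theorem: it is stated as ``a direct consequence of independent work of Mondello \cite[Corollary 4.4]{Mo09} and Do \cite[Theorem 2]{Do10}'' and is used as a black box. So there is no paper proof to compare against; your proposal is essentially an outline of how those cited works proceed.

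That said, your sketch is a faithful summary of the argument. The cohomological identification via Mirzakhani's formula and \eqref{eqn:Konts_int} is exactly what motivates the result, and you correctly recognize that the substantive step---upgrading cohomological to pointwise convergence of forms on each maximal cell---is precisely the content of \cite{Mo09,Do10} rather than something you can extract from the paper at hand. Your handling of the lower-dimensional strata via the total-mass identity of Corollary \ref{cor:WPtop=Kont} together with Fatou on the open top cells is a clean way to close the argument: since $\liminf_t \mu_t(U) \ge \eta_{\mathrm{Kon}}^{\bfL}(U)$ on the union $U$ of maximal facets and the total masses match in the limit, no mass can accumulate on the null strata. One small caveat: $\MRG_{g,b}(\bfL)$ is not compact, so ``any continuous test function'' should be read as compactly supported (which is the convention used elsewhere in the paper); your total-mass argument already controls what happens on compact sets, so this is only a matter of phrasing.
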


\subsection*{Notation.}
For the reader's convenience we have collected here a list of some of the notation conventions we use throughout the paper, including some that we have not yet introduced.
We have tried to remain consistent with the conventions of \cite{Ara19b,Ara20a} as much as possible.

\begin{itemize}
    \item $\mu$ for measures on Teichmüller and moduli spaces of Riemann surfaces. In particular, $\widehat{\mu}_\mathrm{wp}$ denotes the Weil-Petersson measure on $\mathcal{M}_g$.
    \item $\nu$ for measures on bundles of unit length measured geodesic laminations over Teichmüller and moduli spaces of Riemann surfaces. In particular, $\widehat{\nu}_\mathrm{Mir}$ denotes the Mirzakhani measure on $\mathcal{P}^1\mathcal{M}_g$; see \S \ref{sec:RSCequi}.
    \item $\eta$ for measures on moduli spaces of metric ribbon graphs. In particular, $\eta_\mathrm{Kon}$ denotes the Kontsevich measure on $\mathcal{MRG}_{g-1,2}(1,1)$.
    \item $\mu_{\gamma,h}^L$ for RSC-horoball measures on $\mathcal{T}_g$. We denote by $\smash{\widetilde{\mu}_{\gamma,h}^L}$ the local pushforward of $\mu_{\gamma,h}^L$ to the intermediate cover $\mathcal{T}_g/\mathrm{Stab}(\gamma)$ and by $\smash{\widehat{\mu}_{\gamma,h}^L}$ the pushforward of $\smash{\widetilde{\mu}_{\gamma,h}^L}$ to $\mathcal{M}_g$; see \S \ref{sec:RSCequi}.
    \item $\vg := (\vg_1, \ldots, \vg_k)$ for an ordered, oriented simple closed multi-curve on a topological surface $S_g$.
    \item $S_g \setminus \vg$ for the metric completion of the corresponding cut surface. We denote its components by $\Sigma_1, \ldots, \Sigma_m$, indexed according to the order and orientation of the components of $\vg$.
    \item $\Gamma$ for ribbon graphs. Their edges are denoted by $e_1, \ldots, e_E$ and any metric structure has coordinates $\RGmetric = (x_1, \ldots, x_E)$.
    \item $\MRG(S+g \setminus \vg)$ for the product of moduli spaces of complementary subsurfaces subject to gluing conditions; see \S\ref{sec:general}
    \item $\MRG(S_g \setminus \vg; \bfL)$ for the slice of $\MRG(S_g \setminus \vg)$ with fixed lengths of boundary components.
    \item $\MRG(S_g \setminus \vg; \Delta)$ for the subset of $\MRG(S_g \setminus \vg)$ with lengths of boundary components in the standard simplex $\Delta$ of $\RR^k$.
\end{itemize}

\section{Nonmaximal facets and power saving}\label{sec:asymptriv}

\subsection*{Outline of this section.} The purpose of the next two sections is to prove that the weights of the edges of the spine $SC_{\alpha}(X)$ vary uniformly in $\alpha$ as $X$ varies in a neighborhood of Teichmüller space; this is what allows us to eventually run our ``averaging and unfolding'' arguments in \S \ref{sec:avg_unfold}. This goal will be complicated by the faceted structure of the combinatorial moduli space. As such, in this section we first prove that we need only consider those multi-curves whose corresponding spines are ``deep enough'' in maximal facets; see Proposition \ref{prop:asymp_triv}.

Our proof holds for all simple closed multi-curves and the results are phrased in generality; for a precise description of the space $\MRG(S_g \setminus \agamma)$, the reader is directed to Section \ref{sec:general}.
The reader may freely restrict to the case of a single non-separating simple closed curve and $\MRG_{g-1, 2}$ with no loss of intuition.

\subsection*{Statement of the main result.}

For any $X \in \T_g$, any simple closed multi-curve $\gamma$, and any $K>0$, let $F(X, \gamma, K)$ denote the set of $\alpha \in \Mod_g \cdot \gamma$ for which $SC_{\alpha}(X)$ is either not trivalent or has an edge of weight at most $K$. 
Equivalently, $F(X, \gamma, K)$ is the set of $\alpha$ for which $SC_{\alpha}(X)$ lies in a $K$-neighborhood of a lower-dimensional facet of $\MRG(S_g \setminus \agamma)$.
The following is the main result of this section.

\begin{proposition}\label{prop:asymp_triv}
For every $X \in \T_g$, every multi-curve $\gamma$, and every $K >0$,
\[
\frac{\# \{ \alpha \in F(X, \gamma, K) \mid \ell_X(\alpha) \le L\}} {L^{6g-6}} \to 0
\text{ as } L \to \infty.
\]
\end{proposition}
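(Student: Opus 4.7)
The proposition asserts a power saving for the count of ``bad'' curves --- those whose complementary subsurface has a spine with either a short edge (length $\leq K$) or a higher-valence vertex.  Intuitively, as $\ell_\alpha(X) \to \infty$ the cut surface $X \setminus \alpha$ has boundary length tending to infinity, and by the Mondello--Do theorem (Theorem \ref{theo:Do}) a ``typical'' such subsurface should have a trivalent spine with every edge much longer than the fixed cutoff $K$.  The plan is to make this precise by combining Theorem \ref{theo:Do} with Mirzakhani's integration formula.

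I would first use the spine homeomorphism (Theorem \ref{thm:Luo}) to identify $\alpha \in F(X,\gamma,K)$ with the condition $X \setminus \alpha \in B_K$, where $B_K \subset \M(S_g \setminus \agamma)$ is the closed subset of hyperbolic structures whose spine is non-trivalent or has an edge of weight $\leq K$.  Next I would bound the Kontsevich content of the rescaled bad set: the non-trivalent stratum is lower-dimensional and hence has Kontsevich measure zero, while the rescaled short-edge condition ``edge weight $\leq K/t$'' defines, by \eqref{eqn:Konts=Euclid}, a Euclidean slab of $\eta_{\mathrm{Kon}}^{\bfL}$-measure $O(K/t)$ on any normalized slice.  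Applying Theorem \ref{theo:Do} then transfers this into a Weil--Petersson estimate of strictly lower polynomial order than the total Weil--Petersson volume; for non-separating $\gamma$, this reads
\[
    V^{\mathrm{bad}}_K(\ell,\ell) := \widehat{\mu}_{\WP}^{(\ell,\ell)}\!\left(B_K \cap \M_{g-1,2}(\ell,\ell)\right) = o\!\left(\ell^{6g-8}\right) \quad \text{as } \ell \to \infty.
\]

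The final step is to use Mirzakhani's integration formula to translate this volume saving into a count saving.  For non-separating $\gamma$ the formula gives
\[
    \int_{\M_g} \#\{\alpha \in F(X,\gamma,K) \mid \ell_\alpha(X) \leq L\} \, d\widehat{\mu}_{\WP}(X)
    = \tfrac{1}{2}\int_0^L \ell \cdot V^{\mathrm{bad}}_K(\ell,\ell)\, d\ell = o(L^{6g-6}),
\]
and the general case is analogous after integrating over the simplex of possible boundary lengths.  To upgrade this $\widehat\mu_{\WP}$-average statement to the pointwise-in-$X$ bound required by the proposition, I would approximate $\mathbbm{1}_{B_K}$ from above by continuous, compactly supported functions on $\M(S_g \setminus \agamma)$ and apply Mirzakhani's pointwise weighted counting theorem for each approximant, then tighten the approximation using the vanishing of the limiting constants deduced from the Mondello--Do estimate.

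\textbf{Main obstacle.} The principal difficulty is promoting the $\widehat{\mu}_{\WP}$-average bound to a pointwise estimate valid for every $X \in \T_g$.  The approximation argument needs continuity of the limiting counting constants in the test function, which in turn depends on the regularity of $\mathrm{RSC}_\alpha(X)$ in $X$ --- essentially the content of Lemma \ref{lem:edgespersist} and Proposition \ref{prop:weightsvary} later in the paper.  So the cleanest arrangement may be to develop this proposition in parallel with those continuity results.
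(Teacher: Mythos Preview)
Your approach is genuinely different from the paper's, and it has a real gap.

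The paper's proof is purely geometric and never leaves the fixed surface $X$.  It shows, via Lemma \ref{lem:Hclosetomaxtt_largeweight}, that any $\alpha$ whose $\delta$-neighborhood yields a \emph{maximal} train track automatically has a trivalent spine with all edge weights larger than $K$.  By compactness of the space of approximable laminations, finitely many geometric train tracks $\tau_i$ cover $\ML_X$, and every $\alpha \in F(X,\gamma,K)$ is carried on one of the \emph{non-maximal} $\tau_j$.  Lattice-point counting in the weight polytope (Lemmas \ref{lem:ttdim} and \ref{lem:ttgrowthrate}) then gives $O(L^{6g-7})$, a full degree of power saving, pointwise in $X$ from the start.

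Your route through Mondello--Do and Mirzakhani's integration formula does give the correct \emph{average} statement
\[
\int_{\M_g} \#\{\alpha \in F(X,\gamma,K)\mid \ell_\alpha(X)\le L\}\, d\widehat{\mu}_{\WP}(X)=o(L^{6g-6}),
\]
but the upgrade to a pointwise-in-$X$ bound is circular as you have written it.  The only ``pointwise weighted counting theorem'' available for weights of the form $f(\mathrm{RSC}_\alpha(X))$ is precisely Theorem~\ref{theo:red}, and its proof (via Proposition~\ref{prop:comp}) uses Proposition~\ref{prop:asymp_triv} to throw away the curves in $F(X,\gamma,K)$ before the averaging--unfolding step can be carried out.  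The continuity results Lemma~\ref{lem:edgespersist} and Proposition~\ref{prop:weightsvary} that you invoke do not help: they only apply to curves \emph{outside} $F(X,\gamma,K)$, so they give no handle on $\mathbbm{1}_{B_K}$ itself.  Nor can you approximate $\mathbbm{1}_{B_K}$ from above by compactly supported functions on $\MRG(S_g\setminus\agamma;\Delta)$, since the non-trivalent locus is closed but not compact.  In short, your plan produces the estimate only after averaging over $\M_g$, and there is no independent mechanism in the paper for de-averaging without already knowing Proposition~\ref{prop:asymp_triv}.

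A secondary point: even if the circularity were resolved, your method yields only $o(L^{6g-6})$, whereas the train-track argument gives the sharper $O(L^{6g-7})$ noted in the remark following the proposition.
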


\begin{remark}
   The proof of Proposition \ref{prop:asymp_triv} actually shows that
    \[
    \#\{ \alpha \in F(X, \gamma, K) \ | \ \ell_X(\alpha) \le L\} = O\left(L^{6g-7}\right),
    \]
    corresponding to a power saving of an entire degree. 
\end{remark}

The main idea of the proof is to exploit the connection (mediated by geometric train tracks) between certain subspaces in $\ML_g$ and facets of the moduli space of metric ribbon graphs.
As we show below, the elements of $F(X, \gamma, K)$ are carried on non-maximal train tracks, bounding the growth of $F(X, \gamma, K)$ in terms of the maximal dimension of the weight space of any of these train tracks.

\subsection*{Geometric train tracks}
The uniform $\delta$-neighborhood $\cN_\delta(\lambda)$ of any geodesic lamination $\lambda$ is foliated by (the restrictions of) leaves of the orthogeodesic foliation $\cO_\lambda(X)$. 
If the collapse map extends to a $C^1$ homotopy equivalence of the surface $X$ (equivalently, if each leaf of $\cO_\lambda(X)|_{\cN_\delta(X)}$ is just an interval), then the leaf space is called a {\em train track}, and we say that $\cN_\delta(X)$ is a {\em geometric train track neighborhood.}
The train track $\tau$ can also be thought of as a graph embedded in the surface with an assignment of tangential data at each vertex.
Its edges (or {\em branches}) correspond to ``rectangles'' foliated by parallel leaves of $\cO_{\lambda}(X)|_{\cN_{\delta}(\lambda)}$, while its vertices (or {\em switches}) correspond to leaves where these rectangles are conjoined.

We begin by recording a uniform estimate on the width of a $\delta$-neighborhood of an arbitrary geodesic lamination. See also \cite[Lemma 14.5]{CF}.

\begin{lemma}\label{lem:ttwidth}
For any $X \in \T_g$ there exists a constant $\delta_0 = \delta_0(X) > 0$, uniform on the thick part of $\mathcal{T}_g$, such that if $\lambda$ is a geodesic lamination of $X$ and $0 < \delta < \delta_0$, then the length of any segment of $\cO_{\lambda}(X)|_{\cN_{\delta}(\lambda)}$ is $O_X(\delta)$, with the implicit constant being uniform on the thick part of $\mathcal{T}_g$.
\end{lemma}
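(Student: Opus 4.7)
Proof proposal.

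The plan is to lift to the universal cover $\mathbb{H}^2$ and exploit the fact that every leaf of the orthogeodesic foliation of the lifted lamination $\tilde\lambda$ lies on a single geodesic perpendicular to $\tilde\lambda$. All the work will then reduce to a one-line hyperbolic-geometric estimate about perpendicular geodesics.

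First I would set $\delta_0(X)$ to be strictly less than half the injectivity radius of $X$. By Mumford compactness, such a $\delta_0$ is bounded below uniformly on the thick part of $\mathcal{T}_g$. For any $\delta < \delta_0$, the closest-point projection $\mathcal{N}_\delta(\lambda)\to\lambda$ lifts isometrically to the closest-point projection $\mathcal{N}_\delta(\tilde\lambda)\to\tilde\lambda$ in $\mathbb{H}^2$, and the foliation $\mathcal{O}_\lambda(X)$ is the local quotient of $\mathcal{O}_{\tilde\lambda}(\mathbb{H}^2)$. Since the covering is a local isometry, lengths of segments are preserved, and it suffices to bound the length of any segment $\sigma$ of $\mathcal{O}_{\tilde\lambda}(\mathbb{H}^2)|_{\mathcal{N}_\delta(\tilde\lambda)}$.

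In the generic case, $\sigma$ lies on the geodesic $\gamma_p$ perpendicular to $\tilde\lambda$ at some point $p$, consisting of those $y\in\gamma_p$ for which $p$ is the closest point of $\tilde\lambda$. For any such $y$ one has $d(y,\tilde\lambda)=d(y,p)$, so $y\in\mathcal{N}_\delta(\tilde\lambda)$ forces $d(y,p)<\delta$. Hence $\sigma$ is contained in the $\delta$-ball about $p$ on $\gamma_p$, of total length at most $2\delta=O_X(\delta)$, with implicit constant $2$ uniform on the thick part.

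The main subtlety to address is the possibility of a "joining" segment that bridges two nearby leaves $\ell_1,\ell_2$ of $\tilde\lambda$ through their equidistant locus; by Lemma \ref{lem:closeimpliesarc}, such segments only appear when $d(\ell_1,\ell_2)<\log\sqrt{3}$. In that configuration $\sigma$ is a union of two half-perpendiculars meeting at the equidistant locus, with total length equal to $d(\ell_1,\ell_2)$. If $\sigma\subset\mathcal{N}_\delta(\tilde\lambda)$ this forces $d(\ell_1,\ell_2)\leq 2\delta$, so the same bound $\mathrm{length}(\sigma)\leq 2\delta$ persists. Combining both cases completes the proof.
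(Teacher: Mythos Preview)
Your argument has a genuine gap: you treat a leaf of $\mathcal{O}_{\tilde\lambda}(\mathbb{H}^2)|_{\mathcal{N}_\delta(\tilde\lambda)}$ as lying on a single perpendicular geodesic $\gamma_p$, or at worst as a single pair of half-perpendiculars meeting at one equidistant point. But a leaf of the orthogeodesic foliation continues across every generic (2-valent) point of the spine of a complementary region; it is a \emph{piecewise}-geodesic arc that is perpendicular to each leaf of $\tilde\lambda$ it meets and bends at every spine edge it crosses. Whenever a complementary ideal polygon of $\lambda$ has a thin part (for instance near a spike), the spine there lies inside $\mathcal{N}_\delta(\lambda)$, and a single connected segment $t$ of $\mathcal{O}_\lambda(X)|_{\mathcal{N}_\delta(\lambda)}$ can pass through that spine, bend, cross the next leaf, bend again, and so on---possibly through a Cantor set of leaves. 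Your two cases bound only one piece of $t\setminus\lambda$ at a time, not the sum.

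This is exactly why the paper's proof takes a different route. It decomposes $t$ into the countably many arcs of $t\setminus\lambda$, observes that these fall into finitely many isotopy classes of arcs in $X\setminus\lambda$, and argues that successive occurrences of the same class along $t$ must sit a definite amount (the injectivity radius of $X$) further into a spike, hence have geometrically decaying lengths. The total length is then a finite sum of geometric series with first terms at most $2\delta$, giving the $O_X(\delta)$ bound with constant depending only on the systole. Your approach, as written, cannot recover this without introducing the same isotopy-class and geometric-decay mechanism.
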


\begin{proof}[Proof sketch]
Let $t$ be such a segment.
Since $t \cap \lambda$ has measure $0$, we can compute the length of $t$ by summing the length of the pieces of $t \setminus \lambda$.
But now we note that the pieces of $t \setminus \lambda$ fall into finitely many isotopy classes of arcs on $X \setminus \lambda$, and each subsequent time that an isotopy class occurs it must do so a definite distance (the injectivity radius of $X$) further into the thin part of $X \setminus \lambda$.
In particular, this means that the length of $t$ is bounded by a sum of finitely many geometric series whose first terms are all at most $2 \delta$.
\end{proof}

From Lemma \ref{lem:ttwidth} we get that the defining parameter for geometric train track neighborhoods can be taken to be uniformly large in the base lamination.

\begin{lemma}\label{lem:unifttparam}
For any $X \in \T_g$ there exists a constant $\delta_0 = \delta_0(X) > 0$, uniform in the thick part of $\mathcal{T}_g$, so that for any $0 < \delta < \delta_0$ and any geodesic lamination $\lambda$ on $X$, the uniform $\delta$-neighborhood $\cN_\delta(\lambda) \subseteq X$ is a geometric train track neighborhood.
\end{lemma}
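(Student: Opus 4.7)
The plan is to extract $\delta_0(X)$ directly from Lemma \ref{lem:ttwidth} combined with the uniform lower bound on the injectivity radius of $X$ throughout the thick part of $\T_g$, which I denote by $\iota_0 > 0$. I would choose $\delta_0$ small enough that the quantity $O_X(\delta_0)$ appearing in Lemma \ref{lem:ttwidth} is bounded above by $\iota_0/2$; the uniformity statement in that lemma ensures such $\delta_0$ can be taken uniformly on the thick part.

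With $\delta_0$ fixed, I would verify the two defining properties of a geometric train track neighborhood for any $0 < \delta < \delta_0$ and any geodesic lamination $\lambda$ on $X$. First, passing to the universal cover $\mathbb{H}^2$, each leaf of $\cO_\lambda(X)|_{\cN_\delta(\lambda)}$ is a union of orthogeodesic segments of total length at most $O_X(\delta_0) < \iota_0/2$. Each such segment is tautologically an embedded geodesic arc in $\mathbb{H}^2$, and since its projected length in $X$ is strictly less than the injectivity radius it remains embedded upon projection. In particular no leaf can close into a circle, so each leaf is a contractible 1-complex of embedded arcs, hence an interval away from the singular locus of $\cO_\lambda(X)$.

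To verify that the collapse map is a $C^1$ homotopy equivalence onto a train track $\tau$, I would argue as follows. Away from spine points of the complementary components of $X \setminus \lambda$, the orthogeodesic foliation is smooth and the collapse is locally a smooth projection from a thin rectangle onto an interval, producing smooth branches of $\tau$. At a leaf passing through a spine point, the local model is a tripod (or higher-valence) family of orthogeodesic prongs; collapsing such a family produces a switch of $\tau$ whose tangent data is inherited from the tangent line of $\lambda$ at the foot of each prong. These tangent lines vary continuously along $\lambda$, giving well-defined $C^1$ data at switches. Homotopy equivalence is automatic because every collapsed leaf is contractible.

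The main obstacle will be carefully establishing $C^1$ compatibility of the tangent data at the switches of $\tau$, particularly verifying that the combinatorial structure of $\tau$ does not change discontinuously as one perturbs $\lambda$. This reduces to a local hyperbolic-geometric calculation at each singularity of $\cO_\lambda(X)$ using smoothness of closest-point projection in the universal cover together with continuity of the tangent line along $\lambda$; both are standard in the theory of orthogeodesic foliations and I expect no genuine technical difficulty in packaging them to yield the claimed uniform train track structure.
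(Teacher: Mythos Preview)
Your core idea is the same as the paper's: invoke Lemma~\ref{lem:ttwidth} to force every leaf of $\cO_\lambda(X)|_{\cN_\delta(\lambda)}$ to have length below the systole (your injectivity radius bound is equivalent), so that no leaf can close up into a circle. This is exactly what the paper does.

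Where you diverge is in the second half. The paper's definition of geometric train track neighborhood explicitly states that the $C^1$ homotopy equivalence condition is \emph{equivalent} to the condition that each restricted leaf is an interval; once you have shown the latter, there is nothing further to check. Your discussion of $C^1$ compatibility at switches, tripod models at spine points, and combinatorial stability under perturbation of $\lambda$ is therefore unnecessary. In fact, the paper sidesteps the spine-vertex issue entirely by also imposing $\delta < \log\sqrt{3}$: since every spine vertex lies at distance at least $\log\sqrt{3}$ from $\lambda$ (the inradius of an ideal triangle), this guarantees $\cN_\delta(\lambda)$ contains no singularities of $\cO_\lambda(X)$, so the restricted foliation is automatically nonsingular and each leaf is a single arc. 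You should add this observation and delete the last two paragraphs of your argument; what remains is then precisely the paper's proof.
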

\begin{proof}
So long as the orthogeodesic foliation $\cO_\lambda(X)$ has no closed leaves, one can take any $\delta < \log \sqrt{3}$ (this cutoff ensures that $\cN_{\delta}(\lambda)$ does not contain any vertices of the spine).
Otherwise, if it does, one can use Lemma \ref{lem:ttwidth} to ensure that the length of any segment of $\cO_{\lambda}(X)$ in $\cN_\delta(\lambda)$ is also less than the systole of $X$ and so $\cO_\lambda(X)|_{\cN_\delta(\lambda)}$ has no closed leaves.
\end{proof}

\subsection*{Maximal laminations and facets}
A lamination $\lambda$ is {\em maximal} if it cuts the surface into $4g-4$ ideal hyperbolic triangles.
Being sufficiently Hausdorff-close to a maximal lamination implies that the complementary subsurface $X \setminus \alpha$ should look like a union of ideal triangles. In particular, its spine should be trivalent and all of its edges should have large weight, as the following result shows.

\begin{lemma}\label{lem:Hclosetomax_largeweight}
Let $X \in \mathcal{T}_g$ and $\delta_0 = \delta_0(X) > 0$ be as in Lemma \ref{lem:unifttparam}. Consider a maximal geodesic lamination $\lambda$ on $X$. Then, for any $0 < \delta < \delta_0(X)$  and any multi-geodesic $\alpha$ on $X$ such that 
\[d_X^H(\lambda, \alpha) < \delta,\]
the spine $SC_\alpha(X)$ is trivalent and each of its edges has weight $\Omega_X(\log(1/\delta))$,
where the implicit constant is uniform as $X$ varies in the thick part of $\T_g$.
\end{lemma}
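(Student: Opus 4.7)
The plan is to combine the train track structure of Lemma \ref{lem:unifttparam} with elementary cusp geometry, then invoke Lemma \ref{lem:lengthvsweight} to convert a short-arc estimate into the desired weight lower bound.

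First I would set up the combinatorics. Since $d_X^H(\lambda,\alpha)<\delta<\delta_0$, the multi-geodesic $\alpha$ lies in the geometric train track neighborhood $\cN_\delta(\lambda)$ from Lemma \ref{lem:unifttparam}. Maximality of $\lambda$ means its complement consists of $4g-4$ ideal triangles $T_i$, so $X\setminus\cN_\delta(\lambda)$ is a disjoint union of truncated ideal triangles $T_i^{(\delta)}$, and the dual train track $\tau$ has only trivalent switches. In each $T_i^{(\delta)}$ the Hausdorff condition forces $\alpha$ to shadow every side of $T_i$ from within distance $\delta$, so the closest-point projection from points of $T_i^{(\delta)}$ to $\alpha$ essentially matches the projection to $\partial T_i$. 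Thus the restriction of $SC_\alpha(X)$ to $T_i^{(\delta)}$ is the standard tripod centered at the barycenter of $T_i$, and each tripod edge continues through $\cN_\delta(\lambda)$, passing smoothly through the cusps of $\tau$, to meet a corresponding tripod edge from an adjacent truncated triangle. Trivalence of $\tau$ ensures that no additional vertices form along the way, yielding a trivalent ribbon graph with $4g-4$ vertices and $6g-6$ edges.

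Next I would bound the edge weights using Lemma \ref{lem:lengthvsweight}. For each edge $e$ the dual orthogeodesic arc $a$ lives in the cusp region shared by two adjacent truncated triangles, where two sides of $\lambda$ come asymptotically close. Standard cusp geometry shows the $\delta$-neighborhoods of these two sides first overlap where the sides are within distance $\sim 2\delta$ of each other; since $\alpha$ lies within $\delta$ of each side, the two pieces of $\alpha$ nearest the cusp apex are within $O(\delta)$ of one another. For $\delta$ small, Lemma \ref{lem:closeimpliesarc} (applied with $\alpha$ in place of $\lambda$) joins them by a segment of $\cO_\alpha(X)$; this segment is precisely $a$, and has length $\ell_Y(a)=O(\delta)$. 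Lemma \ref{lem:lengthvsweight} then yields $|e|_Y\sim\log(1/\ell_Y(a))=\Omega(\log(1/\delta))$.

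The main obstacle is the combinatorial step of ruling out extra vertices: when $\alpha$ has many parallel strands through a single branch of $\tau$, the complement $X\setminus\alpha$ contains many thin strips between consecutive strands, and one must check their spines do not introduce extraneous vertices. The key observation is that a point inside such a thin strip has only two closest projections to $\alpha$ (the two bounding strands), so it lies on an edge rather than at a vertex, and the trivalence of switches of $\tau$ forces exactly three spine-edges to converge at each shared cusp, matching the three edges emanating from each barycenter tripod. Uniformity of the implicit constants as $X$ ranges over the thick part of $\T_g$ then follows automatically from the uniform bounds already established in Lemmas \ref{lem:ttwidth}, \ref{lem:unifttparam}, and \ref{lem:lengthvsweight}.
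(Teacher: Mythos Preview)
Your proposal uses the same ingredients as the paper's proof---the geometric train track neighborhood, Lemma~\ref{lem:closeimpliesarc}, and Lemma~\ref{lem:lengthvsweight}---and the weight estimate in your second paragraph is essentially what the paper does (though the paper obtains the $O_X(\delta)$ bound on arc length more directly from Lemma~\ref{lem:ttwidth} rather than from ad~hoc cusp geometry). The gap is in your trivalence argument. You assert that the restriction of $SC_\alpha(X)$ to each truncated triangle $T_i^{(\delta)}$ is ``the standard tripod centered at the barycenter of $T_i$,'' but $SC_\alpha(X)$ is built from closest-point projection to $\alpha$, not to $\partial T_i$, and ``essentially matches'' is not a proof that exactly one trivalent vertex appears per triangle. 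Your last paragraph correctly notes that points strictly inside a thin strip have valence at most~$2$, but the claim that ``trivalence of switches of $\tau$ forces exactly three spine-edges to converge at each shared cusp'' does not follow: near a switch many strands of $\alpha$ may meet, and you have not shown that the resulting piece of the spine contributes a single trivalent vertex rather than some more complicated tree.

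The paper sidesteps any direct description of the spine by working entirely with the dual arc system. It takes the leaves of the orthogeodesic foliation $\cO_\lambda(X)$ (with respect to $\lambda$, not $\alpha$) restricted to $\cN_\delta(\lambda)$; these have length $O_X(\delta)$ by Lemma~\ref{lem:ttwidth}, and the finitely many isotopy classes of arcs $\{a_i\}$ they define on $X\setminus\alpha$ are, by Lemma~\ref{lem:closeimpliesarc}, contained in the dual arc system of $SC_\alpha(X)$. The key step is then purely topological: the components of $X\setminus(\alpha\cup\bigcup_i a_i)$ are in bijection with the plaques of $X\setminus\lambda$, hence are all hexagons, so $\{a_i\}$ is already a maximal arc system and therefore equals the full dual arc system. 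Trivalence is then immediate, and the weight bound drops out because every dual arc already has a representative of length $O_X(\delta)$.
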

\begin{proof}
We begin by demonstrating that there is a correspondence between the vertices of $SC_\alpha(X)$ and the complementary components of $X \setminus \lambda$.
Compare with the discussion of ``invisible arc systems'' in \cite{CF2}.

Consider the regular $\delta$-neighborhood $\cN_\delta(\lambda) \subseteq X$; it may be foliated by segments of the orthogeodesic foliation of $X$ with respect to $\lambda$, all of which have length at most $O_X(\delta)$
(Lemma \ref{lem:ttwidth}).
These segments break up into finitely many isotopy classes of disjoint arcs $\{a_i\}$ running from $\alpha$ to itself, each of which has a representative of length at most $O_X(\delta)$. Taking $\delta > 0$ small enough, Lemma \ref{lem:closeimpliesarc} implies that the arc system $\{a_i\}$ is a subset of the dual arc system to the spine $SC_\alpha(X)$.

\begin{figure}[hb]
    \centering
    \includegraphics[scale=.6]{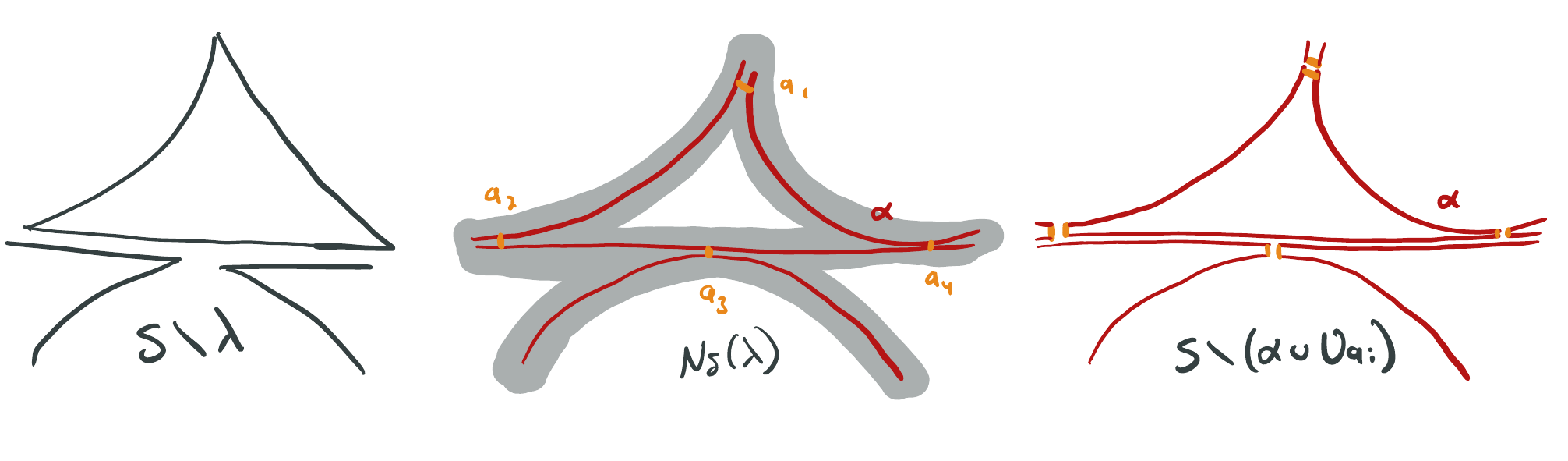}
    \caption{The correspondence between plaques of $X \setminus \lambda$ and hexagons of $X \setminus (\alpha \cup \bigcup_i a_i)$.}
    \label{fig:maxarcs}
\end{figure}

In particular, the components of $X \setminus (\alpha \cup \bigcup_i a_i)$ correspond to the components of $X \setminus \cN_\delta(\lambda)$, which in turn correspond to the plaques of $X \setminus \lambda$. See Figure \ref{fig:maxarcs}.
Since $\lambda$ is maximal, the components of $X \setminus (\alpha \cup \bigcup_i a_i)$ are all right-angled hexagons. In particular, the arc system $\{a_i\}$ is maximal, so must be the entire dual arc system to $SC_\alpha(X)$.

Now the dual orthogeodesic arcs to $SC_\alpha(X)$ are of minimal length in their isotopy class, and since there are representatives in each class of length $O_X(\delta)$, the orthogeodesic representatives are also of length $O_X(\delta)$.
Applying Lemma \ref{lem:lengthvsweight} then gives the desired lower bound on the weights of the edges of $SC_\alpha(X)$.
\end{proof}

In fact, inspection of the proof above reveals that we have actually proved a stronger statement.
Recall that a train track on a closed surface is {\em maximal} if its complementary regions are all triangles.

\begin{lemma}\label{lem:Hclosetomaxtt_largeweight}
Let $X \in \mathcal{T}_g$, $\lambda$ be a geodesic lamination on $X$ and fix $\delta > 0$. Suppose that the uniform $\delta$-neighborhood $\cN_\delta(\lambda) \subseteq X$ defines a maximal train track on $X$. Then, for any multi-geodesic $\alpha$ on $X$ such that 
\[d_X^H(\lambda, \alpha) < \delta,\]
the spine $SC_\alpha(X)$ is trivalent and each of its edges has weight $\Omega_X(\log(1/\delta))$,
where the implicit constant is uniform as $X$ varies in the thick part of $\T_g$.
\end{lemma}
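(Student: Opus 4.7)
The plan is to follow the proof of Lemma \ref{lem:Hclosetomax_largeweight} essentially verbatim, since the maximality hypothesis on $\lambda$ in that lemma was used only to ensure that the complementary regions of a thin regular neighborhood of $\lambda$ are topological trigons; under the present hypothesis this is true by the very definition of ``maximal train track,'' so no change of substance is required.

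First, I would take $\delta$ smaller than both the threshold $\delta_0(X)$ of Lemma \ref{lem:unifttparam} and $\log\sqrt{3}$, so that $\cN_\delta(\lambda)$ is a geometric train track neighborhood and Lemma \ref{lem:closeimpliesarc} applies. Lemma \ref{lem:ttwidth} then gives that every segment of $\cO_\lambda(X)|_{\cN_\delta(\lambda)}$ has length $O_X(\delta)$, with implicit constants uniform on the thick part of $\T_g$. Since $d_X^H(\lambda,\alpha) < \delta$, the multi-geodesic $\alpha$ lies inside $\cN_\delta(\lambda)$, and the segments of the orthogeodesic foliation, cut along $\alpha$, group into finitely many isotopy classes of arcs $\{a_i\}$ from $\alpha$ to itself, each admitting a representative of length $O_X(\delta)$. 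By Lemma \ref{lem:closeimpliesarc} each $a_i$ then belongs to the dual arc system of the spine $SC_\alpha(X)$.

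Next I would match complementary regions. Because $\cN_\delta(\lambda)$ is a maximal train track, its complementary regions in $X$ are trigons, and these are in natural bijection with the components of $X \setminus (\alpha \cup \bigcup_i a_i)$. Each such component is therefore a (topological) right-angled hexagon whose boundary alternates three arcs of $\alpha$ with three arcs from $\{a_i\}$. Hence the arc system $\{a_i\}$ is already a maximal filling arc system, so it is the entire dual arc system of $SC_\alpha(X)$, and $SC_\alpha(X)$ is trivalent.

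Finally, since each dual arc has a representative of length $O_X(\delta)$, its orthogeodesic representative is also of length $O_X(\delta)$, and Lemma \ref{lem:lengthvsweight} converts this into the edge-weight bound $|e|_X = \Omega_X(\log(1/\delta))$. The only point to watch is the uniformity of the implicit constants as $X$ ranges over the thick part of $\T_g$, but this is inherited directly from the corresponding uniformity statements in Lemmas \ref{lem:ttwidth}, \ref{lem:closeimpliesarc}, and \ref{lem:lengthvsweight}; no new obstacle arises.
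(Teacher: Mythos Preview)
Your proposal is correct and takes essentially the same approach as the paper: the paper's proof is a two-sentence remark that Lemma~\ref{lem:ttwidth} is uniform in $\lambda$ and that the proof of Lemma~\ref{lem:Hclosetomax_largeweight} only used that the complementary regions of $\cN_\delta(\lambda)$ are triangles, which is now assumed directly. You simply spell this out in full; one minor quibble is that $\delta$ is given in the hypothesis (with the maximal train track assumption already forcing it below $\delta_0(X)$), so you should phrase the first step as observing that the needed smallness conditions follow from the hypothesis rather than as choosing $\delta$.
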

\begin{proof}
The statement of Lemma \ref{lem:ttwidth} is uniform over all geodesic laminations $\lambda$, no matter the topological type, and the proof of Lemma \ref{lem:Hclosetomax_largeweight} above needs only that the components of $X \setminus \cN_\delta(\lambda)$ are triangles.
\end{proof}

\subsection*{Train tracks and lattice point counting}
In the previous paragraph we observed that the geometry of the uniform $\delta$-neighborhood $\cN_\delta(\lambda) \subseteq X$ imposes constraints on the geometry of the spine $SC_\alpha(X)$ for any multi-geodesic $\alpha$ contained within the neighborhood.
We now show that the topology of $\cN_\delta(\lambda)$ controls how many curves are contained within, or more generally carried, by it.

Before we begin, we first recall some facts about combinatorics of train tracks. The reader is directed to \cite{PH92} for a more thorough introduction to these concepts. Each switch of a train track $\tau$ cuts out a hyperplane in the space $\mathbb{R}^{\text{edges}(\tau)}$ by imposing the condition that the sum of the edge weights on one side of the switch is equal to the sum of the edge weights on the other. The intersection of all of these hyperplanes is called the {\em weight space} $W(\tau)$ of the train track. We record its dimension below; for a proof, see \cite[Section 2.1]{PH92}.

\begin{lemma}\label{lem:ttdim}
Let $\tau$ be a train track on a closed, genus $g \geq 2$ surface $S_g$. Then
\[\dim W(\tau) = -\chi(\tau) + n_0(\tau),\]
where $\chi(\tau)$ is the Euler characteristic of $\tau$ and $n_0(\tau)$ is the number of orientable components of $\tau$. In particular, if $\tau$ is not maximal, then $\dim W(\tau) \le 6g-7$.
\end{lemma}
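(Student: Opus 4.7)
My plan is to realize $W(\tau)$ as the kernel of an explicit linear map encoding the switch conditions, compute its dimension by rank--nullity, and then interpret the cokernel as the space of orientations on the orientable components of $\tau$. Let $E = E(\tau)$ and $V = V(\tau)$ denote the numbers of branches and switches. The smooth structure at each switch $v$ partitions the branch-ends incident to $v$ (with multiplicity for loops) into two sides, labelled $+$ and $-$. Setting
\[
\Phi \colon \mathbb{R}^E \to \mathbb{R}^V, \qquad \Phi(w)_v \;:=\; \sum_{e \text{ on } + \text{ at } v} w(e) \;-\; \sum_{e \text{ on } - \text{ at } v} w(e),
\]
one has $W(\tau) = \ker \Phi$ by definition, so rank--nullity yields $\dim W(\tau) = E - V + \dim \mathrm{coker}(\Phi)$.

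To identify the cokernel, I would pass to the transpose $\Phi^{*} \colon \mathbb{R}^V \to \mathbb{R}^E$, whose kernel consists of assignments $(c_v)_v$ satisfying $\epsilon_1(e)\, c_{v_1(e)} + \epsilon_2(e)\, c_{v_2(e)} = 0$ for every branch $e$, where $\epsilon_i(e) \in \{\pm 1\}$ records the side of $v_i(e)$ to which $e$ attaches. Propagating values along branches, such a solution on a connected component $C$ of $\tau$ is determined by its value at a single switch, subject to the condition that the cumulative product of sign-flips around every closed loop in $C$ equals $+1$. This holonomy triviality is precisely the combinatorial characterization of $C$ being orientable as a train track, yielding $\dim \ker \Phi^{*} = n_0(\tau)$, and hence $\dim W(\tau) = -\chi(\tau) + n_0(\tau)$.

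For the ``in particular'' clause, recall that by carrying, $W(\tau)$ embeds linearly into $\mathcal{ML}_g$, so $\dim W(\tau) \le \dim \mathcal{ML}_g = 6g-6$. A direct count for a maximal train track (complementary regions are $4g-4$ trigons, giving $V = 12g-12$ and $E = 18g-18$) shows $-\chi(\tau_{\max}) = 6g-6$, which forces $n_0(\tau_{\max}) = 0$ and saturates the bound. If $\tau$ is not maximal, then one of its complementary regions is not a trigon and can be subdivided by inserting new branches, yielding a maximal carrier $\tau_{\max}$ with $W(\tau) \subsetneq W(\tau_{\max})$; strict inclusion then forces $\dim W(\tau) \le 6g-7$. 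The main obstacle is the sign-propagation argument in the second paragraph: rigorously identifying holonomy triviality with the intrinsic combinatorial notion of orientability requires care, and I would rely on the framework of \cite[\S 2.1]{PH92} for this step.
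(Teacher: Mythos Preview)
The paper does not give its own proof of this lemma; it simply records the statement and refers the reader to \cite[\S 2.1]{PH92}. Your argument is essentially the one found there: encode the switch conditions as a linear map $\Phi$, apply rank--nullity, and identify $\ker\Phi^{*}$ with the space of orientations on components of $\tau$. That part is correct and standard.

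Two small remarks on the ``in particular'' clause. First, the phrase ``$W(\tau)$ embeds linearly into $\mathcal{ML}_g$'' is imprecise, since $\mathcal{ML}_g$ is only a PIL manifold; what you want is that the cone $P(\tau)$ of nonnegative solutions injects, and for recurrent $\tau$ this cone is full-dimensional in $W(\tau)$, yielding the bound $\dim W(\tau)\le 6g-6$. Second, the strict inclusion $W(\tau)\subsetneq W(\tau_{\max})$ is not automatic from the formula alone: when you add a branch across a non-trigon region, $-\chi$ increases by one, but if the operation destroys orientability of a component then $n_0$ drops by one and $\dim W$ stays put. One clean way to finish is to note that after extending to a maximal $\tau_{\max}$ (which is necessarily non-orientable, since an orientation cannot be consistent around a trigon), the carrying injection $P(\tau)\hookrightarrow P(\tau_{\max})$ misses any lamination that crosses a non-trigon complementary region of $\tau$, forcing strictness. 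Since you already flag reliance on \cite[\S 2.1]{PH92} for the bookkeeping, none of this is a real gap.
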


The intersection of $W(\tau)$ with the octant $\mathbb{R}_{\ge 0}^{\text{edges}(\tau)}$ defines a finite-sided polyhedron $P(\tau)$. Any weight system $w \in P(\tau)$ corresponds to a measured geodesic lamination carried on $\tau$ and, in particular, any system of integral weights in $P(\tau)$ corresponds to an integrally weighted multi-curve carried on $\tau$. We denote by $\lambda \prec \tau$ the statement that the measured geodesic lamination $\lambda$ is carried by the train track $\tau$. This linear structure allows us to bound the number of integral multi-curves contained in a geometric train track neighborhood in terms of integral points of the weight space.

\begin{lemma}\label{lem:ttgrowthrate}
Let $X \in \mathcal{T}_g$ and $\tau$ be a train track on $X$. Then,
\[\# \{ \alpha \in \ML_g(\mathbb{Z}) \mid
\alpha  \prec \tau, \ \ell_X(\alpha) \le L \}
= O\left(L^{\dim W(\tau)}\right).\]
\end{lemma}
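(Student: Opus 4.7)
The plan is to reduce the problem to lattice point counting in the weight space $W(\tau)$. Each integral multi-curve $\alpha \prec \tau$ determines a vector $w(\alpha) \in W(\tau) \cap \mathbb{Z}^{e(\tau)}$ recording the transverse measure along each branch of $\tau$; this assignment is injective because a carried integer multi-curve is recovered from its branch weights inside any (sufficiently regular) train track neighborhood. The switch conditions defining $W(\tau)$ are integer linear equations, so $\Lambda := W(\tau) \cap \mathbb{Z}^{e(\tau)}$ is a lattice of full rank $\dim W(\tau)$ in $W(\tau)$. Therefore the cardinality we wish to bound is at most the number of lattice points of $\Lambda$ lying in the subset
\[
B_\tau(L) := \{\, w \in P(\tau) : \ell_X(w) \leq L \,\}.
\]

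Next I would observe that the length function $\ell_X$, viewed as a function on $P(\tau)$ via the identification of weight vectors with carried measured laminations, is continuous, positively homogeneous of degree $1$ (since the $\mathbb{R}_{>0}$-action on $\ML_g$ scales transverse measures), and strictly positive on $P(\tau) \setminus \{0\}$. Since $P(\tau)$ is a closed polyhedral cone in $W(\tau)$, its intersection with the unit Euclidean sphere of $W(\tau)$ is compact, so $\ell_X$ attains a positive minimum $c > 0$ there. By homogeneity this forces
\[
B_\tau(L) \subseteq \{\, w \in W(\tau) : \|w\|_{\mathrm{Euc}} \leq L/c \,\},
\]
so $B_\tau(L)$ is bounded and $B_\tau(L) = L \cdot B_\tau(1)$ as subsets of the $\dim W(\tau)$-dimensional real vector space $W(\tau)$.

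Finally, I would invoke the standard Gauss-type estimate: the number of points of a full-rank lattice $\Lambda \subset W(\tau)$ contained in a bounded set dilated by $L$ is $O(L^{\dim W(\tau)})$, with implicit constant depending on $\Lambda$ and the diameter of $B_\tau(1)$ (hence on $X$ and $\tau$). Combining with the injection from the first paragraph gives
\[
\# \{\, \alpha \in \ML_g(\mathbb{Z}) : \alpha \prec \tau,\ \ell_X(\alpha) \leq L \,\} \leq \#(L \cdot B_\tau(1) \cap \Lambda) = O\!\left(L^{\dim W(\tau)}\right),
\]
as desired.

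The main obstacle I anticipate is the first step, namely verifying that the branch-weight assignment really is injective on the set of integral carried multi-curves. This ultimately depends on choosing the train track neighborhood carefully enough that the carrying map is well-defined on measures; for the geometric train tracks $\cN_\delta(\lambda)$ considered earlier in the section this is immediate, since the weight on a branch is just the transverse measure of any orthogeodesic segment crossing it. The rest of the argument is a standard convex/lattice geometry computation made possible by the linear structure the train track imposes on $\ML_g$.
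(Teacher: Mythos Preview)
Your proof is correct and follows essentially the same approach as the paper: use projective compactness of $P(\tau)$ together with continuity and homogeneity of $\ell_X$ to bound the Euclidean norm of the weight vector linearly by the length, then apply a standard lattice point count in $W(\tau)$. The paper's argument is slightly terser (it simply asserts the existence of $C>0$ with $\|w\|\le C\cdot\ell_X(\lambda)$ and omits the injectivity discussion), but the substance is identical.
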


\begin{proof}
 Denote by $\|\cdot \|$ the Euclidean norm on $\mathbb{R}^{\text{edges}(\tau)}$.
 As the polyhedron of weight systems $P(\tau)$ is projectively compact and the hyperbolic length function $\ell_X$ is continuous on $\ML_g$, there exists a constant $C > 0$ such that for every $\lambda \prec \tau$ with weights $w \in P(\tau)$,
 \[
 \|w\| \leq C \cdot \ell_X(\lambda).
 \]
 This reduces the problem of bounding the counting function of interest to a standard lattice point count on Euclidean space. We conclude
    \[
    \# \{ \alpha \in \ML_g(\mathbb{Z}) \mid
    \alpha  \prec \tau, \ \ell_X(\alpha) \le L \}
    = O\left(L^{\dim W(\tau)}\right). \qedhere
    \]
\end{proof}

We are now ready to prove Proposition \ref{prop:asymp_triv}.
We begin by recalling that a geodesic lamination $\lambda$ is {\em approximable} if it can be arbitrarily approximated in the Hausdorff metric by simple closed multi-geodesics (on any hyperbolic surface) \cite{OP_bij}.
This definition generalizes the notion of {\em chain-recurrence,} which is equivalent to the condition of approximability by simple closed geodesics.
Importantly, the space of chain-recurrent geodesic laminations is compact with respect to the Hausdorff topology \cite[Proposition 6.2]{Thu98}, and the same proof shows that the space of approximable laminations $\mathcal{AL}$ is compact.

\begin{proof}[Proof of Proposition \ref{prop:asymp_triv}]
Using Lemma \ref{lem:Hclosetomaxtt_largeweight}, choose a constant $0 < \delta < \delta_0(X)$ so that if $\alpha$ is a multi-geodesic $\delta$-close to a maximal geodesic lamination $\lambda$ on $X$, then $SC_\alpha(X)$ is trivalent with all edge weights greater than $K$. By compactness of $\mathcal{AL}$, there exists a finite set $\{\lambda_i\}$ of (possibly non-maximal) laminations so that any approximable lamination, and in particular any multi-curve, is $\delta$-Hausdorff close to some $\lambda_i$ on $X$.

For each $i \in I$, let $\tau_i$ denote the train track obtained from $\cN_\delta(\lambda_i)$ by collapsing the leaves of $\cO_{\lambda_i}(X) |_{\cN_\delta(\lambda_i)}$.
Because the $\delta$-neighborhoods of the $\lambda_i$ cover $\mathcal{AL}$, the geometric train tracks $\tau_i$ cover $\ML_X$: given any measured lamination $\lambda$, its support is contained in one of the $\delta$-neighborhoods of the $\lambda_i$ and so the collapse map
$\cN_\delta(\lambda_i) \to \tau_i$
demonstrates that $\lambda$ is carried by $\tau_i$.

Let $J \subset I$ denote the indices of those train tracks $\tau_j$ which are not maximal; 
for every $\alpha \in F(X, \gamma, K)$, Lemma \ref{lem:Hclosetomaxtt_largeweight} implies that $\alpha$ is carried on some $\tau_j$ for $j \in J$.
Applying Lemma \ref{lem:ttgrowthrate}, we therefore get that
\begin{align*}
\# \{ \alpha \in F(X, \gamma, K) & \mid
\ell_X(\alpha) \le L \} \\
& \le 
\sum_{J} \# \{\alpha \in \ML(\mathbb{Z}) \mid
\alpha \prec \tau_j, \ \ell_X(\alpha) \le L \} \\
& = \sum_{J} O\left(L^{\dim W(\tau_j)}\right) = O\left(L^{6g-7}\right),
\end{align*}
where the last equality is a consequence of Lemma \ref{lem:ttdim}. This finishes the proof.
\end{proof}

\section{Variation of weights}\label{sec:weightsvary}

\subsection*{Outline of this section.}
We now derive uniform estimates on the geometry of $SC_\alpha(X)$ and $SC_\alpha(X')$ for all $X'$ close to $X$ and asymptotically all $\alpha$.
In particular, we show that so long $SC_\alpha(X)$ is deep in a maximal facet (an asymptotically generic assumption by Proposition \ref{prop:asymp_triv}), the spines $SC_\alpha(X)$ and $SC_\alpha(X')$ are both combinatorially (Lemma \ref{lem:edgespersist}) and geometrically (Proposition \ref{prop:weightsvary}) comparable.

Throughout this section, we work in Teichm{\"u}ller space so that the components of $X \setminus \alpha$ and $X' \setminus \alpha$ (and hence their spines) are equipped with induced markings.
This allows us to compare the combinatorics and geometry of the spines directly, not just up to the action of the mapping class group.

\subsection*{Bi-Lipschitz comparisons}
Recall that a map $f: X \to X'$ between metric spaces is said to be {\em $L$-bi-Lipschitz} if it distorts distances by a factor of at most $L$. That is, for every pair of points $x, y \in X$, we have that
\[\frac{1}{L} d_{X}(x,y)
\le
d_{X'}(f(x),f(y))
\le 
L d_X(x,y).\]
Throughout this section, we say that two marked hyperbolic surfaces $(X, f)$ and $(X', f') \in \T_g$ are {\em $\varepsilon$-bi-Lipschitz close} if there exists an $e^{\varepsilon}$-bi-Lipschitz diffeomorphism from $X$ to $X'$ in the isotopy class of $f' \circ f^{-1}$.
It is a standard (though nontrivial) fact that the topology defined by $\varepsilon$-bi-Lipschitz closeness is the same as the usual topology on $\T_g$; see \cite[pg. 268]{Th_book} as well as \cite{DE_extensions}.

Now that we have made precise what it means for $X$ and $X'$ to be close, we state precisely the main results of this section.
The first thing we must show is that for any $X'$ close to $X$, the spines $SC_\alpha(X')$ and $SC_\alpha(X)$ have the same topological type so long as one of them is deep enough in a maximal facet.

\begin{lemma}\label{lem:edgespersist}
For every $X \in \T_g$ and every multi-curve $\gamma$, there exist constants $K_1 = K_1(X) > 0$ and $\varepsilon_0 = \varepsilon_0(X) > 0$ so that 
for any multi-curve 
$\alpha \in \Mod_g \cdot \gamma \setminus F(\gamma, X, K_1)$ and any $X'$ that is $\varepsilon_0$ bi-Lipschitz close to $X$,
then the metric ribbon graphs $SC_\alpha(X)$ and $SC_\alpha(X')$ have the same topological type.
\end{lemma}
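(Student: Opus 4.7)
The plan is to transport the dual arc system of $SC_\alpha(X)$ to the cut surface $Y'_\alpha := X' \setminus \alpha$ via a bi-Lipschitz map induced by a bi-Lipschitz close diffeomorphism $X \to X'$, and then verify that the transported arc system is still the dual arc system of the spine there. The first step is to use Lemma \ref{lem:lengthvsweight} to convert the condition $\alpha \notin F(X,\gamma,K_1)$ into a uniform upper bound on the orthogeodesic lengths of the dual arcs: provided $K_1 = K_1(X)$ is chosen sufficiently large, every dual arc $a_i$ of $SC_\alpha(X)$ on $Y_\alpha := X \setminus \alpha$ has orthogeodesic length well below $\log \sqrt{3}$. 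Since $SC_\alpha(X)$ is trivalent, the arcs $\{a_i\}$ form a maximal filling arc system cutting $Y_\alpha$ into right-angled hexagons with alternating arc and boundary sides.

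Second, given any $e^{\varepsilon_0}$-bi-Lipschitz diffeomorphism $\phi : X \to X'$ in the prescribed isotopy class, I would build a bi-Lipschitz diffeomorphism $\psi : Y_\alpha \to Y'_\alpha$ by straightening $\phi(\alpha)$ to the geodesic representative of $\alpha$ on $X'$ through a homotopy of uniformly controlled distortion. Because bi-Lipschitz distortion is scale-invariant, the distortion constant of $\psi$ depends only on $X$ and $\varepsilon_0$, not on $\alpha$, and this uniformity is the crucial point. Pushing each arc $a_i$ forward via $\psi$ and straightening to the unique orthogeodesic representative on $Y'_\alpha$ produces an arc system $\{a'_i\}$ on $Y'_\alpha$ whose orthogeodesic lengths remain uniformly small as $\varepsilon_0 \to 0$ and $K_1 \to \infty$.

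Finally, I would identify $\{a'_i\}$ as the dual arc system of the spine $\mathrm{S}(Y'_\alpha)$. Since each $a'_i$ has length well below $\log \sqrt{3}$, Lemma \ref{lem:closeimpliesarc}, applied exactly as in the proof of Lemma \ref{lem:Hclosetomax_largeweight} to the lifted picture in $\mathbb{H}^2$, shows that each $a'_i$ is a segment of the orthogeodesic foliation $\cO_\alpha(X')$ joining unseparated close leaves and therefore belongs to the dual arc system of $\mathrm{S}(Y'_\alpha)$. Since $\psi$ is a homeomorphism, the topological decomposition of $Y'_\alpha$ cut by $\{a'_i\}$ is still a hexagonal decomposition, so $\{a'_i\}$ is already a maximal filling arc system; this forces the inclusion $\{a'_i\} \subseteq \{\text{dual arcs of } \mathrm{S}(Y'_\alpha)\}$ to be an equality. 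The resulting ribbon graph isomorphism $SC_\alpha(X) \cong SC_\alpha(X')$ is the desired conclusion.

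The main obstacle is guaranteeing the uniformity in $\alpha$ appearing in the construction of $\psi$, since the multi-curve $\alpha$ may wind arbitrarily deeply into $X$ and a priori the constants controlling the bi-Lipschitz distortion of $\psi$ could blow up. This is resolved by the scale-invariance of bi-Lipschitz inequalities, together with the standing assumption that $X$ lies in the thick part of $\T_g$, which allows all implicit constants to be chosen uniformly.
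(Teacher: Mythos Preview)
Your overall strategy matches the paper's: convert large edge weights to short dual arcs via Lemma~\ref{lem:lengthvsweight}, transport these arcs to $X'$, show they remain short, invoke Lemma~\ref{lem:closeimpliesarc} to recognize them as dual arcs of $SC_\alpha(X')$, and use maximality to conclude. The final two steps are fine.

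The gap is in your second step, the construction of a bi-Lipschitz map $\psi : Y_\alpha \to Y'_\alpha$ with constants uniform in $\alpha$. Your justification --- ``bi-Lipschitz distortion is scale-invariant'' together with thickness of $X$ --- does not address the actual difficulty. Straightening $\phi(\alpha_X)$ to $\alpha_{X'}$ requires an ambient isotopy of $X'$, and the bi-Lipschitz constant of such an isotopy depends on the detailed geometry of how $\phi(\alpha_X)$ sits relative to $\alpha_{X'}$. When $\alpha$ has many nearly parallel strands (so $Y_\alpha$ has very thin pieces), a naive straightening can have arbitrarily large distortion in those pieces, and nothing you have said rules this out. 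There is also no ``standing assumption'' that $X$ is thick; the lemma is stated for arbitrary $X \in \T_g$.

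The paper sidesteps this entirely by never building a cut-surface map. It works leaf-by-leaf in the universal cover: for each dual arc $a_e$ joining lifts $\ell_1, \ell_2$ of $\alpha_X$, it uses Proposition~\ref{prop:geos_to_geos} (a Morse lemma with shrinking constants, proved in the Appendix) to show that $f(\ell_i)$ lies within $\delta(\varepsilon)$ of the corresponding lift $\ell_i'$ of $\alpha_{X'}$, with $\delta(\varepsilon) \to 0$ as $\varepsilon \to 0$. One then concatenates a segment from $\ell_1'$ to $f(\ell_1)$, the image $f(a_e)$, and a segment from $f(\ell_2)$ to $\ell_2'$ to get a path of length at most $2\delta(\varepsilon) + e^{\varepsilon - |e|_X}$, which is below $\log\sqrt{3}$ for $\varepsilon$ small and $|e|_X$ large. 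This is exactly the short-arc bound you need, obtained without any global cut-surface map. If you want to salvage your approach, the missing ingredient is precisely this quantitative control on how far $f$ moves geodesics from their straightenings.
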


Together with Proposition \ref{prop:asymp_triv}, this implies that for all but an asymptotically trivial proportion of the multi-curves on $X$, there is a correspondence between the edges of the ribbon graph spines $SC_\alpha(X)$ and $SC_\alpha(X')$. In particular, we can compare their weights.

With this in mind, the main result of this section is the following:

\begin{proposition}\label{prop:weightsvary}
For every $X \in \T_g$ there exists $K_2 = K_2(X) > K_1 > 0$ such that 
for any $X'$ that is $\varepsilon < \varepsilon_0$ bi-Lipschitz close to $X$ and any
$\alpha \in \Mod_g \cdot \gamma \setminus F(X, \gamma, K_2)$, the following holds.
For every edge $e$ of the spine $SC_\alpha(X)$,
\[e^{-\varepsilon} |e|_X - K_2 \le |e|_{X'} \le e^\varepsilon |e|_X + K_2\]
\end{proposition}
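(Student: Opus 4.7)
The plan is to reduce the comparison of edge weights to a comparison of dual orthogeodesic arc lengths via Lemma \ref{lem:lengthvsweight}, and then to obtain the arc length comparison from a bi-Lipschitz comparison between the cut surfaces.

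\emph{Step 1: bi-Lipschitz comparison of cut surfaces.} Starting from the given $e^\varepsilon$-bi-Lipschitz diffeomorphism $\phi \colon X \to X'$, I would modify it by an isotopy in a neighborhood of $\alpha$ to obtain a map $\tilde{\phi}$ sending the $X$-geodesic representative $\alpha_X$ to the $X'$-geodesic representative $\alpha_{X'}$. The curves $\phi(\alpha_X)$ and $\alpha_{X'}$ are freely isotopic simple closed curves on $X'$ with lengths comparable up to $e^{2\varepsilon}$, and hence co-bound an annular region whose thickness can be bounded uniformly in $\alpha$ in terms of $\varepsilon$ and the thick-part geometry of $X$. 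Straightening within this annulus produces an $e^{\varepsilon'}$-bi-Lipschitz diffeomorphism $\tilde{\phi}$ with $\varepsilon' \to 0$ as $\varepsilon \to 0$, which descends to an $e^{\varepsilon'}$-bi-Lipschitz map $\Phi \colon Y \to Y'$ between the complementary subsurfaces $Y = X \setminus \alpha_X$ and $Y' = X' \setminus \alpha_{X'}$.

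\emph{Step 2: arc length comparison.} Fix $K_2 \ge K_1$. For $\alpha \notin F(X, \gamma, K_2)$ and $X'$ sufficiently bi-Lipschitz close to $X$, Lemma \ref{lem:edgespersist} provides a canonical bijection between edges of $SC_\alpha(X)$ and $SC_\alpha(X')$. For a corresponding pair $(e, e')$, let $a, a'$ denote the dual orthogeodesic arcs in $Y, Y'$. Pushing $a$ forward through $\Phi$ produces an arc in the isotopy class of $a'$ with length at most $e^{\varepsilon'}\ell_Y(a)$; length-minimality of the orthogeodesic gives $\ell_{Y'}(a') \le e^{\varepsilon'}\ell_Y(a)$, and the reverse inequality is symmetric. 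Hence $\bigl|\log \ell_{Y'}(a') - \log \ell_Y(a)\bigr| \le \varepsilon'$.

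\emph{Step 3: conversion to weights.} The asymptotic of Lemma \ref{lem:lengthvsweight} refines, via the underlying hyperbolic geometry of the dual trirectangles, to an additive statement $|e|_Y = \log(1/\ell_Y(a)) + O(1)$ valid uniformly once $|e|_Y$ is sufficiently large. Combining with Step 2 yields $\bigl||e|_{X'} - |e|_X\bigr| \le \varepsilon' + O_X(1)$ for every edge $e$ of $SC_\alpha(X)$ with $|e|_X > K_2$. Choosing $K_2 = K_2(X)$ large enough that $\varepsilon' + O_X(1) \le K_2$ for all $\varepsilon < \varepsilon_0$, the stated bound follows: $|e|_{X'} \le |e|_X + K_2 \le e^\varepsilon |e|_X + K_2$ because $e^\varepsilon \ge 1$, and symmetrically for the lower bound.

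The main obstacle lies in Step 1: producing $\Phi$ with a bi-Lipschitz constant depending only on $\varepsilon$ and $X$, uniformly in $\alpha$. Because $\alpha_X$ may be arbitrarily long, the standard collar around $\alpha_{X'}$ may be arbitrarily narrow, so some care is needed to arrange the straightening isotopy between $\phi(\alpha_X)$ and $\alpha_{X'}$ within a region of uniformly bounded geometry, exploiting bounded Hausdorff distance between these isotopic curves (a consequence of the comparability of their lengths).
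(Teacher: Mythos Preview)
Your approach through dual arc lengths is genuinely different from the paper's, but Step~1 cannot be carried out uniformly in $\alpha$, and this is fatal rather than a technicality. If there existed an $e^{\varepsilon'}$-bi-Lipschitz map $\Phi\colon Y\to Y'$ with $\varepsilon'=\varepsilon'(X,\varepsilon)$ independent of $\alpha$, then your Steps~2--3 would yield the purely additive bound $\bigl||e|_{X'}-|e|_X\bigr|\le \varepsilon'+O_X(1)$ for every edge. Summing over the (fixed) number of edges of the trivalent spine and using that $\sum_e 2|e|_Y$ equals the total boundary length of $Y$, this would force $\bigl|\ell_\alpha(X')-\ell_\alpha(X)\bigr|=O_X(1)$ uniformly over all $\alpha\in\Mod_g\cdot\gamma$. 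But for any $X\neq X'$ at fixed bi-Lipschitz distance, $\ell_\alpha(X')-\ell_\alpha(X)$ is of order $(e^{\varepsilon}-1)\ell_\alpha(X)$ for suitable long $\alpha$, so no such uniform $\varepsilon'$ exists. The obstacle you flag at the end is therefore an actual obstruction: when strands of $\alpha_{X'}$ are closer to each other than the Hausdorff displacement $\delta(\varepsilon)$ of Proposition~\ref{prop:geos_to_geos}, any isotopy straightening $\phi(\alpha_X)$ to $\alpha_{X'}$ must have bi-Lipschitz constant blowing up with $\ell_\alpha$.

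The paper in fact anticipates this: it observes that an additive estimate on dual arc lengths, once pushed through Lemma~\ref{lem:lengthvsweight}, only gives a uniform lower bound on $|e|_{X'}$ rather than the desired comparison with $|e|_X$. Instead, the paper works directly with the unmodified map $f\colon X\to X'$ and focuses on the \emph{thick} parts of $X\setminus\alpha$. It shows that vertices of $SC_\alpha(X)$, being centers of circles inscribed in triples of lifts of $\alpha_X$, lie within a uniform distance $R(s)$ of $\alpha_X$ (Lemma~\ref{lem:upperbdribs}), and that $f$ carries such centers within a uniform distance of the corresponding centers for $\alpha_{X'}$ (Lemma~\ref{lem:centers_to_centers}). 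Hence the basepoints $p,q$ on $\alpha_X$ determining $|e|_X$ are sent by $f$ within a uniform additive constant of the basepoints $p',q'$ determining $|e|_{X'}$, and the bi-Lipschitz distortion of the segment $pq\subset\alpha_X$ supplies the genuinely multiplicative factor $e^{\pm\varepsilon}$ in the statement.
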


\begin{remark}
In fact, the constants $K_2$ and $\varepsilon_0$ in Proposition \ref{prop:weightsvary} can be shown to depend only on the thickness of $X$.
\end{remark}

Throughout this section, we use $\alpha_X$ to denote the geodesic realization of a simple closed curve $\alpha$ with respect to the hyperbolic metric $X$.

\begin{remark}
Stronger, quantitative statements are also true, but in order to state them we would need to be careful about adjacency of facets of the moduli space of ribbon graphs and be much more particular about the geometry of $\alpha$ on $X$.
Compare \cite{CF2}.
Since we will not need such detailed results, we content ourselves with the ``soft'' methods and coarse estimates recorded in this section.
\end{remark}

\subsection*{Geodesics map near geodesics}
We first show how we can leverage the fact that the geometry of $X$ is comparable with that of $X'$ to show that the way $\alpha_X$ wraps around $X$ is comparable to how $\alpha_{X'}$ wraps around $X'$.

Fix an $e^{\varepsilon}$-bi-Lipschitz map $f: X \to X'$.
By the Morse Lemma (see Lemma \ref{Morse Lemma}), the geodesic $\alpha_X$ is sent some bounded distance away from $\alpha_{X'}$.
We need finer control on this distance, so we prove a version of the Morse Lemma that allows us to ensure that $f(\alpha_X)$ and $\alpha_{X'}$ are arbitrarily close.

\begin{proposition}\label{prop:geos_to_geos}
For any small enough $\delta>0$ there exists an $\varepsilon = \varepsilon(\delta)>0$ so that for any $e^\varepsilon$-bi-Lipschitz map $f: \mathbb{H}^2 \rightarrow \mathbb{H}^2$ and any geodesic $g \subset \mathbb{H}^2$, we have
\[d_{\mathbb{H}^2}^H( f(g), g') \le \delta\]
where $g'$ denotes the geodesic with the same endpoints as $f(g)$ and $d_{\mathbb{H}^2}^H$ denotes the Hausdorff distance between closed sets in $\mathbb{H}^2$.
\end{proposition}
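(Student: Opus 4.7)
The plan is to argue by contradiction using compactness in $\mathbb{H}^2$. Suppose the proposition fails for some $\delta_0 > 0$: there exist sequences $\varepsilon_n \searrow 0$, $e^{\varepsilon_n}$-bi-Lipschitz maps $f_n : \mathbb{H}^2 \to \mathbb{H}^2$, geodesics $g_n \subset \mathbb{H}^2$, and points $p_n \in f_n(g_n)$ with $d_{\mathbb{H}^2}(p_n, g'_n) > \delta_0$ for every $n$. Each $g'_n$ is well-defined: a bi-Lipschitz self-map of $\mathbb{H}^2$ is a quasi-isometry and hence extends continuously to a homeomorphism of $\partial \mathbb{H}^2$, so $f_n(g_n)$ has two distinct ideal endpoints. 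Using isometries of $\mathbb{H}^2$, I would normalize so that each $g'_n$ is the positive imaginary axis in the upper half-plane model and each $p_n$ lies in a fixed compact set at distance $\geq \delta_0$ from this axis.

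The key steps after normalization would be as follows. First, invoke the classical Morse Lemma to conclude that every $f_n(g_n)$, being a $(e^{\varepsilon_n},0)$-quasigeodesic sharing its ideal endpoints with $g'_n$, lies in a uniform Hausdorff neighborhood $\mathcal{N}_{R_0}(\mathrm{iy}\text{-axis})$; since the Morse constant for $(\lambda,0)$-quasigeodesics is monotone in $\lambda$, the bound $R_0$ is uniform in $n$. Second, reparametrize each $f_n(g_n)$ by arclength centered at $p_n$, producing unit-speed curves $\beta_n: \mathbb{R} \to \mathbb{H}^2$ that are $(e^{2\varepsilon_n},0)$-quasigeodesics and uniformly $1$-Lipschitz. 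Third, apply Arzel\`a--Ascoli together with the compactness of the $p_n$ to extract a locally uniform subsequential limit $\beta_\infty$ with $\beta_\infty(0) = p_\infty$ satisfying $d(p_\infty,\mathrm{iy}\text{-axis}) \geq \delta_0$. Fourth, use $e^{-2\varepsilon_n} \to 1$ to show $d(\beta_\infty(s_1), \beta_\infty(s_2)) = |s_1 - s_2|$, making $\beta_\infty$ a bi-infinite geodesic. Fifth, pass the uniform trapping to the limit to conclude $\beta_\infty \subset \overline{\mathcal{N}_{R_0}(\mathrm{iy}\text{-axis})}$; the ideal endpoints of $\beta_\infty$ thus lie in $\overline{\mathcal{N}_{R_0}(\mathrm{iy}\text{-axis})} \cap \partial \mathbb{H}^2 = \{0, \infty\}$, forcing $\beta_\infty$ to be the imaginary axis. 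This contradicts $\beta_\infty(0) = p_\infty \notin \mathrm{iy}\text{-axis}$.

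The main technical obstacle is the final step: pinning down the ideal endpoints of $\beta_\infty$. Local uniform convergence of $\beta_n \to \beta_\infty$ on compact sets transmits no information about behavior at infinity, so the ideal endpoints of $\beta_\infty$ cannot simply be obtained as a limit of those of the $\beta_n$. The Morse Lemma is the essential input here—it provides a closed hyperbolic tube about the imaginary axis that traps all of the $\beta_n$, hence $\beta_\infty$, and this tube has closure in $\overline{\mathbb{H}^2}$ meeting $\partial\mathbb{H}^2$ in just two points. Everything else—boundary extension of bi-Lipschitz maps, Arzel\`a--Ascoli, and the uniqueness of the geodesic joining two ideal points in $\mathbb{H}^2$—is standard hyperbolic geometry.
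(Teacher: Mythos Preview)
Your argument is correct and follows the same overall strategy as the paper's: argue by contradiction, use the classical Morse lemma to trap the quasigeodesics $f_n(g_n)$ in a uniform tube about $g'_n$, normalize, apply Arzel\`a--Ascoli to extract a limiting geodesic, and derive a contradiction from that geodesic lying in the tube yet passing through a point bounded away from the axis.

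The difference lies in the endgame. The paper first proves a one-sided statement (any $L$-bi-Lipschitz curve lies in the $\delta$-neighborhood of its straightening, for $L$ close enough to $1$) and then deduces the two-sided Hausdorff bound by applying this to both $f$ and $f^{-1}$. For the one-sided bound, the paper restricts to a segment of the axis of length $2b$, uses an explicit hyperbolic trirectangle identity to show the limiting geodesic must pass within $\arctanh(\tanh(R)\sech(b))$ of the center, and then takes $b$ large. Your argument instead works directly with the bi-infinite limit and observes that a complete geodesic trapped in $\overline{\mathcal{N}_{R_0}(\text{axis})}$ must have ideal endpoints $\{0,\infty\}$ and hence coincide with the axis. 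This avoids the auxiliary quadrilateral lemma.

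One small point: in setting up the contradiction you assume $p_n \in f_n(g_n)$ with $d(p_n, g'_n) > \delta_0$, which is only one of the two ways the Hausdorff bound can fail. You should remark that since $f_n(g_n)$ is a continuous curve sharing ideal endpoints with $g'_n$, the nearest-point projection onto $g'_n$ is surjective, whence $\sup_{q \in g'_n} d(q, f_n(g_n)) \le \sup_{p \in f_n(g_n)} d(p, g'_n)$ and the other case is subsumed; alternatively, handle it as the paper does by applying the one-sided conclusion to $f^{-1}$.
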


This Proposition is an immediate consequence of the equicontinuity of $e^{\varepsilon}$-Lipschitz maps and the Arzela--Ascoli theorem.
In the interest of the overall flow of the paper, we have deferred a formal proof to an \hyperref[appendix]{Appendix}.

Let $\varepsilon = \varepsilon(\delta) > 0$ be as in Proposition \ref{prop:geos_to_geos}. For later use, we define the function
\begin{equation}
\delta(\varepsilon) :=
\inf\{ \delta \mid \varepsilon \le \varepsilon(\delta)\}.
\end{equation}
Proposition \ref{prop:geos_to_geos} can then be rephrased as stating that every $e^{\varepsilon}$-bi-Lipschitz map takes every geodesic $g$ within $\delta(\varepsilon)$ of the geodesic $g'$ with the same endpoints as $f(g)$, and that $\delta(\varepsilon) \to 0$ as $\varepsilon \to 0$.

Assuming the statement above we can prove Lemma \ref{lem:edgespersist}: since the $X$-weight of every edge of $\alpha$ is large, each of its dual arcs on $X$ is short. Since $X$ and $X'$ are bi-Lipschitz equivalent these arcs remain short on $X'$, so they must also appear as dual arcs for $\alpha$ on $X'$.

\begin{proof}[Proof of Lemma \ref{lem:edgespersist}]
Suppose that $e$ is an edge of $SC_\alpha(X)$ whose dual arc $a_e$ joins leaves $\ell_1$ and $\ell_2$ of $\widetilde{\alpha}_X \subset \widetilde{X}$.
By Lemma \ref{lem:lengthvsweight}, we know that its dual arc $a_e$ has length comparable to $e^{-|e|_X}$ so long as the weight $|e|_X$ is large enough.

Let $\ell_1'$ and $\ell_2'$ denote the corresponding leaves of $\widetilde{\alpha}_{X'} \subset \widetilde{X}'$ and let $f:X \to X'$ be a $e^{\varepsilon}$-bi-Lipschitz map.
By Proposition \ref{prop:geos_to_geos}, we know that 
$f(\ell_i)$ is $\delta(\varepsilon)$ close to $\ell_i'$ for $i=1,2$. In particular, this implies that there is a point of $\ell_1'$ that is $\delta$ close to $f(a_e)$, and the same for $\ell_2'$.

We can therefore build a path from $\ell_1'$ to $\ell_2'$ by way of $f(a_e)$ that, by Proposition \ref{prop:geos_to_geos} and the bi-Lipschitz quality of $f$, has length at most
\[2\delta(\varepsilon) + e^{\varepsilon -|e|_X}.\]
Forcing $\varepsilon$ to be small enough (smaller than some $\varepsilon_0$) and $|e|_X$ to be large enough (larger than some $K_1(X)$), we can ensure that this quantity is smaller than the universal cutoff $\log \sqrt{3}$. 
Lemma \ref{lem:closeimpliesarc} then implies that the leaves $\ell_1'$ and $\ell_2'$ are connected by an arc of $\mathcal{O}_\alpha(X)$ which is necessarily in the same isotopy class as $a_e$ rel $\widetilde{\alpha}_X$.
\end{proof}

\begin{remark}
Careful inspection of the proof above reveals that if we take $\varepsilon_0$ smaller, we can also take $K_1$ smaller (though at a certain point, this breaks down because the estimate from Lemma \ref{lem:lengthvsweight} does not work for small edge weights; compare also \cite[Lemma 6.7]{CF}).
This reflects the fact that as one takes $X'$ closer and closer to $X$ in Teichm{\"u}ller space, more and more curves look similar on the two surfaces.
\end{remark}

\subsection*{Centers map near centers}
While we were able to give a rough estimate of the length of the dual arcs of $SC_\alpha(X')$ in terms of $\varepsilon$ and the lengths of the corresponding arcs on $X$ above, passing this through Lemma \ref{lem:lengthvsweight} exponentiates the error in our estimate, resulting in bounds on the weights of the edges of $SC_\alpha(X')$ that are much weaker than what we want.

Instead, to prove Proposition \ref{prop:weightsvary} we shift our focus from the dual arcs to $SC_\alpha(X)$ and the thin parts of $X \setminus \alpha_X$ to the vertices and thick parts.
Our aim is to show that not only does $f$ map $\alpha_X$ near $\alpha_{X'}$, but it also takes vertices of $SC_\alpha(X)$ near the corresponding vertices of $SC_\alpha(X')$. 

We first record a bound, uniform in both $X$ and $\alpha$, on the distance between vertices of $SC_\alpha(X)$ and the geodesic $\alpha_X$.

\begin{lemma}\label{lem:upperbdribs}
For any $s>0$, there is a constant $R=R(s)>0$ so that for any $s$-thick $X \in \T_g$ and any multi-curve $\alpha$, any vertex $u$ of $SC_\alpha(X)$ is at most $R(s)$ away from $\alpha_X$.
\end{lemma}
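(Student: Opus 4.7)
The plan is to reduce the lemma to a statement about the diameter of $X$ via Mumford compactness, after which the bound is immediate. Concretely, the estimate we want is far more robust than a pointwise geometric estimate on the spine: the distance from any point $u \in X$ to any non-empty closed subset is at most the diameter of $X$, so it suffices to bound $\mathrm{diam}(X)$ uniformly over $s$-thick hyperbolic surfaces.

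First, I would observe that every quantity in the statement depends only on the image of $X$ in $\mathcal{M}_g$: being $s$-thick, the geodesic multi-curve $\alpha_X$, the spine $SC_\alpha(X)$, and the distance function $d_X$ are all $\mathrm{Mod}_g$-invariant. So it suffices to work over the $s$-thick part of moduli space
\[
\mathcal{M}_g^{\ge s} := \{[X] \in \mathcal{M}_g : \mathrm{sys}(X) \ge s\},
\]
which is compact by Mumford's classical theorem. Next, the diameter function $\mathrm{diam}: \mathcal{M}_g \to \mathbb{R}_{>0}$ is continuous (on Teichm\"uller space, two nearby marked surfaces are bi-Lipschitz close by standard facts invoked in Section 4 of the paper, and $\mathrm{diam}$ is $\mathrm{Mod}_g$-invariant), so it attains a maximum value $D(s) < \infty$ on $\mathcal{M}_g^{\ge s}$.

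Taking $R(s) := D(s)$, for any $s$-thick $X \in \mathcal{T}_g$, any multi-curve $\alpha$, and any vertex $u$ of $SC_\alpha(X)$, the non-emptiness of $\alpha_X$ gives
\[
d_X(u, \alpha_X) \le \mathrm{diam}(X) \le D(s).
\]
This completes the argument. I note in passing that the conclusion in fact holds for arbitrary points $u \in X$, not only for spine vertices; the spine hypothesis is only relevant for the way this lemma is used later.

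The only step that could be considered the ``hard part'' is invoking Mumford compactness together with the continuity of diameter, both of which are standard. If one wanted an explicit, non-compactness proof, one could replace this by combining the collar lemma (to control the thin parts of $X$ away from $\alpha_X$) with a Gauss-Bonnet estimate on the complementary region $P$ of $\alpha$, producing an explicit bound of the form $R(s) = O(\log(1/s))$ as $s \to 0$; however, since later applications only require qualitative finiteness, the compactness argument is both shortest and cleanest.
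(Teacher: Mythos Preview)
Your proof is correct and considerably more direct than the paper's. The paper first fixes $X$ and argues via Hausdorff compactness of the space of approximable laminations: it covers this space by finitely many $\zeta$-balls centered at laminations $\lambda_i$, sets $R_X$ to be the maximal radius of any inscribed circle in any complementary region of the $\lambda_i$, and then shows that an inscribed ball for $\alpha$ yields a slightly smaller inscribed ball for some nearby $\lambda_i$. It then bootstraps this pointwise bound to the entire thick part using bi-Lipschitz comparison and Proposition~\ref{prop:geos_to_geos}. Your observation that $d_X(u,\alpha_X) \le \mathrm{diam}(X)$ for \emph{any} point $u$ and any nonempty $\alpha_X$, combined with Mumford compactness and continuity of the diameter, bypasses all of this. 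The paper's route does keep the argument ``local'' to the geometry of laminations and reuses machinery already set up in \S\ref{sec:asymptriv}, but for the bare statement of the lemma your approach is both shorter and more transparent; as you note, it even proves the stronger fact that the bound holds for every point of $X$, not just spine vertices.
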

\begin{proof}
We first prove this for a fixed $X$, then use Proposition \ref{prop:geos_to_geos} to bootstrap up to a uniform estimate on the thick part.

Fix $X \in \mathcal{T}_g$ and fix some small $\zeta \in (0, \log \sqrt[4]{3})$.
Consider the cover of the space of approximable laminations $\mathcal{AL}$ by radius $\zeta$ neighborhoods in the Hausdorff distance on $X$.
By compactness, there is a finite set $\{\lambda_i\}$ of approximable laminations so that any $\lambda \in \mathcal{AL}$ (and in particular, any multi-curve $\alpha$) has Hausdorff distance at most $\zeta$ from some $\lambda_i$ on $X$.
Let $R_X$ denote the largest radius of any circle inscribed in any of the $\lambda_i$; we note that $R_X$ is finite because the maximal size circles are all centered at vertices of the spines $SC_{\lambda_i}(X)$ and there are finitely many of these (even if $X \setminus \lambda_i$ is a crowned hyperbolic surface).

Now let $\alpha$ be any multi-curve (or more generally, any approximable lamination), let $u$ be a vertex of $SC_\alpha(X)$, and let $r$ be the distance from $u$ to $\alpha_X$ (equivalently, the radius of the inscribed circle centered at $u$).
Note that $r \ge \log \sqrt{3}$, since $\log \sqrt{3}$ is the radius of the circle inscribed in an ideal triangle (compare Lemma \ref{lem:closeimpliesarc}).

By our choice of cover, there is some $\lambda_i$ which is $\zeta$-close to $\alpha$, and so the $r - 2\zeta >0$ ball centered at $u$ does not meet $\lambda_i$; if it did, this would give a path from $u$ to $\alpha$ of length at most $r-\zeta$, contradicting the definition of $r$.
In particular, the $r - 2\zeta$ ball centered at $u$ is contained inside some ball inscribed in $\lambda_i$, so
\[r < R_X + 2\zeta.\]
Since $\alpha$ and $u$ were arbitrary, this completes the proof for our fixed $X$.

To upgrade this statement to hold over the entire thick part, consider an $X'$ that is $\varepsilon$-bi-Lipschitz close to $X$ and fix an $e^{\varepsilon}$-bi-Lipschitz diffeomorphism $f:X' \to X$.
For any ball $B$ embedded in $X' \setminus \alpha_{X'}$ of radius $r$, its image $f(B)$ contains a ball of radius at least $e^{-\varepsilon} r$ that is disjoint from $f(\alpha_{X'})$.
Now by Proposition \ref{prop:geos_to_geos}, this implies $f(B)$ contains a ball of radius $e^{-\varepsilon} r - \delta(\varepsilon)$ disjoint from $\alpha_X$.
In particular, since the balls centered at vertices of the spine have the maximal radius among all balls embedded in $X \setminus \alpha$, this implies that
\[e^{-\varepsilon} r - \delta(\varepsilon) < R_X + 2 \zeta\]
using our bound from above. Hence we get a uniform bound for $r$ over the entire $\varepsilon$-bi-Lipschitz neighborhood of $X$.

We may therefore cover the $s$-thick part of $\M_g$ with finitely many $\varepsilon$-bi-Lipschitz balls and run this argument for each.
The maximum constant we get thus bounds the distance from any vertex of $SC_\alpha(X)$ to $\alpha$ for any $\alpha$ and any $s$-thick $X$.
\end{proof}

We can now prove that $f$ maps vertices of $SC_\alpha(X)$ near those of $SC_\alpha(X')$ by showing a general statement for geodesics in $\mathbb{H}^2$.

Given a triple of pairwise disjoint geodesics in $\mathbb{H}^2$, none of which separates the others, there is a unique inscribed circle.
The {\em center} of the triple is the center of this circle, and the {\em basepoints} of the triple are the points of tangency of this circle with each geodesic.

\begin{lemma}\label{lem:centers_to_centers}
Let $f: \mathbb{H}^2 \rightarrow \mathbb{H}^2$ be an $e^\varepsilon$-bi-Lipschitz map for some $\varepsilon > 0$.
Suppose that $(g_1, g_2, g_3)$ is a triple of pairwise disjoint geodesics, none of which separates the other two, and let $u$ denote the center of the triple.
Set $r$ to be the distance from $u$ to any of the $g_i$, equivalently, the radius of the inscribed circle.

For each $i$, let $g_i'$ denote the geodesic with the same endpoints as the quasigeodesic $f(g_i)$.
Then there exists a constant $D = D(r, \varepsilon)$ so that $f(u)$ is at most $D$ away from the center $u'$ of the triple $(g_1', g_2', g_3')$.
\end{lemma}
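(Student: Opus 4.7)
My plan is to carry out the argument in three steps: first, control the distances from $f(u)$ to each $g_i'$; second, bound the inradius $r'$ of the new triple $(g_1', g_2', g_3')$ in terms of $r$ and $\varepsilon$; and third, deduce closeness of $f(u)$ to $u'$ from a compactness argument on a suitable sublevel set.

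For the first step, I will combine the bi-Lipschitz property with Proposition~\ref{prop:geos_to_geos}. Since $d(u, g_i) = r$, we have $d(f(u), f(g_i)) \in [e^{-\varepsilon} r,\, e^\varepsilon r]$, while Proposition~\ref{prop:geos_to_geos} gives $d^H_{\mathbb{H}^2}(f(g_i), g_i') \le \delta(\varepsilon)$. Combining these yields
\[
d(f(u), g_i') \in [e^{-\varepsilon} r - \delta(\varepsilon),\ e^\varepsilon r + \delta(\varepsilon)]
\]
for each $i$. In particular, $r' \ge e^{-\varepsilon} r - \delta(\varepsilon)$, since $r'$ is the supremum of $\min_i d(\cdot, g_i')$ and $f(u)$ witnesses a lower bound on this quantity.

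For the second step, I will run the same argument symmetrically with $f^{-1}$, which is also $e^\varepsilon$-bi-Lipschitz. Since $f$ extends continuously to the boundary circle (as a quasi-isometry of a Gromov hyperbolic space), the geodesic with the same endpoints as $f^{-1}(g_i')$ is precisely $g_i$, so Proposition~\ref{prop:geos_to_geos} applies in reverse and gives $d(f^{-1}(u'), g_i) \ge e^{-\varepsilon} r' - \delta(\varepsilon)$. Because $u$ uniquely attains the supremum $r$ of $\min_i d(\cdot, g_i)$, this forces $e^{-\varepsilon} r' - \delta(\varepsilon) \le r$, i.e., $r' \le e^\varepsilon(r + \delta(\varepsilon))$. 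Thus $r'$ and $r$ agree up to an error controlled by $\varepsilon$.

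For the third step, both $f(u)$ and $u'$ lie in the sublevel set $S := \{w : \max_i d(w, g_i') \le e^\varepsilon r + \delta(\varepsilon)\}$. Because the $g_i'$ are pairwise disjoint with six distinct endpoints at infinity, $\max_i d(\cdot, g_i')$ is proper and convex on $\mathbb{H}^2$, so $S$ is a compact convex set, and $d(f(u), u')$ is bounded by its diameter. The main obstacle will be making this diameter depend only on $r$ and $\varepsilon$, uniformly over all admissible configurations of $(g_1', g_2', g_3')$: a priori, degenerations in which endpoints of distinct $g_i'$ collide on the boundary circle could make $S$ arbitrarily elongated. I expect to rule these out using the lower bound $r' \ge e^{-\varepsilon} r - \delta(\varepsilon)$ from Step~1, since the existence of an inscribed circle of radius $r'$ tangent to all three $g_i'$ forces their pairwise geometry to remain bounded away from degeneracy by an amount controlled in terms of $r$ and $\varepsilon$, yielding the desired uniform diameter bound $D(r, \varepsilon)$.
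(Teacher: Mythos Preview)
Your overall strategy matches the paper's: show that both $f(u)$ and $u'$ lie in the triple intersection $S=\bigcap_i \cN_C(g_i')$ with $C=e^\varepsilon r+\delta(\varepsilon)$, then bound $\mathrm{diam}(S)$ in terms of $C$ alone. Two simplifications are worth noting.

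First, your Step~2 is unnecessary. You can place $u'$ in $S$ directly from Step~1: the center $u'$ is the unique point equidistant to the three $g_i'$, and by convexity of the functions $d(\cdot,g_i')$ it is exactly the minimizer of $\max_i d(\cdot,g_i')$. Since $f(u)$ already satisfies $\max_i d(f(u),g_i')\le C$, one gets $r'=\max_i d(u',g_i')\le C$ immediately, without invoking $f^{-1}$.

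Second, your worry in Step~3 about degenerations is misplaced, and the lower bound on $r'$ is not the right tool to dispel it. The paper observes that among all triples of pairwise disjoint geodesics with none separating the others, the diameter of $\bigcap_i \cN_C(g_i')$ is \emph{maximized} when the triple forms an ideal triangle; pulling the geodesics apart can only shrink the intersection. For the ideal triangle the intersection is visibly compact, giving a bound $D(C)$ depending only on $C$. The ``endpoints colliding'' limit you flag is exactly the ideal-triangle case, not a pathology. (The genuinely bad scenario, three nearly coincident geodesics, is excluded by the non-separation hypothesis, since the middle one would separate the other two.) So the uniform diameter bound needs no control on $r'$ at all, and your Step~2 can be dropped entirely.
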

\begin{proof}
Because $f$ is $e^\varepsilon$-bi-Lipschitz, we have that $f(u)$ is at most $r e^\varepsilon$ away from each $f(g_i)$.
Invoking Proposition \ref{prop:geos_to_geos}, this implies that it has distance at most
$r e^\varepsilon + \delta(\varepsilon)$
from each $g_i'$.
This in turn implies that $u'$ is at least that close to each $g_i'$, since $u'$ is the center of the triple.

The desired statement is then an immediate corollary of the following:
\begin{claim}
For any $C > 0$, there is a $D(C)$ so that
for any triple of pairwise disjoint geodesics $(g_1, g_2, g_3)$ in $\mathbb{H}^2$, none of which separates the others,
the diameter of 
\begin{equation}\label{eq:tripleint}
\cN_C(g_1) \cap \cN_C(g_2) \cap \cN_C(g_3)
\end{equation}
is at most $D(C)$, where $\cN_C(g_i)$ denotes the regular $C$-neighborhood of $g_i$.
\end{claim}

\begin{figure}
    \centering
    \includegraphics[scale=.5]{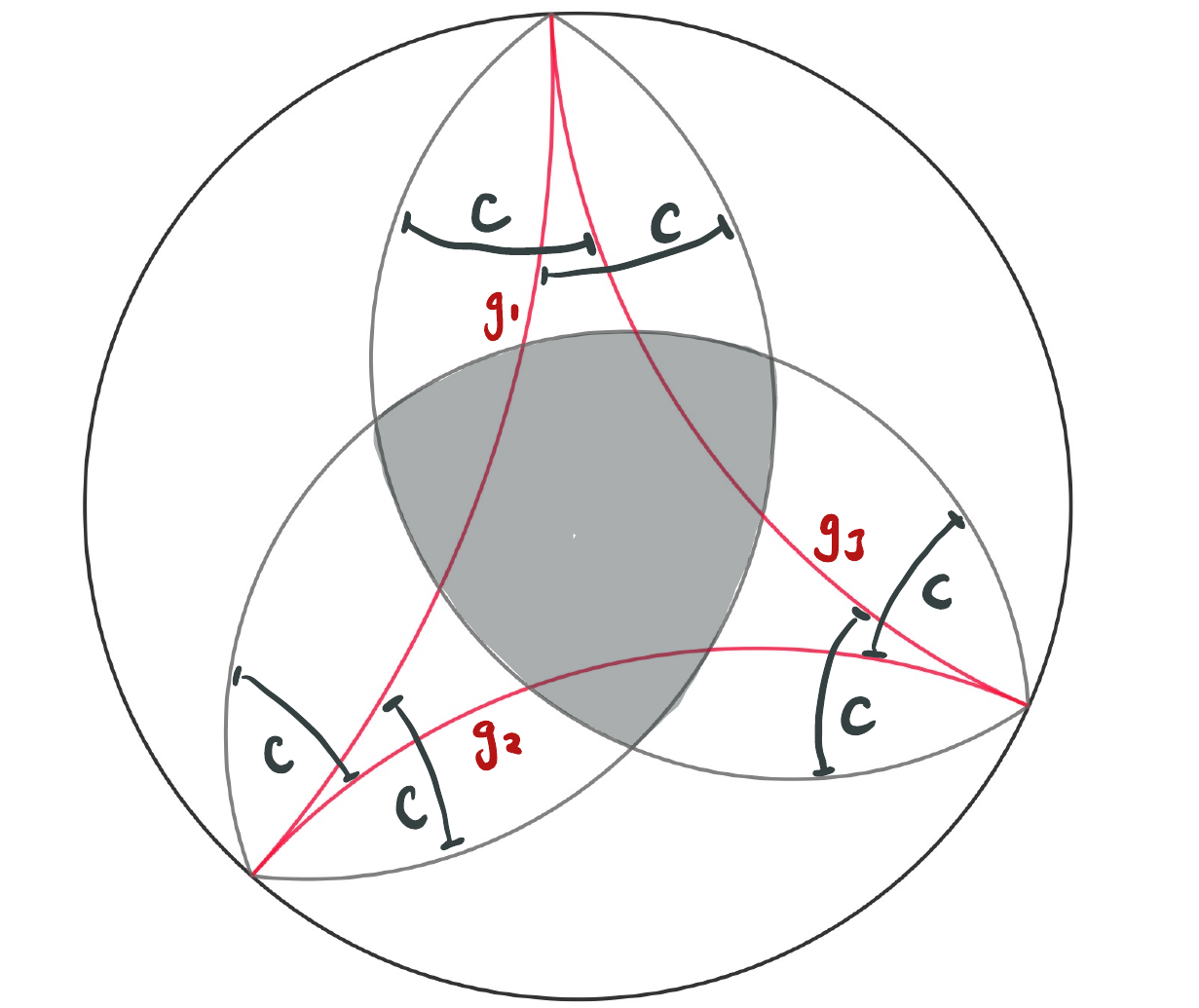}
    \caption{The intersection of three hypercyclic neighborhoods.}
    \label{fig:triple_hypercycle}
\end{figure}

\begin{proof}[Proof of Claim]
For any $C$, the diameter of the triple intersection \eqref{eq:tripleint} is maximized when $(g_1, g_2, g_3)$ forms an ideal triangle.
In this case, inspection reveals that \eqref{eq:tripleint} is compact, and so has bounded diameter. 
Compare Figure \ref{fig:triple_hypercycle}.
\end{proof}

Since $f(u)$ and $u'$ are both contained within a
$r e^\varepsilon + \delta(\varepsilon)$
neighborhood of the $g_i'$, the claim gives us a bound of
\[
D(r e^\varepsilon + \delta(\varepsilon))
\]
(which in particular depends only on $r$ and $\varepsilon$) on the distance between them.
\end{proof}

Vertices of the spine $SC_\alpha(X)$ may be identified with centers of tuples of lifts of $\alpha_X$ to $\smash{\widetilde{X}}$; in particular, the distance from the vertex to $\alpha_X$ is the radius of the inscribed circle.
The uniform bound of Lemma \ref{lem:upperbdribs} on $r$ therefore gives a uniform bound on the $D$ guaranteed by Lemma \ref{lem:centers_to_centers}, hence any $\varepsilon$-bi-Lipschitz map $f:X \to X'$ sends vertices of $SC_\alpha(X)$ uniformly (in $\alpha$) near those of $SC_{\alpha}(X')$.

We are now ready to prove that an $e^\varepsilon$-bi-Lipschitz map scales the edge weights of $SC_\alpha(X)$ by at most $e^{\varepsilon}$, up to a uniform additive error.
The main idea of our proof is that since vertices of $SC_\alpha(X)$ map near vertices of $SC_\alpha(X')$, basepoints on $\alpha_X$ map near basepoints on $\alpha_{X'}$.

\begin{proof}[Proof of Proposition \ref{prop:weightsvary}]
We begin by noting that if $X$ is $s$-thick and $X'$ is $e^\varepsilon$-bi-Lipschitz close to $X$, then $X'$ is $e^{-\varepsilon} s$-thick.

Now for any $\alpha \in F(X, \gamma, K_1)$, we know by Lemma \ref{lem:edgespersist} that the spines $SC_\alpha(X)$ and $SC_\alpha(X')$ have the same combinatorial type.
Let $e$ be any edge of $SC_\alpha(X)$ and choose a lift to the universal cover. Let $\ell$ be either of the lifts of $\alpha$ which meet the dual arc to $e$, and denote by $p$ and $q$ the basepoints on $\ell$ corresponding to the endpoints $u$ and $v$ of $e$.
By definition, the weight $|e|_X$ is the length of the subsegment of $\ell$ that runs from $p$ to $q$.
Since $SC_\alpha(X)$ and $SC_\alpha(X')$ have the same combinatorial type, we can specify a corresponding edge $e'$ of $SC_\alpha(X')$ with endpoints $u'$ and $v'$, a lift $\ell'$ of $\alpha_{X'}$, and basepoints $p'$ and $q'$ on $\ell'$.
The weight $|e|_{X'}$ is similarly the distance between $p'$ and $q'$ along $\ell'$.
See Figure \ref{fig:weightestimate}.

\begin{figure}[ht]
    \centering
    \includegraphics[scale=.65]{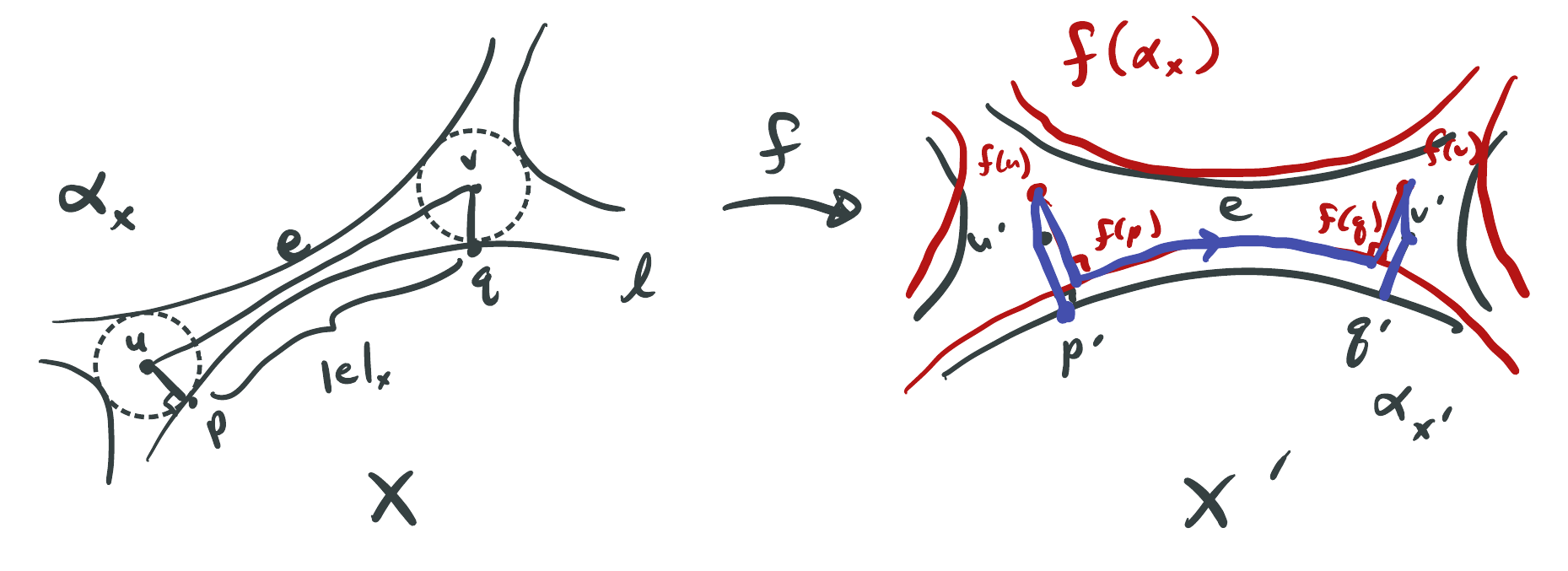}
    \caption{Edges, centers, and basepoints under a bi-Lipschitz map. The path used to derive the estimate of Proposition \ref{prop:weightsvary} is highlighted in the right-hand figure.}
    \label{fig:weightestimate}
\end{figure}

We claim that we can additively bound the distance from $f(p)$ to $p'$. Indeed, by Lemma \ref{lem:upperbdribs} the basepoint $p$ has distance at most $R(s)$ to the center $u$ and likewise $p'$ is at most $R({e^{-\varepsilon}s})$ away from $u'$.
Now by Lemma \ref{lem:centers_to_centers}, $f(u)$ is $D(R(s), \varepsilon)$ away from $u'$, hence we can build a path
$f(p) \to f(u) \to u' \to p'$
with total length at most
\[E(\varepsilon, s) := e^{\varepsilon} R(s) + D(R(s), \varepsilon) + R(e^{-\varepsilon}s).\]
By the same logic, the distance between $f(q)$ and $q'$ is at most $E(\varepsilon, s)$.

Therefore, by traveling from $p'$ to $f(p)$, then from $f(p)$ along $f(\ell)$ to $f(q)$, and finally from $f(q)$ to $q'$, we can build a path from $p'$ to $q'$.
The bi-Lipschitz quality of $f$ plus our bound from the previous equation together imply that
\[|e|_{X'} = d_{X'}(p', q') 
\le e^{\varepsilon}|e|_X + 2E(\varepsilon, s).\]
To prove the reverse direction of the inequality, we note that the argument above depends only on the bi-Lipschitz constant and the thickness of $X$ and $X'$. Thus we can repeat the steps with the roles reversed, arriving at the other desired inequality (with an additive error of $2e^{-\varepsilon} E(\varepsilon, s)$, in fact).

We now observe that for any $\varepsilon < \varepsilon_0$, since every $\varepsilon$-bi-Lipschitz map is in particular $\varepsilon_0$-bi-Lipschitz, and every $e^{-\varepsilon_0}s$-thick surface is $e^{-\varepsilon}s$-thick, we have that
\[E(\varepsilon, s) \le E(\varepsilon_0, s).\]
Setting $K_2 = \max( K_1, 2E(\varepsilon_0, s) )$ therefore yields the desired result.
\end{proof}

\section{Equidistribution of expanding RSC-horoballs}\label{sec:RSCequi}

\subsection*{Outline of this section.} In this section we show that expanding horoballs on moduli spaces of hyperbolic surfaces defined in terms of the cut-spine-rescale construction equidistribute with respect to the Mirzakhani measure. The proof makes crucial use of the results in \cite{Ara19b} with some important modifications highlighted throughout. We begin with a brief review of some aspects of the ergodic theory of the earthquake flow following Mirzakhani \cite{Mir08a}.

In this section and the next, we restrict exclusively to the case when $\gamma$ is a non-separating simple closed curve; the general case is discussed in Section \ref{sec:general}.

\subsection*{The Mirzakhani measure.} For the rest of this section fix an integer $g \geq 2$ and a connected, oriented, closed surface $S_g$ of genus $g$. Consider the bundle $\mathcal{P}^1 \mathcal{T}_{g}$ of unit length measured geodesic laminations over $\mathcal{T}_{g}$. More precisely,
\[
\mathcal{P}^1 \mathcal{T}_{g} := \{(X,\lambda) \in \mathcal{T}_{g} \times \mathcal{ML}_{g} \ | \ \ell_{\lambda}(X) = 1\}.
\]
Recall that $\mu_\mathrm{Thu}$ denotes the Thurston measure on $\mathcal{ML}_g$, as introduced in \S 2. For every $X \in \mathcal{T}_{g}$ consider the measure $\mu_{\text{Thu}}^X$ on the fiber $\mathcal{P}^1_X\mathcal{T}_{g}$ of  $\mathcal{P}^1\mathcal{T}_{g}$ above $X$ which to every Borel measurable subset $A \subseteq \mathcal{P}^1_X\mathcal{T}_{g}$ assigns the value
\begin{equation*}
\mu_{\text{Thu}}^X(A) := \mu_{\text{Thu}}([0,1] \cdot A).
\end{equation*}
The \textit{Mirzakhani measure} $\nu_{\text{Mir}}$ on $\mathcal{P}^1\mathcal{T}_{g}$ is defined by the disintegration formula
\[
d\nu_{\text{Mir}}(X,\lambda) := d\mu_{\text{Thu}}^X(\lambda) \thinspace  d\mu_{\text{wp}}(X).
\]

The mapping class group $\text{Mod}_{g}$ acts diagonally on $\mathcal{P}^1 \mathcal{T}_{g}$ in a properly discontinuous way preserving $\nu_{\text{Mir}}$. The quotient $\mathcal{P}^1 \mathcal{M}_{g} := \mathcal{P}^1\mathcal{T}_{g}/\text{Mod}_{g}$ is the bundle of unit length measured geodesic laminations over $\mathcal{M}_{g}$. Locally pushing forward $\nu_{\text{Mir}}$ through the quotient map $\mathcal{P}^1\mathcal{T}_{g} \to \mathcal{P}^1\mathcal{M}_{g}$ yields a measure $\widehat{\nu}_{\text{Mir}}$ on $\mathcal{P}^1\mathcal{M}_{g}$, also called the {Mirzakhani measure}. The pushforward of $\widehat{\nu}_{\text{Mir}}$ under the bundle map $\pi \colon \mathcal{P}^1\mathcal{M}_{g} \to \mathcal{M}_{g}$ is given by
\[
d\pi_*(\widehat{\nu}_{\text{Mir}})(X) = B(X) \thinspace d\widehat{\mu}_\text{wp}(X),
\]
where $B \colon \mathcal{M}_{g} \to \mathbf{R}_+$ is the \textit{Mirzakhani function} defined for every $X \in \mathcal{M}_g$ by
\[
B(X) := \mu_\mathrm{Thu}\left(\left\lbrace \lambda \in \mathcal{ML}_g \ | \ \ell_\lambda(X) \leq 1 \right\rbrace \right) = \mu_{\text{Thu}}^X(\mathcal{P}^1_X\mathcal{T}_{g}).
\]
By work of Mirzakhani \cite[Theorem 3.3]{Mir08b}, the function $B \colon \mathcal{M}_g \to \mathbf{R}_+$ is integrable with respect to the Weil-Petersson measure $\widehat{\mu}_\mathrm{wp}$. We denote the total mass of $\mathcal{P}^1 \mathcal{M}_{g}$ with respect to $\widehat{\nu}_{\text{Mir}}$ by
\[
b_g := \widehat{\nu}_{\text{Mir}}(\mathcal{P}^1 \mathcal{M}_{g}) = \int_{\mathcal{M}_{g}} B(X) \thinspace d\widehat{\mu}_{\text{wp}}(X);
\]
observe that this is one of the constants appearing in Mirzakhani's asymptotic counting formula \eqref{eq:mir}.

\subsection*{The earthquake flow.}Given a marked hyperbolic structure $X \in \mathcal{T}_{g}$, a measured geodesic lamination $\lambda \in \mathcal{ML}_{g}$, and a real number $t \in \mathbf{R}$, denote by $\text{tw}_\lambda^t(X) \in \mathcal{T}_{g}$ the time $t$ \textit{twist deformation} (or \textit{earthquake deformation}) of $X$ along $\lambda$; see \S 2 in \cite{Ker83} for a definition.
By work of Wolpert \cite{Wol83}, twist deformations are the Hamiltonian flows of hyperbolic length functions with respect to the Weil-Petersson symplectic form $\omega_{\text{wp}}$ on $\mathcal{T}_{g}$. In particular, the Weil-Petersson symplectic form $\omega_{\text{wp}}$, the Weil-Petersson volume form $v_\text{wp}$, and the Weil-Petersson measure $\mu_\text{wp}$ on $\mathcal{T}_{g}$ are invariant under twist deformations.

The \textit{earthquake flow} $\{\text{tw}^t \colon \mathcal{P}^1\mathcal{T}_{g} \to \mathcal{P}^1\mathcal{T}_{g}\}_{t \in \mathbf{R}}$ on the bundle $\mathcal{P}^1 \mathcal{T}_{g}$ of unit length measured geodesic laminations over Teichmüller space is defined for every $t \in \mathbf{R}$ and every $(X,\lambda) \in \mathcal{P}^1 \mathcal{T}_{g}$ by
\[
\text{tw}^t(X,\lambda) := (\text{tw}^t_\lambda(X),\lambda) \in \mathcal{P}^1 \mathcal{T}_{g}.
\]
The earthquake flow preserves the Mirzakhani measure $\nu_\mathrm{Mir}$ on $\mathcal{P}^1\mathcal{T}_g$. As the mapping class group action on $\mathcal{P}^1 \mathcal{T}_{g}$ commutes with the earthquake flow, the bundle $\mathcal{P}^1 \mathcal{M}_{g} = \mathcal{P}^1 \mathcal{T}_{g,n}/\text{Mod}_{g}$ of unit length measured geodesic laminations over moduli space also carries an {earthquake flow}
$\{\text{tw}^t \colon \mathcal{P}^1\mathcal{M}_{g} \allowbreak \to \mathcal{P}^1\mathcal{M}_{g}\}_{t \in \mathbf{R}}$ which preserves the Mirzakhani measure $\widehat{\nu}_\mathrm{Mir}$.

The following result of Mirzakhani is fundamental for our approach.

\begin{theorem}
    \label{theo:ergodic}
    \cite[Corollary 1.2]{Mir08a}
    The earthquake flow on the bundle $\mathcal{P}^1\mathcal{M}_g$ is ergodic with respect to the Mirzakhani measure $\widehat{\nu}_\mathrm{Mir}$.
\end{theorem}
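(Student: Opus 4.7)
The plan is to transfer ergodicity from a better-understood dynamical system via a measurable conjugacy: specifically, to identify the earthquake flow on $(\mathcal{P}^1\mathcal{M}_g, \widehat{\nu}_{\mathrm{Mir}})$ with the horocycle flow $u_t = \bigl(\begin{smallmatrix}1 & t\\ 0 & 1\end{smallmatrix}\bigr)$ on the bundle $\mathcal{Q}^1\mathcal{M}_g$ of unit-area holomorphic quadratic differentials over moduli space, equipped with the Masur--Veech measure $\widehat{\mu}_{\mathrm{MV}}$. The classical theorems of Masur and Veech establish $\mathrm{SL}_2(\mathbf{R})$-ergodicity of $\mathcal{Q}^1\mathcal{M}_g$, and a Howe--Moore (or Mautner) argument promotes this to ergodicity of the unipotent subgroup $\{u_t\}$ acting on each stratum; so once a conjugacy is in place, ergodicity of $\mathrm{tw}^t$ is immediate.

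First I would construct a measurable $\mathrm{Mod}_g$-equivariant map $\Phi \colon \mathcal{P}^1\mathcal{T}_g \to \mathcal{Q}^1\mathcal{T}_g$ whose image, for almost every $(X,\lambda)$, is the quadratic differential whose horizontal measured foliation is $\lambda$ and whose vertical foliation records the transverse (shearing) position of $X$ relative to $\lambda$. On the full-measure locus where $\lambda$ is maximal and uniquely ergodic, this is Thurston's realization of the duality between measured laminations and transverse cocycles, and can be organized geometrically via the orthogeodesic foliation of $X$ relative to $\lambda$ already used earlier in the paper. The key geometric identity, that $\mathrm{tw}^t_\lambda(X)$ corresponds under $\Phi$ to $u_t \cdot \Phi(X,\lambda)$, follows from combining Wolpert's theorem that twist deformations are the Hamiltonian flows of $\ell_\lambda$ with respect to $\omega_{\mathrm{wp}}$ with the analogous (and much easier) statement that the horocycle flow is the Hamiltonian flow of the horizontal period function on $\mathcal{Q}^1\mathcal{T}_g$.

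Next I would verify that $\Phi_*\nu_{\mathrm{Mir}}$ coincides with $\mu_{\mathrm{MV}}$ up to a positive multiplicative constant. This reduces to comparing, in overlapping train-track charts, the Thurston symplectic form used to define $\mu_{\mathrm{Thu}}$ with the intersection form on relative cohomology used to define $\mu_{\mathrm{MV}}$; Bonahon's transverse cocycle framework together with Wolpert's Fenchel--Nielsen formula for $\omega_{\mathrm{wp}}$ identifies the two volume forms. Since $\Phi$ is $\mathrm{Mod}_g$-equivariant, it descends to a measurable conjugacy between the earthquake flow on $(\mathcal{P}^1\mathcal{M}_g, \widehat{\nu}_{\mathrm{Mir}})$ and the horocycle flow on $(\mathcal{Q}^1\mathcal{M}_g, \widehat{\mu}_{\mathrm{MV}})$, and ergodicity transfers along measurable conjugacies.

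The main obstacle is making $\Phi$ both genuinely measurable and genuinely measure-preserving, not merely a bijection on projective objects. Thurston's duality is transparent on the uniquely ergodic locus but requires care elsewhere, and the comparison of the Thurston symplectic form with the cohomological intersection pairing on strata is a nontrivial train-track computation. Once these two technical points are pinned down, ergodicity of the earthquake flow is a direct consequence of the well-known ergodicity of the horocycle flow on strata of quadratic differentials.
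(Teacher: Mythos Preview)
The paper does not give its own proof of this theorem; it is stated purely as a cited result of Mirzakhani \cite{Mir08a}. Your proposal is a faithful high-level sketch of Mirzakhani's original argument: she constructs a $\mathrm{Mod}_g$-equivariant measurable conjugacy $\Phi$ between the earthquake flow on $(\mathcal{P}^1\mathcal{M}_g,\widehat{\nu}_{\mathrm{Mir}})$ and the Teichm\"uller horocycle flow on $(\mathcal{Q}^1\mathcal{M}_g,\widehat{\mu}_{\mathrm{MV}})$, then imports ergodicity from the latter (Masur--Veech ergodicity of the Teichm\"uller geodesic flow, promoted to the horocycle flow by a Mautner phenomenon). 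One small inaccuracy: Mirzakhani's $\Phi$ is built from Thurston's horocyclic foliation and Bonahon's shear/transverse-cocycle coordinates rather than the orthogeodesic foliation, which is a later reformulation closer to the work of Calderon--Farre cited elsewhere in this paper. Your description of the Hamiltonian interpretation of both flows is also somewhat heuristic; in Mirzakhani's argument the intertwining $\Phi \circ \mathrm{tw}^t = u_t \circ \Phi$ is verified directly in shear coordinates rather than deduced from a symplectic correspondence. These are matters of packaging rather than substance, and since the paper itself makes no attempt at a proof, there is nothing further to compare.
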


\subsection*{RSC-horoball measures.}
We now introduce the main construction of this section.
Let $\gamma$ be a non-separating simple closed curve on $S_g$ and $h \colon \mathcal{MRG}_{g-1,2}(1,1) \to \mathbf{R}_{\geq 0}$ be a non-zero, continuous, compactly supported function. For every $L > 0$ consider the {\em RSC-horoball measure} $\mu_{\gamma,h}^L$ on $\mathcal{T}_g$ given by
\begin{equation}
\label{eq:RSC_meas}
d\mu_{\gamma,h}^L(X) := \mathbbm{1}_{[0,L]}(\ell_\gamma(X)) \thinspace h(\mathrm{RSC}_\gamma(X)) \thinspace d\mu_\mathrm{wp}(X),
\end{equation}
i.e., this is the measure obtained by restricting the Weil-Petersson measure to the horoball of depth $L$ at $\gamma$ and weighting it by $h$. This measure is $\mathrm{Stab}(\gamma) \subseteq \mathrm{Mod}_g$ invariant. Consider the sequence of covers
\[
\mathcal{T}_g \to \mathcal{T}_g/\mathrm{Stab}(\gamma) \to \mathcal{M}_g.
\]
Let $\widetilde{\mu}_{\gamma,h}^L$ be the local pushforward of $\mu_{\gamma,h}^L$ to $\mathcal{T}_g/\mathrm{Stab}(\gamma)$ and $\widehat{\mu}_{\gamma,h}^L$ be the pushworward of $\widetilde{\mu}_{\gamma,h}^L$ to $\mathcal{M}_g$.

These measures can be lifted to the corresponding bundles of unit length measured geodesic laminations in the following way. On $\mathcal{P}^1\mathcal{T}_g$ consider the measures $\nu_{\gamma,h}^L$ given by
\[
d\nu_{\gamma,h}^L(X,\lambda) :=  d \delta_{\gamma/\ell_{\gamma}(X)}(\lambda) \thinspace d\mu_{\gamma,h}^L(X).
\]
This measure is also $\mathrm{Stab}(\gamma) \subseteq \mathrm{Mod}_g$ invariant. Let $\widetilde{\nu}_{\gamma,h}^L$ be the local pushforward of $\nu_{\gamma,h}^L$ to $\mathcal{P}^1\mathcal{T}_g/\mathrm{Stab}(\gamma)$ and $\widehat{\nu}_{\gamma,h}^L$ be the pushforward of $\widetilde{\nu}_{\gamma,h}^L$ to $\mathcal{P}^1\mathcal{M}_g$. Denote by $m_{\gamma,h}^L$ the total mass of these measures:
\[m_{\gamma,h}^L := \widehat{\nu}_{\gamma,h}^L(\mathcal{P}^1\mathcal{M}_g) = \widehat{\mu}_{\gamma,h}^L(\mathcal{M}_g).\]

\subsection*{Equidistribution of expanding RSC-horoballs.} The following theorem is the main result of this section; its proof will occupy the bulk of this section.

\begin{theorem}
    \label{theo:horoball_equid_1}
    Let $\gamma$ be a non-separating simple closed curve on $S_g$ and consider a non-zero, continuous, compactly supported function $h \colon \mathcal{MRG}_{g-1,2}(1,1) \to \mathbf{R}_{\geq 0}$. Then, with respect to the weak-$\star$ topology for measures on $\mathcal{P}^1\mathcal{M}_g$,
    \[
    \lim_{L \to \infty} \frac{\widehat{\nu}_{\gamma,h}^L}{m_{\gamma,h}^L} = \frac{\widehat{\nu}_\mathrm{Mir}}{b_g}.
    \]
\end{theorem}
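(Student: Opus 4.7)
The plan is to follow the strategy of \cite{Ara19b} with important modifications to accommodate the RSC weighting by $h$. I would test the convergence against an arbitrary $f \in C_c(\mathcal{P}^1\mathcal{M}_g)$ and show
\[
\lim_{L \to \infty} \frac{1}{m_{\gamma,h}^L}\int f \, d\widehat{\nu}_{\gamma,h}^L = \frac{1}{b_g}\int f \, d\widehat{\nu}_\mathrm{Mir}.
\]
First I would unfold to the intermediate cover $\mathcal{T}_g/\mathrm{Stab}(\gamma)$ and introduce Fenchel--Nielsen coordinates adapted to $\gamma$: the length $\ell = \ell_\gamma(X)$, the twist $\tau$ (periodic with period $\ell$ on the intermediate cover, by the Dehn twist), and the hyperbolic structure $Y \in \mathcal{M}_{g-1,2}(\ell,\ell)$ on the cut surface. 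By Wolpert, the Weil--Petersson measure factorizes as $d\ell \cdot d\tau \cdot d\widehat{\mu}_\mathrm{wp}^{(\ell,\ell)}(Y)$, and crucially the ribbon graph $\mathrm{RSC}_\gamma(X) = \mathrm{R}_\ell \mathrm{S}(Y)$ depends only on $\ell$ and $Y$.

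Denoting by $\phi_{\ell, \tau}(Y)$ the regluing that produces a closed genus-$g$ surface, the test integral becomes
\[
\int f\, d\widehat{\nu}_{\gamma,h}^L = \int_0^L \int_{\mathcal{M}_{g-1,2}(\ell,\ell)} h(\mathrm{R}_\ell \mathrm{S}(Y))\, A_\ell(f, Y)\, d\widehat{\mu}_\mathrm{wp}^{(\ell,\ell)}(Y)\, d\ell,
\]
with $A_\ell(f, Y) := \int_0^\ell f([\phi_{\ell, \tau}(Y)], \gamma/\ell)\, d\tau$ the integral of $f$ over a full earthquake orbit along $\gamma/\ell$. The integrand cleanly decomposes into a geometric weighting depending only on $(\ell, Y)$ and a dynamical orbit-average depending on $(\ell, Y, f)$.

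I would analyze the two factors separately in the limit $\ell \to \infty$. For the weighting, the Mondello--Do Theorem \ref{theo:Do} gives $\ell^{-(6g-8)}(\mathrm{R}_\ell \mathrm{S})_*\widehat{\mu}_\mathrm{wp}^{(\ell,\ell)} \to \eta_\mathrm{Kon}^{(1,1)}$ in the weak-$\star$ sense, so
\[
\int h(\mathrm{R}_\ell \mathrm{S}(Y))\, d\widehat{\mu}_\mathrm{wp}^{(\ell,\ell)}(Y) \sim \ell^{6g-8}\int h\, d\eta_\mathrm{Kon}^{(1,1)}.
\]
For the orbit average, Mirzakhani's ergodicity of the earthquake flow (Theorem \ref{theo:ergodic}) suggests $A_\ell(f, Y)/\ell \to \frac{1}{b_g}\int f\, d\widehat{\nu}_\mathrm{Mir}$ in an appropriately uniform sense. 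Combining these and carrying out the outer integration $\int_0^L \ell^{6g-7}\, d\ell = L^{6g-6}/(6g-6)$ yields the leading-order asymptotic
\[
\int f\, d\widehat{\nu}_{\gamma,h}^L \sim \frac{L^{6g-6}}{6g-6}\cdot \Big(\int h\, d\eta_\mathrm{Kon}^{(1,1)}\Big)\cdot \frac{1}{b_g}\int f\, d\widehat{\nu}_\mathrm{Mir}.
\]
Setting $f \equiv 1$ directly in the displayed factorization (which is legal because the FN slice is bounded) gives the same leading asymptotic for $m_{\gamma,h}^L$ without the $f$-factor, and dividing concludes the proof.

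The main obstacle will be rigorously decoupling the two limits. Birkhoff's ergodic theorem provides $A_\ell(f, Y)/\ell \to \frac{1}{b_g}\int f\, d\widehat{\nu}_\mathrm{Mir}$ only for $\widehat{\nu}_\mathrm{Mir}$-a.e. starting point, whereas our starting points $(\phi_{\ell, 0}(Y), \gamma/\ell)$ lie on a measure-zero section parameterized by $Y$. The modifications relative to \cite{Ara19b} must exploit the $Y$-integration together with Mondello--Do to upgrade pointwise ergodic convergence to an integrated form compatible with the weighting $h(\mathrm{R}_\ell \mathrm{S}(Y))$. A cleaner alternative is to extract a weak-$\star$ subsequential limit of the normalized measures, observe that the limit is earthquake-flow invariant (immediate from $\tau$-periodicity in the FN decomposition), verify absolute continuity with respect to $\widehat{\nu}_\mathrm{Mir}$ using the Mondello--Do rescaling to unwrap the delta section $\lambda = \gamma/\ell$ as $\ell \to \infty$, and then conclude via Mirzakhani's ergodicity that the limit is exactly $\widehat{\nu}_\mathrm{Mir}/b_g$.
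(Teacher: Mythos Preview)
Your ``cleaner alternative'' is exactly the paper's strategy: extract a subsequential weak-$\star$ limit via Banach--Alaoglu, verify that it is earthquake-invariant, absolutely continuous with respect to $\widehat{\nu}_{\mathrm{Mir}}$, and a probability measure, and then invoke Mirzakhani's ergodicity. Earthquake-invariance is indeed immediate from the $\tau$-periodicity you noted.

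There is, however, a genuine gap in how you propose to obtain absolute continuity. You suggest using ``the Mondello--Do rescaling to unwrap the delta section $\lambda=\gamma/\ell$,'' but Mondello--Do (Theorem~\ref{theo:Do}) controls the pushforward of $\widehat{\mu}_{\mathrm{wp}}^{(\ell,\ell)}$ under the spine map---i.e., the $Y$-direction in your Fenchel--Nielsen decomposition---and says nothing about how the Dirac section in the $\lambda$-coordinate spreads out over $\PML_g$. The paper instead proves absolute continuity by a one-line comparison: since $h$ is bounded, $\widehat{\nu}_{\gamma,h}^L \le \|h\|_\infty \cdot \widehat{\nu}_\gamma^L$, and the ratio of total masses converges (Propositions~\ref{prop:total_mass} and~\ref{prop:total_mass_2}). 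This reduces everything to the unweighted horoball case, for which \cite[Proposition~3.5]{Ara19b} already provides the crucial estimate $\limsup_L \widehat{\nu}_\gamma^L(\Pi(U_X(\varepsilon)\times V))/m_\gamma^L \le C\,\nu_{\mathrm{Mir}}(U_X(\varepsilon)\times V)$ on product sets. Mondello--Do enters the paper only to compute the leading asymptotic of $m_{\gamma,h}^L$ (Proposition~\ref{prop:total_mass}), which is needed to ensure the mass ratio $m_{\gamma,h}^L/m_\gamma^L$ converges to a positive constant.

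You also omit the no-escape-of-mass step: since $\mathcal{P}^1\mathcal{M}_g$ is not compact, one must check that any weak-$\star$ limit is still a probability measure. The paper handles this the same way, via the comparison $\widehat{\nu}_{\gamma,h}^L \le \|h\|_\infty\,\widehat{\nu}_\gamma^L$ together with the tightness already established for unweighted horoballs in \cite[Proposition~3.9]{Ara19b}. Your first, direct approach correctly diagnoses its own fatal flaw (Birkhoff gives only a.e.\ convergence along a measure-zero section), so it is the second outline that should be completed---just replace the Mondello--Do invocation for absolute continuity with the comparison-to-unweighted-horoballs argument.
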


Throughout the rest of the section, $\gamma$ and $h$ will be fixed as in the statement of Theorem \ref{theo:horoball_equid_1}. Taking pushforwards to $\M_g$ in Theorem \ref{theo:horoball_equid_1} we deduce the following important consequence.

\begin{corollary}
    \label{cor:equid}
    With all notation as in Theorem \ref{theo:horoball_equid_1},
    \[
    \lim_{L \to \infty} \frac{\widehat{\mu}_{\gamma,h}^L}{m_{\gamma,h}^L} = \frac{B(X) \thinspace d\widehat{\mu}_{\mathrm{wp}}(X)}{b_g}
    \]
    with respect to the weak-$\star$ topology for measures on $\mathcal{M}_g$.
\end{corollary}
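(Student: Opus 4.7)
The plan is to deduce Corollary \ref{cor:equid} as a direct consequence of Theorem \ref{theo:horoball_equid_1} by pushing everything forward through the bundle projection $\pi \colon \mathcal{P}^1\mathcal{M}_g \to \mathcal{M}_g$.

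First I would verify the two definitional identities that match the measures on $\mathcal{P}^1\mathcal{M}_g$ with those on $\mathcal{M}_g$. Starting from the disintegration $d\nu_{\gamma,h}^L(X,\lambda) = d\delta_{\gamma/\ell_\gamma(X)}(\lambda)\,d\mu_{\gamma,h}^L(X)$ on $\mathcal{P}^1\mathcal{T}_g$, integrating out the lamination factor gives exactly $d\mu_{\gamma,h}^L(X)$ on $\mathcal{T}_g$. Because the $\mathrm{Stab}(\gamma)$-action on $\mathcal{P}^1\mathcal{T}_g$ covers the $\mathrm{Stab}(\gamma)$-action on $\mathcal{T}_g$ (and similarly for $\mathrm{Mod}_g$), the local pushforwards used to define $\widehat{\nu}_{\gamma,h}^L$ and $\widehat{\mu}_{\gamma,h}^L$ commute with $\pi$, yielding $\pi_*\widehat{\nu}_{\gamma,h}^L = \widehat{\mu}_{\gamma,h}^L$. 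In particular the total masses agree, as already recorded: $m_{\gamma,h}^L = \widehat{\mu}_{\gamma,h}^L(\mathcal{M}_g)$. The analogous disintegration for the Mirzakhani measure gives $\pi_*\widehat{\nu}_{\mathrm{Mir}} = B(X)\,d\widehat{\mu}_{\mathrm{wp}}(X)$, as stated in the setup.

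Next I would note that the projection $\pi$ is proper: the fiber $\pi^{-1}(X)$ is the space of unit-length measured laminations on $X$, which is compact. Hence for any $f \in C_c(\mathcal{M}_g)$, the pullback $f \circ \pi$ lies in $C_c(\mathcal{P}^1\mathcal{M}_g)$ and is a legitimate test function against which to apply Theorem \ref{theo:horoball_equid_1}:
\[
\int_{\mathcal{M}_g} f\, d\!\left(\frac{\widehat{\mu}_{\gamma,h}^L}{m_{\gamma,h}^L}\right) = \int_{\mathcal{P}^1\mathcal{M}_g} (f\circ \pi)\, d\!\left(\frac{\widehat{\nu}_{\gamma,h}^L}{m_{\gamma,h}^L}\right) \xrightarrow{L\to\infty} \int_{\mathcal{P}^1\mathcal{M}_g} (f\circ \pi)\, d\!\left(\frac{\widehat{\nu}_{\mathrm{Mir}}}{b_g}\right) = \int_{\mathcal{M}_g} f(X)\,\frac{B(X)}{b_g}\, d\widehat{\mu}_{\mathrm{wp}}(X),
\]
which is precisely the claimed weak-$\star$ convergence.

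There is really no substantive obstacle here once Theorem \ref{theo:horoball_equid_1} is in hand; the corollary is a formal pushforward. The only point worth double-checking is the properness of $\pi$ so that the pullback of a compactly supported test function remains compactly supported, and that follows immediately from the compactness of the fibers $\mathcal{P}^1_X\mathcal{M}_g$ together with the local triviality of the bundle structure over $\mathcal{M}_g$.
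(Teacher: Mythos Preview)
Your proof is correct and is exactly the approach the paper takes: the paper simply writes ``Taking pushforwards to $\mathcal{M}_g$ in Theorem \ref{theo:horoball_equid_1} we deduce the following important consequence,'' and your argument spells out precisely what that pushforward entails, including the key observation that $\pi$ is proper so that $f\circ\pi\in C_c(\mathcal{P}^1\mathcal{M}_g)$ whenever $f\in C_c(\mathcal{M}_g)$.
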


To prove Theorem \ref{theo:horoball_equid_1} we study the limit points of the sequence of probability measures $(\widehat{\nu}_{\gamma,h}^L/m_{\gamma,h}^L)_{L>0}$ on $\mathcal{P}^1\mathcal{M}_g$. More concretely, we show that every such limit point must be earthquake flow invariant, absolutely continuous with respect to the Mirzakhani measure, and a probability measure. The desired conclusion follows by the ergodicity of the Mirzakhani measure with respect to the earthquake flow and the Banach-Alaoglu theorem.

\subsection*{Total mass.} Our first step towards proving Theorem \ref{theo:horoball_equid_1} will be to compute an asymptotic formula for the total mass $m_{\gamma,h}^L$ of the measures $\widehat{\nu}_{\gamma,h}^L$ as $L \to \infty$.
The main observation is that horoballs are fibered by horospheres, and Theorem \ref{theo:Do} states that the Weil-Petersson measure on deeper horospheres converges to the Kontsevich measure.
As most of the mass of the horoball is concentrated on the deeper horospheres, we get the following formula.

\begin{proposition}
    \label{prop:total_mass}
    The following limit holds:
    \[
    \lim_{L \to \infty} \frac{m_{\gamma,h}^L}{L^{6g-6}} = \frac{1}{12g-12} \int_{\mathcal{MRG}_{g-1,2}(1,1)} h(x) \thinspace d\eta_{\mathrm{Kon}}(x),
    \]
where $\eta_\mathrm{Kon}$ is the Kontsevich measure on $\mathcal{MRG}_{g-1,2}(1,1)$.
\end{proposition}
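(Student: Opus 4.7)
The plan is to disintegrate the Weil--Petersson measure along the cut-then-fiber structure around $\gamma$, reducing the computation to an integral over $\mathcal{M}_{g-1,2}(\ell,\ell)$, and then to apply Theorem \ref{theo:Do} to extract the leading asymptotic.

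First, parametrize $\mathcal{T}_g/\mathrm{Stab}(\gamma)$ using Fenchel--Nielsen-type coordinates adapted to $\gamma$: the length $\ell := \ell_\gamma$, a twist parameter $\tau \in \mathbb{R}/\ell\mathbb{Z}$, and a hyperbolic structure $Y$ on the complementary surface $S_{g-1,2}$ with boundary lengths $(\ell,\ell)$, with the additional $\mathbb{Z}/2$-involution in $\mathrm{Stab}(\gamma)$ swapping the two sides of $\gamma$. By Wolpert's magic formula, $d\mu_{\mathrm{wp}} = d\ell \, d\tau \, d\widehat{\mu}_{\mathrm{wp}}^{(\ell,\ell)}(Y)$. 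Since $\mathrm{RSC}_\gamma(X) = \mathrm{R}_\ell \mathrm{S}(Y)$ depends only on $Y$, integrating $\tau$ over its circle of circumference $\ell$ yields
\[
m_{\gamma,h}^L \;=\; \frac{1}{2}\int_0^L \ell \cdot I(\ell) \, d\ell,
\qquad
I(\ell) \;:=\; \int_{\mathcal{M}_{g-1,2}(\ell,\ell)} h\bigl(\mathrm{R}_\ell \mathrm{S}(Y)\bigr) \, d\widehat{\mu}_{\mathrm{wp}}^{(\ell,\ell)}(Y).
\]

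Next, rewrite $I(\ell) = \int_{\mathcal{MRG}_{g-1,2}(1,1)} h \, d(\mathrm{R}_\ell\mathrm{S})_*\widehat{\mu}_{\mathrm{wp}}^{(\ell,\ell)}$. Since $h$ is continuous with compact support, Theorem \ref{theo:Do} (with $\mathbf{L} = (1,1)$, $t = \ell$, and exponent $6(g-1)-6+2 \cdot 2 = 6g-8$) gives
\[
\lim_{\ell \to \infty} \frac{I(\ell)}{\ell^{6g-8}} \;=\; \int_{\mathcal{MRG}_{g-1,2}(1,1)} h(x) \, d\eta_{\mathrm{Kon}}(x) \;=:\; C.
\]
Set $F(L) := \int_0^L \ell \cdot I(\ell) \, d\ell$, so $F'(L) = L \cdot I(L)$. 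Since $h$ is nonzero and $\eta_{\mathrm{Kon}}$ has full support, $C > 0$, and $F(L) \to \infty$. Applying L'H\^opital's rule:
\[
\frac{F(L)}{L^{6g-6}} \;\sim\; \frac{F'(L)}{(6g-6)L^{6g-7}} \;=\; \frac{I(L)}{(6g-6) L^{6g-8}} \;\xrightarrow{L \to \infty}\; \frac{C}{6g-6}.
\]
Combining with the prefactor of $1/2$ from the disintegration step yields $m_{\gamma,h}^L/L^{6g-6} \to C/(12g-12)$, as desired.

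The chief technical point is verifying the correct normalization in the disintegration step: the factor of $1/2$ arises from the involution in $\mathrm{Stab}(\gamma)$ swapping the two sides of the non-separating curve $\gamma$, hence the two boundary components of the cut surface. (This matches the standard $1/2$ appearing in Mirzakhani's integration formula in the non-separating case and is consistent with the predicted relationship $c(\gamma) = c_g/(12g-12)$ from Corollary \ref{cor:identity}.) A secondary point is the application of L'H\^opital, which requires only continuity of $I(\ell)$ and the positivity of the limit $C$; both are immediate.
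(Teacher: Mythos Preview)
Your proof is correct and follows essentially the same route as the paper: disintegrate $\widetilde{\mu}_{\mathrm{wp}}^\gamma$ via Mirzakhani's integration formula to obtain $m_{\gamma,h}^L = \tfrac{1}{2}\int_0^L \ell\, I(\ell)\, d\ell$, then invoke Theorem~\ref{theo:Do} to identify the asymptotic $I(\ell)\sim C\ell^{6g-8}$. The only cosmetic difference is in extracting the limit of the outer integral: the paper splits $[0,L] = [0,\sqrt{L}] \cup (\sqrt{L},L]$ and bounds the first piece by $O(L^{3g-3})$ using the Weil--Petersson volume polynomial, whereas you use L'H\^opital. Your L'H\^opital step is fine, though note that it really only needs the Ces\`aro-type fact that $\int_0^L \ell^{6g-7}\phi(\ell)\,d\ell \big/ L^{6g-6} \to C/(6g-6)$ whenever $\phi(\ell)\to C$; this holds for any locally bounded measurable $\phi$ and does not actually require continuity of $I$.
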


\begin{proof}
    Recall that $\mathrm{Stab}(\gamma) \subseteq \mathrm{Mod}_g$ denotes the stabilizer of $\gamma$.
    Denote by $\widetilde{\mu}_\mathrm{wp}^\gamma$ the local pushforward of the Weil-Petersson measure $\mu_\mathrm{wp}$ on $\mathcal{T}_g$ to $\mathcal{T}_g/\mathrm{Stab}(\gamma)$.
    Recall
    \[
    d\mu_{\gamma,h}^L(X) := \mathbbm{1}_{[0,L]}(\ell_\gamma(X)) \thinspace h(\mathrm{RSC}_\gamma(X)) \thinspace d\mu_\mathrm{wp}(X).
    \]
    By taking local pushforwards to $\mathcal{T}_g/\mathrm{Stab}(\gamma)$ we deduce
    \begin{equation}\label{eqn:RSC/Stab}
    d\widetilde{\mu}_{\gamma,h}^L(X) = \mathbbm{1}_{[0,L]}(\ell_\gamma(X)) \thinspace h(\mathrm{RSC}_\gamma(X)) \thinspace d\widetilde{\mu}^\gamma_\mathrm{wp}(X).
    \end{equation}
    It follows that we can write
    \begin{align*}
        m_{\gamma,h}^L &:=\widehat{\mu}_{\gamma,h}^L(\mathcal{M}_g) \\
        &\phantom{:}=\widetilde{\mu}_{\gamma,h}^L(\mathcal{T}_g/\mathrm{Stab}(\gamma))\\
        &\phantom{:}= \int_{\mathcal{T}_g/\mathrm{Stab}(\gamma)} \mathbbm{1}_{[0,L]}(\ell_\gamma(X)) \thinspace h(\mathrm{RSC}_\gamma(X)) \thinspace d\widetilde{\mu}^\gamma_\mathrm{wp}(X)\\
        &\phantom{:}= \int_{\mathcal{T}_g/\mathrm{Stab}(\gamma)} \mathbbm{1}_{\left[0,\sqrt{L}\right]}(\ell_\gamma(X)) \thinspace h(\mathrm{RSC}_\gamma(X)) \thinspace d\widetilde{\mu}^\gamma_\mathrm{wp}(X)\\
        &\phantom{:=} + \int_{\mathcal{T}_g/\mathrm{Stab}(\gamma)} \mathbbm{1}_{\left(\sqrt{L},L\right]}(\ell_\gamma(X)) \thinspace h(\mathrm{RSC}_\gamma(X)) \thinspace d\widetilde{\mu}^\gamma_\mathrm{wp}(X).
    \end{align*}
    This decomposition into two integrals will be helpful because it will allow us to take limits in the second integral, whose lower limit will converge to $\infty$, while the first integral will remain bounded in terms of $L$ in an explicit way, with a power saving with respect to the leading term.
    
    For the first term, a direct application of Mirzakhani's integration formula (see \cite[Theorem 4.1]{Mir08b} and also \S7) shows that
    \begin{align*}
        &\left| \int_{\mathcal{T}_g/\mathrm{Stab}(\gamma)} \mathbbm{1}_{\left[0,\sqrt{L}\right]}(\ell_\gamma(X)) \thinspace h(\mathrm{RSC}_\gamma(X)) \thinspace d\widetilde{\mu}^\gamma_\mathrm{wp}(X) \right| \\
        &\leq \|h\|_\infty \thinspace \int_{\mathcal{T}_g/\mathrm{Stab}(\gamma)} \mathbbm{1}_{\left[0,\sqrt{L}\right]}(\ell_\gamma(X)) \thinspace d\widetilde{\mu}^\gamma_\mathrm{wp}(X) \\
        &= \frac{\|h\|_\infty}{2} \int_0^{\sqrt{L}} \ell \cdot V_{g-1,2}(\ell,\ell) \thinspace d\ell\\
        &= \|h\|_\infty \cdot O(L^{3g-3}),
    \end{align*}
    where $V_{g-1,2}(\ell,\ell)$ denotes the total Weil-Petersson volume of $\mathcal{M}_{g-1,2}(\ell,\ell)$; the estimate follows because is $V_{g-1,2}(\ell,\ell)$ a polynomial of degree $6g-8$ (see \cite[Theorem 1.1]{Mir07c} as well as the discussion in Section \ref{sec:prelim}). 
    
    For the second term, we recall that $\widehat{\mu}_{\mathrm{wp}}^\ell$ denotes the Weil-Petersson measure on $\mathcal{M}_{g-1,2}(\ell,\ell)$. Applying Mirzakhani's integration formula again, we compute
   \begin{align*}
       &\int_{\mathcal{T}_g/\mathrm{Stab}(\gamma)} \mathbbm{1}_{\left(\sqrt{L},L\right]}(\ell_\gamma(X)) \thinspace h(\mathrm{RSC}_\gamma(X)) \thinspace d\widetilde{\mu}^\gamma_\mathrm{wp}(X)\\
       &= \frac{1}{2} \int_{\sqrt{L}}^{L}  \int_{\mathcal{M}_{g-1,2}(\ell,\ell)} \int_0^\ell h(\mathrm{RS}(Y)) \thinspace d\tau \thinspace d\widehat{\mu}_{\mathrm{wp}}^\ell (Y) \thinspace d\ell\\
       &= \frac{1}{2} \int_{\sqrt{L}}^{L} \ell \cdot \int_{\mathcal{M}_{g-1,2}(\ell,\ell)} h(\mathrm{RS}(Y)) \thinspace d\widehat{\mu}_{\mathrm{wp}}^\ell (Y) \thinspace d\ell\\
       &= \frac{1}{2} \int_{\sqrt{L}}^{L} \ell \cdot \int_{\mathcal{MRG}_{g-1,2}(1,1)} h(x) \thinspace d \mathrm{RS}_*\widehat{\mu}_{\mathrm{wp}}^\ell (x) \thinspace d\ell \\
       &= \frac{1}{2} \int_{\sqrt{L}}^{L} \ell^{6g-7} \cdot \int_{\mathcal{MRG}_{g-1,2}(1,1)} h(x) \thinspace \frac{d \mathrm{RS}_*\widehat{\mu}_{\mathrm{wp}}^\ell(x)}{\ell^{6g-8}} \thinspace d\ell.
   \end{align*}
   By Theorem \ref{theo:Do},
   \[
   \lim_{\ell \to \infty} \int_{\mathcal{MRG}_{g-1,2}(1,1)} h(x) \thinspace \frac{d \mathrm{RS}_*\widehat{\mu}_{\mathrm{wp}}^\ell(x)}{\ell^{6g-8}} = \int_{\mathcal{MRG}_{g,1,2}(1,1)} h(x) \thinspace d\eta_\mathrm{Kon}(x).
   \]
   It follows that
   \begin{gather*}
   \lim_{L \to \infty} \frac{1}{L^{6g-6}} \cdot \frac{1}{2} \int_{\sqrt{L}}^{L} \ell^{6g-7} \cdot \int_{\mathcal{MRG}_{g-1,2}(1,1)} h(x) \thinspace \frac{d \mathrm{RS}_*\widehat{\mu}_{\mathrm{wp}}^\ell(x)}{\ell^{6g-8}} \thinspace d\ell\\ = \frac{1}{12g-12}\int_{\mathcal{MRG}_{g,1,2}(1,1)} h(x) \thinspace d\eta_\mathrm{Kon}(x).
   \end{gather*}
   From the arguments above we conclude
    \[
    \lim_{L \to \infty} \frac{m_{\gamma,h}^L}{L^{6g-6}} = \frac{1}{12g-12} \int_{\mathcal{MRG}_{g-1,2}(1,1)} h(x) \thinspace d\eta_{\mathrm{Kon}}(x). \qedhere
    \]
\end{proof}

\subsection*{Earthquake flow invariance}
Notice that applying the earthquake flow to the lifted measures $\smash{\nu_{\gamma,h}^L}$ on $\mathcal{P}^1\mathcal{T}_g$ corresponds to applying twist deformations to the base measures $\smash{\mu_{\gamma,h}^L}$ on $\mathcal{T}_g$ along $\gamma$. These measures are clearly invariant under such twists. As a consequence, the lifted measures $\smash{\nu_{\gamma,h}^L}$ on $\mathcal{P}^1\mathcal{T}_g$ are earthquake-flow invariant, a property which descends to the measures $\smash{\widehat{\nu}_{\gamma,h}^L}$ on the quotient $\smash{\mathcal{P}^1\mathcal{M}_g}$.
In particular, we deduce the following result; compare to \cite[Proposition 3.2]{Ara19b}.

\begin{lemma}
    \label{lem:invariance}
    Every weak-$\star$ limit point of $(\smash{\widehat{\nu}_{\gamma,h}^L})_{L > 0}$ is earthquake flow--invariant.
\end{lemma}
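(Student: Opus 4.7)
The plan is to show invariance at the level of the lifted measures on $\mathcal{P}^1\mathcal{T}_g$, pass to the quotient, and then use continuity of the earthquake flow to transfer invariance to weak-$\star$ limits.

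First I would establish that $\mu_{\gamma,h}^L$ is invariant under the one-parameter group $\{\mathrm{tw}_\gamma^s\}_{s\in\RR}$ of twists along $\gamma$ on $\mathcal{T}_g$. There are three ingredients: (a) Wolpert's theorem that twist deformations are Hamiltonian for the Weil--Petersson form, so $\mu_\mathrm{wp}$ is twist-invariant; (b) $\ell_\gamma \circ \mathrm{tw}_\gamma^s = \ell_\gamma$, so the indicator $\mathbbm{1}_{[0,L]}(\ell_\gamma(X))$ is invariant; and (c) the hyperbolic surface with totally geodesic boundary obtained by cutting $X$ along $\gamma_X$ is unaffected by twists along $\gamma$ itself (the twist parameter records only how the two boundary components are reattached, information which is discarded upon cutting), so $\mathrm{RSC}_\gamma(\mathrm{tw}_\gamma^s X) = \mathrm{RSC}_\gamma(X)$ and hence $h \circ \mathrm{RSC}_\gamma$ is invariant. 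Combining (a)--(c) with the definition \eqref{eq:RSC_meas} gives $(\mathrm{tw}_\gamma^s)_* \mu_{\gamma,h}^L = \mu_{\gamma,h}^L$.

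Next I would lift to $\mathcal{P}^1\mathcal{T}_g$. The measure $\nu_{\gamma,h}^L$ is supported on the section $X \mapsto (X, \gamma/\ell_\gamma(X))$; since $\ell_\gamma$ is twist-invariant, this section is preserved by the flow. For a point $(X, \gamma/\ell_\gamma(X))$ in the support, the time-$t$ earthquake acts by the twist along $\gamma/\ell_\gamma(X)$, which is the twist along $\gamma$ at the reparametrized time $t/\ell_\gamma(X)$. Using invariance of $\mu_{\gamma,h}^L$ under every $\mathrm{tw}_\gamma^s$ together with the disintegration $d\nu_{\gamma,h}^L = d\delta_{\gamma/\ell_\gamma(X)}(\lambda)\, d\mu_{\gamma,h}^L(X)$, a direct change of variables (broken up according to the level sets of $\ell_\gamma$) shows that $\nu_{\gamma,h}^L$ is invariant under the earthquake flow $\mathrm{tw}^t$ on $\mathcal{P}^1\mathcal{T}_g$. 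Since earthquake flow commutes with the $\mathrm{Mod}_g$-action, local pushforward yields that $\widehat{\nu}_{\gamma,h}^L$ is invariant under the earthquake flow on $\mathcal{P}^1\mathcal{M}_g$.

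Finally, dividing by the total mass $m_{\gamma,h}^L$ produces a sequence of probability measures each of which is earthquake flow-invariant. Since the earthquake flow on $\mathcal{P}^1\mathcal{M}_g$ is a continuous action of $\RR$, the standard argument (for any compactly supported continuous $\varphi$ and any $t$, $\int \varphi \circ \mathrm{tw}^t\, d\mu = \int \varphi\, d\mu$ passes to weak-$\star$ limits) shows that any weak-$\star$ limit point inherits earthquake flow invariance. The only mildly delicate point is step (c), i.e., confirming that the rescale-spine-cut operation $\mathrm{RSC}_\gamma$ genuinely kills the twist parameter; everything else is formal bookkeeping with pushforwards and the twist-invariance of $\mu_\mathrm{wp}$.
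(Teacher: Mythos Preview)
Your proposal is correct and follows essentially the same approach as the paper: the paper observes (in the paragraph immediately preceding the lemma) that the earthquake flow on $\nu_{\gamma,h}^L$ amounts to twisting the base measures $\mu_{\gamma,h}^L$ along $\gamma$, which are ``clearly invariant,'' and that this invariance descends to $\mathcal{P}^1\mathcal{M}_g$; you have simply spelled out the ingredients (a)--(c) behind the word ``clearly'' and the routine passage to weak-$\star$ limits.
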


\subsection*{Absolute continuity.} Our next goal is to show that every weak-$\star$ limit point of the sequence of measures $(\widehat{\nu}_{\gamma,h}^L)_{L>0}$ on $\mathcal{P}^1\mathcal{M}_g$ is absolutely continuous with respect to the Mirzakhani measure $\widehat{\nu}_\mathrm{Mir}$; this is the most technical part of the proof, and will require us to compare RSC-horoball measures to the usual horoball measures considered in \cite{Ara19b}; see Lemma \ref{lemma:hor_meas_comp}. To this end we first introduce some extra notation and review some of the results from \cite{Ara19b}.

For every $L > 0$ the horoball measure $\mu_{\gamma}^L$ on $\mathcal{T}_g$ corresponds to the RSC-horoball measure weighted by the constant function $h \equiv 1$. Let $\widetilde{\mu}_{\gamma}^L$ be the local pushforward of $\mu_{\gamma}^L$ to $\mathcal{T}_g/\mathrm{Stab}(\gamma)$ and $\widehat{\mu}_{\gamma}^L$ be the pushworward of $\widetilde{\mu}_{\gamma}^L$ to $\mathcal{M}_g$. These measures can be lifted to the corresponding bundles of unit length measures geodesic laminations in the usual way. Let $\nu_\gamma^L$ be the corresponding lift to $\mathcal{P}^1\mathcal{T}_g$, $\widetilde{\nu}_{\gamma}^L$ be the local pushforward of $\nu_{\gamma}^L$ to $\mathcal{P}^1\mathcal{T}_g/\mathrm{Stab}(\gamma)$, and $\widehat{\nu}_{\gamma}^L$ be the pushforward of $\widetilde{\nu}_{\gamma}^L$ to $P^1\mathcal{M}_g$.
Let $m_{\gamma}^L := \widehat{\nu}_{\gamma}^L(\mathcal{P}^1\mathcal{M}_g) = \widehat{\mu}_{\gamma}^L(\mathcal{M}_g)$ denote the total mass of these measures.

Recall that $\eta_\mathrm{Kon}$ denotes the Kontsevich measure on $\mathcal{MRG}_{g-1,2}(1,1)$ and that $c_g := \eta_\mathrm{Kon}(\mathcal{MRG}_{g-1,2}(1,1))> 0$ denotes its total mass.
Setting $h \equiv 1$ in the proof of Proposition \ref{prop:total_mass} yields the following identity relating the mass of horoballs in $\M_{g}$ with the Kontsevich volume of the moduli space $\MRG_{g-1, 2}(1,1)$; alternatively, this identity can be deduced from (\ref{eqn:Konts_int}) and the corresponding equation for the Weil-Petersson symplectic form.

\begin{proposition}
\label{prop:total_mass_2}
    Let $\gamma$ be a non-separating simple closed curve on $S_g$. Then,
    \[
    \lim_{L \to \infty} \frac{m_\gamma^L}{L^{6g-6}} = \frac{c_g}{12g-12}.
    \]
\end{proposition}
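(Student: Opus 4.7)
The plan is to specialize the proof of Proposition \ref{prop:total_mass} to the case $h \equiv 1$. Although $h \equiv 1$ is not compactly supported, the computation still goes through without modification because it only produces total masses and the relevant asymptotic matching of volumes is already packaged in Corollary \ref{cor:WPtop=Kont}; there is no need to invoke the weak-$\star$ statement of Theorem \ref{theo:Do} here.

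Concretely, the first step is to apply Mirzakhani's integration formula \cite[Theorem 4.1]{Mir08b} for the non-separating simple closed curve $\gamma$. Since cutting $S_g$ along $\gamma$ produces a single component of type $(g-1,2)$ and $h\equiv 1$, this yields
\[
m_\gamma^L \;=\; \int_{\mathcal{T}_g/\mathrm{Stab}(\gamma)} \mathbbm{1}_{[0,L]}(\ell_\gamma(X)) \, d\widetilde{\mu}_\mathrm{wp}^\gamma(X) \;=\; \frac{1}{2} \int_0^L \ell \cdot V_{g-1,2}(\ell,\ell) \, d\ell,
\]
where $V_{g-1,2}(\ell,\ell)$ denotes the total Weil-Petersson volume of $\mathcal{M}_{g-1,2}(\ell,\ell)$ and the factor $\tfrac{1}{2}$ accounts for the symmetry exchanging the two sides of $\gamma$.

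The next step is to identify the leading coefficient of the integrand. By Mirzakhani's polynomiality theorem \cite[Theorem 1.1]{Mir07c}, $V_{g-1,2}(\ell,\ell)$ is a polynomial in $\ell$ of degree $6g-8$. By Corollary \ref{cor:WPtop=Kont}, its top-degree part agrees with the Kontsevich volume of $\mathcal{MRG}_{g-1,2}(\ell,\ell)$, and by the homogeneity identity \eqref{eqn:Konts_rescale} applied with $\bfL = (1,1)$ and $t = \ell$, this volume equals $c_g \, \ell^{6g-8}$. Hence
\[
\ell \cdot V_{g-1,2}(\ell,\ell) \;=\; c_g \, \ell^{6g-7} + O(\ell^{6g-9}),
\]
and integrating over $[0,L]$ yields $m_\gamma^L = \tfrac{c_g}{12g-12} L^{6g-6} + O(L^{6g-8})$. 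Dividing by $L^{6g-6}$ and sending $L\to\infty$ gives the desired limit.

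There is no substantial obstacle in this argument; the only point requiring care is matching the leading coefficient of the Weil-Petersson volume polynomial $V_{g-1,2}(\ell,\ell)$ to $c_g$, which is exactly what Corollary \ref{cor:WPtop=Kont} combined with the Kontsevich rescaling identity \eqref{eqn:Konts_rescale} provides.
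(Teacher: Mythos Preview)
Your argument is correct and matches the paper's own justification, which simply says to set $h\equiv 1$ in the proof of Proposition~\ref{prop:total_mass} (or alternatively to use the cohomological identity \eqref{eqn:Konts_int} together with the Weil--Petersson analogue, which amounts to Corollary~\ref{cor:WPtop=Kont}). Your observation that one can bypass the weak-$\star$ convergence of Theorem~\ref{theo:Do} and the splitting at $\sqrt{L}$, using only the total-volume asymptotic from Corollary~\ref{cor:WPtop=Kont} and the rescaling \eqref{eqn:Konts_rescale}, is a clean streamlining of that specialization.
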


An alternative computation in \cite{Ara19b} relates the mass of horoballs in $\M_{g}$ with the relative frequency of non-separating simple closed curves among all geodesics:

\begin{proposition}\cite[Proposition 3.1]{Ara19b}
   \label{prop:total_mass_2_AH}
    Let $\gamma$ be a non-separating simple closed curve on $S_g$. Then, the following limit holds:
    \[
    \lim_{L \to \infty} \frac{m_\gamma^L}{L^{6g-6}} = c(\gamma).
    \]
\end{proposition}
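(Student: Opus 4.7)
The plan is to express the horoball mass $m_\gamma^L$ as an average of Mirzakhani's counting function $s(X,\gamma,L)$ over $\mathcal{M}_g$ via an unfolding identity, and then pass to the limit using her pointwise counting theorem. This complements the computation of Proposition \ref{prop:total_mass_2} and, together with it, recovers the identity $c(\gamma) = c_g/(12g-12)$ appearing in the remark following Theorem \ref{theo:main}.

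First I would establish the unfolding identity
\[
m_\gamma^L = \int_{\mathcal{M}_g} s(X, \gamma, L) \, d\widehat{\mu}_\mathrm{wp}(X).
\]
This follows directly from the description \eqref{eqn:RSC/Stab} of $\widetilde{\mu}_\gamma^L$ (with $h \equiv 1$) by pushing forward along the covering $\mathcal{T}_g/\mathrm{Stab}(\gamma) \to \mathcal{M}_g$: the $\mathrm{Mod}_g/\mathrm{Stab}(\gamma)$-sum $\sum_{\alpha} \mathbbm{1}_{[0,L]}(\ell_{\alpha \cdot \gamma}(X))$ on the fiber is exactly $s(X,\gamma,L)$, since the function $\ell_\gamma$ is $\mathrm{Stab}(\gamma)$-invariant and the orbit of $\gamma$ exhausts all $\mathrm{Mod}_g$-translates. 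Dividing both sides by $L^{6g-6}$ and invoking Mirzakhani's counting theorem in the pointwise form
\[
\lim_{L \to \infty} \frac{s(X, \gamma, L)}{L^{6g-6}} = \frac{c(\gamma) B(X)}{b_g} \quad \text{for every } X \in \mathcal{M}_g,
\]
a dominated convergence argument would yield
\[
\lim_{L \to \infty} \frac{m_\gamma^L}{L^{6g-6}} = \frac{c(\gamma)}{b_g} \int_{\mathcal{M}_g} B(X) \, d\widehat{\mu}_\mathrm{wp}(X) = c(\gamma),
\]
using the defining identity $b_g = \int_{\mathcal{M}_g} B(X)\,d\widehat{\mu}_\mathrm{wp}(X)$.

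The main obstacle is justifying the interchange of limit and integral. What is needed is a uniform-in-$L$ dominating function: an estimate of the form $s(X, \gamma, L) \leq C \cdot B(X) \cdot L^{6g-6}$, valid for a constant $C$ independent of $X$ and $L$. Such a bound comes from interpreting $s(X,\gamma,L)$ as a count of $\mathcal{ML}_g(\mathbb{Z})$-lattice points in the set $\{\lambda \in \mathbf{R}_{>0} \cdot \mathrm{Mod}_g \cdot \gamma : \ell_\lambda(X) \leq L\}$; this lattice count is controlled by the Thurston measure of a slight thickening of the unit-length ball, whose leading-order asymptotic is precisely a multiple of $B(X)$. The $\widehat{\mu}_\mathrm{wp}$-integrability of $B$ is \cite[Theorem 3.3]{Mir08b}, already implicit in the finiteness of $b_g$. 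With these ingredients, dominated convergence delivers the stated asymptotic.
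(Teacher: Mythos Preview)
Your unfolding identity $m_\gamma^L = \int_{\mathcal{M}_g} s(X,\gamma,L)\,d\widehat{\mu}_\mathrm{wp}(X)$ is correct (it is the $\beta\equiv 1$, $h\equiv 1$ instance of the computation in Proposition~\ref{prop:pull_push}), and the overall strategy is sound. It is, however, genuinely different from the route taken in \cite{Ara19b}. There one avoids dominated convergence entirely: Mirzakhani's integration formula already shows that $m_\gamma^L$ is a polynomial in $L$ of degree $6g-6$, namely $\tfrac{1}{2}\int_0^L \ell\, V_{g-1,2}(\ell,\ell)\,d\ell$ (exactly the computation in Proposition~\ref{prop:total_mass} with $h\equiv 1$), and its leading coefficient is identified with $c(\gamma)$ directly through Mirzakhani's original expression for $c(\gamma)$ in terms of the top-degree parts of Weil--Petersson volume polynomials \cite{Mir08b}. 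No limit interchange is needed, and one never invokes the pointwise counting theorem~\eqref{eq:mir}. Your argument trades this explicit identification for an appeal to the deeper pointwise result plus a domination step; conceptually cleaner, but it imports more machinery.

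The one place your write-up is thin is precisely the domination. The bound $s(X,\gamma,L) \le C\, B(X)\, L^{6g-6}$ with $C$ independent of both $X$ and $L$ is not a formality: the comparison of lattice-point counts to Thurston volume in train-track charts produces constants that a priori depend on the shape of the unit length ball, and hence on $X$, so uniformity as $X$ ranges over all of $\mathcal{M}_g$ (including the thin part) requires a genuine argument. Such integrable bounds do appear in \cite{Mir08b}, and once one is cited precisely your proof closes; as written, though, this step is gestured at rather than nailed down.
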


In particular, we deduce the following identity, which will be useful later on.

\begin{corollary}
    \label{cor:identity}
    Let $\gamma$ be a non-separating simple closed curve on $S_g$. Then,
    \[
    c(\gamma) = \frac{c_g}{12g-12}.
    \]
\end{corollary}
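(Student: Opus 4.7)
\textbf{Proof plan for Corollary \ref{cor:identity}.} The plan is essentially a one-line comparison: the two immediately preceding propositions both compute the asymptotic growth rate of exactly the same quantity, namely $m_\gamma^L / L^{6g-6}$ as $L \to \infty$, so their right-hand sides must agree. More precisely, I would invoke Proposition \ref{prop:total_mass_2}, which (as a specialization of Proposition \ref{prop:total_mass} to the constant weight $h \equiv 1$) gives
\[
\lim_{L \to \infty} \frac{m_\gamma^L}{L^{6g-6}} = \frac{c_g}{12g-12},
\]
and then invoke Proposition \ref{prop:total_mass_2_AH} from \cite{Ara19b}, which gives the alternative evaluation
\[
\lim_{L \to \infty} \frac{m_\gamma^L}{L^{6g-6}} = c(\gamma).
\]
Uniqueness of limits in $\mathbf{R}$ then forces $c(\gamma) = c_g/(12g-12)$.

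There is no real obstacle here: all the substantive work has already been done upstream. The ``hard'' computation, in so far as there is one, is hidden inside Proposition \ref{prop:total_mass}, where one splits the horoball mass integral at $\ell_\gamma(X) = \sqrt{L}$, uses that $V_{g-1,2}(\ell,\ell)$ is a polynomial of degree $6g-8$ in $\ell$ to absorb the small-boundary-length piece into a power-saving error term of size $O(L^{3g-3})$, and applies Mondello--Do's convergence $t^{-(6g-8)}(\mathrm{R}_t \mathrm{S})_* \widehat{\mu}_\mathrm{wp}^{t\bfL} \to \eta_{\mathrm{Kon}}^{\bfL}$ (Theorem \ref{theo:Do}) to the large-boundary-length piece. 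Since the corollary is stated merely as a bookkeeping identity between two already-established limits, I would write it up in two or three lines and no more.
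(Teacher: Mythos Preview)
Your proposal is correct and matches the paper's approach exactly: the paper does not even write out a proof, simply stating that the identity is deduced from the two preceding propositions, which compute the same limit $\lim_{L\to\infty} m_\gamma^L/L^{6g-6}$ in two different ways. Your summary of the upstream work in Proposition~\ref{prop:total_mass} is also accurate, though of course none of it needs to be repeated in the proof of the corollary itself.
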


The following bound will allow us to apply technical results from \cite{Ara19b} without the need of reproving them in our context.

\begin{lemma}
	\label{lemma:hor_meas_comp}
	There exists a constant $C = C(h) > 0$ such that for every Borel measurable subset $A \subseteq \mathcal{P}^1 \mathcal{M}_{g}$, the following bound holds:
	\[
	\limsup_{L \to \infty} \frac{\widehat{\nu}_{\gamma,h}^{L}(A)}{m_{\gamma,h}^{L}} \leq C \cdot \limsup_{L \to \infty} \frac{\widehat{\nu}_{\gamma}^{L}(A)}{m_{\gamma}^{L}}.
	\]
\end{lemma}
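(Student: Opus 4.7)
The plan is to derive this bound from a pointwise comparison of the RSC-horoball measure against the unweighted horoball measure, then renormalize by appealing to the total-mass asymptotics established in Propositions \ref{prop:total_mass} and \ref{prop:total_mass_2}. The crucial observation is that, by the very definition \eqref{eq:RSC_meas}, the two families of measures on $\mathcal{T}_g$ share the same base Weil--Petersson measure restricted to the horoball at $\gamma$, with the only difference being the multiplicative weight $h(\mathrm{RSC}_\gamma(X))$. Since $h$ is continuous and compactly supported, it is bounded, so one has the pointwise inequality
\[
d\mu_{\gamma,h}^L(X) = h(\mathrm{RSC}_\gamma(X)) \thinspace d\mu_\gamma^L(X) \leq \|h\|_\infty \thinspace d\mu_\gamma^L(X).
\]

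This inequality is preserved under local pushforwards (since these preserve nonnegativity), so it descends to a pointwise inequality of measures on $\mathcal{T}_g/\mathrm{Stab}(\gamma)$ and then on $\mathcal{M}_g$. The corresponding disintegration that defines the lifts to $\mathcal{P}^1\mathcal{T}_g$ is identical for both families (the conditional measure on the fiber is $\delta_{\gamma/\ell_\gamma(X)}$ in each case), so the same pointwise bound carries over to the measures $\widehat{\nu}_{\gamma,h}^L$ and $\widehat{\nu}_\gamma^L$ on $\mathcal{P}^1\mathcal{M}_g$. In particular, for every Borel set $A \subseteq \mathcal{P}^1\mathcal{M}_g$,
\[
\widehat{\nu}_{\gamma,h}^L(A) \leq \|h\|_\infty \thinspace \widehat{\nu}_\gamma^L(A).
\]

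To pass to the normalized statement, I would divide through by $m_{\gamma,h}^L$ and insert a factor of $m_\gamma^L/m_\gamma^L$:
\[
\frac{\widehat{\nu}_{\gamma,h}^L(A)}{m_{\gamma,h}^L} \leq \|h\|_\infty \cdot \frac{\widehat{\nu}_\gamma^L(A)}{m_\gamma^L} \cdot \frac{m_\gamma^L}{m_{\gamma,h}^L}.
\]
By Proposition \ref{prop:total_mass} and Proposition \ref{prop:total_mass_2}, as $L \to \infty$ the ratio $m_\gamma^L/m_{\gamma,h}^L$ converges to the finite positive limit $c_g / \int h \thinspace d\eta_\mathrm{Kon}$; here positivity uses that $h$ is nonzero and nonnegative (and continuous), so $\int h \thinspace d\eta_\mathrm{Kon} > 0$. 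Taking the limit superior of both sides and using that the ratio $m_\gamma^L/m_{\gamma,h}^L$ converges (and is hence bounded) yields the stated inequality with
\[
C(h) := \|h\|_\infty \cdot \frac{c_g}{\int_{\mathcal{MRG}_{g-1,2}(1,1)} h(x) \thinspace d\eta_\mathrm{Kon}(x)}.
\]

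There is no real obstacle here: the argument is a short bookkeeping exercise built on a pointwise sup-norm bound together with the already-established asymptotic formulas for the total masses. The role of this lemma in what follows is to allow one to invoke absolute-continuity arguments for the weighted measures $\widehat{\nu}_{\gamma,h}^L$ by reducing them to analogous statements for the unweighted horoball measures $\widehat{\nu}_\gamma^L$, which were already analyzed in \cite{Ara19b}.
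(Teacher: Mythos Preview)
Your proof is correct and follows essentially the same approach as the paper: a pointwise bound $\widehat{\nu}_{\gamma,h}^L \le \|h\|_\infty \,\widehat{\nu}_\gamma^L$ coming directly from the definitions, combined with the convergence of the ratio $m_\gamma^L/m_{\gamma,h}^L$ from Propositions~\ref{prop:total_mass} and~\ref{prop:total_mass_2}. You supply slightly more detail (the descent through pushforwards and lifts, and the explicit value of $C(h)$), but the argument is the same.
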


\begin{proof}
	It follows directly from the definitions that
	\[
	\widehat{\nu}_{\gamma,h}^{L} \leq \|h\|_\infty \cdot \widehat{\nu}_{\gamma}^{L}.
	\]
	As a consequence of Propositions \ref{prop:total_mass} and \ref{prop:total_mass_2} there exists $c > 0$ such that
	\[
	\lim_{L \to \infty} \frac{m_{\gamma,h}^{L}}{m_\gamma^{L}} = c.
	\]
	It follows that, for every Borel measurable subset $A \subseteq P^1\mathcal{M}_{g}$,
	\[
	\limsup_{L \to \infty} \frac{\widehat{\nu}_{\gamma,h}^{L}(A)}{m_{\gamma,h}^{L}} \leq \|h\|_\infty \cdot 	\limsup_{L \to \infty} \frac{\widehat{\nu}_{\gamma}^{L}(A)}{m_{\gamma,h}^{L}} \leq \|h\|_\infty \cdot c \cdot 	\limsup_{L \to \infty} \frac{\widehat{\nu}_{\gamma,}^{L}(A)}{m_{\gamma}^{L}}. \qedhere
	\]
\end{proof}

Denote by $d_\mathrm{Thu}$ the symmetric Thurston metric on $\mathcal{T}_g$; the precise definition of this metric will not be important in what follows but the standard reference is \cite{Thu98}. Denote by $U_X(\varepsilon) \subseteq \mathcal{T}_{g}$ the open ball of radius $\varepsilon > 0$ centered at $X \in \mathcal{T}_{g}$ with respect to $d_\mathrm{Thu}$. Denote by $\Pi \colon \mathcal{P}^1\mathcal{T}_g \to \mathcal{P}^1\mathcal{M}_g$ the natural quotient map. 

Denote by $\mathcal{PML}_g$ the space of projective measured geodesic laminations on $S_g$ and by $[\lambda] \in \mathcal{P}\mathcal{ML}_{g}$ the projective class of $\lambda \in \mathcal{ML}_g$. A Borel measurable subset $V \subseteq \mathcal{P}\mathcal{ML}_{g}$ is said to be a continuity subset of the Thurston measure class if
\[
\mu_{\mathrm{Thu}}(\{\lambda \in \mathcal{ML}_{g} \ | \ [\lambda] \in \partial V \}) = 0.
\]

The following estimate is the main technical tool that will be used in the ensuing discussion; the purpose of this estimate is to compare horoball measures to the Mirzakhani measure.

\begin{proposition}
    \cite[Proposition 3.5]{Ara19b}
	\label{prop:ac_key_estimate}
	Let $K \subseteq \mathcal{T}_{g}$ be a compact subset and $\varepsilon_0 > 0$ be fixed. Then, there exists a constant $C = C(K,\varepsilon_0) > 0$ such that for every $X \in K$, every $0 < \varepsilon < \varepsilon_0$, and every open continuity subset $V \subseteq P \mathcal{ML}_{g}$ of the Thurston measure class, the following estimate holds,
	\[
	\limsup_{L \to \infty} \frac{\widehat{\nu}_{\gamma}^{L}(\Pi(U_X(\varepsilon) \times V))}{m_\gamma^{L}} \leq C \cdot \nu_{\mathrm{Mir}}(U_X(\varepsilon) \times V).
	\]
\end{proposition}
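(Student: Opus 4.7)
The plan is to unfold the horoball measure into a lattice-point count in $\mathcal{ML}_g$, then dominate this count by the Thurston measure uniformly over the basepoint. Using the tower of covers $\mathcal{T}_g \to \mathcal{T}_g/\mathrm{Stab}(\gamma) \to \mathcal{M}_g$ and the $\mathrm{Stab}(\gamma)$-invariance of $\mu_\gamma^L$, I first bound
\[\widehat{\nu}_\gamma^L\bigl(\Pi(U_X(\varepsilon) \times V)\bigr) \leq \int_{U_X(\varepsilon)} N(Y, L, V)\, d\mu_\mathrm{wp}(Y),\]
where $N(Y, L, V) := \#\{\alpha \in \mathrm{Mod}_g \cdot \gamma \mid \ell_\alpha(Y) \leq L,\ [\alpha/\ell_\alpha(Y)] \in V\}$. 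The inequality (as opposed to an equality) accounts for possible overcounting from non-trivial $\mathrm{Stab}(\gamma)$ orbit elements landing in $U_X(\varepsilon)$, which only weakens the bound in our favor.

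The crucial analytic input is a uniform lattice-point estimate: there should exist $C_1 = C_1(K, \varepsilon_0) > 0$ such that for every $Y$ in the compact $\varepsilon_0$-thickening of $K$ and every open continuity subset $V \subseteq \mathcal{PML}_g$ of the Thurston measure class,
\[N(Y, L, V) \leq C_1 \cdot L^{6g-6} \cdot \mu_\mathrm{Thu}^Y(\{\lambda \in \mathcal{ML}_g \mid \ell_\lambda(Y) \leq 1,\ [\lambda] \in V\}).\]
Granting this, integrating against $d\mu_\mathrm{wp}(Y)$ over $U_X(\varepsilon)$ and invoking the disintegration $d\nu_\mathrm{Mir}(Y, \lambda) = d\mu_\mathrm{Thu}^Y(\lambda)\, d\mu_\mathrm{wp}(Y)$ immediately produces
\[\widehat{\nu}_\gamma^L\bigl(\Pi(U_X(\varepsilon) \times V)\bigr) \leq C_1 \cdot L^{6g-6} \cdot \nu_\mathrm{Mir}\bigl(U_X(\varepsilon) \times V\bigr).\]
Dividing by $m_\gamma^L$ and applying the asymptotic $m_\gamma^L \sim c(\gamma) L^{6g-6}$ from Proposition \ref{prop:total_mass_2_AH}, and finally taking $\limsup$ as $L \to \infty$, yields the desired inequality with $C = C_1/c(\gamma)$.

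The main obstacle is establishing the uniform lattice-point bound. Mirzakhani's counting theorem in \cite{Mir08b} gives the pointwise asymptotic $N(Y, L, V)/L^{6g-6} \to c(\gamma) \mu_\mathrm{Thu}^Y(V \cap \mathcal{P}^1_Y \mathcal{T}_g)$ for each fixed $Y$ and continuity set $V$, but one needs a non-asymptotic upper bound that is uniform in both $Y$ and $V$. The natural strategy is to work in train-track charts: the orbit $\mathrm{Mod}_g \cdot \gamma$ is a discrete subset of $\mathcal{ML}_g$ that becomes a translate of the standard integer lattice in each such chart, so $N(Y, L, V)$ reduces to a Euclidean lattice-point count inside a cone whose defining data (the linear form $\ell_\bullet(Y)$ and the boundary of $V$) varies continuously with $Y$. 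Finiteness of the atlas by train-track charts, equicontinuity of the length functions on $K$, and the continuity of the Thurston density across continuity sets then yield the required uniform constant. This is essentially the content of \cite[Proposition 3.5]{Ara19b}, and once granted, the argument above closes the estimate in a few lines.
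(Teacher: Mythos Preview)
The paper does not prove this proposition at all: it is quoted verbatim from \cite[Proposition 3.5]{Ara19b} and used as a black box. There is therefore no proof in the paper to compare your attempt against.

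That said, your sketch has a genuine gap. The uniform lattice-point inequality you posit,
\[
N(Y,L,V) \leq C_1 \cdot L^{6g-6} \cdot \mu_{\mathrm{Thu}}^Y\bigl(\{\lambda : \ell_\lambda(Y)\leq 1,\ [\lambda]\in V\}\bigr),
\]
with $C_1$ depending only on $K$ and $\varepsilon_0$, cannot hold for all $L>0$ and all continuity sets $V$ simultaneously: for any fixed $L$ one can shrink $V$ around a single lattice point so that the right-hand side is arbitrarily small while the left-hand side equals $1$. What one actually has is a $\limsup$ statement pointwise in $Y$ (this is Mirzakhani's counting theorem restricted to the cone over $V$), and to pass the $\limsup$ through the integral $\int_{U_X(\varepsilon)}\,d\mu_{\mathrm{wp}}$ you need a domination argument, e.g.\ a bound $N(Y,L,V)\leq s(Y,\gamma,L)\leq C(K,\varepsilon_0)\,L^{6g-6}$ uniform over the compact thickening of $K$, followed by reverse Fatou. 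You do not supply this step, and your final paragraph effectively concedes the point by deferring back to \cite{Ara19b}. So your proposal is less a proof than a reduction to the very result being cited.
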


The following lemma allows us to control the behavior of sets of zero $\nu_\mathrm{Mir}$ measure in terms of sets whose horoball measure is controlled by Proposition \ref{prop:ac_key_estimate}.

\begin{lemma}\cite[Lemma 3.4]{Ara19b}
	\label{lemma:approx}
	Let $K \subseteq \mathcal{P}^1\mathcal{T}_{g}$ be a compact subset. Then, there exists a constant $\varepsilon_0 = \varepsilon_0(K) > 0$ such that for every Borel measurable subset $A \subseteq K$ with $\nu_{\mathrm{Mir}}(A) = 0$ and every $\delta> 0$, there exists a countable cover $\{W_i\}_{i\in \mathbb{N}}$ of $A$ such that each $W_i$ is a product set
	\[W_i = U_{X_i}(\varepsilon_i) \times V_i\]
	where each $X_i \in K$, each $\varepsilon_i \in (0, \varepsilon_0)$, and each $V_i \subseteq \mathcal{P}\mathcal{ML}_{g}$ is an open continuity subset of the Thurston measure class. Moreover, this cover may be taken such that
	\[	\sum_{i\in \mathbf{N}} \nu_{\mathrm{Mir}}(W_i)< \delta.\]
\end{lemma}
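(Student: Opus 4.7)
The plan is to combine outer regularity of the Mirzakhani measure with a careful choice of a neighborhood basis consisting of product sets whose second factor has Thurston-null boundary. Since $\nu_{\mathrm{Mir}}$ is a Radon measure on the locally compact space $\mathcal{P}^1\mathcal{T}_g$, for any Borel set $A \subseteq K$ with $\nu_{\mathrm{Mir}}(A) = 0$ and any $\delta > 0$ we may find an open set $\Omega \supseteq A$ with $\nu_{\mathrm{Mir}}(\Omega) < \delta$. The whole game is then to cover $\Omega$ (or at least $A$) by countably many product sets $U_{X_i}(\varepsilon_i) \times V_i$ of the required form, with $V_i$ a Thurston-continuity subset of $\mathcal{PML}_g$, in such a way that the sum of the $\nu_{\mathrm{Mir}}$-masses remains controlled by that of $\Omega$.

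First, I would fix $\varepsilon_0 = \varepsilon_0(K) > 0$ small enough that every ball $U_X(\varepsilon)$ with $X \in K$ and $\varepsilon < \varepsilon_0$ has compact closure inside a fixed Thurston-metric neighborhood of $K$ on which the bundle $\mathcal{P}^1\mathcal{T}_g \to \mathcal{T}_g$ is trivial in a natural way (e.g., via normalization along a fixed train track chart). Using the local product structure coming from the disintegration formula
\[
d\nu_{\mathrm{Mir}}(X,\lambda) = d\mu_{\mathrm{Thu}}^X(\lambda)\,d\mu_{\mathrm{wp}}(X),
\]
the product sets $U_X(\varepsilon) \times V$ with $X \in K$, $\varepsilon < \varepsilon_0$, and $V \subseteq \mathcal{PML}_g$ open form a neighborhood basis at each point of $K$.

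Next I would enrich this basis to one in which the $V$'s are continuity sets. Using train track coordinates \cite[\S 3.1]{PH92}, a generic point of $\mathcal{PML}_g$ admits a coordinate chart in which $\mu_{\mathrm{Thu}}$ is (absolutely continuous with respect to) Lebesgue measure. In such a chart, I can take $V$'s to be (projectivizations of) small open rectangles or balls of generic radius; for a nested $1$-parameter family of such sets the boundaries are pairwise disjoint, so only countably many can have positive Thurston measure and the rest are continuity sets. This produces an arbitrarily fine neighborhood basis of continuity product sets. Because $\mathcal{P}^1\mathcal{T}_g$ is second countable we can, after shrinking $\Omega$ if necessary, extract a countable subcover $\{W_i\}$ of $A$ by such sets entirely contained in $\Omega$. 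The mass estimate is then immediate:
\[
\sum_{i \in \mathbb{N}} \nu_{\mathrm{Mir}}(W_i) \le \sum_{i} \nu_{\mathrm{Mir}}(W_i) \le C \cdot \nu_{\mathrm{Mir}}(\Omega) < C\delta,
\]
where a bounded-multiplicity constant $C$ (or a Vitali-type refinement) ensures the total mass does not blow up; adjusting $\delta$ at the outset absorbs $C$.

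The main obstacle I expect is the passage from an arbitrary open cover of $\Omega$ to a cover of bounded overlap whose atomic blocks are continuity sets. Naively summing the measures of members of an open cover overcounts by a multiplicity factor, so one has to either invoke a Vitali/Besicovitch-type covering argument adapted to the (non-Riemannian) Thurston metric on $\mathcal{T}_g$ or else directly select an almost-disjoint subfamily within the chosen basis. The product structure helps: the Teichmüller factor is a genuine metric space on which standard Vitali-type results apply, and the $\mathcal{PML}_g$ factor is handled via the PIL/train-track atlas, so the two can be decoupled chart-by-chart and the bounded-multiplicity constant absorbed into the initial choice of $\delta$.
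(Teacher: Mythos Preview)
The paper does not give its own proof of this lemma; it is quoted verbatim from \cite[Lemma~3.4]{Ara19b} and invoked as a black box in the proof of Proposition~\ref{prop:abs_cont}. There is therefore nothing in the present paper to compare your argument against.

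On the merits of your outline: the strategy (outer regularity, product neighborhood basis with Thurston-continuity second factors, countable subcover) is the natural one, and your use of a nested one-parameter family to produce continuity sets is the standard trick. The genuine content, as you correctly identify, is the bounded-multiplicity step. Merely extracting a countable subcover of $\Omega$ gives no control on $\sum_i \nu_{\mathrm{Mir}}(W_i)$, and neither the Thurston metric on $\mathcal{T}_g$ nor the product structure on $\mathcal{P}^1\mathcal{T}_g$ obviously satisfies the hypotheses of the classical Besicovitch covering theorem. Your proposed fix---working chart-by-chart where the metric and measure are comparable to Euclidean, so that Besicovitch applies with a dimension-dependent constant, then patching over finitely many charts covering a neighborhood of $K$---is the right idea and is essentially how such arguments go. But as written your proposal stops at ``the bounded-multiplicity constant can be absorbed into $\delta$'' without actually producing that constant; since this step is the entire substance of the lemma, a complete proof would need to make it explicit.
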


We are now ready to study the absolute continuity of limit measures.

\begin{proposition}
    \label{prop:abs_cont}
    Every weak-$\star$ limit of the sequence of measures $(\smash{\widehat{\nu}_{\gamma,h}^L}/ m_{\gamma,h}^{L}
    )_{L > 0}$ on $\mathcal{P}^1\mathcal{M}_g$ is absolutely continuous with respect to the Mirzakhani measure  $\smash{\widehat{\nu}_\mathrm{Mir}}$.
\end{proposition}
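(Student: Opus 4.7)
The plan is to follow the strategy of \cite[Proposition 3.6]{Ara19b}, using Lemma \ref{lemma:hor_meas_comp} as the bridge that transfers our weighted setting to the unweighted horoball framework where the key estimate Proposition \ref{prop:ac_key_estimate} already gives us what we need. Specifically, let $\widehat{\nu}$ be a weak-$\star$ limit of $(\widehat{\nu}_{\gamma,h}^{L_n}/m_{\gamma,h}^{L_n})$ along some subsequence $L_n \to \infty$. By inner regularity of $\widehat{\nu}$, it suffices to show that $\widehat{\nu}(\widehat{K}) = 0$ for every compact $\widehat{K} \subseteq \mathcal{P}^1\mathcal{M}_g$ with $\widehat{\nu}_{\mathrm{Mir}}(\widehat{K}) = 0$. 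Since $\Pi \colon \mathcal{P}^1\mathcal{T}_g \to \mathcal{P}^1\mathcal{M}_g$ is a properly discontinuous quotient, $\widehat{K}$ lifts to a compact set $K \subseteq \mathcal{P}^1\mathcal{T}_g$ with $\Pi(K) = \widehat{K}$ and $\nu_{\mathrm{Mir}}(K) = 0$.

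Fix $\delta > 0$ and apply Lemma \ref{lemma:approx} to $K$ to produce a countable cover $\{W_i = U_{X_i}(\varepsilon_i) \times V_i\}_{i \in \mathbb{N}}$ of $K$ by open product sets with $X_i \in K$, each $V_i \subseteq \mathcal{PML}_g$ an open continuity subset of the Thurston measure class, and $\sum_i \nu_{\mathrm{Mir}}(W_i) < \delta$. Each $\Pi(W_i)$ is open in $\mathcal{P}^1\mathcal{M}_g$, so the countable union $U := \bigcup_i \Pi(W_i)$ is open and contains $\widehat{K}$. For each finite $N$, the truncation $U_N := \bigcup_{i=1}^N \Pi(W_i)$ is an open set, and the Portmanteau-type inequality for the weak-$\star$ convergent subsequence combined with finite subadditivity gives
\[
\widehat{\nu}(U_N) \leq \liminf_n \frac{\widehat{\nu}_{\gamma,h}^{L_n}(U_N)}{m_{\gamma,h}^{L_n}} \leq \sum_{i=1}^{N} \limsup_n \frac{\widehat{\nu}_{\gamma,h}^{L_n}(\Pi(W_i))}{m_{\gamma,h}^{L_n}}.
\]
Chaining Lemma \ref{lemma:hor_meas_comp} with Proposition \ref{prop:ac_key_estimate}, and exploiting that the constant from Proposition \ref{prop:ac_key_estimate} depends only on a fixed compact set in $\mathcal{T}_g$ (containing the projections of all $X_i \in K$) and on $\varepsilon_0 = \varepsilon_0(K)$, each summand is bounded above by $C_0 \cdot \nu_{\mathrm{Mir}}(W_i)$ for a single constant $C_0 = C_0(K,h)$. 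Letting $N \to \infty$ and using continuity of $\widehat{\nu}$ along increasing unions yields
\[
\widehat{\nu}(\widehat{K}) \leq \widehat{\nu}(U) \leq C_0 \sum_{i=1}^\infty \nu_{\mathrm{Mir}}(W_i) \leq C_0 \delta,
\]
and sending $\delta \to 0$ gives $\widehat{\nu}(\widehat{K}) = 0$.

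The main obstacle is bookkeeping rather than analytic: one must verify that every ingredient can be applied with constants uniform across the countable cover. Uniformity of the comparison constant in $h$ is exactly what Lemma \ref{lemma:hor_meas_comp} provides, while uniformity of the Ara19b key estimate across the $W_i$'s relies critically on confining the centers $X_i$ to the fixed compact lift $K$, so that a single $\varepsilon_0(K)$ and single $C(K,\varepsilon_0)$ govern all terms. A secondary subtlety is justifying the Portmanteau-type bound $\widehat{\nu}(U_N) \leq \liminf_n \widehat{\nu}_{\gamma,h}^{L_n}(U_N)/m_{\gamma,h}^{L_n}$ in the vague-convergence setting, which holds because each $U_N$ is a finite union of sets of the form $\Pi(U_{X_i}(\varepsilon_i) \times V_i)$ and hence has compact closure in $\mathcal{P}^1\mathcal{M}_g$ (using that $\mathcal{PML}_g$ is compact).
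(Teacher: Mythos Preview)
Your proof is correct and follows essentially the same approach as the paper's: both reduce to a compact set with zero Mirzakhani measure, cover it using Lemma~\ref{lemma:approx}, and then combine Portmanteau, Lemma~\ref{lemma:hor_meas_comp}, and Proposition~\ref{prop:ac_key_estimate} to bound $\widehat{\nu}$ of the cover by $C\delta$. The only cosmetic differences are that you reduce via inner regularity of $\widehat{\nu}$ (the paper uses a compact exhaustion of $\mathcal{P}^1\mathcal{M}_g$) and that you pass to the full countable sum by letting $N\to\infty$ (the paper instead truncates to a finite $I$ at the cost of an extra $\delta$, yielding a bound of $(1+C)\delta$ rather than your $C_0\delta$); neither changes the argument in any substantive way.
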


\begin{proof}
	Let $\widehat{\nu}$ be some weak-$\star$ limit point of the sequence and let $L_j \nearrow +\infty$ be an increasing sequence of positive real numbers such that
	\[
	\lim_{j \to \infty} \frac{\widehat{\nu}_{\gamma,h}^{L_j}}{m_{\gamma,h}^{L_j}} = \widehat{\nu}.
	\]
	Let $\smash{\widehat{A}} \subseteq \mathcal{P}^1 \mathcal{M}_{g}$ be a Borel measurable subset such that $\widehat{\nu}_{\mathrm{Mir}}(\smash{\widehat{A}})=0$.
	Our goal is to show that $\widehat{\nu}(\smash{\widehat{A}}) = 0$.
	As $\mathcal{P}^1 \mathcal{M}_{g}$ admits a countable exhaustion by compact sets and as the limit measure $\widehat{\nu}$ is continuous with respect to increasing limits of sets, it suffices to consider those $\smash{\widehat{A}} \subseteq \smash{\widehat{K}}$ for some compact subset $\smash{\widehat{K}} \subseteq \mathcal{P}^1 \mathcal{M}_{g}$.
	Let $K \subseteq \mathcal{P}^1 \mathcal{T}_{g}$ be a compact subset covering $\smash{\widehat{K}}$ and $A \subseteq K$ be a subset covering $\smash{\widehat{A}}$. Notice that $\nu_{\text{Mir}}(A) = 0$.\\
	
	Let $\delta > 0$ be arbitrary and consider the countable cover $\{W_i\}_{i\in \mathbb{N}}$ of $A$ guaranteed by Lemma \ref{lemma:approx}.
	The monotonicity property of measures ensures that
	\begin{equation}\label{eqn:mon}
	\widehat{\nu}(\widehat{A}) \leq \widehat{\nu} \left(\bigcup_{i\in \mathbb{N}} \Pi(W_i)\right).
	\end{equation}
	On the other hand, as the limit measure $\widehat{\nu}$ is finite and continuous with respect to increasing limits of sets, we can find a finite subset $I \subseteq\mathbb{N}$ such that
	\begin{equation}\label{eqn:coverdeltaerror}
	\widehat{\nu} \left(\bigcup_{i\in \mathbb{N}} \Pi(W_i)\right)
	\leq \delta + \widehat{\nu} \left(\bigcup_{i\in I} \Pi(W_i)\right)
	\leq \delta +  \sum_{i\in I} {\widehat{\nu}}\left( \Pi(W_i)\right) 
	\end{equation}
	where the second inequality follows from the subadditivity property of measures.
	
	Now Portmanteau's theorem applied to open sets with compact closure ensures
	\begin{equation}\label{eqn:coverPort}
	\widehat{\nu} \left( \Pi(W_i)\right)  \leq \liminf_{j \to \infty} \ \frac{\widehat{\nu}_{\gamma,h}^{L_j}}{m_\gamma^{f,L_j}} \left(\Pi(W_i)\right)
	\end{equation}
	for every $i \in I$.
	Lemma \ref{lemma:hor_meas_comp} bounds the right-hand side in terms of the usual horoball measures $\widehat{\nu}_{\gamma}^{L}$, and thanks to the structure of our choice of cover, we know by Proposition \ref{prop:ac_key_estimate} that the $\widehat{\nu}_{\gamma}^{L}/m_\gamma^L$ measures of the $W_i$ sets are small.
	More concretely, taken together these statements provide a constant $C > 0$ such that
	\[
	\limsup_{j \to \infty} \frac{\widehat{\nu}_{\gamma,h}^{L_j}}{m_{\gamma,h}^{L_j}} \left( \Pi(W_i )\right) \leq C \cdot \nu_{\mathrm{Mir}}(W_i)
	\]
	for every $i \in I$. In particular, we have that
	\begin{equation}\label{eqn:smallmeaslimit}
	\limsup_{j \to \infty} \sum_{i\in I} \frac{\widehat{\nu}_{\gamma,h}^{L_j}}{m_{\gamma,h}^{L_j}} \left( \Pi(W_i)\right)  \leq C \cdot \sum_{i \in I} \nu_{\mathrm{Mir}}(W_i) \le C \cdot \delta,
	\end{equation}
	where the last inequality follows from our choice of the cover $\{W_i\}_{i \in \mathbb{N}}$ in Lemma \ref{lemma:approx}.
	Putting together \eqref{eqn:mon}, \eqref{eqn:coverdeltaerror}, \eqref{eqn:coverPort}, and \eqref{eqn:smallmeaslimit}, we get that
	\[
	\widehat{\nu}(\widehat{A}) \leq (1 + C) \cdot \delta.
	\]
	As $\delta  > 0$ was arbitrary, we see that $\widehat{\nu}(\widehat{A})  = 0$, completing the proof.
\end{proof}

\subsection*{No escape of mass.} We now show that every weak-$\star$ limit point of the measures $\widehat{\nu}_{\gamma,h}^L/m_{\gamma, h}^L$ on $\mathcal{P}^1\mathcal{M}_g$ is a probability measure; this does not follow automatically because the space $\PoM_g$ is not compact.

For every $s > 0$ denote by $\mathcal{K}_s \subseteq \mathcal{M}_{g}$ the $s$-thick part of moduli space and by $\mathcal{P}^1 \mathcal{K}_s \subseteq \mathcal{P}^1\mathcal{M}_{g}$ the natural lift of this subset to the bundle of unit length measured geodesic laminations over moduli space.
By Mumford's compactness criterion, these subsets are both compact.

We begin by recording the desired statement for the measures $\widehat{\nu}_{\gamma}^L$.

\begin{proposition}\cite[Proposition 3.9]{Ara19b}
    \label{prop:nem_1}
    For every $\delta > 0$ there exists an $s > 0$ such that the following bound holds:
    \[
    \liminf_{L \to \infty} \frac{\widehat{\nu}_{\gamma}^L(\mathcal{P}^1\mathcal{K}_s)}{m_{\gamma}^L} \geq 1 - \delta.
    \] 
\end{proposition}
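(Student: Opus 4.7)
My plan is to use Mirzakhani's integration formula twice: first to unfold the horoball measure along $\gamma$ (as already done in the proof of Proposition~\ref{prop:total_mass}), and then to bound the contribution of the $s$-thin part by unfolding along a second short simple closed geodesic $\beta$. The initial reduction is immediate: since $\widehat{\nu}_\gamma^L$ is supported on the graph of the unit-length section $X \mapsto (X, \gamma/\ell_\gamma(X))$, the equality $\widehat{\nu}_\gamma^L(\mathcal{P}^1\mathcal{K}_s) = \widehat{\mu}_\gamma^L(\mathcal{K}_s)$ reduces the problem to showing $\widehat{\mu}_\gamma^L(\mathcal{M}_g \setminus \mathcal{K}_s) = o(L^{6g-6})$ as $L \to \infty$, uniformly in $s$ for $s \le 1$.

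For the main estimate I would decompose the thin part of $\mathcal{M}_g$ according to which simple closed geodesic $\beta$ is short, splitting into three cases: $\beta = \gamma$; $\beta$ disjoint from $\gamma$; or $\beta$ crossing $\gamma$. The first case contributes $\tfrac{1}{2} \int_0^s \ell \, V_{g-1,2}(\ell,\ell)\,d\ell = O(s^{6g-6})$, independent of $L$. For the second, applying Mirzakhani's integration formula on $\mathcal{M}_{g-1,2}(\ell,\ell)$ gives
\[
\widehat{\mu}_{\WP}^{(\ell,\ell)}(\{Y : \ell_\beta(Y) < s\}) \le \frac{1}{|\mathrm{Stab}(\beta)|} \int_0^s \tilde{\ell} \cdot V_{\mathrm{cut}}(\ell,\ell,\tilde{\ell},\tilde{\ell})\,d\tilde{\ell},
\]
where $V_{\mathrm{cut}}$ is the Weil--Petersson volume polynomial of the further-cut moduli space, which has real dimension $6g-10$ and hence total polynomial degree $6g-10$. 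The integral is therefore bounded by $C s^2 \ell^{6g-10}$ for $s \le 1$ (the $s^2$ coming from the lowest-order term after multiplication by $\tilde{\ell}$). Summing over the finitely many $\mathrm{Mod}(\Sigma_{g-1,2})$-orbits and plugging into the Fenchel--Nielsen description $d\widetilde{\mu}_\gamma^L = \tfrac{1}{2}\, \ell \, d\ell \wedge d\tau \wedge d\widehat{\mu}_{\WP}^{(\ell,\ell)}$ yields a total contribution of order $s^2 L^{6g-8}$.

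The third case is the main obstacle I expect. By the collar lemma, $\ell_\beta(X) < s$ together with $i(\beta,\gamma) \ge 1$ force $\ell_\gamma(X) \ge 2\arcsinh(\sinh(s/2)^{-1}) \sim 2\log(4/s)$, restricting the relevant range of $\ell_\gamma$. Mirzakhani's integration formula applied to the ordered pair $(\gamma,\beta)$ --- where cutting along both curves produces a moduli space of dimension $6g-10$, regardless of intersection pattern --- still gives an $O(s^2 L^{6g-8})$ contribution per $\mathrm{Mod}_g$-orbit of pairs. The delicate point here is that, unlike in case~(ii), there are infinitely many topological types of pairs $(\gamma,\beta)$ distinguished by their intersection pattern, so controlling the sum over these orbits requires careful bookkeeping of the kind carried out in the analogous argument of \cite{Ara19b}. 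Combining all three cases produces $\widehat{\mu}_\gamma^L(\mathcal{K}_s^c) = O(s^{6g-6}) + O(s^2 L^{6g-8})$, so dividing by $m_\gamma^L \sim c L^{6g-6}$ gives a ratio of order $O(s^2/L^2) \to 0$ as $L \to \infty$, which in fact produces liminf $= 1$ for every $s > 0$ and is stronger than the stated bound.
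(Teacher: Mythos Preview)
The paper does not prove this proposition; it is quoted directly from \cite[Proposition 3.9]{Ara19b}, so there is no in-paper argument to compare against.

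Your sketch has a genuine error in case~(iii). Mirzakhani's integration formula applies only to disjoint systems of curves; when $\beta$ and $\gamma$ intersect, cutting along $\gamma\cup\beta$ yields a surface with corners rather than a point of any $\mathcal{M}_{g',b'}(\mathbf{L})$, so there is no volume polynomial $V_{\mathrm{cut}}$ available and the ``$O(s^2 L^{6g-8})$ per orbit'' estimate is unfounded. More decisively, your final bound $\widehat{\mu}_\gamma^L(\mathcal{M}_g\setminus\mathcal{K}_s)=O(s^{6g-6})+O(s^2 L^{6g-8})$ is impossible. By the unfolding identity (Proposition~\ref{prop:pull_push} with $h\equiv 1$),
\[
\widehat{\mu}_\gamma^L(\mathcal{M}_g\setminus\mathcal{K}_s)=\int_{\mathcal{M}_g\setminus\mathcal{K}_s} s(X,\gamma,L)\, d\widehat{\mu}_{\mathrm{wp}}(X),
\]
and on any fixed $s$-thin surface $X$ with short curve $\beta_0$, only $O(L^{6g-8})$ of the curves in $\mathrm{Mod}_g\cdot\gamma$ of length $\le L$ can be disjoint from $\beta_0$ (they must live on the lower-complexity subsurface $X\setminus\beta_0$, whose space of measured laminations has dimension $6g-8$). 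Since $s(X,\gamma,L)$ itself is of order $B(X)L^{6g-6}$ by \eqref{eq:mir}, case~(iii) is the \emph{dominant} contribution, genuinely of order $L^{6g-6}$; no bookkeeping over intersection types can reduce it to $O(L^{6g-8})$. The argument in \cite{Ara19b} instead works through the unfolded expression above together with a uniform upper bound for $s(X,\gamma,L)$ in terms of $B(X)L^{6g-6}$ and the integrability of $B$ over $\mathcal{M}_g$; this yields only $\limsup_{L\to\infty}\widehat{\mu}_\gamma^L(\mathcal{M}_g\setminus\mathcal{K}_s)/m_\gamma^L\le C(s)$ with $C(s)\to 0$ as $s\to 0$, which is exactly the stated strength and not the stronger $o(1)$-for-every-$s$ conclusion you reach.
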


Invoking the comparison result of Lemma \ref{lemma:hor_meas_comp} allows us to quickly deduce the corresponding statement for RSC-horoballs.

\begin{corollary}
    \label{cor:nem_2}
    For every $\delta > 0$ there exists an $s > 0$ such that
    \[
    \liminf_{L \to \infty} \frac{\widehat{\nu}_{\gamma,h}^L(\mathcal{P}^1\mathcal{K}_s)}{m_{\gamma,h}^L} \geq 1 - \delta.
    \]
\end{corollary}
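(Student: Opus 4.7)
The plan is to reduce the statement to Proposition \ref{prop:nem_1} by passing to complements and invoking the comparison estimate of Lemma \ref{lemma:hor_meas_comp}. Since $\widehat{\nu}_{\gamma,h}^L$ is a finite measure with total mass $m_{\gamma,h}^L$, establishing that the $\mathcal{P}^1\mathcal{K}_s$ mass captures most of $m_{\gamma,h}^L$ is equivalent to showing that the mass of its complement $\mathcal{P}^1\mathcal{M}_g \setminus \mathcal{P}^1\mathcal{K}_s$ is asymptotically negligible.

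First I would fix $\delta > 0$ and let $C = C(h) > 0$ be the constant produced by Lemma \ref{lemma:hor_meas_comp}. Applying Proposition \ref{prop:nem_1} with error parameter $\delta/C$ in place of $\delta$, I obtain an $s = s(\delta/C) > 0$ with
\[
\liminf_{L \to \infty} \frac{\widehat{\nu}_{\gamma}^L(\mathcal{P}^1\mathcal{K}_s)}{m_{\gamma}^L} \geq 1 - \frac{\delta}{C},
\]
which, since $\widehat{\nu}_{\gamma}^L$ has total mass $m_\gamma^L$, is equivalent to
\[
\limsup_{L \to \infty} \frac{\widehat{\nu}_{\gamma}^L(\mathcal{P}^1\mathcal{M}_g \setminus \mathcal{P}^1\mathcal{K}_s)}{m_{\gamma}^L} \leq \frac{\delta}{C}.
\]

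Next I apply Lemma \ref{lemma:hor_meas_comp} to the Borel set $A = \mathcal{P}^1\mathcal{M}_g \setminus \mathcal{P}^1\mathcal{K}_s$ to conclude
\[
\limsup_{L \to \infty} \frac{\widehat{\nu}_{\gamma,h}^L(\mathcal{P}^1\mathcal{M}_g \setminus \mathcal{P}^1\mathcal{K}_s)}{m_{\gamma,h}^L} \leq C \cdot \limsup_{L \to \infty} \frac{\widehat{\nu}_{\gamma}^L(\mathcal{P}^1\mathcal{M}_g \setminus \mathcal{P}^1\mathcal{K}_s)}{m_{\gamma}^L} \leq \delta.
\]
Passing to complements once more and using that $\widehat{\nu}_{\gamma,h}^L$ has total mass $m_{\gamma,h}^L$ yields the desired bound
\[
\liminf_{L \to \infty} \frac{\widehat{\nu}_{\gamma,h}^L(\mathcal{P}^1\mathcal{K}_s)}{m_{\gamma,h}^L} \geq 1 - \delta.
\]

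There is no genuine obstacle here: Proposition \ref{prop:nem_1} does the substantive work of ruling out escape of mass into the cusps of $\M_g$ for the unweighted horoball measures, and Lemma \ref{lemma:hor_meas_comp} transfers such upper bounds to the $h$-weighted setting up to a multiplicative constant, which is exactly the form of statement needed to control the complement. The only ``trick'' is to absorb the factor $C = C(h)$ by choosing the thickness parameter using $\delta/C$ rather than $\delta$ in the unweighted statement.
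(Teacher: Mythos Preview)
Your proof is correct and follows essentially the same approach as the paper: fix $C=C(h)$ from Lemma~\ref{lemma:hor_meas_comp}, apply Proposition~\ref{prop:nem_1} with parameter $\delta/C$, pass to complements, invoke Lemma~\ref{lemma:hor_meas_comp} on the complement, and pass back. The sequence of steps and the absorption of the constant $C$ into the choice of thickness parameter are identical to the paper's argument.
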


\begin{proof}
    Let $C = C(h) > 0$ be as in Lemma \ref{lemma:hor_meas_comp}. Fix $\delta > 0$. Proposition \ref{prop:nem_1} ensures there exists $s > 0$ such that
    \[
    \liminf_{L \to \infty} \frac{\widehat{\nu}_{\gamma}^L(\mathcal{P}^1\mathcal{K}_s)}{m_{\gamma}^L} \geq 1 - \delta/C.
    \] 
    Taking complements we deduce
    \[
    \limsup_{L \to \infty} \frac{\widehat{\nu}_{\gamma}^L(\mathcal{P}^1\mathcal{M}_g \setminus \mathcal{P}^1\mathcal{K}_s)}{m_{\gamma}^L} \leq \delta/C.
    \]
    It follows from Lemma \ref{lemma:hor_meas_comp} that
    \[
        \limsup_{L \to \infty} \frac{\widehat{\nu}_{\gamma,h}^L(\mathcal{P}^1\mathcal{M}_g \setminus \mathcal{P}^1\mathcal{K}_s)}{m_{\gamma,h}^L} \leq C \cdot \limsup_{L \to \infty} \frac{\widehat{\nu}_{\gamma}^L(\mathcal{P}^1\mathcal{M}_g \setminus \mathcal{P}^1\mathcal{K}_s)}{m_{\gamma,h}^L} \leq \delta.
    \]
    Taking complements we conclude
    \[
    \liminf_{L \to \infty} \frac{\widehat{\nu}_{\gamma,h}^L(\mathcal{P}^1\mathcal{K}_s)}{m_{\gamma,h}^L} \geq 1 - \delta. \qedhere
    \]
\end{proof}

As such, we see that no mass can escape out the cusp of $\PoM_g$, hence standard arguments allow us to conclude the following.

\begin{corollary}\label{cor:prob}
Every weak-$\star$ limit of $(\widehat{\nu}_{\gamma,h}^L/ m_{\gamma}^L)_{L > 0}$ is a probability measure.
\end{corollary}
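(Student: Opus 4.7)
The plan is to deduce the result directly from the no-escape-of-mass bound in Corollary \ref{cor:nem_2} together with the standard behavior of weak-$\star$ convergence of Radon measures against compactly supported test functions. Since each $\widehat{\nu}_{\gamma,h}^L/m_{\gamma,h}^L$ is a probability measure on the non-compact space $\mathcal{P}^1\mathcal{M}_g$, the total mass of a weak-$\star$ limit could a priori drop, so the content of the statement is precisely that no mass escapes to infinity.

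Concretely, let $\widehat{\nu}$ be a weak-$\star$ limit point, with $\widehat{\nu}_{\gamma,h}^{L_j}/m_{\gamma,h}^{L_j} \to \widehat{\nu}$ along some $L_j \nearrow \infty$. For the upper bound $\widehat{\nu}(\mathcal{P}^1\mathcal{M}_g) \leq 1$, I would pair against any $f \in C_c(\mathcal{P}^1\mathcal{M}_g)$ with $0 \leq f \leq 1$ and note that weak-$\star$ convergence gives $\int f \thinspace d\widehat{\nu} \leq 1$; taking the supremum over all such $f$ and invoking inner regularity of Radon measures on the $\sigma$-compact space $\mathcal{P}^1\mathcal{M}_g$ then yields the desired bound on the total mass. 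For the lower bound, fix $\delta > 0$ and apply Corollary \ref{cor:nem_2} to obtain an $s > 0$ with $\liminf_L \widehat{\nu}_{\gamma,h}^L(\mathcal{P}^1\mathcal{K}_s)/m_{\gamma,h}^L \geq 1 - \delta$. Choose a bump $\chi \in C_c(\mathcal{P}^1\mathcal{M}_g)$ with $0 \leq \chi \leq 1$ and $\chi \equiv 1$ on the compact set $\mathcal{P}^1\mathcal{K}_s$. Then for $j$ sufficiently large one has $\int \chi \thinspace d(\widehat{\nu}_{\gamma,h}^{L_j}/m_{\gamma,h}^{L_j}) \geq 1 - 2\delta$, so weak-$\star$ convergence yields $\int \chi \thinspace d\widehat{\nu} \geq 1 - 2\delta$, and since $\chi \leq 1$ this forces $\widehat{\nu}(\mathcal{P}^1\mathcal{M}_g) \geq 1 - 2\delta$. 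Letting $\delta \to 0$ completes the proof.

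There is essentially no substantive obstacle at this stage: the current statement is a routine tightness argument, and all the genuine work has already been carried out upstream in establishing the no-escape-of-mass bound, which itself leans on Proposition \ref{prop:nem_1} from \cite{Ara19b} together with the comparison Lemma \ref{lemma:hor_meas_comp}. The only minor technical point is to ensure that a compactly supported bump function $\chi$ with the required properties exists, which is immediate as $\mathcal{P}^1\mathcal{M}_g$ is metrizable and $\sigma$-compact.
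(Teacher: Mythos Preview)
Your proof is correct and is precisely the ``standard argument'' the paper alludes to; the paper does not give any further details beyond noting that Corollary \ref{cor:nem_2} prevents escape of mass. One small remark: the normalization in the statement as printed reads $m_{\gamma}^L$, but this is evidently a typo for $m_{\gamma,h}^L$ (as in Corollary \ref{cor:nem_2} and the proof of Theorem \ref{theo:horoball_equid_1}); you have correctly worked with the intended normalization.
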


\subsection*{Endgame.} We are now ready to prove Theorem \ref{theo:horoball_equid_1}.

\begin{proof}[Proof of Theorem \ref{theo:horoball_equid_1}.]
    The Banach-Alaoglu theorem ensures that the sequence of measures $\smash{(\widehat{\nu}_{\gamma,h}^L/m_{\gamma}^L)_{L>0}}$ has a weak-$\star$ limit point in the space of regular measures on $\mathcal{P}^1\mathcal{M}_g$. By Lemma \ref{lem:invariance}, Proposition \ref{prop:abs_cont}, and Corollary \ref{cor:prob}, every such limit point is invariant under the earthquake flow, absolutely continuous with respect to $\widehat{\nu}_\mathrm{Mir}$, and a probability measure. As the earthquake flow is ergodic  with respect to $\widehat{\nu}_\mathrm{Mir}$ (Theorem \ref{theo:ergodic}), every such limit point must be equal to $\widehat{\nu}_\mathrm{Mir}/b_g$.
\end{proof}

\section{Equidistribution of complementary subsurfaces: the first case}\label{sec:avg_unfold}

\subsection*{Outline of this section.} In this section we prove Theorem \ref{theo:main}, the main result of this paper in the special case of non-separating simple closed curves.
We begin by reducing the original equidistribution question to a counting problem for metric ribbon graphs. Using work of Luo (Theorem \ref{thm:Luo}), this is equivalent to a counting problem for surfaces in a certain region of moduli space.
Averaging and unfolding techniques allow us to further reduce this counting problem to an equidistribution question for RSC-horoball measures;
Propositions \ref{prop:asymp_triv} and \ref{prop:weightsvary} will play an important role at this stage of the proof.
The results of \S\ref{sec:RSCequi} (which rely on the ergodicity of the earthquake flow) guarantee these measures equidistribute.
The relationship between the total mass of RSC-horoball measures and the Kontsevich measure (explained in Proposition \ref{prop:total_mass}) plays a key role in the averaging and unfolding argument.

\subsection*{Reducing equidistribution to counting.}
For the rest of this section fix an integer $g \geq 2$ and a connected, oriented, closed surface $S_g$ of genus $g$. Let $\gamma$ be a non-separating simple closed curve on $S_g$ and fix a marked hyperbolic structure $X \in \mathcal{T}_g$.
Recall that for every $L > 0$ we are considering the counting function
\[
s(X,\gamma,L) := \# \{\alpha \in \mathrm{Mod}_g \cdot \gamma \ | \ \ell_\alpha(X) \leq L \}
\]
which does not depend on the marking of $X \in \mathcal{T}_g$ but only on its underlying hyperbolic structure $X \in \mathcal{M}_g$.
For the convenience of the reader, we restate Mirzakhani's asymptotic count:
\begin{equation}
\label{eq:mir}
\lim_{L \to \infty} \frac{s(X,\gamma,L)}{L^{6g-6}} = \frac{c(\gamma) \cdot B(X)}{b_g}
\end{equation}
where the constants $c(\gamma) > 0$, $B(X) > 0$, and $b_g > 0$ were introduced in \S1 (and further discussed in the beginning of Section \ref{sec:RSCequi}).

We recall that on $\mathcal{MRG}_{g-1,2}(1,1)$ we want to study the asymptotic distribution of the counting measure
\[
\eta_{X,\gamma}^L := \sum_{\alpha \in \mathrm{Mod}_g \cdot \gamma} \mathbbm{1}_{[0,L]}(\ell_\alpha(X)) \cdot \delta_{\mathrm{RSC}_\alpha(X)}.
\]
Recall that $\eta_\mathrm{Kon}$ denotes the Kontsevich measure on $\mathcal{MRG}_{g-1,2}(1,1)$ and $c_g := \eta_\mathrm{Kon}(\mathcal{MRG}_{g-1,2}(1,1))> 0$ its total mass.
For the convenience of the reader we restate Theorem \ref{theo:main} here; this is the main result of this paper for non-separating simple closed curves and its proof will occupy the bulk of this section.

\begin{theorem}
\label{theo:main_2}
Let $\gamma$ be a non-separating simple closed curve on $S_g$ and $X \in \mathcal{M}_g$. Then, with respect to the weak-$\star$ topology for measures on $\mathcal{MRG}_{g-1,2}(1,1)$,
\[
\lim_{L \to \infty} \frac{\eta_{X,\gamma}^L}{s(X,\gamma,L)} = \frac{\eta_{\mathrm{Kon}}}{c_g}.
\]
\end{theorem}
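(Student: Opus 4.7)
The plan is to prove the equivalent Theorem \ref{theo:red_intro}: fix a non-zero, non-negative, continuous, compactly supported $f \colon \mathcal{MRG}_{g-1,2}(1,1) \to \mathbf{R}_{\geq 0}$ and show
\[
\lim_{L \to \infty} \frac{c(X,\gamma,f,L)}{L^{6g-6}} = \frac{B(X)}{b_g(12g-12)} \int f \, d\eta_{\mathrm{Kon}}.
\]
Combined with Mirzakhani's asymptotic \eqref{eq:mir} and the relation $c(\gamma)=c_g/(12g-12)$ from Corollary \ref{cor:identity}, this is equivalent to the stated theorem after dividing through by $s(X,\gamma,L)$.

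To prove the displayed limit I would use Margulis's averaging-and-unfolding technique. Fix a lift $X_0 \in \mathcal{T}_g$ and choose a non-negative $\psi$ on $\mathcal{M}_g$ of unit Weil--Petersson mass whose support is a small Teichmüller ball around $X_0$. Define the averaged counting function
\[
A_\psi(L) := \int_{\mathcal{M}_g} \psi(Y)\, c(Y,\gamma,f,L)\, d\widehat{\mu}_{\mathrm{wp}}(Y).
\]
Since both $\psi(Y)$ and $c(\cdot,\gamma,f,L)$ are $\mathrm{Mod}_g$-invariant, the standard unfolding identity for sums over $\mathrm{Mod}_g \cdot \gamma$, together with the pushforward formula \eqref{eqn:RSC/Stab}, rewrites
\[
A_\psi(L) = \int_{\mathcal{T}_g/\mathrm{Stab}(\gamma)} \psi(Y)\, \mathbbm{1}_{[0,L]}(\ell_\gamma(Y))\, f(\mathrm{RSC}_\gamma(Y))\, d\widetilde{\mu}_{\mathrm{wp}}^\gamma(Y) = \int_{\mathcal{M}_g} \psi\, d\widehat{\mu}_{\gamma,f}^L.
\]
Combining the total mass computation of Proposition \ref{prop:total_mass} with the horoball equidistribution of Corollary \ref{cor:equid} then yields
\[
\lim_{L \to \infty} \frac{A_\psi(L)}{L^{6g-6}} = \frac{1}{b_g(12g-12)}\left(\int f \, d\eta_{\mathrm{Kon}}\right) \int_{\mathcal{M}_g} \psi(Y)\, B(Y)\, d\widehat{\mu}_{\mathrm{wp}}(Y).
\]

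The main obstacle is passing from this averaged statement back to a pointwise limit at $X$. The strategy is to shrink the support of $\psi$ toward $\{X\}$ and use continuity of $B$ to recover the desired right-hand side; the content is in showing that $c(Y,\gamma,f,L)/L^{6g-6}$ varies only negligibly over the support of $\psi$. This is exactly where the uniform estimates of Sections \ref{sec:asymptriv} and \ref{sec:weightsvary} are essential. For each $\alpha \notin F(X_0,\gamma,K_2)$, Proposition \ref{prop:weightsvary} gives a bi-Lipschitz comparison (up to additive $K_2$) between $|e|_Y$ and $|e|_{X_0}$ for every edge $e$ of the spine $\mathrm{SC}_\alpha$; since all these weights and $\ell_\alpha$ are large in this regime, rescaling the spine to unit boundary length absorbs the additive error, so $\mathrm{RSC}_\alpha(Y)$ lies in an arbitrarily small neighborhood of $\mathrm{RSC}_\alpha(X_0)$ as the support of $\psi$ shrinks, and the uniform continuity of $f$ on its compact support converts this into the required approximation. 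Meanwhile Proposition \ref{prop:asymp_triv} guarantees that the contribution to $c$ from the exceptional set $F(X_0,\gamma,K_2)$ is $O(L^{6g-7})$, of strictly lower order than $L^{6g-6}$ and therefore negligible. Combining these bounds with the averaged limit above produces the pointwise asymptotic for $c(X,\gamma,f,L)/L^{6g-6}$ and thus Theorem \ref{theo:main_2}.
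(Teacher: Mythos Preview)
Your proposal is correct and follows essentially the same route as the paper: reduce to the counting statement, unfold the averaged count into an integral against the RSC-horoball measure $\widehat{\mu}_{\gamma,f}^L$, apply Proposition~\ref{prop:total_mass} and Corollary~\ref{cor:equid}, and then use Propositions~\ref{prop:asymp_triv} and~\ref{prop:weightsvary} to pass from the averaged limit back to the pointwise one.

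The only places your sketch is looser than the paper are packaging choices. The paper encodes the comparison $f(\mathrm{RSC}_\alpha(X)) \approx f(\mathrm{RSC}_\alpha(Y))$ by introducing the auxiliary functions $f_{2\varepsilon,\delta}^{\max}$ and $f_{2\varepsilon,\delta}^{\min}$ and sandwiching $c(X,\gamma,f,L)$ between $c(Y,\gamma,f_{2\varepsilon,\delta}^{\min},e^{-\varepsilon}L)$ and $c(Y,\gamma,f_{2\varepsilon,\delta}^{\max},e^{\varepsilon}L)$ before unfolding; you instead unfold with $f$ and compare afterward, which is equivalent. One technical point you elide: the additive error $K_2$ from Proposition~\ref{prop:weightsvary} is absorbed by rescaling only when $\ell_\alpha(X)$ is large, so you need a preliminary truncation (the paper uses $\ell_\alpha(X)\ge\sqrt{L}$) to discard the negligibly many short curves before invoking the edge-weight comparison. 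With that in place your argument goes through exactly as in the paper.
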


As explained in the introduction, to prove Theorem \ref{theo:main_2} we first reduce to a counting problem for metric ribbon graphs. More concretely, it is enough to show that for every non-zero, non-negative, continuous, compactly supported $f \colon \mathcal{MRG}_{g-1,2}(1,1) \to \mathbf{R}_{\geq 0}$,
\begin{gather*}
\lim_{L \to \infty} \frac{1}{s(X,\gamma,L)} \int_{\mathcal{MRG}_{g-1,2}(1,1)} f(x) \thinspace d\eta_{X,\gamma}^L(x) \\
= \frac{1}{c_g}\int_{\mathcal{MRG}_{g-1,2}(1,1)} f(x) \thinspace d\eta_\mathrm{Kon}(x).
\end{gather*}

For the rest of this section we fix such a function $f$.
For every $L > 0$ consider the $f$-weighted counting function
\begin{align}
\label{eq:count}
c(X,\gamma,f,L) &:= \int_{\mathcal{MRG}_{g-1,2}(1,1)} f(x) \thinspace d\eta_{X,\gamma}^L(x)\\
&\phantom{:}= \sum_{\alpha \in \mathrm{Mod}_g \cdot \gamma} \mathbbm{1}_{[0,L]}(\ell_\alpha(X)) \cdot f(\mathrm{RSC}_\alpha(X)). \nonumber
\end{align}
The rest of this section is devoted to proving the ensuing result, from which Theorem \ref{theo:main_2} follows directly by the above discussion. This appears as Theorem \ref{theo:red_intro} in the introduction.

\begin{theorem}
\label{theo:red}
With all notation as above,
\[
\lim_{L \to \infty} \frac{c(X,\gamma,f,L)}{s(X,\gamma,L)} = \frac{1}{c_g} \int_{\mathcal{MRG}_{g-1,2}(1,1)} f(x) \thinspace d\eta_\mathrm{Kon}(x).
\]
\end{theorem}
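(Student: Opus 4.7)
The plan is to combine Margulis's averaging-and-unfolding technique with the RSC-horoball equidistribution of Corollary \ref{cor:equid}, using the uniform control provided by Propositions \ref{prop:asymp_triv} and \ref{prop:weightsvary} to pass from an integrated statement to a pointwise one.

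First, I would unfold and apply horoball equidistribution. Fix a non-negative, compactly supported, continuous test function $\phi \colon \mathcal{M}_g \to \mathbf{R}_{\geq 0}$. Since the summand defining $c(Y, \gamma, f, L)$ is $\mathrm{Mod}_g$-equivariant, the standard unfolding identity transforms the sum from $\mathcal{M}_g$ to the intermediate cover $\mathcal{T}_g/\mathrm{Stab}(\gamma)$, and comparison with (\ref{eqn:RSC/Stab}) yields
\[
\int_{\mathcal{M}_g} \phi(Y)\, c(Y,\gamma,f,L)\, d\widehat{\mu}_\mathrm{wp}(Y) \;=\; \int_{\mathcal{M}_g} \phi\, d\widehat{\mu}_{\gamma,f}^{L}.
\]
Combining Corollary \ref{cor:equid}, Proposition \ref{prop:total_mass}, and Corollary \ref{cor:identity}, I obtain the averaged asymptotic
\[
\lim_{L \to \infty} \frac{1}{L^{6g-6}} \int_{\mathcal{M}_g} \phi(Y)\, c(Y,\gamma,f,L)\, d\widehat{\mu}_\mathrm{wp}(Y) \;=\; \frac{c(\gamma)}{b_g\, c_g} \Bigl(\int \phi\, B\, d\widehat{\mu}_\mathrm{wp}\Bigr)\Bigl(\int f\, d\eta_\mathrm{Kon}\Bigr).
\]

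Next, I would de-average to obtain the pointwise limit at a given $X \in \mathcal{M}_g$. Let $(\phi_\varepsilon)$ be a family of non-negative continuous approximations to $\delta_X$ supported in an $\varepsilon$-bi-Lipschitz neighborhood of $X$. After dividing by $\int \phi_\varepsilon\, d\widehat{\mu}_\mathrm{wp}$ and invoking continuity of $B$, the right-hand side of the averaged asymptotic applied to $\phi_\varepsilon$ tends to $\frac{c(\gamma) B(X)}{b_g c_g} \int f\, d\eta_\mathrm{Kon}$ as $\varepsilon \to 0$. In view of Mirzakhani's asymptotic (\ref{eq:mir}), Theorem \ref{theo:red} thus reduces to a uniform comparison of the form
\[
\sup_{Y \in \operatorname{supp}(\phi_\varepsilon)} \frac{|c(Y,\gamma,f,L) - c(X,\gamma,f,L)|}{L^{6g-6}} \;=\; o_{L\to\infty}(1) + o_{\varepsilon \to 0}(1).
\]

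The main obstacle is establishing this uniform comparison, and this is exactly where the preparatory results of \S\ref{sec:asymptriv} and \S\ref{sec:weightsvary} come in. I would split each counting function into a good part, over $\alpha \notin F(X, \gamma, K_2)$, and a bad part, over $F(X, \gamma, K_2)$, with $K_2 = K_2(X)$ as in Proposition \ref{prop:weightsvary}. Proposition \ref{prop:asymp_triv} immediately bounds the bad contribution by $O_X(\|f\|_\infty L^{6g-7})$, which is negligible upon dividing by $L^{6g-6}$. For the good part, Lemma \ref{lem:edgespersist} ensures $\mathrm{RSC}_\alpha(Y)$ and $\mathrm{RSC}_\alpha(X)$ have the same combinatorial type, and Proposition \ref{prop:weightsvary} gives a multiplicative-plus-additive comparison of their edge weights, uniform in $\alpha$. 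Since the good weights are all at least $K_2$, after the RSC rescaling the two metric ribbon graphs lie in an $O(\varepsilon)$-neighborhood of each other in $\mathcal{MRG}_{g-1,2}(1,1)$, so the uniform continuity of $f$ on its compact support forces $|f(\mathrm{RSC}_\alpha(Y)) - f(\mathrm{RSC}_\alpha(X))| = o_{\varepsilon\to 0}(1)$ uniformly in $\alpha$. Meanwhile, the length indicators $\mathbbm{1}_{[0,L]}(\ell_\alpha(\cdot))$ only disagree on the boundary sandwich $\ell_\alpha(X) \in [L e^{-\varepsilon}, L e^{\varepsilon}]$, which by Mirzakhani's polynomial asymptotics contains only $O_X(\varepsilon L^{6g-6})$ curves. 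Assembling these three estimates yields the required uniform comparison and completes the proof of Theorem \ref{theo:red}.
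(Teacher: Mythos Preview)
Your overall strategy is correct and is essentially the paper's own argument --- average against a bump function, unfold to an RSC-horoball integral, apply Corollary~\ref{cor:equid} and Proposition~\ref{prop:total_mass}, then de-average. There is, however, one genuine imprecision in your de-averaging step.

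You assert that for $\alpha \notin F(X,\gamma,K_2)$, ``after the RSC rescaling the two metric ribbon graphs lie in an $O(\varepsilon)$-neighborhood of each other,'' justified by the fact that the good weights are all at least $K_2$. This does not follow. Proposition~\ref{prop:weightsvary} gives $e^{-\varepsilon}|e|_X - K_2 \le |e|_Y \le e^{\varepsilon}|e|_X + K_2$; after dividing by $\ell_\alpha(\cdot)$ and using $\ell_\alpha(Y) = e^{\pm\varepsilon}\ell_\alpha(X)$, the rescaled weights differ by $O(\varepsilon) + O(K_2/\ell_\alpha(X))$. The lower bound $|e|_X \ge K_2$ on individual edges only forces $\ell_\alpha(X) \gtrsim K_2$, so the additive error $K_2/\ell_\alpha(X)$ is merely $O(1)$, not $O(\varepsilon)$. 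To make it vanish you must first restrict to curves with $\ell_\alpha(X)$ large --- the paper uses $\ell_\alpha(X) \ge \sqrt{L}$ --- which by Mirzakhani's asymptotic \eqref{eq:mir} discards only $o(L^{6g-6})$ terms. With that truncation in place your good-part error is indeed $o_{\varepsilon\to 0}(1) + o_{L\to\infty}(1)$, matching the form you announced, and the rest of your argument goes through.

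The paper packages the same idea slightly differently: rather than bounding $|c(Y,\cdot)-c(X,\cdot)|$, it first uses the geometric comparison (including the $\sqrt{L}$ truncation) to sandwich $c(X,\gamma,f,L)$ between $c(Y,\gamma,f^{\min}_{2\varepsilon,\delta},e^{-\varepsilon}L)$ and $c(Y,\gamma,f^{\max}_{2\varepsilon,\delta},e^{\varepsilon}L)$, where $f^{\max}_{2\varepsilon,\delta}$ and $f^{\min}_{2\varepsilon,\delta}$ are sup and inf of $f$ over the neighborhoods $N_{2\varepsilon,\delta}$ and $\delta = 3K/\sqrt{L}$ (Proposition~\ref{prop:comp} and its corollaries). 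It then integrates in $Y$, unfolds (Proposition~\ref{prop:pull_push}), applies horoball equidistribution with $h = f^{\max}_{2\varepsilon,\delta}$ or $f^{\min}_{2\varepsilon,\delta}$, and finally sends $\varepsilon,\delta \to 0$ via monotone convergence. The additive error is thus absorbed into the parameter $\delta$ of the test function rather than bounded directly, but the content is the same once your missing truncation is inserted.
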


\subsection*{Averaging counts.} Our next goal is to average the counting functions introduced in (\ref{eq:count}) over small neighborhoods of moduli space. Using the results from \S \ref{sec:weightsvary}, we first study how these counting functions vary in such neighborhoods. 

Recall that Proposition \ref{prop:weightsvary} states that for any $\varepsilon$-bi-Lipschitz close hyperbolic structures $X, Y \in \T_g$ and any $\alpha$ so that all of the edges of $SC_\alpha(X)$ are long, the edges of $SC_\alpha(X)$ and $SC_\alpha(Y)$ differ by a multiplicative and additive constant. We now define analogous neighborhoods in the moduli space of ribbon graphs.

Given $\mrg \in \mathcal{MRG}_{g-1,2}(1,1)$ and positive constants $\varepsilon, \delta > 0$, denote by $N_{\varepsilon,\delta}(\mrg)$ the set of all $\mrgy \in \mathcal{MRG}_{g-1,2}(1,1)$ in the same facet as $\mrg$, i.e., with the same topological type underlying ribbon graph as $\mrg$, such that for every edge $e$ of $\mrg$ and $\mrgy$,
\[
e^{-\varepsilon} \cdot |e|_\mrg - \delta \leq |e|_\mrgy \leq e^\varepsilon \cdot |e|_\mrg + \delta
\]
(here we have implicitly fixed a local marking so we can compare the weights of specific edges).
For every $\varepsilon > 0$ and $\delta > 0$ consider the averaged functions
\[
f_{\varepsilon,\delta}^{\min}, f_{\varepsilon,\delta}^{\max} \colon \mathcal{MRG}_{g-1,2}(1,1) \to \mathbf{R}_{\geq 0}
\]
given by
\[
f_{\varepsilon,\delta}^{\min}(\mrg) :=\min_{\mrgy \in N_{\varepsilon,\delta}(\mrg)} f(\mrgy),\quad
f_{\varepsilon,\delta}^{\max}(\mrg) := \max_{\mrgy \in N_{\varepsilon,\delta}(\mrg)} f(\mrgy).
\]

Denote by $o_X(L^{6g-6})$ any function $o_X \colon \mathbf{R}_{>0} \to \mathbf{R}$ depending on $X \in \mathcal{M}_g$ with
\[
\lim_{L \to \infty} \frac{o_X(L)}{L^{6g-6}} = 0.
\]

We begin our proof of Theorem \ref{theo:red} with the following estimate, which allows us to compare counts of curves on $X$ with counts on nearby surfaces.
Proposition \ref{prop:asymp_triv}, Lemma \ref{lem:edgespersist}, and Proposition \ref{prop:weightsvary} play a crucial role in the proof of this result.
 
\begin{proposition}
\label{prop:comp}
There exists $K = K(X) > 0$ such that for every non-zero, continuous, compactly supported function $f \colon \mathcal{MRG}_{g-1,2}(1,1) \to \mathbf{R}_{\geq 0}$, every $\varepsilon \in (0,1)$, every $Y \in \mathcal{T}_g$ that is $e^\varepsilon$-bi-Lipschitz close to $X$, and every $L > 0$,
\begin{gather}
    c\left(Y,\gamma,f_{2\varepsilon,3K/\sqrt{L}}^{\min},e^{-\varepsilon} L\right) + \| f \|_\infty \cdot o_{X}\left(L^{6g-6}\right) \leq c(X,\gamma,f,L), \label{eq:bd_1}\\
    c(X,\gamma,f,L) \leq c\left(Y,\gamma,f_{2\varepsilon,3K/\sqrt{L}}^{\max},e^\varepsilon L\right) + \|f\|_\infty \cdot o_{X}\left(L^{6g-6}\right). \label{eq:bd_2}
\end{gather}
\end{proposition}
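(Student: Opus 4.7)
The plan is to split the sum defining $c(X,\gamma,f,L)$ into a negligible ``bad'' part and a ``good'' part on which the RSC construction transfers reliably between $X$ and $Y$. Set $K := K_2(X)$ as in Proposition~\ref{prop:weightsvary}; by the remark there, $K$ may be taken uniform over a bi-Lipschitz neighborhood of $X$, and without loss of generality $\varepsilon < \varepsilon_0(X)$ so that the proposition applies at both $X$ and $Y$.

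First, I partition $\{\alpha \in \mathrm{Mod}_g\cdot\gamma : \ell_\alpha(X) \le L\}$ into three classes: (i) short curves with $\ell_\alpha(X) \le \sqrt{L}$; (ii) singular curves $\alpha \in F(X,\gamma,K)$ with $\sqrt{L} < \ell_\alpha(X) \le L$; and (iii) good curves $\alpha \notin F(X,\gamma,K)$ with $\sqrt{L} < \ell_\alpha(X) \le L$. Mirzakhani's asymptotic \eqref{eq:mir} bounds the count in (i) by $O_X(L^{3g-3}) = o_X(L^{6g-6})$, and Proposition~\ref{prop:asymp_triv} bounds (ii) by $o_X(L^{6g-6})$; together these contribute $\|f\|_\infty \cdot o_X(L^{6g-6})$, which will account for the error terms in \eqref{eq:bd_1} and \eqref{eq:bd_2}.

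For the good curves in (iii), Lemma~\ref{lem:edgespersist} places $\mathrm{RSC}_\alpha(X)$ and $\mathrm{RSC}_\alpha(Y)$ in the same facet of $\mathcal{MRG}_{g-1,2}(1,1)$, and Proposition~\ref{prop:weightsvary} yields $e^{-\varepsilon}|e|_X - K \le |e|_Y \le e^\varepsilon|e|_X + K$ for each corresponding edge. Combined with the bi-Lipschitz comparison $e^{-\varepsilon}\ell_\alpha(X) \le \ell_\alpha(Y) \le e^\varepsilon\ell_\alpha(X)$ and the truncation $\ell_\alpha(X) \ge \sqrt{L}$, a direct computation (using $\varepsilon < 1$ so that $e^\varepsilon < 3$) gives
\[
e^{-2\varepsilon}|e|_{\mathrm{RSC}_\alpha(X)} - \frac{3K}{\sqrt{L}} \le |e|_{\mathrm{RSC}_\alpha(Y)} \le e^{2\varepsilon}|e|_{\mathrm{RSC}_\alpha(X)} + \frac{3K}{\sqrt{L}},
\]
so that (after possibly enlarging $K$ to symmetrize) both $\mathrm{RSC}_\alpha(Y) \in N_{2\varepsilon,3K/\sqrt{L}}(\mathrm{RSC}_\alpha(X))$ and the reverse hold. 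In particular $f(\mathrm{RSC}_\alpha(X)) \le f_{2\varepsilon,3K/\sqrt{L}}^{\max}(\mathrm{RSC}_\alpha(Y))$ and $\ell_\alpha(Y) \le e^\varepsilon L$, and summing over the good $\alpha$ proves \eqref{eq:bd_2}. The inequality \eqref{eq:bd_1} will follow by the symmetric argument starting from $c(Y,\gamma,f_{2\varepsilon,3K/\sqrt{L}}^{\min},e^{-\varepsilon}L)$: every $\alpha$ contributing has $\ell_\alpha(X) \le L$, and on the good curves the min-averaging gives $f_{2\varepsilon,3K/\sqrt{L}}^{\min}(\mathrm{RSC}_\alpha(Y)) \le f(\mathrm{RSC}_\alpha(X))$.

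The main obstacle is dimensional: the additive error $K$ in Proposition~\ref{prop:weightsvary} is a hyperbolic length, whereas the averaged functions $f_{\varepsilon,\delta}^{\min}$ and $f_{\varepsilon,\delta}^{\max}$ compare dimensionless edge weights in $\mathcal{MRG}_{g-1,2}(1,1)$. Rescaling by $\ell_\alpha(X)^{-1}$ converts the additive error into $K/\ell_\alpha(X)$, and the choice of cutoff at $\sqrt{L}$ is the sweet spot: large enough that $K/\ell_\alpha(X) = O(1/\sqrt{L}) \to 0$, yet small enough that the number of discarded short curves is $O(L^{3g-3}) = o(L^{6g-6})$.
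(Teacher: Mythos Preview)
Your proposal is correct and follows essentially the same approach as the paper's proof. The paper likewise truncates at $\sqrt{L}$, removes the curves in $F(X,\gamma,K)$ via Proposition~\ref{prop:asymp_triv}, and then applies Lemma~\ref{lem:edgespersist} and Proposition~\ref{prop:weightsvary} together with the bi-Lipschitz length comparison to land $\mathrm{RSC}_\alpha(X)$ in $N_{2\varepsilon,3K/\sqrt{L}}(\mathrm{RSC}_\alpha(Y))$; your three-class partition is exactly the paper's two successive truncations $c \to c^\dagger \to c^\ddagger$, and your final paragraph nicely articulates the reason for the $\sqrt{L}$ cutoff that the paper leaves implicit.
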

 
\begin{proof}
We prove (\ref{eq:bd_2}). A proof of (\ref{eq:bd_1}) can be obtained following similar arguments.

We first isolate the bottom of the count by length of curves in the mapping class group orbit of $\gamma$, i.e., we fix $L > 0$ and consider the truncated counting function
\[
c^\dagger(X,\gamma,f,L) := \sum_{\alpha \in \mathrm{Mod}_g \cdot \gamma} \mathbbm{1}_{[\sqrt{L},L]}(\ell_\alpha(X)) \cdot f(\mathrm{RSC}_\alpha(X)).
\]
The asymptotic estimate in (\ref{eq:mir}) implies
\begin{equation}
\label{eq:a_1}
c(X,\gamma,f,L) = c^\dagger(X,\gamma,f,L) + \|f\|_\infty \cdot o_X\left(L^{6g-6}\right).
\end{equation}

We now apply Proposition \ref{prop:asymp_triv} to further focus our attention on curves in the mapping class group orbit of $\gamma$ whose complements are deep in maximal facets of $\MRG_{g-1,2}(1,1)$.
Let $s > 0$ be such that all surfaces $Y \in \mathcal{M}_g$ that are $e$-bi-Lipschitz close to $X$ are $s$-thick and set $K = K_2(s) > 0$ to be the constant coming from Proposition \ref{prop:weightsvary}. 

Recall from Section \ref{sec:asymptriv} that for $K > 0,$ the set of $\alpha \in \Mod_g \cdot \gamma$ for which $SC_\alpha(X)$ lies in a $K$-neighborhood of the lower-dimensional facet of $\mathbb{R}_{>0} \cdot \MRG_{g-1,2}(1,1)$ is denoted by $F(X,\gamma,K)$.
Consider now the further truncated counting function
\begin{gather*}
c^\ddagger(X,\gamma,f,L) 
:= \sum_{\alpha \in \mathrm{Mod}_g \cdot \gamma \setminus F(X, \gamma, K)}
\mathbbm{1}_{[\sqrt{L},L]}(\ell_\alpha(X)) \cdot f(\mathrm{RSC}_\alpha(X)).
\end{gather*}
By Proposition \ref{prop:asymp_triv} it follows that
\begin{equation}
\label{eq:a_2}
c^\dagger(X,\gamma,f,L) = c^\ddagger(X,\gamma,f,L) + \|f\|_\infty \cdot o_X\left(L^{6g-6}\right).
\end{equation}

We can now invoke the geometric comparison results of Section \ref{sec:weightsvary}.
Fix $\varepsilon \in (0,1)$ and consider any $Y \in \mathcal{M}_g$ such that $Y$ is $e^\varepsilon$-bi-Lipschitz close to $X$.
Now for any $\alpha \in \mathrm{Mod}_g \cdot \gamma \setminus F(X, \gamma, K)$,
it follows from Lemma \ref{lem:edgespersist} and Proposition \ref{prop:weightsvary} that $\mathrm{SC}_\alpha(Y)$ is in the same maximal facet as $\mathrm{SC}_\alpha(X)$ and that for every edge $e$ of $\mathrm{SC}_\alpha(X)$ and $\mathrm{SC}_\alpha(Y)$,
\begin{equation}
\label{eq:bd_3}
e^{-\varepsilon} \cdot |e|_X - K \leq |e|_Y \leq e^\varepsilon \cdot |e|_X + K
\end{equation}
and hence
\begin{equation}
\label{eq:bd_XYflip}
e^{-\varepsilon} \cdot |e|_Y - e^{-\varepsilon} K \leq |e|_X \leq e^\varepsilon \cdot |e|_Y + e^\varepsilon K.
\end{equation}
Dividing \eqref{eq:bd_XYflip} by $\ell_\alpha(X)$ and using the bounds $\ell_\alpha(X) \geq \sqrt{L}$ and $e^{-\varepsilon} \leq e^\varepsilon \leq 3$, we get that
\begin{equation}
\label{eq:bd_intermediate}
e^{-\varepsilon} \frac{|e|_Y}{\ell_\alpha(X)} - 3K/\sqrt{L} \leq
|e|_{\mathrm{RSC}_\alpha(X)}  \leq
e^\varepsilon \frac{|e|_Y}{\ell_\alpha(X)} + 3K/\sqrt{L}.
\end{equation}
As $Y$ is $e^\varepsilon$-bi-Lipschitz close to $X$ we have
\begin{equation}
\label{eq:bd_thu}
e^{-\varepsilon} \cdot \ell_\alpha(X) \leq \ell_\alpha(Y) \leq e^\varepsilon \cdot \ell_\alpha(X),
\end{equation}
so combining \eqref{eq:bd_intermediate} and (\ref{eq:bd_thu}) we deduce
\[
e^{-2\varepsilon} \cdot |e|_{\mathrm{RSC}_\alpha(Y)} - 3 K / \sqrt{L} \leq |e|_{\mathrm{RSC}_\alpha(X)} \leq e^{2\varepsilon} \cdot |e|_{\mathrm{RSC}_\alpha(Y)} + 3 K / \sqrt{L}.
\]
It follows that $\mathrm{RSC}_\alpha(X) \in N_{2\varepsilon,3K/\sqrt{L}}(\mathrm{RSC}_\alpha(Y))$, so by definition
\[
f(\mathrm{RSC}_\alpha(X)) \leq f^{\max}_{2\varepsilon,3K/\sqrt{L}}(\mathrm{RSC}_\alpha(Y)).
\]
From this bound and (\ref{eq:bd_thu}) we get the comparison of counting functions
\begin{equation}
\label{eq:a_3}
c^\ddagger(X,\gamma,f,L) \leq c(Y,\gamma,f^{\max}_{2\varepsilon,3K/\sqrt{L}},e^\varepsilon L).
\end{equation}
Putting together (\ref{eq:a_1}), (\ref{eq:a_2}), and (\ref{eq:a_3}) we conclude (\ref{eq:bd_2}) holds, i.e., 
\[
c(X,\gamma,f,L) \leq c\left(Y,\gamma,f_{2\varepsilon,3K/\sqrt{L}}^{\max},e^\varepsilon L\right) + \|f\|_\infty \cdot o_{X}\left(L^{6g-6}\right). \qedhere
\]
\end{proof}

Since $K$ depends only on $X$, taking $L$ arbitrarily large allows us to make the additive error of the neighborhood $\mathcal{N}_{\varepsilon, \delta}(\mrg)$ over which we are averaging as small as we wish, directly yielding the following corollary.

\begin{corollary}
    \label{cor:comp}
    For every $\delta > 0$ there exists a constant $L_0 = L_0(X,\delta) > 0$ such that for 
    every $f, \varepsilon$, and $Y$ as above, and every $L \geq L_0$,
    \begin{gather}
    c\left(Y,\gamma,f_{2\varepsilon,\delta}^{\min},e^{-\varepsilon} L\right) + \| f \|_\infty \cdot o_{X}\left(L^{6g-6}\right) \leq c(X,\gamma,f,L), \label{eq:bd_1b}\\
    c(X,\gamma,f,L) \leq c\left(Y,\gamma,f_{2\varepsilon,\delta}^{\max},e^\varepsilon L\right) + \|f\|_\infty \cdot o_{X}\left(L^{6g-6}\right). \label{eq:bd_2b}
    \end{gather}
\end{corollary}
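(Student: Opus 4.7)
The plan is to derive the corollary from Proposition \ref{prop:comp} by exploiting two elementary facts: the additive error $3K/\sqrt{L}$ appearing there tends to $0$ as $L \to \infty$, and the averaged functions $f_{2\varepsilon,\delta'}^{\min/\max}$ are monotone in the additive parameter $\delta'$. Since the constant $K$ provided by Proposition \ref{prop:comp} depends only on $X$ (and not on $f$, $\varepsilon$, or $Y$), this monotonicity will let us ``absorb'' the $L$-dependent error into a fixed $\delta$ once $L$ is sufficiently large.

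First I would fix $\delta > 0$ and set $L_0(X,\delta) := 9 K(X)^2 / \delta^2$, where $K(X)$ is the constant of Proposition \ref{prop:comp}. For every $L \geq L_0$ we then have $3K/\sqrt{L} \leq \delta$, and crucially this choice of $L_0$ is uniform in $f$, $\varepsilon$, and $Y$. Next I would record the immediate observation that $N_{2\varepsilon,\delta'}(\mrg)$ grows weakly with $\delta'$ (the defining inequalities become less restrictive), so for $\delta' \leq \delta$ one has
\[f_{2\varepsilon,\delta'}^{\max}(\mrg) \leq f_{2\varepsilon,\delta}^{\max}(\mrg), \qquad f_{2\varepsilon,\delta'}^{\min}(\mrg) \geq f_{2\varepsilon,\delta}^{\min}(\mrg)\]
pointwise on $\MRG_{g-1,2}(1,1)$. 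Applying these bounds term-by-term inside the sums defining the $f$-weighted counting function with $\delta' = 3K/\sqrt{L} \leq \delta$ produces
\[c\!\left(Y,\gamma, f_{2\varepsilon,3K/\sqrt{L}}^{\max}, e^\varepsilon L\right) \leq c\!\left(Y,\gamma, f_{2\varepsilon,\delta}^{\max}, e^\varepsilon L\right)\]
together with the reverse inequality for the $\min$-version at truncation $e^{-\varepsilon}L$.

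Chaining these two comparisons with \eqref{eq:bd_1} and \eqref{eq:bd_2} of Proposition \ref{prop:comp} immediately yields (\ref{eq:bd_1b}) and (\ref{eq:bd_2b}). There is no genuine obstacle: the corollary is a bookkeeping consequence of Proposition \ref{prop:comp}, made possible precisely by the fact (already established upstream) that the constant $K$ depends only on the base surface $X$ and hence is uniform in all the other data. The nontrivial content of the averaging argument sits in Propositions \ref{prop:asymp_triv}, \ref{prop:weightsvary}, and \ref{prop:comp} themselves.
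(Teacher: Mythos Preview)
Your proof is correct and is precisely the argument the paper has in mind: the paper's own justification is the single sentence preceding the corollary, observing that since $K$ depends only on $X$, taking $L$ large makes $3K/\sqrt{L}$ as small as desired. You have simply made explicit the monotonicity of $N_{2\varepsilon,\delta'}(\mrg)$ in $\delta'$ and the resulting comparison of counting functions, which the paper leaves to the reader.
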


Denote by $o_{X,f}(L^{6g-6})$ any function $o_{X,f} \colon \mathbf{R}_{>0} \to \mathbf{R}$ depending on $X \in \mathcal{M}_g$ and a function $f \colon \mathcal{MRG}_{g-1,2}(1,1) \to \mathbf{R}_{\geq 0}$ such that
\[
\lim_{L \to \infty} \frac{o_{X,f}(L)}{L^{6g-6}} = 0.
\]

Recall that $\widehat{\mu}_\mathrm{wp}$ denotes the Weil-Petersson measure on $\mathcal{M}_g$. 
For every $\varepsilon \in (0,1)$ denote by $U_X(\varepsilon) \subseteq \mathcal{M}_g$ the neighborhood of hyperbolic surfaces that are $e^\varepsilon$-bi-Lipschitz close to $X \in \mathcal{M}_g$ and let $\beta_{X,\varepsilon} \colon \mathcal{M}_g \to \mathbf{R}_{\geq 0}$ be any bump function supported on $U_X(\varepsilon)$ of total $\widehat{\mu}_\mathrm{wp}$ mass $1$. 
Integrating Corollary \ref{cor:comp} therefore gives the following result.

\begin{corollary}
    \label{cor:comp_2}
    For every $L \geq L_0(X, \delta)$ we have
    \begin{gather}
    \int_{\mathcal{M}_g} \beta_{X,\varepsilon}(Y) \cdot c\left(Y,\gamma,f_{2\varepsilon,\delta}^{\min},e^{-\varepsilon} L\right) \thinspace d\widehat{\mu}_\mathrm{wp}(Y) + o_{X,f}\left(L^{6g-6}\right) \leq c(X,\gamma,f,L), \label{eq:bd_4}\\
    c(X,\gamma,f,L) \leq \int_{\mathcal{M}_g} \beta_{X,\varepsilon}(Y) \cdot c\left(Y,\gamma,f_{2\varepsilon,\delta}^{\max},e^\varepsilon L\right) \thinspace d\widehat{\mu}_\mathrm{wp}(Y) + o_{X,f}\left(L^{6g-6}\right) \label{eq:bd_5}.
    \end{gather}
\end{corollary}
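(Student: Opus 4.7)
The proof is essentially a direct integration of the pointwise bounds from Corollary \ref{cor:comp} against the probability density $\beta_{X,\varepsilon}$, so the plan is almost entirely bookkeeping; the substantive work has already been done in Proposition \ref{prop:comp}.

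First, note that by hypothesis $\beta_{X,\varepsilon}$ is supported on $U_X(\varepsilon)$, which is precisely the set of $Y \in \mathcal{M}_g$ that are $e^\varepsilon$-bi-Lipschitz close to $X$. Hence the hypotheses of Corollary \ref{cor:comp} hold for every $Y$ in the support of $\beta_{X,\varepsilon}$. Moreover the threshold $L_0 = L_0(X,\delta)$ depends only on $X$ and $\delta$, not on $Y$, so the inequalities \eqref{eq:bd_1b}--\eqref{eq:bd_2b} hold uniformly for $Y$ in the support of $\beta_{X,\varepsilon}$ whenever $L \geq L_0$.

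Next, I would fix such an $L$, multiply \eqref{eq:bd_2b} on both sides by $\beta_{X,\varepsilon}(Y)$, and integrate against $d\widehat{\mu}_\mathrm{wp}(Y)$. The right-hand side $c(X,\gamma,f,L)$ does not depend on $Y$, and $\beta_{X,\varepsilon}$ is a probability density, so the integral of the right-hand side equals $c(X,\gamma,f,L)$ itself. On the left, the error term $\|f\|_\infty \cdot o_X(L^{6g-6})$ is also independent of $Y$ and so integrates to itself; since $\|f\|_\infty$ is a constant depending on $f$, we may absorb it into the notation $o_{X,f}(L^{6g-6})$. The remaining term becomes the integral appearing in \eqref{eq:bd_5}. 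The same argument applied to \eqref{eq:bd_1b} yields \eqref{eq:bd_4}.

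There is no serious obstacle: the only minor technical point to verify is measurability and local integrability of the maps $Y \mapsto c(Y,\gamma,f_{2\varepsilon,\delta}^{\min/\max}, e^{\mp\varepsilon}L)$ on $U_X(\varepsilon)$, but this follows because each counting function is a finite sum (for fixed $L$) of continuous functions of $Y$ evaluated via $\mathrm{RSC}_\alpha$ and the length function $\ell_\alpha$, and $\beta_{X,\varepsilon}$ is compactly supported and bounded. Once these elementary points are in hand, the corollary is immediate from Corollary \ref{cor:comp}.
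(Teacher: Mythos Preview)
Your proposal is correct and matches the paper's approach exactly: the paper simply states that ``Integrating Corollary~\ref{cor:comp} therefore gives the following result,'' and your argument spells out precisely this integration against the probability density $\beta_{X,\varepsilon}$. (One tiny slip: in discussing \eqref{eq:bd_2b} you refer to $c(X,\gamma,f,L)$ as the ``right-hand side'' when it is the left-hand side, but the logic is unaffected.)
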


\subsection*{Unfolding averaged counts.}
Unfolding the integrals in (\ref{eq:bd_4}) and (\ref{eq:bd_5}) over $\mathcal{T}_{g} / \text{Stab}(\gamma)$ and pushing them back down to $\mathcal{M}_{g}$ in a suitable way will reduce the proof of Theorem \ref{theo:red} to an applicaton of Corollary \ref{cor:equid}.
The following proposition describes this principle; 
the reader should also compare \cite[Proposition 3.3]{Ara20a}.

\begin{proposition}
	\label{prop:pull_push}
	Fix a non-negative, continuous, compactly supported function $h \colon \mathcal{MRG}_{g-1,2}(1,1) \to \mathbf{R}_{\geq 0}$. Then, for every $\varepsilon > 0$ and every $L > 0$,
	\begin{equation}
	\label{eq:unfolding}
	\int_{\mathcal{M}_{g}} \beta_{X,\varepsilon}(Y) \cdot c(Y,\gamma,h,L) \thinspace d\widehat{\mu}_{\mathrm{wp}}(Y) = \int_{\mathcal{M}_{g}} \beta_{X,\varepsilon}(Y) \thinspace d\widehat{\mu}_{\gamma,h}^{L}(Y).
	\end{equation}
\end{proposition}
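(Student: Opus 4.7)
Plan: This is a standard Margulis-style ``unfolding'' identity. The strategy is to lift the LHS to Teichm\"uller space, parametrize the mapping class group orbit of $\gamma$ by cosets $\mathrm{Mod}_g/\mathrm{Stab}(\gamma)$, change variables term by term, and reassemble the result as a single integral over $\mathcal{T}_g/\mathrm{Stab}(\gamma)$ that descends to the RHS.

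More concretely, fix a (measurable) fundamental domain $F \subset \mathcal{T}_g$ for the $\mathrm{Mod}_g$-action. Since $\beta_{X,\varepsilon}$ is a function on $\mathcal{M}_g$, the LHS equals
\[
\int_F \beta_{X,\varepsilon}(p(Y)) \sum_{\alpha \in \mathrm{Mod}_g \cdot \gamma} \mathbbm{1}_{[0,L]}(\ell_\alpha(Y)) \cdot h(\mathrm{RSC}_\alpha(Y)) \, d\mu_{\mathrm{wp}}(Y),
\]
where $p \colon \mathcal{T}_g \to \mathcal{M}_g$ is the quotient map. Parametrize the orbit by cosets $[\phi] \in \mathrm{Mod}_g/\mathrm{Stab}(\gamma)$ via $\alpha = \phi \cdot \gamma$, and use the equivariance identities $\ell_{\phi \cdot \gamma}(Y) = \ell_\gamma(\phi^{-1}Y)$ and $\mathrm{RSC}_{\phi \cdot \gamma}(Y) = \mathrm{RSC}_\gamma(\phi^{-1} Y)$, which hold because RSC is built from a canonical geometric construction on the cut surface. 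Changing variables $Y \mapsto \phi Y$ in each summand (and using $\mathrm{Mod}_g$-invariance of $\mu_{\mathrm{wp}}$ and of $\beta_{X,\varepsilon} \circ p$), the LHS rewrites as
\[
\sum_{[\phi] \in \mathrm{Mod}_g/\mathrm{Stab}(\gamma)} \int_{\phi^{-1} F} \beta_{X,\varepsilon}(p(Y)) \, \mathbbm{1}_{[0,L]}(\ell_\gamma(Y)) \, h(\mathrm{RSC}_\gamma(Y)) \, d\mu_{\mathrm{wp}}(Y).
\]

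The disjoint union $\bigsqcup_{[\phi]} \phi^{-1}F$ is (up to a set of measure zero) a fundamental domain for $\mathrm{Stab}(\gamma)$ acting on $\mathcal{T}_g$, so the sum collapses to the corresponding integral over $\mathcal{T}_g/\mathrm{Stab}(\gamma)$ against the local pushforward $\widetilde{\mu}_{\mathrm{wp}}^\gamma$. By formula \eqref{eqn:RSC/Stab}, the $\mathbbm{1}_{[0,L]}(\ell_\gamma) \cdot h(\mathrm{RSC}_\gamma)$ factors combine with $d\widetilde{\mu}_{\mathrm{wp}}^\gamma$ to yield $d\widetilde{\mu}_{\gamma,h}^L$, leaving
\[
\int_{\mathcal{T}_g/\mathrm{Stab}(\gamma)} (\beta_{X,\varepsilon} \circ p) \, d\widetilde{\mu}_{\gamma,h}^L.
\]
Since $\widehat{\mu}_{\gamma,h}^L$ is by definition the pushforward of $\widetilde{\mu}_{\gamma,h}^L$ along $\mathcal{T}_g/\mathrm{Stab}(\gamma) \to \mathcal{M}_g$ and $\beta_{X,\varepsilon}$ descends from $\mathcal{M}_g$, this is exactly the RHS.

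The main subtlety is conceptual rather than computational: one must be careful at orbifold points and confirm that the ``local pushforward'' conventions used to define $\widetilde{\mu}_{\gamma,h}^L$ and $\widehat{\mu}_{\gamma,h}^L$ match the weighting induced by the coset enumeration. The conventions laid out in \S\ref{sec:prelim} are designed precisely so that this unfolding identity holds without any additional automorphism factors, so no care beyond standard measure-theoretic bookkeeping is required.
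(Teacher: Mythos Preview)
Your proposal is correct and follows essentially the same unfolding argument as the paper: both rewrite the orbit sum over $\mathrm{Mod}_g \cdot \gamma$ as a sum over cosets of $\mathrm{Stab}(\gamma)$, use the equivariance identities $\ell_{\phi\cdot\gamma}(Y)=\ell_\gamma(\phi^{-1}Y)$ and $\mathrm{RSC}_{\phi\cdot\gamma}(Y)=\mathrm{RSC}_\gamma(\phi^{-1}Y)$, and then recognize the resulting integral over $\mathcal{T}_g/\mathrm{Stab}(\gamma)$ as $\int \widetilde{\beta}_{X,\varepsilon}^\gamma\,d\widetilde{\mu}_{\gamma,h}^L$ via \eqref{eqn:RSC/Stab}. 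Your version is slightly more explicit about the fundamental-domain bookkeeping, but the structure of the argument is the same.
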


\begin{remark}
Notice that our weight function has changed names; this is because we eventually apply Proposition \ref{prop:pull_push} with $h$ equal to the functions $f_{2\varepsilon,\delta}^{\max}$ and $f_{2\varepsilon,\delta}^{\min}$.
\end{remark} 

\begin{proof}
Let $\varepsilon > 0$ and $L > 0$ be arbitrary. For every $Y \in \mathcal{M}_{g}$ one can rewrite the counting function $c(Y,\gamma,h,L)$ as follows:
	\begin{align*}
	c(Y,\gamma,h,L)  &= \sum_{\alpha \in \text{Mod}_{g} \cdot \gamma} \mathbbm{1}_{[0,L]}(\ell_\alpha(Y)) \cdot h(\mathrm{RSC}_\alpha(Y))\\
	&= \sum_{[\phi] \in \text{Mod}_{g}/\text{Stab}(\gamma)} \mathbbm{1}_{[0,L]}(\ell_{\phi.\gamma}(Y)) \cdot h(\mathrm{RSC}_{\phi.\gamma}(Y)) \\
	&= \sum_{[\phi] \in \text{Mod}_{g}/\text{Stab}(\gamma)} \mathbbm{1}_{[0,L]}(\ell_\gamma(\phi^{-1}.Y)) \cdot h(\mathrm{RSC}_{\gamma}(\phi^{-1}.Y)) \\
	&= \sum_{[\phi] \in \text{Stab}(\gamma) \backslash \text{Mod}_{g}} \mathbbm{1}_{[0,L]}(\ell_\gamma(\phi.Y)) \cdot h(\mathrm{RSC}_{\gamma}(\phi.Y)). \\
	\end{align*}
	Let us record this fact as
	\begin{equation}
	\label{eq:count_mas}
	c(Y,\gamma,h,L)  = \sum_{[\phi] \in \text{Stab}(\gamma) \backslash \text{Mod}_{g}} \mathbbm{1}_{[0,L]}(\ell_\gamma(\phi.Y)) \cdot h(\mathrm{RSC}_{\gamma}(\phi.Y)).
	\end{equation}
	Let $p_\gamma \colon \mathcal{T}_{g}/\text{Stab}(\gamma) \to \mathcal{M}_{g}$ be the quotient map and $\smash{\widetilde{\beta}_{X,\varepsilon}^\gamma} \colon \mathcal{T}_{g}/\text{Stab}(\gamma) \to \mathbf{R}_{\geq 0}$ be the lift of $ \beta_{X,\varepsilon}$ given by $\smash{\widetilde{\beta}_{X,\varepsilon}^\gamma} := \beta_{X,\varepsilon} \circ p_\gamma$. Unfolding the integral on the left hand side of (\ref{eq:unfolding}) using (\ref{eq:count_mas}) it follows that
	
	\begin{align*}
	\int_{\mathcal{M}_{g}} \beta_{X,\varepsilon}(Y) \cdot & c(Y,\gamma,h,L) \thinspace d \widehat{\mu}_\mathrm{wp}(Y) \\
	& = \int_{\mathcal{T}_{g}/\text{Stab}(\gamma)} \widetilde{\beta}_{X,\varepsilon}^\gamma(Y) \cdot \mathbbm{1}_{[0,L]}(\ell_\gamma(Y)) \cdot h(\mathrm{RSC}_{\gamma}(Y)) \thinspace
	d \widetilde{\mu}_{\mathrm{wp}}^\gamma(Y)  \\
	& = \int_{\mathcal{T}_{g}/\text{Stab}(\gamma)} \widetilde{\beta}_{X,\varepsilon}^\gamma(Y)
	\thinspace d \widetilde{\mu}_{\gamma,h}^{L}(Y) \\
	& = \int_{\mathcal{M}_{g}} \beta_{X,\varepsilon}(Y)
	\thinspace d \widehat{\mu}_{\gamma,h}^{L}(Y),
	\end{align*}
where the second equality follows from the expression for $\widetilde{\mu}_{\gamma,h}^{L}$ appearing in \eqref{eqn:RSC/Stab} and the third from the fact that $\widehat{\mu}_{\gamma,h}^{L}$ is the pushforward of $\widetilde{\mu}_{\gamma,h}^{L}$ to $\mathcal{M}_{g}$.
\end{proof}

\subsection*{Reducing counting to equidistribution.}
We are now ready to prove Theorem \ref{theo:red} by applying the equidistribution results proved in \S \ref{sec:RSCequi}.
Our strategy is to relate the $f$-weighted counted function $c(X,\gamma,f,L)$ with the mass of certain RSC-horoball measures, which we can then compare with the count $s(X,\gamma,L)$ of all geodesics in the $\Mod_g$-orbit of $\gamma$.

\begin{proof}[Proof of Theorem \ref{theo:red}]
Recall that we are aiming to prove that
\[
\lim_{L \to \infty} \frac{c(X,\gamma,f,L)}{s(X,\gamma,L)} = \frac{1}{c_g} \int_{\mathcal{MRG}_{g-1,2}(1,1)} f(x) \thinspace d\eta_\mathrm{Kon}(x).
\]

By Proposition \ref{prop:total_mass}, the following limit exists for any non-negative, continuous, compactly supported function $h \colon \mathcal{MRG}_{g-1,2}(1,1) \to \mathbb{R}_{\geq 0}$,
	\begin{equation}
	\label{eq:r(gamma,h)}
	\lim_{L \to \infty} \frac{m_{\gamma,h}^{L}}{L^{6g-6}} = \frac{1}{12g-12} \int_{\mathcal{MRG}_{g-1,2}(1,1)} h(x) \thinspace d\eta_{\mathrm{Kon}}(x).
	\end{equation}
	For the rest of the proof, denote this limit by $r(\gamma, h)$.
	
	By (\ref{eq:r(gamma,h)}) and Corollary 
    \ref{cor:identity}, proving Theorem \ref{theo:red} is equivalent to showing that
	\begin{equation}
		\label{eq:count_lbd}
		\frac{r(\gamma, f)}{c(\gamma)} \leq \liminf_{L \to \infty} \frac{c(X,\gamma,f,L)}{s(X,\gamma,L)},
	\end{equation}
	\begin{equation}
		\label{eq:count_ubd}
		\limsup_{L \to \infty} \frac{c(X,\gamma,f,L)}{s(X,\gamma,L)} \leq \frac{r(\gamma,f)}{c(\gamma)}.
	\end{equation}
	
	We verify (\ref{eq:count_ubd}); a proof of (\ref{eq:count_lbd}) can be obtained following similar arguments.
	Let $\delta > 0$ be arbitrary and take $L_0 = L_0(X,\delta) > 0$ as in Corollary \ref{cor:comp_2}.
	Fix $\varepsilon \in (0,1)$ and $L \geq L_0$. Set $h := f_{2\varepsilon,\delta}^{\max}$. Using Corollary \ref{cor:comp_2} (specifically \eqref{eq:bd_5}), we can average our counting function to get
	\[
	c(X,\gamma,f,L) \leq \int_{\mathcal{M}_g} \beta_{X,\varepsilon}(Y) \cdot c\left(Y,\gamma,h,e^\varepsilon L\right) \thinspace d\widehat{\mu}_\mathrm{wp}(Y) + o_{X,f}\left(L^{6g-6}\right).
	\]
	Unfolding this integral (Proposition \ref{prop:pull_push}) then implies that
	\[
	c(X,\gamma,f,L) \leq \int_{\mathcal{M}_{g}} \beta_{X,\varepsilon}(Y) \thinspace d\widehat{\mu}_{\gamma,h}^{L}(Y) + o_{X,f}\left(L^{6g-6}\right).
	\]
	Dividing this inequality by $m_{\gamma,h}^L$ (which is non-zero since $f$ is non-zero) we get
	\[
	\frac{c(X,\gamma,f,L)}{m_{\gamma,h}^L} \leq \int_{\mathcal{M}_{g}} \beta_{X,\varepsilon}(Y) \thinspace \frac{d\widehat{\mu}_{\gamma,h}^{L}(Y)}{m_{\gamma,h}^L} + \frac{o_{X,f}\left(L^{6g-6}\right)}{m_{\gamma,h}^L}.
	\]
	Taking $\limsup_{L \to \infty}$ on both sides of this inequality and using Corollary \ref{cor:equid} and Proposition \ref{prop:total_mass} we deduce that
\begin{equation}\label{eq:fcount_vs_RSCmass}
	\limsup_{L \to \infty} \frac{c(X,\gamma,f,L)}{m_{\gamma,h}^L} \leq \int_{\mathcal{M}_{g}} \beta_{X,\varepsilon}(Y) \thinspace \frac{B(Y) \cdot d\widehat{\mu}_\mathrm{wp}(Y)}{b_g}.
\end{equation}

Now we know that the RSC-horoball mass $m_{\gamma, h}^L$ and the counting function $s(X,\gamma,L)$ both grow polynomially with degree $L^{6g-6}$, so combining \eqref{eq:fcount_vs_RSCmass} with the definition of $r(\gamma, h)$ appearing in \eqref{eq:r(gamma,h)} and Mirzakhani's asymptotic formula \eqref{eq:mir}
we arrive at the following estimate:
	\begin{equation}
	\label{eq:coun_ineq_1}
	\limsup_{L \to \infty} \frac{c(X,\gamma,f,L)}{s(X,\gamma,L)} \leq \frac{r(\gamma,h)}{c(\gamma) \cdot B(X)} \cdot \int_{\mathcal{M}_{g,n}} \beta_{X,\varepsilon}(Y) \cdot B(Y) \thinspace d\widehat{\mu}_{\text{wp}}(Y).
	\end{equation}
	
We now shrink our approximating neighborhoods and consider the limiting behavior of the right-hand side of \eqref{eq:coun_ineq_1}.
	By definition, $h=f_{2\varepsilon,\delta}^{\max} \searrow f$ pointwise as $\varepsilon,\delta \searrow 0$. In particular, by the monotone convergence theorem,
	\begin{align*}
	\lim_{\varepsilon,\delta \searrow 0}  r(\gamma,f_{2\varepsilon,\delta}^{\max}) &= \lim_{\varepsilon,\delta \searrow 0}  \frac{1}{12g-12} \int_{\mathcal{MRG}_{g-1,2}(1,1)} f_{2\varepsilon,\delta}^{\max}(x) \thinspace d\eta_{\mathrm{Kon}}(x) \\
	&= \frac{1}{12g-12} \int_{\mathcal{MRG}_{g-1,2}(1,1)} f(x) \thinspace d\eta_{\mathrm{Kon}}(x)\\
	&= r(\gamma,f).
	\end{align*}
	Directly from the definition of $\beta_{X,\varepsilon} \colon \mathcal{M}_{g} \to \mathbf{R}_{\geq0}$ one checks that
	\[
	\lim_{\varepsilon \to 0}  \int_{\mathcal{M}_{g}} \beta_{X,\varepsilon}(Y) \cdot B(Y) = B(X).
	\]
	Taking $\varepsilon,\delta \searrow 0$ in (\ref{eq:coun_ineq_1}) we therefore deduce
	\[
	\limsup_{L \to \infty} \frac{c(X,\gamma,f,L)}{s(X,\gamma,L)} \leq \frac{r(\gamma,f)}{c(\gamma)}. \qedhere
	\]
\end{proof}

This completes the proof of the counting result (Theorem \ref{theo:red}), hence the proof of our main equidistribution result (Theorem \ref{theo:main_2}).

\section{Equidistribution of complementary subsurfaces: the general case.}\label{sec:general}

\subsection*{Outline of this section.}
In this section we state the main result of this paper for general simple closed multi-curves; see Theorem \ref{theo:main_2_new}. 
As its proof mirrors that of Theorem \ref{theo:main}, we discuss only a few of its most important aspects; the rest of the argument can be quoted {\em mutatis mutandis}.
We begin by introducing appropriate terminology concerning simple closed multi-curves and their corresponding cut-and-glue fibrations.
We also introduce the fibered Kontsevich measure, a key player in the statement of Theorem \ref{theo:main_2_new}.
We finish this section with a discussion on simultaneous equidistribution; see Theorem \ref{theo:main_3_new}.

\subsection*{Multi-curves and subsurfaces.}
For the rest of this section fix an integer $g \geq 2$ and a connected, oriented, closed surface $S_g$ of genus $g$.
A {\em simple closed multi-curve}
$\gamma = \{\gamma_1, \ldots, \gamma_k\}$ on $S_g$ with $1 \leq k \leq 3g-3$ components is a $k$-tuple of pairwise non-isotopic, pairwise non-intersecting, essential (isotopy classes of) simple closed curves.
Given a marked hyperbolic structure $X \in \mathcal{T}_g$, the total length of $\gamma$ is given by
\[
 \ell_{\gamma}(X) := \ell_{\gamma_1}(X) + \cdots + \ell_{\gamma_k}(X).
\]
Throughout this section, we assume all of our multi-curves are simple and consist of closed curves, so we refer to them simply as multi-curves.

For any $X \in \mathcal{T}_g$, cutting along the geodesic realization of $\gamma$ on $X$ results in a hyperbolic structure on the (possibly disconnected) surface with boundary $S_g \setminus \gamma$ together with an induced marking.
Label the components $(\Sigma_j)_{j=1}^c$ of $S_g \setminus \gamma$, and for each $j \in \{1,\dots,c\}$ let $g_j,b_j \geq 0$ to be the pair of non-negative integers such that $\Sigma_j$ is homeomorphic to $\smash{S_{g_j,b_j}}$.
Cutting along $\gamma$ therefore yields a map
\[\mathrm{C}_{\gamma}:\T_g \to \prod_{j=1}^c \T_{g_j, b_j}\]
whose fibers are homeomorphic to $\mathbb{R}^k$, representing all the possible Fenchel-Nielsen twists along the components of $\gamma$.

\subsection*{The cut-and-glue fibration}
We would like to push $\mathrm{C}_\gamma$ down to a map from the moduli space of pairs (hyperbolic surface, multi-curve) to the moduli space of hyperbolic structures on the complementary subsurfaces.
However, a difficulty arises in that simply marking a multi-curve is not enough to consistently identify its complementary subsurfaces.
At fault is the fact that the mapping class group can permute the components of $\gamma$, and can stabilize a component while reversing its orientation.

To address this, define an ordered, oriented multi-curve $\agamma:= (\agamma_1,\dots,\agamma_k)$ to be a multi-curve together with an ordering of its components and a choice of orientation on each.
Now, given any hyperbolic surface $X \in \M_g$ and a $\agamma$ on $X$, the subsurface ``to the right of $\gamma_i$'' is well-defined, and so we can distinguish between the components of $X \setminus \agamma$.
As before, we use $\Sigma_j$ to denote the (labeled) components of $S_g \setminus \agamma$ and $g_j, b_j$ to denote the genus and number of boundary components of $\Sigma_j$.

\begin{remark}
 Given an ordered, oriented multi-curve $\agamma:= (\agamma_1,\dots,\agamma_k)$ we will sometimes denote by $\gamma:= (\gamma_1,\dots,\gamma_k)$ the underlying (unoriented) ordered multi-curve.
\end{remark}

Given a simple closed curve $\gamma$ on $S_{g}$, denote by
$\text{Stab}_0(\gamma) < \text{Mod}_{g}$
the subgroup of all mapping classes that fix $\gamma$ up to isotopy together with its orientations. Since $\gamma$ only admits two orientations, this is clearly the same as the subgroup of $\Mod_g$ that fixes either of those orientations.

For an ordered multi-curve $\gamma := (\gamma_1,\dots,\gamma_k)$ on $S_{g}$ with $1 \leq k \leq 3g-3$ components, denote by
\[
\Stab_0(\gamma) := \bigcap_{i=1}^k \Stab_0(\gamma_i) < \text{Mod}_{g}
\]
the stabilizer of each component of $\gamma$ together with their respective orientations.
As above, this is equal to the stabilizer of the ordered, oriented multi-curve $\agamma$ for any choice of orientation of the components of $\gamma$.
In particular, $\Stab_0(\gamma)$ preserves each complementary subsurface of $S_g \setminus \agamma$ and fixes each of its boundaries setwise.

\begin{remark}
There is a natural isomorphism
\[
\Stab_0(\gamma)
\cong
\langle T_{\gamma_1}, \ldots, T_{\gamma_k} \rangle
\times 
\prod_{j=1}^c \Mod_{g_j, b_j} 
\]
where $T_{\gamma_i}$ denotes the Dehn twist in $\gamma_i$.
This follows from our convention that $\Mod_{g,b}$ is considered up to isotopies {\em setwise} fixing each boundary component.
\end{remark}

Each point of $\T_g / \Stab_0(\gamma)$ is now a hyperbolic structure together with a choice of ordered, oriented multi-curve in the $\Mod_g$ orbit of $\agamma$. Equivalently, this quotient is the moduli space of hyperbolic structures on $S_g$ equipped with an unoriented ordered multi-curve, together with an labeling of its complementary subsurfaces.

Cutting a hyperbolic surface $X \in \T_g / \Stab_0(\gamma)$ along the geodesic representative of the specified multi-curve gives (unmarked) hyperbolic structures on the complementary subsurfaces $\{\Sigma_j\}_{j=1}^c$, but not all structures are represented.
In particular, boundary components glued along a curve of $\gamma$ must have equal length.

We therefore introduce the following notation: if
$\mathbf{L} = \smash{(L_i)_{i=1}^k }\in \mathbb{R}_{>0}^k$,
then for each $j \in \{1,\dots,c\}$ we let
$\smash{\mathbf{L}^{(j)}} \in \mathbb{R}_{>0}^{b_j}$
be the subvector of $\mathbf{L}$ whose entries correspond to the boundary components of $\Sigma_j$.
Now define the slice $\M(S_g \setminus \agamma; \bfL)$ to be the moduli space of complementary subsurfaces where the components of $\gamma$ have fixed lengths, i.e., 
\[
\M(S_g \setminus \agamma; \bfL) :=
\prod_{j=1}^c \M_{g_j,b_j} \left(\bfL^{(j)}\right).
\]

These slices piece together into a fibration over the space of possible lengths, and we define the moduli space $\M(S_g \setminus \agamma)$ of complementary subsurfaces as the total space of this fibration:
\[
\M(S_g \setminus \agamma) :=
\left\{ (X_1, \ldots, X_c) \in \M(S_g \setminus \agamma; \bfL)
\mid 
\bfL \in \mathbb{R}_{>0}^k \right\}.
\]

With all of this notation established, we now observe that there is a fibration
\[\mathrm{C}_\gamma: \T_g / \Stab_0(\gamma) \to \M(S_g \setminus \agamma),\]
whose fiber over $(Y_1, \ldots, Y_c) \in \prod_{j=1}^c \M_{g_j,b_j} \left(\bfL^{(j)}\right)$ is a $k$-dimensional torus representing the different gluings of the complementary subsurfaces $(X_1, \ldots, X_c)$. This is often referred to as the {\em cut-and-glue fibration}.

We note that the exact shape of the fibers is somewhat subtle: one would initially expect the fiber to be equal to 
\[\RR^k / (L_1 \mathbb{Z} \oplus \ldots \oplus L_k \mathbb{Z}),\]
representing that one can twist along each $\gamma_i$ and get different hyperbolic structures on the glued surface.
However, if any of the $\gamma_i$ separate off a torus with one boundary component then the gluings with twist parameters $t$ and $t + L_i/2$ are isometric; this is a consequence of the presence of the elliptic involution.
In general, then, the fiber is equal to 
\[\RR^k / (2^{-\varepsilon_1} L_1 \mathbb{Z} \oplus \ldots \oplus 2^{-\varepsilon_k} L_k \mathbb{Z}),\]
where $\varepsilon_i =1$ if $\gamma_i$ separates off a subsurface of genus 1 with a single boundary component and $\varepsilon_i=0$ otherwise.

\subsection*{Disintegrating the Weil--Petersson measure}
Using Wolpert's formula for the Weil--Petersson symplectic form
\cite{Wol85},
we can express the Weil--Petersson volume form $\mu_{\WP}$ on $\T_g$ as
\begin{equation}\label{eqn:WPcutandglue_Teich}
d\mu_{\WP} = d\mu_{\WP}^{\agamma, \bfL} \,
d \tau_1 \ldots d \tau_k \thinspace
d L_1 \ldots d L_k,
\end{equation}
where $\tau_i$ are the twist parameters associated with the components of $\gamma$ and $\mu_{\WP}^{\agamma,\bfL}$ is the product of the Weil--Petersson measures on the Teichm{\"u}ller spaces of the complementary subsurfaces, i.e.,
\[\mu_{\WP}^{\agamma,\bfL} = \mu_{\WP}^{\bfL^{(1)}} \otimes \ldots \otimes \mu_{\WP}^{\bfL^{(c)}}.\]

We now wish to push equation \eqref{eqn:WPcutandglue_Teich} forward by the cut-and-glue fibration and down to the moduli spaces considered above.
However, there are number of technical subtleties we must take into account to get the correct constant factor.
For one, the size of the toral fibers of $\mathrm{C}_\gamma$ depends on both the length $\bfL$ of the curves of $\gamma$ as well as the number $\rho_{g}(\gamma)$ of components of $\gamma$ that bound a torus with one boundary component.
An even more delicate issue arises from our definition of the Weil--Petersson measures; since we are taking a local pushforward measure to an orbifold, we must divide each measure by the size of the generic stabilizer.
For a more thorough discussion of these issues see \cite{Ara19b,Ara20a}.

Therefore, let $\sigma_{g}(\gamma) > 0$ be the rational number given by
\[
\sigma_{g}(\gamma) := \frac{\prod_{j=1}^c |K_{g_j,b_j}|}{|\text{Stab}_0(\gamma)\cap K_{g}|},
\]
where $K_{g_j,b_j} \triangleleft \text{Mod}_{g_j,b_j}$ is the kernel of the mapping class group action on $\mathcal{T}_{g_j,b_j}$ and $K_{g} \triangleleft \text{Mod}_{g}$ is the kernel of the mapping class group action on $\mathcal{T}_{g}$.
These kernels are non-trivial only in the low complexity cases where special involutions arise. More specifically, $|K_{g_j,b_j}| = 1$ unless $g_j = b_j = 1$ in which case $|K_{1,1}| = 2$, and $|K_g|=1$ unless $g=2$ in which case $|K_2|=2$.

Once we have dealt with these issues, we can push forward formula \eqref{eqn:WPcutandglue_Teich}.
Let $\widetilde{\mu}_{\WP}$ denote the local pushforward of $\mu_{\WP}$ to $\T_g / \Stab_0(\gamma)$, and define the Weil--Petersson measure $\widehat{\mu}^{\agamma,\bfL}_{\WP}$ on $\M(S_g \setminus \agamma; \bfL)$ as the product of the Weil--Petersson measures $\widehat{\mu}^{\bfL^{(j)}}_{\WP}$ on the moduli spaces of complementary subsurfaces.
Then there is the following relationship between measures on $\M(S_g \setminus \agamma)$:
\begin{equation}\label{eqn:WPcutandglue}
d (C_\gamma)_* \widetilde{\mu}_{\WP} = 
\frac{\sigma_{g}(\gamma)}
{ 2^{\rho_{g}(\gamma)}} \thinspace
L_1 \cdots L_k \, d\widehat{\mu}_{\WP}^{\bfL} \, d L_1 \ldots d L_k.
\end{equation}

\subsection*{The fibered Kontsevich measure}
We now mimic the above constructions for the space of spines of complementary subsurfaces.
Therefore, for any length vector $\bfL \in \mathbb{R}_{>0}^k$, we set
\[
\MRG(S_g \setminus \agamma; \bfL) :=
\prod_{j=1}^c \MRG_{g_j,b_j} \left(\bfL^{(j)}\right).
\]
As these are simply products of moduli spaces with fixed boundary lengths, they come equipped with a natural (product) Kontsevich measure $\eta_\mathrm{Kon}^{\agamma, \bfL}$.
As in the case of hyperbolic structures on $S_g \setminus \agamma$, these slices fit together into a larger moduli space $\MRG(S_g \setminus \agamma)$ which can be topologized through its natural embedding into a product of combinatorial moduli spaces with variable boundary lengths.
By Theorem \ref{thm:Luo} the spine map induces a homeomorphism
\[\mathrm{S}: \M(S_g \setminus \agamma) \xrightarrow{\sim} \MRG(S_g \setminus \agamma)\]
that restricts to a homeomorphism on each slice.

Integrating against boundary lengths, the Kontsevich measures on each slice also fit together into a measure on the total space, defined by
\[
\eta_{\mathrm{Kon}}^{\agamma}(A) := 
\int_{\mathbb{R}_{>0}^{k}} 
\eta_\mathrm{Kon}^{\agamma, \bfL}
\left( A \cap \MRG(S_g \setminus \agamma; \bfL) \right)
\,
dL_1 \ldots dL_k 
\]
for any measurable set $A$ of $\MRG(S_g \setminus \agamma)$.

Denote by $\Delta \subseteq \mathbb{R}_{>0}^k$ the (open) standard simplex
\[
\Delta := \{\bfL \in \mathbb{R}_{>0}^k \colon L_1 + \dots + L_k = 1\}
\]
and let $\MRG(S_g \setminus \agamma;\Delta)$ denote the total space of the fibration over $\Delta$ whose fiber above $\bfL$ is $\MRG(S_g \setminus \agamma; \bfL)$; this can be thought of as the ``projectivization'' of $\MRG(S_g \setminus \agamma)$.
To that point, we can define a map
\[
\mathrm{RSC}_\gamma :
\T_g/ \Stab_0(\gamma) \to \MRG(S_g \setminus \agamma;\Delta)
\]
which to every hyperbolic structure assigns the point obtained by cutting $X$ along the geodesic representatives of the components of $\agamma$, finding the metric spines of the complementary hyperbolic surfaces, and rescaling the corresponding metric ribbon graphs to lie in $\MRG(S_g \setminus \agamma;\Delta)$.

In analogy with formula \eqref{eqn:WPcutandglue}, we now define a measure on $\mathcal{MRG}(S_g \setminus \agamma;\Delta)$ as follows.
For a subset $A \subset \mathcal{MRG}(S_g \setminus \agamma; \Delta)$, define 
\[
\text{cone}(A) :=
\{ (\Gamma, t\mathbf{x}): (\Gamma, \mathbf{x}) \in A, t \in (0,1]\}
\subset \MRG(S_g \setminus \agamma).
\]

\begin{definition}\label{def:fib_Kont}
The {\em fibered Kontsevich measure} of $A \subset \mathcal{MRG}(S_g \setminus \agamma; \Delta)$ is
\[
\mathring{\eta}_\mathrm{Kon}^{\Delta}(A) := 
\frac{\sigma_{g}(\gamma)}
{ 2^{\rho_{g}(\gamma)}}
\int_{\text{cone}(A)} L_1 \cdots L_k \, d \eta_{\mathrm{Kon}}^{\agamma}(\Gamma, \mathbf{x}).
\]
\end{definition}

The ``fibered'' qualifier and the circle accent are both meant to emphasize that we are taking into account the mass of the toral fibers coming from different gluings along the curves of $\gamma$.
As we will see shortly, the fibered Kontsevich measure reflects the asymptotic behavior of the $\mathrm{RSC}_\gamma$-pushforward of Weil--Petersson measure to $\M(S_g \setminus \agamma; \Delta)$ as the length of $\gamma$ grows.
Compare to Proposition \ref{prop:total_mass_comp}.

\begin{remark}\label{rmk:K_normalization}
By \eqref{eqn:Konts=Euclid}, the measure $\mathring{\eta}_\mathrm{Kon}^{\Delta}$ can also be expressed in terms of the Lebesgue measure on $\prod_{j=1}^c \MRG_{g_j,b_j}$ in edge length coordinates, restricted to the subspace $\MRG(S_g \setminus \agamma)$.
This requires lifting to the Teichm{\"u}ller space of complementary subsurfaces (equivalently, the filling arc complex $|{\mathscr{A}_\text{fill}}(\Sigma)|_{\RR}$) and then taking a local pushforward.
Since $\M_{g,b}$ and $\MRG_{g,b}$ have the same symmetries, this strategy also naturally recovers the weighting factor $\sigma_g(\gamma) / 2^{\rho_g(\gamma)}$.
\end{remark}

\subsection*{Counting multi-curves.} Let $\gamma := (\gamma_1,\dots,\gamma_k)$ be an ordered multi-curve on $S_g$ with $1 \leq k \leq 3g-3$ components and $X \in \mathcal{T}_g$ be a marked hyperbolic structure on $S_g$. For every $L > 0$ consider the counting function
\[
s(X,\gamma,L) := \# \{\alpha \in \mathrm{Mod}_g \cdot \gamma \ | \ \ell_\alpha(X) \leq L \}.
\]
By work of Mirzakhani \cite{Mir08b}, an asymptotic formula analogous to \eqref{eq:mir} holds for this counting function as well; compare also \cite{Liu19,Ara20a}.

To state this result, we first define the following constant,
\[c(\gamma) := \frac{\mathring{\eta}_\mathrm{Kon}^{\Delta}
\left( \MRG(S_g \setminus \agamma; \Delta) \right)
}   {[\Stab(\gamma): \Stab_0(\gamma)]} > 0.
\]
Recall from Section \ref{sec:RSCequi} that $B(X) > 0$ denotes the Thurston volume of the ball of unit length measured geodesic laminations on $X$ and that $b_g > 0$ denotes the total mass of the Mirzakhani measure on $\PoM_g$.

\begin{theorem}
    \cite[Theorems 1.1, 1.2]{Mir08b} \cite[Theorem 1.2]{Liu19} \cite[Theorem 1.6]{Ara20a} 
    Let $\gamma := (\gamma_1,\dots,\gamma_k)$ be an ordered multi-curve on $S_g$ with $1 \leq k \leq 3g-3$ components and $X \in \mathcal{M}_g$ be a hyperbolic structure on $S_g$. Then,
    \[
    \lim_{L \to \infty} \frac{s(X,\gamma,L)}{L^{6g-6}} = \frac{c(\gamma) \cdot B(X)}{b_g}.
    \]
\end{theorem}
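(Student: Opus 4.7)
The plan is to deduce this asymptotic from the pointwise counting theorems of Mirzakhani \cite{Mir08b}, Liu \cite{Liu19}, and Arana--Herrera \cite{Ara20a}, each of which establishes the stated limit but with \emph{some} positive leading coefficient $c_M(\gamma)$ (defined via integrals of length functions against the Thurston measure). The remaining task is therefore the identification $c_M(\gamma) = c(\gamma)$, which I would carry out via the asymptotic equivalence of Weil--Petersson and Kontsevich volumes recorded in Corollary \ref{cor:WPtop=Kont}.

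First, I would compute $\int_{\mathcal{M}_g} s(X,\gamma,L) \, d\widehat{\mu}_{\mathrm{wp}}(X)$ in two different ways. Invoking the pointwise asymptotic cited above, together with Mirzakhani's polynomial upper bound on $s(X,\gamma,L)$ in terms of $B(X)$ \cite[Theorem 3.3]{Mir08b} and the $\widehat{\mu}_{\mathrm{wp}}$-integrability of $B(X)$, dominated convergence yields
\[
\lim_{L \to \infty} \frac{1}{L^{6g-6}} \int_{\mathcal{M}_g} s(X,\gamma,L) \, d\widehat{\mu}_{\mathrm{wp}}(X) = \frac{c_M(\gamma)}{b_g} \int_{\mathcal{M}_g} B(X) \, d\widehat{\mu}_{\mathrm{wp}}(X) = c_M(\gamma).
\]
On the other hand, unfolding the integral of $s(X,\gamma,L)$ over $\mathcal{M}_g$ through the cut-and-glue fibration $\mathrm{C}_\gamma$ and using the disintegration formula \eqref{eqn:WPcutandglue} gives
\[
\int_{\mathcal{M}_g} s(X,\gamma,L) \, d\widehat{\mu}_{\mathrm{wp}}(X) = \frac{\sigma_g(\gamma)/2^{\rho_g(\gamma)}}{[\mathrm{Stab}(\gamma):\mathrm{Stab}_0(\gamma)]} \int_{\{\bfL \colon \sum L_i \leq L\}} L_1 \cdots L_k \cdot V_{S_g \setminus \vec{\gamma}}(\bfL) \, d\bfL,
\]
where $V_{S_g \setminus \vec{\gamma}}(\bfL)$ denotes the product (over complementary components $\Sigma_j$) of the Weil--Petersson volumes of $\mathcal{M}_{g_j,b_j}(\bfL^{(j)})$. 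Rescaling $\bfL = L\bfL'$ and extracting the leading $L^{6g-6}$ term isolates the top-degree part of the volume polynomial; by Corollary \ref{cor:WPtop=Kont} this top-degree part coincides with the Kontsevich volume of $\mathcal{MRG}(S_g \setminus \vec{\gamma}; \bfL')$, and hence by Definition \ref{def:fib_Kont} the leading coefficient is exactly $\mathring{\eta}_{\mathrm{Kon}}^{\Delta}(\mathcal{MRG}(S_g \setminus \vec{\gamma}; \Delta))/[\mathrm{Stab}(\gamma):\mathrm{Stab}_0(\gamma)] = c(\gamma)$. Comparing the two computations forces $c_M(\gamma) = c(\gamma)$, completing the proof.

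The main obstacle I expect is the bookkeeping of the combinatorial prefactors $\sigma_g(\gamma)$, $2^{\rho_g(\gamma)}$, and $[\mathrm{Stab}(\gamma):\mathrm{Stab}_0(\gamma)]$ through the cut-and-glue disintegration; these encode both the orbifold stabilizers and the sizes of the torus fibers of $\mathrm{C}_\gamma$, and they interact subtly in the low-complexity cases where either a component $\gamma_i$ bounds a one-holed torus (producing the factor of $2^{-1}$ from the elliptic involution) or $g = 2$ (where the hyperelliptic involution acts trivially on $\mathcal{T}_2$). A secondary but important technical point is justifying the interchange of limit and integral in the first computation, which is where the uniform polynomial bound on $s(X,\gamma,L)/L^{6g-6}$ in terms of $B(X)$ enters.
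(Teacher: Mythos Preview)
The paper does not prove this theorem; it is quoted directly from the cited references. The only thing the paper adds is the remark immediately following the statement, explaining that the constant $c(\gamma)$ as defined here (via the fibered Kontsevich measure) agrees with the constant appearing in \cite{Mir08b,Liu19,Ara20a} (defined via the top-degree part of the Weil--Petersson volume polynomials $W_g(\gamma,\bfL)$); this identification is immediate from Corollary~\ref{cor:WPtop=Kont}.

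Your proposal is correct and is in the same spirit: you cite the asymptotic from the literature and then identify the leading constant. Your route to the constant identification---integrating $s(X,\gamma,L)$ over $\mathcal{M}_g$, applying dominated convergence on one side, and unfolding through the cut-and-glue fibration on the other---is more elaborate than what the paper actually does. The paper simply observes that, once the definition of $\mathring{\eta}_{\mathrm{Kon}}^{\Delta}$ is unraveled, the expression for $c(\gamma)$ differs from the cited formula only in that Kontsevich volumes of the $\mathcal{MRG}_{g_j,b_j}(\bfL^{(j)})$ replace the top-degree parts $V_{g_j,b_j}^{\mathrm{top}}(\bfL^{(j)})$, and these coincide by Corollary~\ref{cor:WPtop=Kont}. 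Your approach has the advantage of being self-contained and of making the role of the prefactors $\sigma_g(\gamma)$, $2^{\rho_g(\gamma)}$, and $[\Stab(\gamma):\Stab_0(\gamma)]$ transparent, at the cost of the extra analytic step (the dominated convergence argument) that the paper avoids entirely by working at the level of definitions.
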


\begin{remark}
In \cite{Mir08b}, \cite{Liu19}, and \cite{Ara20a}, the constant $c(\gamma)$ is defined in terms of polynomials $W_g(\gamma, \bfL)$
that record the top degree part $\smash{V_{g_j, b_j}^{\text{top}}}$ of the Weil--Petersson volume polynomials of the complementary subsurfaces.
In particular, once one fully unravels our definition of $c(\gamma)$ and the fibered Kontsevich measure, it becomes clear our definition is the same as the one that appears in those papers but with $\smash{V_{g_j, b_j}^{\text{top}}}$ replaced by the Kontsevich volumes of complementary moduli spaces.
Corollary \ref{cor:WPtop=Kont} implies that our definition is equivalent to the original one.
\end{remark}

\subsection*{Equidistribution of complementary subsurfaces.}
We can now state the general version of our main theorem for arbitrary multi-curves. 
Fix an ordered oriented multi-curve $\agamma := (\agamma_1,\dots,\agamma_k)$ and a marked hyperbolic structure $X \in \mathcal{T}_g$.
On $\MRG(S_g \setminus \agamma;\Delta)$ consider the counting measure
\[
\eta_{X,\agamma}^L := \sum_{\vec{\alpha} \in \mathrm{Mod}_g \cdot \agamma} \mathbbm{1}_{[0,L]}(\ell_{\vec{\alpha}}(X)) \cdot \delta_{\mathrm{RSC}_{\vec{\alpha}}(X)}.
\]
This measure does not depend on the marking of $X \in \mathcal{T}_g$ but only on its underlying hyperbolic strucure $X \in \mathcal{M}_g$.
Recall that $\mathring{\eta}_\mathrm{Kon}^{\Delta}$ denotes the fibered Kontsevich measure on $\mathcal{MRG}(S_g \setminus \agamma; \Delta)$ and denote by 
$m_{\agamma} > 0$
its total mass.

The following generalization of Theorem \ref{theo:main} can be proved using the same strategy adopted in previous sections; for the sake of brevity, we will only discuss some of the most important aspects of the proof.

\begin{theorem}
\label{theo:main_2_new}
Let $\agamma:=(\agamma_1,\dots,\agamma_k)$ be an ordered oriented multi-curve on $S_g$ and $X \in \mathcal{M}_g$ be a hyperbolic structure on $S_g$. Then, with respect to the weak-$\star$ topology for measures on $\mathcal{MRG}(S_g \setminus \agamma;\Delta)$,
\[
\lim_{L \to \infty} \frac{\eta_{X,\agamma}^L}{s(X,\gamma,L)} = \frac{\mathring{\eta}_{\mathrm{Kon}}^{\Delta}}{m_{\agamma}}.
\]
\end{theorem}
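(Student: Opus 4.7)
Mirroring \S\ref{sec:avg_unfold}, I would first reduce Theorem \ref{theo:main_2_new} to the equivalent $f$-weighted counting asymptote
\[
\lim_{L\to\infty} \frac{c(X,\agamma,f,L)}{s(X,\gamma,L)} = \frac{1}{m_{\agamma}} \int_{\MRG(S_g\setminus\agamma;\Delta)} f \, d\mathring{\eta}^{\Delta}_{\mathrm{Kon}}
\]
for compactly supported continuous $f \geq 0$, where $c(X,\agamma,f,L) := \sum_{\vec\alpha \in \Mod_g\cdot\agamma} \mathbbm{1}_{[0,L]}(\ell_{\vec\alpha}(X)) \cdot f(\mathrm{RSC}_{\vec\alpha}(X))$. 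Since Propositions \ref{prop:asymp_triv} and \ref{prop:weightsvary} are already stated for arbitrary multi-curves, the averaging argument of Corollaries \ref{cor:comp} and \ref{cor:comp_2} transposes verbatim. The unfolding step (Proposition \ref{prop:pull_push}) then goes through after replacing $\T_g/\Stab(\gamma)$ by $\T_g/\Stab_0(\gamma)$, rewriting the averaged $f$-weighted count as $\int_{\M_g} \beta_{X,\varepsilon} \, d\widehat{\mu}_{\agamma,h}^L$ for the natural RSC-horoball measure $d\mu_{\agamma,h}^L(X) := \mathbbm{1}_{[0,L]}(\ell_\gamma(X)) \cdot h(\mathrm{RSC}_\agamma(X)) \, d\mu_\WP(X)$ on $\T_g$, where $\ell_\gamma = \ell_{\gamma_1} + \cdots + \ell_{\gamma_k}$.

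\textbf{Horoball equidistribution.} The next ingredient is the multi-curve analogue of Theorem \ref{theo:horoball_equid_1}, namely $\widehat{\nu}_{\agamma,h}^L / m_{\agamma,h}^L \to \widehat{\nu}_\mathrm{Mir}/b_g$ on $\PoM_g$. The proof of \S\ref{sec:RSCequi} adapts cleanly: earthquake-flow invariance is immediate because $\mathrm{RSC}_\agamma$ is invariant under twists along every component $\gamma_i$, so the lift $\nu_{\agamma,h}^L$ is invariant under the earthquake flow along the unit-normalized $\gamma/\ell_\gamma(X)$. For absolute continuity and non-escape of mass, the same strategy works---compare $\nu_{\agamma,h}^L$ to the unweighted horoball measure via Lemma \ref{lemma:hor_meas_comp} and then apply the estimates of \cite{Ara19b}---once one has upgraded the single-curve versions of Propositions \ref{prop:ac_key_estimate} and \ref{prop:nem_1} to arbitrary multi-curves, which are routine. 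Mirzakhani's earthquake ergodicity (Theorem \ref{theo:ergodic}) then identifies any weak-$\star$ limit with $\widehat{\nu}_\mathrm{Mir}/b_g$.

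\textbf{Main obstacle: total mass.} The genuinely new computation is the generalization of Proposition \ref{prop:total_mass}:
\[
\lim_{L\to\infty} \frac{m_{\agamma,h}^L}{L^{6g-6}} = \int_{\MRG(S_g\setminus\agamma;\Delta)} h \, d\mathring{\eta}^{\Delta}_{\mathrm{Kon}}.
\]
Disintegrating $\widetilde{\mu}_\WP^\agamma$ via the cut-and-glue identity \eqref{eqn:WPcutandglue} and changing coordinates to $(t,\boldsymbol\lambda) \in (0,L]\times\Delta$ with $\bfL = t\boldsymbol\lambda$ (Jacobian $t^{k-1}$, product $L_1\cdots L_k = t^k\lambda_1\cdots\lambda_k$) yields
\[
m_{\agamma,h}^L = \frac{\sigma_g(\gamma)}{2^{\rho_g(\gamma)}}\int_0^L t^{2k-1}\int_\Delta \lambda_1\cdots\lambda_k \int h \, d(\mathrm{R}_t\mathrm{S})_*\widehat{\mu}_\WP^{t\boldsymbol\lambda} \, d\boldsymbol\lambda \, dt.
\]
After truncating to $t\geq\sqrt{L}$ (whose complement contributes $O(L^{6g-7})$ by a Weil--Petersson volume estimate, exactly as in Proposition \ref{prop:total_mass}), Theorem \ref{theo:Do} applied factor-by-factor to the product $\prod_j \MRG_{g_j,b_j}(\bfL^{(j)})$ gives $(\mathrm{R}_t\mathrm{S})_*\widehat{\mu}_\WP^{t\boldsymbol\lambda}/t^{6g-6-2k}\to \eta_\mathrm{Kon}^{\boldsymbol\lambda}$ (since $\sum_j(6g_j-6+2b_j) = 6g-6-2k$); combined with $t^{2k-1}$ this produces an integrand of order $t^{6g-7}$, which integrates to $L^{6g-6}/(6g-6)$, and the residual $\boldsymbol\lambda$-integral precisely matches Definition \ref{def:fib_Kont}. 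The most delicate sub-issue will be controlling the Mondello--Do convergence uniformly as $\boldsymbol\lambda$ approaches the walls of $\Delta$, but the weight $\lambda_1\cdots\lambda_k$ vanishes there and suppresses the boundary contribution. Combining these three ingredients via the endgame of Theorem \ref{theo:red}, and using the identity $m_\agamma = [\Stab(\gamma):\Stab_0(\gamma)] \cdot c(\gamma)$ (coming directly from the definition of $c(\gamma)$) to reconcile the ordered-oriented orbit appearing in $\eta_{X,\agamma}^L$ with the unordered orbit appearing in $s(X,\gamma,L)$, yields Theorem \ref{theo:main_2_new}.
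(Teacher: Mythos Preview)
Your proposal is correct and follows essentially the same route as the paper. The paper itself only sketches the argument, explicitly stating that the proof ``can be proved using the same strategy adopted in previous sections'' and then highlighting the two key modifications: defining the RSC-horoball measures through $\Stab_0(\gamma)$ and the cut-and-glue fibration (your unfolding step), and the total-mass computation of Proposition~\ref{prop:total_mass_comp} (your ``main obstacle''). Your write-up covers both of these and in fact spells out more detail on the total-mass step---the change of variables to $(t,\boldsymbol\lambda)\in(0,L]\times\Delta$, the dimension count $\sum_j(6g_j-6+2b_j)=6g-6-2k$, and the factor-by-factor application of Theorem~\ref{theo:Do}---than the paper provides. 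Two small remarks: the multi-curve versions of Propositions~\ref{prop:ac_key_estimate} and~\ref{prop:nem_1} are already in \cite{Ara19b} and need no upgrade; and your concern about uniformity of the Mondello--Do convergence near $\partial\Delta$ is a real technical point that the paper also leaves implicit, though your observation that the weight $\lambda_1\cdots\lambda_k$ suppresses the boundary is the right heuristic.
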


\begin{remark}
The results in \cite{Liu19} and \cite{Ara20a} also explain how to count multi-curves in the $\Mod_g$ orbit of $\agamma$ with respect to other notions of length, e.g., with non-uniform weights on the components or by the maximum length of the components, instead of the total length.
Analogues of Theorem \ref{theo:main_2_new} also hold true in these settings; we leave it to the reader to formulate precise statements.
\end{remark}

\subsection*{RSC-horoball measures.} We begin by disussing how to define appropriate RSC-horoball measures in this context. For the rest of this section fix a non-zero, non-negative, continuous, compactly supported function $h \colon \mathcal{MRG}_{g}(\agamma,\Delta) \to \mathbb{R}_{>0}$. For every $L > 0$ consider the {\em RSC-horoball measure} $\mu_{\vec{\gamma},h}^L$ on $\mathcal{T}_g$ given by restricting the Weil-Petersson measure to the horoball of depth $L$ at $\vg$ and weighting it by $h$:
\begin{equation*}
d\mu_{\vg,h}^L(X) := \mathbbm{1}_{[0,L]}(\ell_{\vg}(X)) \thinspace h(\mathrm{RSC}_{\vg}(X)) \thinspace d\mu_\mathrm{wp}(X).
\end{equation*}
As a consequence of the cut-and-glue fibration, this measure is invariant by  $\mathrm{Stab}_0(\vg)$ as well as by twisting in the curves of $\gamma$.
Consider the sequence of covers
\[
\mathcal{T}_g \to \mathcal{T}_g/\mathrm{Stab}_0(\vg) \to \mathcal{M}_g.
\]
Let $\widetilde{\mu}_{\vg,h}^L$ be the local pushforward of $\mu_{\vg,h}^L$ to $\mathcal{T}_g/\mathrm{Stab}_0(\vg)$ and $\widehat{\mu}_{\vg,h}^L$ be the pushworward of $\widetilde{\mu}_{\vg,h}^L$ to $\mathcal{M}_g$.

\begin{remark}\label{rmk:nonsepRSC}
    For an oriented non-separating simple closed curve $\agamma$ on $S_g$,
    \[
    \widehat{\mu}_{\vg,h}^L = [\mathrm{Stab}(\gamma):\mathrm{Stab}_0(\gamma)] \cdot \widehat{\mu}_{\gamma,h}^L = 2\widehat{\mu}_{\gamma,h}^L
    \]
where the measure $\widehat{\mu}_{\gamma,h}^L$ is as defined in \eqref{eq:RSC_meas}.
\end{remark}

These measures can be lifted to the corresponding bundles of unit length measured geodesic laminations in the following way. Let $\mathbf{1} \cdot \vg$ be the weighted multi-curve $\gamma_1 + \cdots + \gamma_k$ considered as an element of $\mathcal{ML}_g$. On $\mathcal{P}^1\mathcal{T}_g$ consider the measures $\nu_{\vg,h}^L$ given by
\[
d\nu_{\gamma,h}^L(X,\lambda) :=  d \delta_{\mathbf{1}\cdot \vg/\ell_{\vg}(X)}(\lambda) \thinspace d\mu_{\vg,h}^L(X).
\]
This measure is similarly $\mathrm{Stab}_0(\vg)$ invariant, and as a consequence of the twist-invariance of $\mu_{\vg,h}^L(X)$, is invariant under the earthquake flow. Let $\widetilde{\nu}_{\vg,h}^L$ be the local pushforward of $\nu_{\vg,h}^L$ to $\mathcal{P}^1\mathcal{T}_g/\mathrm{Stab}_0(\vg)$ and $\widehat{\nu}_{\vg,h}^L$ be the pushforward of $\widetilde{\nu}_{\vg,h}^L$ to $\mathcal{P}^1\mathcal{M}_g$. 
Denote by
\[m_{\vg,h}^L := \widehat{\nu}_{\vg,h}^L(\mathcal{P}^1\mathcal{M}_g) = \widehat{\mu}_{\vg,h}^L(\mathcal{M}_g)\]
the total mass of these measures. 

To prove Theorem \ref{theo:main_2_new} using the methods discussed in previous sections one needs to ensure the measures $\widehat{\mu}_{\vg,h}^L$ equidistribute over $\mathcal{M}_g$ with respect to $B(X) \thinspace d\widehat{\mu}_\mathrm{wp}(X)$ as $L \to \infty$. Following the same arguments as in the proof of Theorem \ref{theo:horoball_equid_1} one first deduces the following result.

\begin{theorem}
\label{theo:equid_new}
    With respect to the weak-$\star$ topology for measures on $\mathcal{P}^1\mathcal{M}_g$,
    \[
    \lim_{L \to \infty} \frac{\widehat{\nu}_{\vec{\gamma},h}^L}{m_{\vec{\gamma},h}^L} = \frac{\widehat{\nu}_\mathrm{Mir}}{b_g}.
    \]
\end{theorem}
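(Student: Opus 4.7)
The plan is to mirror the proof of Theorem \ref{theo:horoball_equid_1} essentially verbatim, replacing the non-separating curve $\gamma$ by the ordered oriented multi-curve $\vec{\gamma}$ throughout. The three pillars of the argument---a total mass asymptotic, earthquake flow invariance, and absolute continuity with no escape of mass---combined with Mirzakhani's ergodicity of the earthquake flow (Theorem \ref{theo:ergodic}) and Banach--Alaoglu will yield the desired equidistribution.

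The first step is to establish the analog of Proposition \ref{prop:total_mass}, namely
\[
\lim_{L \to \infty} \frac{m_{\vec{\gamma},h}^L}{L^{6g-6}} = \int_{\mathcal{MRG}(S_g \setminus \vec{\gamma};\Delta)} h(x) \thinspace d\mathring{\eta}_{\mathrm{Kon}}^{\Delta}(x).
\]
Using the cut-and-glue disintegration \eqref{eqn:WPcutandglue} of the Weil--Petersson measure, the integral computing $m_{\vec{\gamma},h}^L$ factors through the fibration $\mathrm{C}_{\vec{\gamma}} \colon \mathcal{T}_g/\mathrm{Stab}_0(\vec{\gamma}) \to \mathcal{M}(S_g \setminus \vec{\gamma})$, splitting into an integral over length vectors $\bfL$ in the scaled simplex $L \cdot \Delta$, twist parameters, and the product moduli space $\mathcal{M}(S_g \setminus \vec{\gamma}; \bfL)$. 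As in the non-separating case, the contribution from the ``shallow'' region $\|\bfL\| \le \sqrt{L}$ is of lower order in $L$ by the polynomial growth of Weil--Petersson volumes. On the remaining ``deep'' region, Theorem \ref{theo:Do} applied to each complementary subsurface converts the pushforward of the Weil--Petersson volume to the product Kontsevich volume; the weighting $L_1 \cdots L_k$ from \eqref{eqn:WPcutandglue} and the factor $\sigma_g(\vec{\gamma})/2^{\rho_g(\vec{\gamma})}$ precisely match Definition \ref{def:fib_Kont} of $\mathring{\eta}_{\mathrm{Kon}}^{\Delta}$.

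The remaining ingredients carry over with only cosmetic modifications. Earthquake-flow invariance of $\widehat{\nu}_{\vec{\gamma},h}^L$ is immediate: $\mu_{\vec{\gamma},h}^L$ is invariant under twisting along each $\gamma_i$ by the cut-and-glue structure together with Wolpert's theorem, and the lifted measure $\nu_{\vec{\gamma},h}^L$ is supported on the section $\lambda = \mathbf{1} \cdot \vec{\gamma}/\ell_{\vec{\gamma}}(X)$, along which the earthquake flow acts by simultaneously twisting each $\gamma_i$ at rate $1$. Absolute continuity is proved as in Proposition \ref{prop:abs_cont}: the analog of Lemma \ref{lemma:hor_meas_comp} comparing $\widehat{\nu}_{\vec{\gamma},h}^L$ with the un-weighted ($h \equiv 1$) RSC-horoball measure $\widehat{\nu}_{\vec{\gamma}}^L$ follows from the bound $\widehat{\nu}_{\vec{\gamma},h}^L \le \|h\|_\infty \widehat{\nu}_{\vec{\gamma}}^L$ together with the total mass asymptotic above, and then the multi-curve generalizations of Proposition \ref{prop:ac_key_estimate} and Lemma \ref{lemma:approx} from \cite{Ara19b} apply directly. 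No escape of mass follows identically from the multi-curve version of Proposition \ref{prop:nem_1} in \cite{Ara19b} and Mumford compactness.

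The main obstacle will be the bookkeeping in the total mass computation: carefully tracking the passage from $\mathcal{T}_g$ to $\mathcal{T}_g/\mathrm{Stab}_0(\vec{\gamma})$, then to the product moduli space $\mathcal{M}(S_g \setminus \vec{\gamma};\bfL)$, and verifying that the combinatorial factors $\sigma_g(\vec{\gamma})$, $2^{\rho_g(\vec{\gamma})}$, and the products of boundary lengths assemble into exactly the fibered Kontsevich measure of Definition \ref{def:fib_Kont}. Once this identification is in place, the endgame is identical to that of Theorem \ref{theo:horoball_equid_1}: Banach--Alaoglu produces weak-$\star$ limit points of $\widehat{\nu}_{\vec{\gamma},h}^L/m_{\vec{\gamma},h}^L$, each of which must be an earthquake-flow-invariant probability measure absolutely continuous with respect to $\widehat{\nu}_{\mathrm{Mir}}$, and Mirzakhani's ergodicity forces every such limit to coincide with $\widehat{\nu}_{\mathrm{Mir}}/b_g$.
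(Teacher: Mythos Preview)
Your proposal is correct and follows essentially the same approach as the paper: the paper explicitly states that Theorem \ref{theo:equid_new} is proved ``following the same arguments as in the proof of Theorem \ref{theo:horoball_equid_1},'' with the total mass computation (Proposition \ref{prop:total_mass_comp}) obtained via the cut-and-glue fibration exactly as you outline. Your identification of the bookkeeping in the total mass computation as the only real new content is accurate.
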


Pushing down to moduli space, one immediately concludes the following.

\begin{theorem}
\label{theo:equid_new_Mg}
    With respect to the weak-$\star$ topology for measures on $\mathcal{M}_g$,
    \[
    \lim_{L \to \infty} \frac{\widehat{\mu}_{\vec{\gamma},h}^L}{m_{\vec{\gamma},h}^L} = \frac{B(X) \thinspace d\widehat{\mu}_\mathrm{wp}(X)}{b_g}.
    \]
\end{theorem}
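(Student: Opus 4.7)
The plan is to simply push Theorem \ref{theo:equid_new} forward along the bundle projection $\pi \colon \mathcal{P}^1\mathcal{M}_g \to \mathcal{M}_g$, and to check that each side of the pushed-forward equality matches the corresponding side of Theorem \ref{theo:equid_new_Mg}. Two pushforward identities are needed, and both follow from the definitions together with material already recalled in Section \ref{sec:RSCequi}.

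First, since
\[
d\nu_{\vec{\gamma},h}^L(X,\lambda) := d\delta_{\mathbf{1}\cdot\vec{\gamma}/\ell_{\vec{\gamma}}(X)}(\lambda) \thinspace d\mu_{\vec{\gamma},h}^L(X),
\]
the lamination coordinate carries only a Dirac mass. Forgetting that coordinate therefore yields $\pi_*\nu_{\vec{\gamma},h}^L = \mu_{\vec{\gamma},h}^L$ on $\mathcal{T}_g$, and this identity is preserved under local pushforward to $\mathcal{T}_g/\mathrm{Stab}_0(\vec{\gamma})$ and then to $\mathcal{M}_g$, giving $\pi_*\widehat{\nu}_{\vec{\gamma},h}^L = \widehat{\mu}_{\vec{\gamma},h}^L$. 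In particular, the total masses agree, so the same normalization $m_{\vec{\gamma},h}^L$ appears on both sides. Second, the disintegration formula for the Mirzakhani measure recalled in Section \ref{sec:RSCequi} reads
\[
\pi_*\widehat{\nu}_{\mathrm{Mir}} = B(X)\thinspace d\widehat{\mu}_{\mathrm{wp}}(X),
\]
which is precisely the measure appearing on the right-hand side of Theorem \ref{theo:equid_new_Mg}.

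It remains to verify that weak-$\star$ convergence is preserved under $\pi_*$. The fiber $\mathcal{P}^1_X\mathcal{M}_g$ over any $X \in \mathcal{M}_g$ is the quotient by a finite stabilizer of the unit sphere $\{\lambda \in \mathcal{ML}_g : \ell_\lambda(\widetilde{X}) = 1\}$ for any lift $\widetilde{X}$, and this sphere is compact since it is a cross-section to the $\mathbb{R}_{>0}$-action on $\mathcal{ML}_g \setminus \{0\}$, whose quotient $\mathcal{PML}_g$ is a topological sphere. Hence $\pi$ is proper, and for any compactly supported continuous $f$ on $\mathcal{M}_g$ the pullback $f \circ \pi$ is compactly supported and continuous on $\mathcal{P}^1\mathcal{M}_g$. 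Testing the statement of Theorem \ref{theo:equid_new} against $f \circ \pi$ and applying the two pushforward identities above then yields the conclusion of Theorem \ref{theo:equid_new_Mg}. There is no genuine obstacle at this step beyond the bookkeeping: the corollary really is immediate from the equidistribution result upstairs once one identifies the pushforwards of the relevant measures.
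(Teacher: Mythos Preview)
Your proof is correct and follows exactly the approach the paper intends: the paper simply states ``Pushing down to moduli space, one immediately concludes the following,'' and your argument spells out this pushforward in detail, matching the deduction of Corollary \ref{cor:equid} from Theorem \ref{theo:horoball_equid_1} in the non-separating case.
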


\subsection*{Total mass.} As in the case of Theorem \ref{theo:horoball_equid_1} an important step in the proof of Theorem \ref{theo:equid_new} is to compute an asymptotic formula for the total mass $\smash{m_{\vg,h}^L}$ of the measures $\smash{\widehat{\nu}_{\vg,h}^L}$ as $L \to \infty$. Using the same methods as in the proof of Proposition \ref{prop:total_mass} together with the cut-and-glue fibration yields the following result.

\begin{proposition}
    \label{prop:total_mass_comp}
    The following limit holds,
    \[
    \lim_{L \to \infty} \frac{m_{\vec{\gamma},h}^L}{L^{6g-6}} = 
    \int_{\mathcal{MRG}(S_g \setminus \agamma;\Delta)} h(x) \thinspace d\mathring{\eta}_{\mathrm{Kon}}^{\Delta}(x).
    \]
\end{proposition}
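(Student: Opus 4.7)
The plan is to adapt the proof of Proposition \ref{prop:total_mass}, replacing the single-curve pushforward with the cut-and-glue fibration and applying Theorem \ref{theo:Do} componentwise to each complementary subsurface.

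First, rewrite $m_{\vg,h}^L = \widetilde{\mu}_{\vg,h}^L(\T_g/\Stab_0(\vg))$ as in \eqref{eqn:RSC/Stab}, and push forward to $\M(S_g \setminus \agamma)$ via the cut-and-glue fibration $C_\vg$. Since $\mathrm{RSC}_\vg(X)$ is constant along the toral fibers of $C_\vg$, formula \eqref{eqn:WPcutandglue} yields
\[
m_{\vg,h}^L = \frac{\sigma_g(\vg)}{2^{\rho_g(\vg)}} \int_{\{|\bfL|\leq L\}} L_1 \cdots L_k \int_{\M(S_g \setminus \agamma;\bfL)} h(\mathrm{RSC}(Y)) \, d\widehat{\mu}_{\mathrm{wp}}^{\agamma,\bfL}(Y) \, dL_1 \cdots dL_k,
\]
where $|\bfL| := L_1 + \cdots + L_k = \ell_\vg$. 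Split this integral at $|\bfL| = \sqrt{L}$. The piece over $\{|\bfL| \leq \sqrt{L}\}$ can be bounded using that $\prod_j V_{g_j,b_j}$ is a polynomial of total degree $\sum_j(6g_j - 6 + 2b_j) = 6g-2k-6$ in the boundary lengths; integrating against $L_1 \cdots L_k\, d\bfL$ over this simplex contributes $O(L^{3g-3}) = o(L^{6g-6})$.

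For the main piece $I_2(L)$ over $\{\sqrt{L} < |\bfL| \leq L\}$, substitute $\bfL = t\bfL'$ with $t = |\bfL|$ and $\bfL' \in \Delta$; this gives the Jacobian $dL_1\cdots dL_k = t^{k-1}\,dt\,d\bfL'$ and the identity $L_1\cdots L_k = t^k L_1'\cdots L_k'$, producing
\[
I_2(L) = \frac{\sigma_g(\vg)}{2^{\rho_g(\vg)}} \int_{\sqrt{L}}^L t^{6g-7} \int_\Delta L_1'\cdots L_k' \cdot \frac{1}{t^{6g-2k-6}} \int h(x) \, d(R_t S)_* \widehat{\mu}_{\mathrm{wp}}^{\agamma,t\bfL'}(x) \, d\bfL' \, dt.
\]
Applying Theorem \ref{theo:Do} to each factor of the product $\M(S_g \setminus \agamma;t\bfL') = \prod_j \M_{g_j,b_j}(t\bfL'^{(j)})$ shows that for each fixed $\bfL'$ the inner double-integrand converges as $t \to \infty$ to $\int h\,d\eta_{\mathrm{Kon}}^{\agamma,\bfL'}$. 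The polynomial bound on Weil--Petersson volumes provides uniform domination in $\bfL'$, so the limit may be interchanged with the integration over $\Delta$; combined with $\int_{\sqrt{L}}^L t^{6g-7}\,dt \sim L^{6g-6}/(6g-6)$ this yields
\[
\lim_{L \to \infty} \frac{I_2(L)}{L^{6g-6}} = \frac{1}{6g-6} \cdot \frac{\sigma_g(\vg)}{2^{\rho_g(\vg)}} \int_\Delta L_1' \cdots L_k' \int h(x) \, d\eta_{\mathrm{Kon}}^{\agamma,\bfL'}(x) \, d\bfL'.
\]

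Finally, performing the analogous change of variables inside Definition \ref{def:fib_Kont} and invoking the Kontsevich homogeneity relation \eqref{eqn:Konts_rescale} shows that $\int h\,d\mathring{\eta}_{\mathrm{Kon}}^\Delta$ equals exactly the right-hand side above, completing the proof. The principal obstacle will be the careful bookkeeping of normalization factors $\sigma_g(\vg)$, $2^{\rho_g(\vg)}$, the Jacobian $t^{k-1}$, and the homogeneity degrees so that both sides match; a secondary concern is verifying that the pointwise Mondello--Do convergence may be integrated against $\bfL' \in \Delta$, which follows from the explicit polynomial form of the Weil--Petersson volume functions on the complementary moduli spaces.
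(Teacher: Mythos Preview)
Your proposal is correct and follows precisely the approach the paper indicates (the paper itself only says ``using the same methods as in the proof of Proposition~\ref{prop:total_mass} together with the cut-and-glue fibration''); you have filled in the details of that sketch accurately, including the dimension count $\sum_j(6g_j-6+2b_j)=6g-6-2k$, the change of variables $\mathbf{L}=t\mathbf{L}'$, and the identification of the limit with $\int h\,d\mathring{\eta}_{\mathrm{Kon}}^{\Delta}$ via Definition~\ref{def:fib_Kont} and the homogeneity \eqref{eqn:Konts_rescale}. One minor notational point: after applying $C_{\vg}$ you should write $h(\mathrm{RS}(Y))$ rather than $h(\mathrm{RSC}(Y))$, since the cutting has already been performed; also note that the interchange of limit and integral over $\Delta$ is justified because the compact support of $h$ in $\MRG(S_g\setminus\agamma;\Delta)$ projects to a compact subset of the open simplex $\Delta$.
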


\begin{remark}
A priori, in the case $\vec{\gamma}$ is an oriented non-separating simple closed curve, the limit in Proposition \ref{prop:total_mass_comp} seems to differ from the limit in Proposition \ref{prop:total_mass}. This difference can be reconciled in the following way. Notice that
\[\mathring{\eta}_\mathrm{Kon}^\Delta :=
\int_0^1 \ell \thinspace \eta_\mathrm{Kon}^{(\ell,\ell)} d \ell.\]
By quasi-invariance of the Kontsevich measure under rescaling \eqref{eqn:Konts_rescale},
\[\eta_\mathrm{Kon}^{(\ell,\ell)} = \ell^{6g-8} \thinspace \eta_\mathrm{Kon}^{(1,1)} \]
and thus we get that
\[
\mathring{\eta}_\mathrm{Kon}^\Delta =
\int_0^1 \ell^{6g-7} \thinspace \eta_\mathrm{Kon}^{(1,1)} d \ell
= \frac{1}{6g-6} \eta_\mathrm{Kon}^{(1,1)}.
\]
The last missing factor of 2 comes from the fact that we have defined our measures by pushing down to $\M_g$ from $\mathcal{T}_g/\mathrm{Stab}_0(\gamma)$ instead of from $\mathcal{T}_g/\mathrm{\Stab}(\gamma)$, as in Remark \ref{rmk:nonsepRSC}.
\end{remark}

\subsection*{Simultaneous equidistribution.} Inspired by \cite{AES16a,AES16b,ERW19}, we now discuss the issue of simultaneous equidistribution, proving that the placement of a curve in the space of measured laminations is independent from the shape of its complementary subsurface.

Recall that $\mathcal{PML}_g$ denotes the space of projective measured geodesic laminations on $S_g$ and that $[\lambda] \in \mathcal{PML}_g$ denotes the projective class of $\lambda \in \mathcal{ML}_g$. Given $X \in \mathcal{T}_g$ consider the coned-off Thurston measure $\mu_\mathrm{Thu}^X$ which to every measurable subset $A \subseteq \mathcal{PML}_g$ assigns the value
\[
\mu_\mathrm{Thu}^X(A) := \mu_\mathrm{Thu}(\{\lambda \in \mathcal{ML}_g \ | \ \ell_\lambda(X) \leq 1, \ [\lambda] \in A\});
\]
up to identifying $\mathcal{P}_X^1\mathcal{T}_g \cong \PML_g$, this is just the measure $\mu_\mathrm{Thu}^X$ introduced in Section \ref{sec:RSCequi}.
Observe that its total mass is $B(X)$.

Fix an ordered multi-curve $\gamma := (\gamma_1,\dots,\gamma_k)$ on $S_g$ and a marked hyperbolic structure $X \in \mathcal{T}_g$. Recall that $\mathbf{1} \cdot \gamma := \gamma_1 + \dots + \gamma_k \in \mathcal{ML}_g$. For every $L > 0$ consider the counting measure on $\mathcal{PML}_g$ given by
\[
\zeta_{\gamma,X}^L := \sum_{\alpha \in \mathrm{Mod}_g \cdot \gamma} \mathbbm{1}_{[0,L]}(\ell_{\alpha}(X)) \cdot \delta_{[\mathbf{1} \cdot \alpha]}.
\]
This measure depends only on the underlying hyperbolic structure of $X \in \mathcal{T}_g$ and not on its marking. The following result can be deduced directly from Mirzakhani's work \cite[Theorem 6.4]{Mir08b}.

\begin{theorem}
\label{theo:mir}
In the weak-$\star$ topology for measures on $\mathcal{PML}_g$,
\[
\lim_{L \to \infty} \frac{\zeta_{\gamma,X}^L}{s(X,\gamma,L)} = \frac{\mu_\mathrm{Thu}^X}{B(X)}.
\]
\end{theorem}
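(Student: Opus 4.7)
The plan is to deduce the theorem directly from Mirzakhani's general equidistribution theorem \cite[Theorem 6.4]{Mir08b} for mapping class group orbits in $\mathcal{ML}_g$. That result, together with its extensions to ordered multi-curves \cite{Liu19,Ara20a}, asserts that for any bounded continuous function $\psi$ on $\mathcal{ML}_g$ that is compactly supported away from the origin and any $X \in \mathcal{M}_g$,
\[
\lim_{L \to \infty} \frac{1}{L^{6g-6}} \sum_{\alpha \in \mathrm{Mod}_g \cdot \gamma} \psi\!\left(\frac{\mathbf{1}\cdot\alpha}{L}\right) = \frac{c(\gamma)}{b_g} \int_{\mathcal{ML}_g} \psi(\lambda) \, d\mu_{\mathrm{Thu}}(\lambda).
\]

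First I would realize $\zeta_{\gamma,X}^L$ as the pushforward of a natural counting measure on $\mathcal{ML}_g$. Let $\pi\colon\mathcal{ML}_g \setminus\{0\}\to\mathcal{PML}_g$ denote projectivization. Since $[\mathbf{1}\cdot\alpha] = \pi(\mathbf{1}\cdot\alpha/L)$, and the length bound $\ell_\alpha(X) \leq L$ is equivalent to $\ell_{\mathbf{1}\cdot\alpha/L}(X) \leq 1$, we have
\[
\zeta_{\gamma,X}^L = \pi_*\!\left(\sum_{\alpha \in \mathrm{Mod}_g \cdot \gamma} \mathbbm{1}_{\{\ell_{(\cdot)}(X)\leq 1\}}\!\left(\tfrac{\mathbf{1}\cdot\alpha}{L}\right)\delta_{\mathbf{1}\cdot\alpha/L}\right).
\]
Given a continuous test function $\varphi$ on $\mathcal{PML}_g$, I would apply Mirzakhani's theorem to $\psi := \mathbbm{1}_{\{\ell_{(\cdot)}(X)\leq 1\}}\cdot(\varphi\circ\pi)$ via a standard sandwich argument using continuous approximations from above and below, exploiting the fact that the boundary sphere $\{\ell_{(\cdot)}(X)=1\}$ carries no Thurston measure (since $\mu_{\mathrm{Thu}}$ is in the Lebesgue class). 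This yields
\[
\lim_{L \to \infty} \frac{1}{L^{6g-6}}\int_{\mathcal{PML}_g} \varphi \, d\zeta_{\gamma,X}^L = \frac{c(\gamma)}{b_g}\int_{\mathcal{PML}_g} \varphi \, d\mu_{\mathrm{Thu}}^X,
\]
where the identification on the right is immediate from the definition of the coned-off Thurston measure $\mu_{\mathrm{Thu}}^X$ as the pushforward under $\pi$ of $\mu_{\mathrm{Thu}}$ restricted to the unit $X$-ball.

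Dividing by the normalized count $s(X,\gamma,L)/L^{6g-6}\to c(\gamma)B(X)/b_g$ recalled above then gives the desired weak-$\star$ limit $\mu_{\mathrm{Thu}}^X/B(X)$. There is no serious obstacle; the only minor subtlety is justifying the use of Mirzakhani's equidistribution statement against the discontinuous cutoff $\mathbbm{1}_{\{\ell_{(\cdot)}(X) \leq 1\}}$, which is handled by the routine continuity-set approximation just described.
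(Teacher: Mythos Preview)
Your proposal is correct and follows exactly the approach the paper indicates: the paper does not give a proof of this theorem but simply states that it ``can be deduced directly from Mirzakhani's work \cite[Theorem 6.4]{Mir08b},'' and your argument spells out precisely that deduction. The sandwich argument you describe to handle the discontinuous cutoff $\mathbbm{1}_{\{\ell_{(\cdot)}(X)\le 1\}}$ is the standard and correct way to pass from Mirzakhani's equidistribution on $\mathcal{ML}_g$ to the projectivized statement.
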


It is natural to consider the question of simultaneous equidistribution for the limits in Theorems \ref{theo:main_2_new} and \ref{theo:mir}. More precisely, fix an ordered oriented multi-curve $\vg := (\vg_1,\dots,\vg_k)$ on $S_g$ and $X \in \mathcal{T}_g$. For every $L > 0$ consider the counting measure on $\mathcal{PML}_g \times \mathcal{MRG}(S_g \setminus \agamma;\Delta)$ given by
\[
\xi_{\gamma,X}^L := \sum_{{\vec{\alpha}} \in \mathrm{Mod}_g \cdot {\vg}} \mathbbm{1}_{[0,L]}(\ell_{\alpha}(X)) \cdot \delta_{[\mathbf{1} \cdot \alpha]} \otimes \delta_{\mathrm{RSC}_{\vec{\alpha}}(X)}.
\]
As always, this measure depends only on the underlying hyperbolic structure of $X \in \mathcal{T}_g$ and not on its marking.
The following general result can be proved following similar arguments as in the proof of Theorem \ref{theo:main_2_new} but working over the bundle $\mathcal{P}^1\mathcal{M}_g$; compare to \cite[Proof of Theorem 3.5]{Ara20a}.

\begin{theorem}
\label{theo:main_3_new}
Let $\agamma:=(\agamma_1,\dots,\agamma_k)$ be an ordered oriented multi-curve on $S_g$ with $1 \leq k \leq 3g-3$ components and $X \in \mathcal{M}_g$. Then, with respect to the weak-$\star$ topology for measures on $\mathcal{PML}_g \times \mathcal{MRG}(S_g \setminus \agamma;\Delta)$,
\[
\lim_{L \to \infty} \frac{\xi_{\gamma,X}^L}{s(X,\gamma,L)} = \frac{\mu_\mathrm{Thu}^X}{B(X)} \otimes \frac{\mathring{\eta}_{\mathrm{Kon}}^{\Delta}}{m_{\agamma}}.
\]
\end{theorem}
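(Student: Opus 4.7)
The plan is to follow the template for Theorem \ref{theo:main_2_new}, but with every measure and equidistribution statement promoted from $\mathcal{M}_g$ to the unit-length lamination bundle $\mathcal{P}^1\mathcal{M}_g$ in order to track the projective direction $[\mathbf{1}\cdot\alpha]\in\mathcal{PML}_g$ of each counted multi-curve. The key input is that the refined equidistribution Theorem \ref{theo:equid_new} for RSC-horoball measures on $\mathcal{P}^1\mathcal{M}_g$, rather than just its pushforward to $\mathcal{M}_g$ (Theorem \ref{theo:equid_new_Mg}), naturally yields joint information about both the hyperbolic structure and the distinguished unit-length lamination. First I would reduce the weak-$\star$ convergence to a weighted counting statement: it suffices to show that for every pair of non-zero, non-negative, continuous, compactly supported test functions $F\colon\mathcal{PML}_g\to\mathbf{R}_{\ge 0}$ and $f\colon\mathcal{MRG}(S_g\setminus\vg;\Delta)\to\mathbf{R}_{\ge 0}$,
\[
\lim_{L \to \infty} \frac{c(X,\vg,F,f,L)}{s(X,\gamma,L)}
= \frac{1}{B(X)\,m_{\vec{\gamma}}}\left(\int F\,d\mu_\mathrm{Thu}^X\right)\left(\int f\,d\mathring{\eta}_\mathrm{Kon}^\Delta\right),
\]
where
\[
c(X,\vg,F,f,L) := \sum_{\vec\alpha\in\mathrm{Mod}_g\cdot\vg}\mathbbm{1}_{[0,L]}(\ell_\alpha(X))\,F([\mathbf{1}\cdot\alpha])\,f(\mathrm{RSC}_{\vec\alpha}(X)).
\]

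Next I would repeat the averaging-and-unfolding procedure of \S\ref{sec:avg_unfold} with the extra $F$-factor built in. The asymptotic triviality estimate (Proposition \ref{prop:asymp_triv}) and the weight-comparison estimate (Proposition \ref{prop:weightsvary}) continue to control the $f$-factor under small bi-Lipschitz perturbations of the basepoint; since $F$ depends only on the projective class $[\mathbf{1}\cdot\alpha]$, which is independent of the base surface, uniform continuity of $F$ on compact sets suffices to accommodate it with no new estimate. Averaging $c(Y,\vg,F,f,L)$ against a bump $\beta_{X,\varepsilon}$ on $\mathcal{M}_g$ and unfolding over $\mathrm{Stab}_0(\vg)\backslash\mathrm{Mod}_g$, one obtains an expression of the form
\[
\int_{\mathcal{P}^1\mathcal{M}_g}\beta_{X,\varepsilon}(\pi(p))\,\widetilde{F}(p)\,d\widehat{\nu}_{\vg,f}^L(p)\ +\ o\!\left(L^{6g-6}\right),
\]
where $\pi\colon\mathcal{P}^1\mathcal{M}_g\to\mathcal{M}_g$ is the bundle projection and $\widetilde{F}$ is the function obtained by pulling back $F$ via the projection to $\mathcal{PML}_g$, viewed along the $\mathrm{Stab}_0(\vg)$-invariant support of $\widetilde{\nu}_{\vg,f}^L$ on the intermediate cover $\mathcal{P}^1\mathcal{T}_g/\mathrm{Stab}_0(\vg)$.

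Then I would apply Theorem \ref{theo:equid_new}: the normalized measures $\widehat{\nu}_{\vg,f}^L/m_{\vg,f}^L$ converge weak-$\star$ on $\mathcal{P}^1\mathcal{M}_g$ to $\widehat{\nu}_\mathrm{Mir}/b_g$. Since $\widehat{\nu}_\mathrm{Mir}$ disintegrates along $\pi$ as $d\widehat{\nu}_\mathrm{Mir}(Y,\lambda)=d\mu_\mathrm{Thu}^Y(\lambda)\,d\widehat{\mu}_\mathrm{wp}(Y)$, the integral above splits as $\varepsilon\searrow 0$ into a product of a Thurston integral of $F$ (at the basepoint $X$) and a Weil--Petersson/Kontsevich integral of $f$. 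Combining this with the total-mass asymptotic of Proposition \ref{prop:total_mass_comp} and Mirzakhani's asymptotic formula for $s(X,\gamma,L)$ produces the claimed product limit.

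The hard part is the bookkeeping of the $F$-factor through the sequence of covers $\mathcal{P}^1\mathcal{T}_g\to\mathcal{P}^1\mathcal{T}_g/\mathrm{Stab}_0(\vg)\to\mathcal{P}^1\mathcal{M}_g$: because $F$ is not $\mathrm{Mod}_g$-equivariant on $\mathcal{PML}_g$, one must carefully track on which intermediate quotient each object lives, and exploit the $\mathrm{Stab}_0(\vg)$-invariance of $F$ along the support of $\widetilde{\nu}_{\vg,f}^L$ (which sits over the distinguished lamination slice $\lambda=(\mathbf{1}\cdot\vg)/\ell_\vg$) to arrive at a measurable expression on $\mathcal{P}^1\mathcal{M}_g$ suitable for invoking Theorem \ref{theo:equid_new}. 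A complete version of this bookkeeping, in a closely analogous setting, is carried out in \cite[Proof of Theorem 3.5]{Ara20a}, which provides a direct template.
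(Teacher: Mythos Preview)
Your proposal is correct and follows exactly the approach indicated by the paper, which does not give a full proof but only the one-line hint ``following similar arguments as in the proof of Theorem \ref{theo:main_2_new} but working over the bundle $\mathcal{P}^1\mathcal{M}_g$; compare to \cite[Proof of Theorem 3.5]{Ara20a}.'' You have fleshed out precisely this sketch: promote the averaging-and-unfolding argument to $\mathcal{P}^1\mathcal{M}_g$, invoke Theorem \ref{theo:equid_new} in place of its pushforward Theorem \ref{theo:equid_new_Mg}, use the disintegration of $\widehat{\nu}_{\mathrm{Mir}}$ to split the limit into a Thurston factor and a Kontsevich factor, and defer the cover-by-cover bookkeeping of the non-equivariant $F$-factor to the same reference \cite[Proof of Theorem 3.5]{Ara20a}.
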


As a consequence, we see that even when prescribing how a set of curves coarsely wraps $X$ (for example, by fixing a maximal train track chart for $\ML_X$), the complementary subsurfaces to those curves remain uniformly distributed.

\begin{appendices}
\section{A Morse Lemma with shrinking constants}\label{appendix}

The usual Morse Lemma in hyperbolic geometry states that quasigeodesics remain close to geodesics.
Recall that a map $c: \mathbb{R} \to X$ to a metric space $X$ is an {\em $(L,K)$-quasigeodesic} if for all $t, t' \in \mathbb{R}$ we have that
\[\frac{1}{L}|t-t'| - K
\le 
d_X(c(t), c(t'))
\le 
L|t - t'| + K.\]

\begin{lemma}\label{Morse Lemma}
For any geodesic, Gromov-hyperbolic metric space $X$ and any constants $L,K >0$, there exists $R>0$ such that any $(L, K)$-quasigeodesic $c$ is contained within the $R$ neighborhood of $g$, the geodesic with the same endpoints as $c$.
\end{lemma}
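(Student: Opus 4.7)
The plan is to follow the classical Bridson--Haefliger style proof of the Morse Lemma, which proceeds by first reducing to continuous quasigeodesics and then applying the $\delta$-thin triangle condition to a carefully chosen subarc near a point of maximal deviation from $g$.

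First, I would reduce to the case of continuous quasigeodesics. A general $(L,K)$-quasigeodesic $c \colon [a,b] \to X$ may have discontinuities, but one can replace it by a piecewise-geodesic path $\tilde{c}$ obtained by interpolating at integer times; a standard check shows that $\tilde{c}$ is an $(L', K')$-quasigeodesic with $L', K'$ depending only on $L$ and $K$, and that $c$ and $\tilde{c}$ are within bounded Hausdorff distance of each other. So it suffices to bound the distance from a continuous, rectifiable quasigeodesic to the geodesic $g$ with the same endpoints.

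Next, let $D := \max_t d(c(t), g)$ and let $x_0 = c(t_0)$ achieve this maximum. I would pick times $t_1 < t_0 < t_2$ so that $d(c(t_0), c(t_i)) \approx D$ for $i=1,2$ (using the endpoints if one side of $c$ is too short), and then estimate the length of the subarc $c|_{[t_1, t_2]}$: by the quasigeodesic condition, its length is at most $L \cdot d(c(t_1), c(t_2)) + K$, which is at most linear in $D$. On the other hand, the thin-triangle condition applied to the geodesic triangle with vertices $c(t_1), c(t_2)$ and the nearest-point projection $y$ of $x_0$ to $g$ implies that $x_0$ is $\delta$-close to one of the three sides of that triangle. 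A short case analysis rules out the two sides incident to $y$ (they lie within a bounded neighborhood of $g$, while $x_0$ has distance $D$ from $g$) and so $x_0$ must be $\delta$-close to the side $[c(t_1), c(t_2)]$, giving a shortcut from $c(t_1)$ to $c(t_2)$ via a point near $x_0$.

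The core estimate then comes from comparing the length of $c|_{[t_1,t_2]}$ with this shortcut and exploiting the well-known tension in $\delta$-hyperbolic spaces between logarithmic divergence of geodesics and linear growth of quasigeodesics: a detour of length $\gtrsim D$ can only fellow-travel the geodesic spine within distance $O(\log D)$, which forces $D = O(\log D) + O(1)$ and hence $D \leq R$ for some $R = R(L, K, \delta)$. The main obstacle is setting up this logarithmic-versus-linear comparison cleanly; once it is in place the bound on $R$ is immediate, and the lemma follows. Setting $R$ equal to this constant and undoing the reduction in the first paragraph completes the proof.
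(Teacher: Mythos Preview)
The paper does not give its own proof of this lemma; it simply cites \cite[Theorem III.H.1.7]{BH} (Bridson--Haefliger) and moves on. Your outline is essentially the classical Bridson--Haefliger argument that the paper is referencing: tame the quasigeodesic to a continuous one, then exploit the logarithmic-versus-linear tension between path length and distance to the geodesic in a $\delta$-hyperbolic space. So your approach coincides with the cited source.

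One small remark: your middle paragraph sets up a thin triangle with vertices $c(t_1)$, $c(t_2)$, and the nearest-point projection $y$ of $x_0$ to $g$, but the standard argument in \cite{BH} does not quite proceed this way. There one first proves the auxiliary fact that for any rectifiable path $p$ of length $\ell$, the geodesic with the same endpoints lies in the $(\delta\log_2\ell + 1)$-neighbourhood of $p$ (by repeated bisection and thin triangles), and then applies this to a subarc of $c$ that stays outside the $D$-neighbourhood of $g$. This yields the inequality $D \le \delta \log_2(\text{length of subarc}) + 1$, and since the length of the subarc is linear in $D$ by the quasigeodesic property, one gets $D \lesssim \log D + O(1)$, hence $D$ bounded. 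Your sketch gestures at this correctly in the final paragraph, but the triangle you describe in the second paragraph is not the mechanism that produces the logarithmic bound.
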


See \cite[Theorem III.H.1.7]{BH} for a proof.
In this appendix, we show that in the case where we restrict to $X = \mathbb{H}^2$ and set $K=0$, we can take $R \to 0$ as $L \to 1$.

\begin{proposition}\label{prop:shrinkMorse}
For any $\delta >0$ there exists a constant $L >1$ so that  any $L$-bi-Lipschitz curve $c: \mathbb{R} \to \mathbb{H}^2$ is contained within the $\delta$ neighborhood of $g$, the geodesic with the same endpoints as $c$.
\end{proposition}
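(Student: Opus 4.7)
I would proceed by contradiction and compactness. Suppose the statement fails: then there is a $\delta > 0$ and a sequence of $L_n$-bi-Lipschitz curves $c_n \colon \RR \to \mathbb{H}^2$ with $L_n \searrow 1$, each of whose image escapes the $\delta$-neighborhood of the geodesic $g_n$ with the same endpoints on $\partial \mathbb{H}^2$. Choose points $p_n \in c_n(\RR)$ with $d_{\mathbb{H}^2}(p_n, g_n) \ge \delta$, and reparametrize each $c_n$ so that $c_n(0) = p_n$. After precomposing with isometries of $\mathbb{H}^2$, I may further assume that $p_n = p_0$ is a fixed basepoint and that the foot of the perpendicular from $p_0$ to $g_n$ lies in a fixed direction; then the $g_n$ converge to a single geodesic $g_\infty$ with $d_{\mathbb{H}^2}(p_0, g_\infty) \ge \delta$.

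Because $L_n \searrow 1$, the family $\{c_n\}$ is uniformly Lipschitz and uniformly bounded on every compact subinterval of $\RR$. By Arzel\`a--Ascoli and a diagonal argument, some subsequence converges uniformly on compact sets to a map $c_\infty \colon \RR \to \mathbb{H}^2$. Passing to the limit in the inequalities $L_n^{-1}|s-t| \le d_{\mathbb{H}^2}(c_n(s), c_n(t)) \le L_n |s-t|$ shows that $d_{\mathbb{H}^2}(c_\infty(s), c_\infty(t)) = |s - t|$ for all $s,t \in \RR$. Hence $c_\infty$ is an arclength parametrization of a complete geodesic line in $\mathbb{H}^2$ passing through the point $p_0$, which lies at distance at least $\delta$ from $g_\infty$.

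To derive a contradiction, I would apply the usual Morse lemma (Lemma~\ref{Morse Lemma}) with a uniform constant: once $L_n \le 2$, there is a single $R = R(2) > 0$ so that each $c_n$ lies within the $R$-neighborhood of $g_n$. Taking the limit $n \to \infty$, it follows that $c_\infty$ lies within the $R$-neighborhood of $g_\infty$. But any two geodesic lines in $\mathbb{H}^2$ that remain within bounded Hausdorff distance must share both endpoints on $\partial \mathbb{H}^2$ and therefore coincide. Hence $c_\infty = g_\infty$, contradicting $d_{\mathbb{H}^2}(p_0, g_\infty) \ge \delta$.

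The main obstacle in this plan is ensuring that the compactness step goes through cleanly after the normalization: we need both the $g_n$ to converge to a genuine geodesic (rather than degenerating to a point at infinity) and the limit $c_\infty$ to inherit the correct asymptotics from the $c_n$. The two-parameter normalization that fixes $p_n$ and the foot of the perpendicular from $p_n$ to $g_n$ handles the first issue, while the uniform Morse bound $R(2)$ replaces the need to track endpoint convergence directly: it forces $c_\infty$ to fellow-travel $g_\infty$, at which point the rigidity of geodesics in $\mathbb{H}^2$ closes the argument.
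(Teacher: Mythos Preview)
Your proof is correct and follows the same overall scheme as the paper: argue by contradiction, use the standard Morse Lemma to get a uniform tube radius $R$ once $L_n \le 2$, apply Arzel\`a--Ascoli to extract a convergent subsequence, and observe that the limit must be a geodesic. The normalizations differ slightly (the paper translates along a fixed geodesic $g$ so that all $c_n$ share the same straightening, whereas you fix the basepoint $p_0$ and let the geodesics $g_n$ vary and converge), but this is cosmetic; you should note that a further subsequence is needed so that $d(p_0,g_n) \in [\delta,R]$ converges, hence $g_n \to g_\infty$.

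The genuine difference is in the endgame. The paper invokes an explicit trirectangle identity (Lemma~\ref{lem:quadgeo}) to show that any geodesic trapped in the $R$-tube around $g$ over a segment of length $2b$ must come within $\arctanh(\tanh(R)\sech(b))$ of the midpoint, then takes $b$ large. You instead use the global fact that a complete geodesic contained in the $R$-neighborhood of another must share both ideal endpoints and hence coincide with it. Your version is cleaner and avoids the auxiliary lemma entirely; the paper's version has the minor advantage of being local (it only needs convergence on a fixed compact piece $N$), but since you already have uniform convergence on compacta this gains nothing here.
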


We first record one useful fact about the hyperbolic geometry of certain quadrilaterals; see
\cite[Theorem 2.3.1, (iv)]{Bus92}.

\begin{lemma}\label{lem:quadgeo}
Given a hyperbolic quadrilateral with three right angles whose sides have length as labeled as in Figure \ref{fig:hypquad}, then
\[\cosh(b) \tanh(h)= \tanh(R).\]
In particular, if $R$ remains fixed, then $h \to 0$ as $b \to \infty.$
\end{lemma}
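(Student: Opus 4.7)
The plan is to work in the upper half-plane model of $\mathbb{H}^2$ and compute in coordinates. After placing the side of length $b$ on the positive imaginary axis, one can canonically position the trirectangle so that the two right-angled corners of the $b$-side are at $B = i$ (where the side of length $h$ emerges perpendicularly) and $A = i e^b$ (where the side of length $R$ emerges perpendicularly). The side of length $h$ then runs along the unit upper semicircle and the side of length $R$ along the upper semicircle of radius $e^b$ centered at the origin. Parametrizing each by hyperbolic arc length places the remaining two vertices at $C = (\tanh h, \sech h)$ and $D = (e^b \tanh R,\; e^b \sech R)$.

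Since the trirectangle's third right angle is at $C$, the fourth side $CD$ is the geodesic through $C$ perpendicular to the unit semicircle. The standard criterion for orthogonality of circles pins this perpendicular down explicitly: it is the upper semicircle centered at $(\coth h, 0)$ with radius $\operatorname{csch} h$. Imposing that $D$ lie on this semicircle gives a single equation, and after a direct expansion that uses the identities $\tanh^2 + \sech^2 = 1$ and $\coth^2 - \operatorname{csch}^2 = 1$, the equation collapses to
\[ e^{2b} - 2 e^b \tanh(R) \coth(h) + 1 = 0. \]
This is a quadratic in $e^b$ whose constant term is $1$, so its two roots form a reciprocal pair, necessarily $e^{b}$ and $e^{-b}$. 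Their sum is $2\cosh(b) = 2\tanh(R)\coth(h)$, and multiplying through by $\tanh(h)$ yields the desired identity $\cosh(b)\tanh(h) = \tanh(R)$.

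The main obstacle I anticipate is simply the bookkeeping: correctly parametrizing the two perpendicular geodesics, carrying out the orthogonality condition without error, and trusting that the miraculous simplification to a reciprocal-rooted quadratic actually occurs. For the ``in particular'' statement, the identity rearranges to $\tanh(h) = \tanh(R)/\cosh(b)$; fixing $R$ and sending $b \to \infty$ drives the right-hand side to $0$, and since $\tanh$ is a homeomorphism of $[0,\infty)$ onto $[0,1)$, this forces $h \to 0$. An alternative route would be to cut the quadrilateral along the diagonal $BD$ into two right triangles and combine the hyperbolic Pythagorean theorem with the constraint that the two subangles at $B$ sum to $\pi/2$; this avoids coordinates but requires recalling several right-triangle trigonometric formulas, so I prefer the direct model computation.
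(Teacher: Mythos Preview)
Your proof is correct. The coordinate computation is carried out cleanly: the placement of the trirectangle, the parametrization of the two circular geodesics by hyperbolic arc length, the orthogonality condition for the fourth side, and the reciprocal-root observation all check out, and the ``in particular'' clause follows immediately from the identity as you say.

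By way of comparison: the paper does not actually prove this lemma at all---it simply cites \cite[Theorem 2.3.1(iv)]{Bus92} and moves on. So your approach is not so much different from the paper's as it is a genuine proof where the paper offers none. Your upper-half-plane computation is a perfectly standard way to derive trirectangle identities and has the virtue of being self-contained; the alternative you mention (cutting along a diagonal and combining right-triangle formulas) is closer in spirit to how Buser organizes these identities, but either route is fine for a lemma of this sort.
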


\begin{figure}[ht]
    \centering
    \includegraphics[scale=.7]{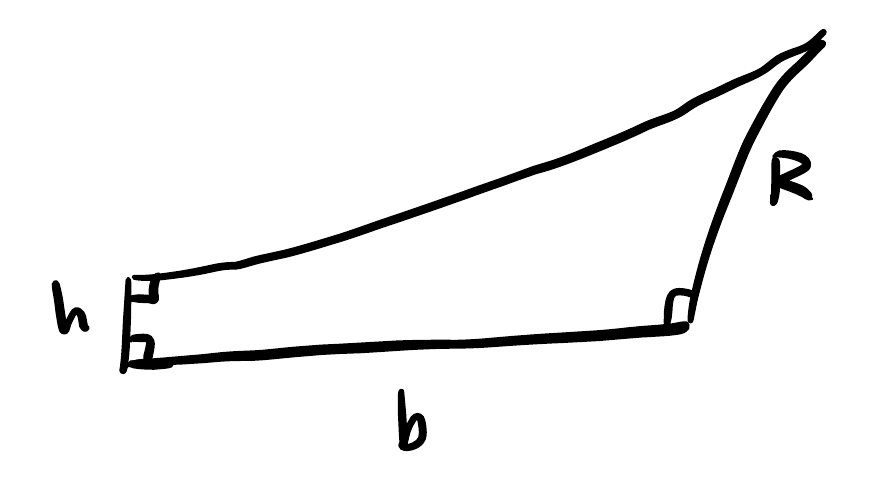}
    \caption{A hyperbolic quadrilateral with three right angles.}
    \label{fig:hypquad}
\end{figure}

\begin{proof}[Proof of Proposition \ref{prop:shrinkMorse}]
We argue by contradiction. Suppose the desired statement did not hold; then there would exist a sequence of $L_n$-bi-Lipschitz curves $c_n$ with $L_n \to 0$ such that each $c_n$ has the same endpoints as $g$ but is not contained in its $\delta$-neighborhood.
Without loss of generality, assume all $L_n < 2$.
Then, by the Morse Lemma, there exists a uniform $R > 0$ so that the curves $c_n$ are all contained within the $R$ neighborhood of $g$.

Translating each $c_n$ along $g$ as necessary (and relabeling the sequence), we can find points $p_n \in c_n$ all of which lie along a common orthogeodesic emanating from a point $o \in g$ and so that
\[d_{\mathbb{H}^2}(p_n, o) \in [\delta, R].\]
See the left-hand side of Figure \ref{fig:Morse_lemma}.
Take a subsequence $c_{n_k}$ so that the $p_{n_k}$ converge to a point $p_\infty$, necessarily lying along the orthogeodesic through $o$ and distance at least $\delta$ from $g$.

\begin{figure}[h]
    \centering
    \includegraphics[scale=.6]{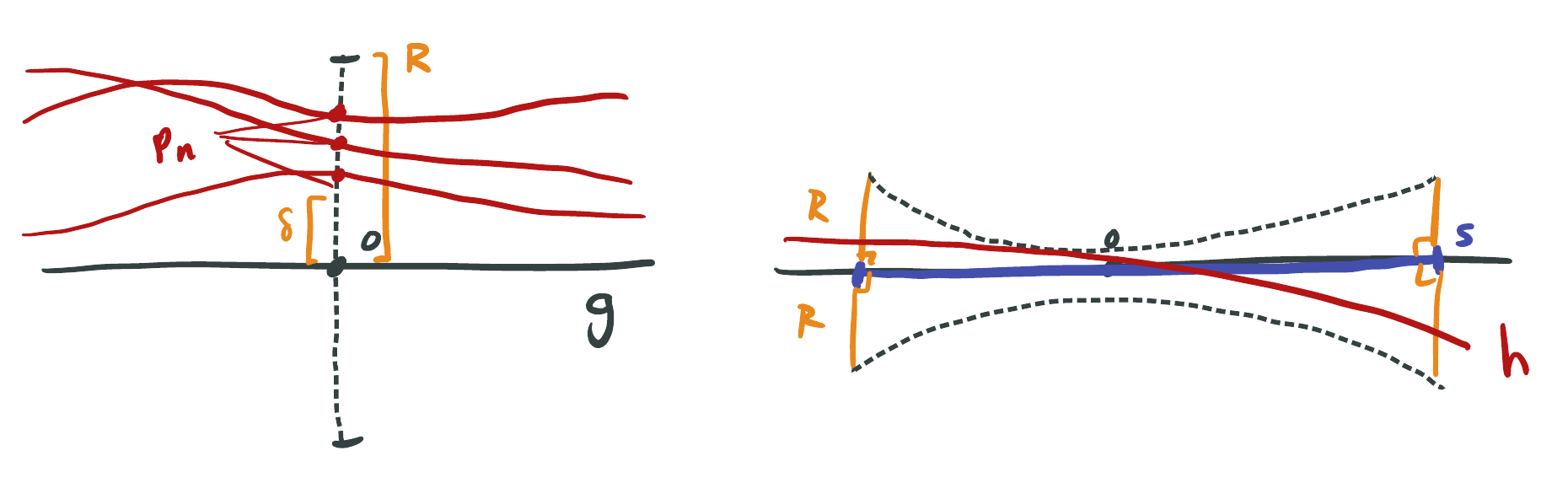}
    \caption{The limiting behavior of bi-Lipschitz paths with improving Lipschitz constant.}
    \label{fig:Morse_lemma}
\end{figure}

Now consider the subsegment $s$ of $g$ of some large length $2b$ centered at $o$ and let $N$ denote the uniform $R$-neighborhood of $s$.
Because the family of $2$-Lipschitz maps is equicontinuous, and $N$ is compact, the Arzela--Ascoli theorem implies that there is a further subsequence (which we also denote $c_{n_k}$) that converges on $N$.
Since $L_n \to 0$, the limit must be a geodesic $h$.

This limiting geodesic $h$ must meet the length $R$ orthogeodesic segments through each of the endpoints of $s$; see the right-hand side of Figure \ref{fig:Morse_lemma}.
Therefore, Lemma \ref{lem:quadgeo} implies that $h$ has distance at most 
$\arctanh \left( \tanh(R) \sech(b) \right)$
from $o$. In particular, for $b$ large this is arbitrarily small.

But now $h$ must also pass through $p_\infty$, which was at least $\delta$ from $o$, a contradiction.
Thus no such sequence $c_n$ can exist, i.e., there is a uniform bound on the distance from an $L$-bi-Lipschitz curve to the geodesic connecting its endpoints.
\end{proof}

Given this, we can immediately deduce the proof of Proposition \ref{prop:geos_to_geos}, which we repeat below for the reader's convenience.

\begin{proposition}\label{prop:geos_to_geos2}
For any small enough $\delta>0$ there exists an $\varepsilon>0$ so that for any $e^\varepsilon$-bi-Lipschitz map $f: \mathbb{H}^2 \rightarrow \mathbb{H}^2$ and any geodesic $g \subset \mathbb{H}^2$, we have
\[d_{\mathbb{H}^2}^H(f(g), g') \le \delta\]
where $g'$ denotes the geodesic with the same endpoints as $f(g)$.
\end{proposition}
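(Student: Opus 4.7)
The plan is to reduce Proposition \ref{prop:geos_to_geos2} directly to Proposition \ref{prop:shrinkMorse}. First I would parameterize $g$ by arc length as $\gamma : \mathbb{R} \to \mathbb{H}^2$; then the composition $c := f \circ \gamma$ is an $e^\varepsilon$-bi-Lipschitz curve from $\mathbb{R}$ to $\mathbb{H}^2$. In particular, $c$ is an $(e^\varepsilon, 0)$-quasigeodesic, so Lemma \ref{Morse Lemma} ensures $c$ stays in a bounded neighborhood of some genuine geodesic and thus has well-defined endpoints in $\partial \mathbb{H}^2$; this justifies the very definition of $g'$ in the statement.

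Given $\delta > 0$, I would apply Proposition \ref{prop:shrinkMorse} with parameter $\delta/2$ to extract some $L > 1$, and then set $\varepsilon := \log L$. Feeding the bi-Lipschitz curve $c$ into Proposition \ref{prop:shrinkMorse} yields
\[
f(g) \;=\; c \;\subseteq\; \mathcal{N}_{\delta/2}(g'),
\]
which is one direction of the Hausdorff bound.

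For the reverse direction I would use a simple separation argument. Fix a point $p \in g'$ and let $\ell_p$ denote the geodesic perpendicular to $g'$ at $p$. Then $\ell_p$ separates $\mathbb{H}^2$ into two half-planes, each containing exactly one of the two endpoints of $g'$ at $\partial \mathbb{H}^2$. Since $c$ shares these endpoints, continuity forces $c$ to meet $\ell_p$ at some point $q$, and because $\ell_p \perp g'$ at $p$, the closest-point projection of $q$ to $g'$ is exactly $p$. Therefore
\[
d_{\mathbb{H}^2}(p, q) \;=\; d_{\mathbb{H}^2}(q, g') \;\leq\; \delta/2,
\]
so $g' \subseteq \mathcal{N}_{\delta/2}(c)$, and combining the two inclusions gives $d_{\mathbb{H}^2}^H(c, g') \leq \delta/2 < \delta$.

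The real substance is not in this reduction but in Proposition \ref{prop:shrinkMorse} itself, which is handled by the Arzelà--Ascoli argument sketched earlier. The only subtle point in the present step is ensuring that $f(g)$ has well-defined endpoints at infinity (so that $g'$ makes sense), and this is precisely what the standard Morse lemma delivers.
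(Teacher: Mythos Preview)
Your reduction to Proposition \ref{prop:shrinkMorse} and the forward inclusion $f(g)\subseteq\mathcal{N}_{\delta/2}(g')$ match the paper exactly. The difference is in the reverse inclusion: the paper observes that $f^{-1}$ is also $e^{\varepsilon}$-bi-Lipschitz, applies Proposition \ref{prop:shrinkMorse} again to the curve $f^{-1}(g')$ to get $f^{-1}(g')\subseteq\mathcal{N}_{\delta/2}(g)$, and then pushes this forward by $f$ to obtain $g'\subseteq\mathcal{N}_{e^{\varepsilon}\delta/2}(f(g))\subseteq\mathcal{N}_{\delta}(f(g))$ (using $\varepsilon<\log 2$). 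Your argument instead uses the perpendicular-separation trick: each orthogonal geodesic $\ell_p$ through $p\in g'$ separates the two ideal endpoints of $c$, forcing $c$ to cross $\ell_p$ at a point whose nearest-point projection to $g'$ is $p$. This is correct and in fact slightly sharper (you get $\delta/2$ rather than $e^{\varepsilon}\delta/2$); it also uses only that $c$ is a bi-Lipschitz curve with the right endpoints, never the inverse map $f^{-1}$. The paper's route is a touch shorter to write down because it recycles Proposition \ref{prop:shrinkMorse} instead of introducing a new geometric ingredient, but both arguments are complete.
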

\begin{proof}
Using Proposition \ref{prop:shrinkMorse}, take $\varepsilon < \log 2$ so that any $e^\varepsilon$-bi-Lipschitz curve is within $\delta/2$ of the geodesic with the same endpoints.

Now because $g$ is a geodesic, $f(g)$ is an $e^{\varepsilon}$-bi-Lipschitz curve, so by Proposition \ref{prop:shrinkMorse} it must lie in the $\delta/2$ neighborhood of $g'$.
Since $f^{-1}$ is also $e^\varepsilon$-bi-Lipschitz, we also get that $f^{-1}(g')$ is $\delta/2$ close to $g$.
Once more applying the fact that $f$ is bi-Lipschitz, this implies that $g'$ is $e^{\varepsilon} \delta/2 < \delta$ close to $f(g)$, completing the proof.
\end{proof}
\end{appendices}

\bibliographystyle{amsalpha}

\bibliography{bibliography}

\newcommand{\etalchar}[1]{$^{#1}$}
\providecommand{\bysame}{\leavevmode\hbox to3em{\hrulefill}\thinspace}
\providecommand{\MR}{\relax\ifhmode\unskip\space\fi MR }
\providecommand{\MRhref}[2]{%
  \href{http://www.ams.org/mathscinet-getitem?mr=#1}{#2}
}
\providecommand{\href}[2]{#2}
\begin{thebibliography}{ABC{\etalchar{+}}20}

\bibitem[ABC{\etalchar{+}}20]{ABC}
J.~E. Andersen, G.~Borot, S.~Charbonnier, A.~Giacchetto, D.~Lewa{\'n}ski, and
  C.~Wheeler, \emph{On the {K}ontsevich geometry of the combinatorial
  {T}eichm{\"u}ller space}, Preprint, {arXiv:2010.11806}, 2020.

\bibitem[AES16a]{AES16a}
M.~Aka, M.~Einsiedler, and U.~Shapira, \emph{Integer points on spheres and
  their orthogonal grids}, J. Lond. Math. Soc. (2) \textbf{93} (2016), no.~1,
  143--158.

\bibitem[AES16b]{AES16b}
\bysame, \emph{Integer points on spheres and their orthogonal lattices},
  Invent. Math. \textbf{206} (2016), no.~2, 379--396.

\bibitem[{Ara}20]{Ara20a}
F.~{Arana-Herrera}, \emph{Counting hyperbolic multi-geodesics with respect to
  the lengths of individual components and asymptotics of {W}eil--{P}etersson
  volumes}, Geom. Top., {\em (to appear)}, 2020.

\bibitem[{Ara}21]{Ara19b}
\bysame, \emph{Equidistribution of families of expanding horospheres on moduli
  spaces of hyperbolic surfaces}, Geometriae Dedicata \textbf{210} (2021),
  65--102.

\bibitem[BE88]{BowdEpst}
B.~Bowditch and D.~B.~A. Epstein, \emph{{Natural triangulations associated to a
  surface}}, Topology \textbf{27} (1988), no.~1, 91--117.

\bibitem[BH99]{BH}
M.~R. Bridson and A.~Haefliger, \emph{{Metric Spaces of Non-Positive
  Curvature}}, Grundlehren der mathematischen Wissenschaften [Fundamental
  Principles of Mathematical Sciences], vol. 319, Springer-Verlag, Berlin,
  1999.

\bibitem[BS85]{BS85}
J.~S. Birman and C.~Series, \emph{Geodesics with bounded intersection number on
  surfaces are sparsely distributed}, Topology \textbf{24} (1985), no.~2,
  217--225.

\bibitem[Bus92]{Bus92}
P.~Buser, \emph{Geometry and spectra of compact {R}iemann surfaces}, Progress
  in Mathematics, vol. 106, Birkh\"{a}user Boston, Inc., Boston, MA, 1992.

\bibitem[CF]{CF2}
A.~Calderon and J.~Farre, \emph{Continuity of the orthogeodesic foliation and
  the geometry of train tracks}, In preparation.

\bibitem[CF21]{CF}
\bysame, \emph{Shear-shape cocycles for measured laminations and ergodic theory
  of the earthquake flow}, Preprint, {arXiv:2102.13124}, 2021.

\bibitem[CMS11]{CMS_Ksympred}
K.~Chapman, M.~Mulase, and B.~Safnuk, \emph{The {K}ontsevich constants for the
  volume of the moduli of curves and topological recursion}, Commun. Number
  Theory Phys. \textbf{5} (2011), no.~3, 643--698.

\bibitem[DE86]{DE_extensions}
A.~Douady and C.~J. Earle, \emph{Conformally natural extension of
  homeomorphisms of the circle}, Acta Math. \textbf{157} (1986), no.~1-2,
  23--48.

\bibitem[{Do}10]{Do10}
N.~{Do}, \emph{The asymptotic {W}eil--{P}etersson form and intersection theory
  on $\mathcal{M}_{g,n}$}, Preprint, {arXiv:1010.4126}, 2010.

\bibitem[Duk88]{Duke}
W.~Duke, \emph{Hyperbolic distribution problems and half-integral weight
  {M}aass forms}, Invent. Math. \textbf{92} (1988), no.~1, 73--90.

\bibitem[EM18]{EM}
A.~Eskin and M.~Mirzakhani, \emph{Invariant and stationary measures for the
  {${\rm SL}(2,\Bbb R)$} action on moduli space}, Publ. Math. Inst. Hautes
  \'{E}tudes Sci. \textbf{127} (2018), 95--324.

\bibitem[EMM15]{EMM}
A.~Eskin, M.~Mirzakhani, and A.~Mohammadi, \emph{Isolation, equidistribution,
  and orbit closures for the {${\rm SL}(2,\Bbb R)$} action on moduli space},
  Ann. of Math. (2) \textbf{182} (2015), no.~2, 673--721.

\bibitem[ERW19]{ERW19}
M.~Einsiedler, R.~R\"{u}hr, and P.~Wirth, \emph{Distribution of shapes of
  orthogonal lattices}, Ergodic Theory Dynam. Systems \textbf{39} (2019),
  no.~6, 1531--1607.

\bibitem[ES20]{ES20}
V.~Erlandsson and J.~Souto, \emph{Geodesic currents and {M}irzakhani's curve
  counting}, In preparation, 2020.

\bibitem[ES22]{ES21}
\bysame, \emph{Distribution in the unit tangent bundle of the geodesics of
  given type}, Ergodic Theory and Dynamical Systems (2022), 1–17.

\bibitem[GT21]{GT_residue}
Q.~Gendron and G.~Tahar, \emph{Quadratic differentials with prescribed
  singularities}, Preprint, {arXiv:2111.12653}, 2021.

\bibitem[HOP21]{HOP}
Y.~Huang, K.~Ohshika, and A.~Papadopoulos, \emph{The infinitesimal and global
  thurston geometry of teichm{\"u}ller space}, Preprint, {arXiv:2111.13381},
  2021.

\bibitem[Hub16]{Hub16}
J.~H. Hubbard, \emph{Teichm\"uller theory and applications to geometry,
  topology, and dynamics. {V}ol. 2}, Matrix Editions, Ithaca, NY, 2016, Surface
  homeomorphisms and rational functions.

\bibitem[Jen57]{Jenkins}
J.~A. Jenkins, \emph{On the existence of certain general extremal metrics},
  Ann. of Math. (2) \textbf{66} (1957), 440--453.

\bibitem[Ker83]{Ker83}
S.~P. Kerckhoff, \emph{The {N}ielsen realization problem}, Ann. of Math. (2)
  \textbf{117} (1983), no.~2, 235--265.

\bibitem[Kon92]{Kon92}
M.~Kontsevich, \emph{Intersection theory on the moduli space of curves and the
  matrix {A}iry function}, Comm. Math. Phys. \textbf{147} (1992), no.~1, 1--23.

\bibitem[Lin68]{Linnik}
Y.~V. Linnik, \emph{Ergodic properties of algebraic fields}, Ergebnisse der
  Mathematik und ihrer Grenzgebiete, Band 45, Springer-Verlag New York, Inc.,
  New York, 1968, Translated from the Russian by M. S. Keane.

\bibitem[{Liu}19]{Liu19}
M.~{Liu}, \emph{{Length statistics of random multicurves on closed hyperbolic
  surfaces}}, Preprint, {arXiv:1912.11155}, 2019.

\bibitem[Luo07]{Luo}
F.~Luo, \emph{On {T}eichm\"{u}ller spaces of surfaces with boundary}, Duke
  Math. J. \textbf{139} (2007), no.~3, 463--482.

\bibitem[Mar70]{Mar04}
G.~A. Margulis, \emph{On some aspects of the theory of {A}nosov systems}, Ph.D.
  Thesis, 1970, Springer-Verlag, Berlin, 2003.

\bibitem[Mas85]{Mas85}
H.~Masur, \emph{Ergodic actions of the mapping class group}, Proc. Amer. Math.
  Soc. \textbf{94} (1985), no.~3, 455--459.

\bibitem[Mir07]{Mir07c}
M.~Mirzakhani, \emph{Weil--{P}etersson volumes and intersection theory on the
  moduli space of curves}, J. Amer. Math. Soc. \textbf{20} (2007), no.~1,
  1--23.

\bibitem[Mir08a]{Mir08a}
\bysame, \emph{Ergodic theory of the earthquake flow}, Int. Math. Res. Not.
  IMRN (2008), no.~3, Art. ID rnm116, 39.

\bibitem[Mir08b]{Mir08b}
\bysame, \emph{Growth of the number of simple closed geodesics on hyperbolic
  surfaces}, Ann. of Math. (2) \textbf{168} (2008), no.~1, 97--125.

\bibitem[Mir16]{Mir16}
\bysame, \emph{{Counting Mapping Class group orbits on hyperbolic surfaces}},
  Preprint, {arXiv:1601.03342}, 2016.

\bibitem[Mon09a]{Mond_handbook}
G.~Mondello, \emph{Riemann surfaces, ribbon graphs and combinatorial classes},
  Handbook of {T}eichm\"{u}ller theory. {V}ol. {II}, IRMA Lect. Math. Theor.
  Phys., vol.~13, Eur. Math. Soc., Z\"{u}rich, 2009, pp.~151--215.

\bibitem[Mon09b]{Mo09}
\bysame, \emph{Triangulated {R}iemann surfaces with boundary and the
  {W}eil--{P}etersson {P}oisson structure}, J. Differential Geom. \textbf{81}
  (2009), no.~2, 391--436.

\bibitem[MW02]{MW02}
Y.~Minsky and B.~Weiss, \emph{Nondivergence of horocyclic flows on moduli
  space}, J. Reine Angew. Math. \textbf{552} (2002), 131--177.

\bibitem[OP19]{OP_bij}
K.~Ohshika and A.~Papadopoulos, \emph{Bijections of geodesic lamination space
  preserving left {H}ausdorff convergence}, Monatsh. Math. \textbf{189} (2019),
  no.~3, 507--521.

\bibitem[Pen87]{Pen}
R.~C. Penner, \emph{The decorated {T}eichm\"{u}ller space of punctured
  surfaces}, Comm. Math. Phys. \textbf{113} (1987), no.~2, 299--339.

\bibitem[PH92]{PH92}
R.~C. Penner and J.~L. Harer, \emph{Combinatorics of train tracks}, Annals of
  Mathematics Studies, vol. 125, Princeton University Press, Princeton, NJ,
  1992.

\bibitem[Str67]{Strebel}
K.~Strebel, \emph{On quadratic differentials with closed trajectories and
  second order poles}, J. Analyse Math. \textbf{19} (1967), 373--382.

\bibitem[Thu80]{T80}
W.~P. Thurston, \emph{Geometry and topology of three-manifolds}, Lecture notes,
  Princeton University, 1980.

\bibitem[Thu97]{Th_book}
\bysame, \emph{Three-dimensional geometry and topology. {V}ol. 1}, Princeton
  Mathematical Series, vol.~35, Princeton University Press, Princeton, NJ,
  1997, Edited by Silvio Levy.

\bibitem[Thu98]{Thu98}
\bysame, \emph{{Minimal stretch maps between hyperbolic surfaces}}, Preprint,
  {arXiv:9801039}, 1998.

\bibitem[Wol83]{Wol83}
S.~Wolpert, \emph{On the symplectic geometry of deformations of a hyperbolic
  surface}, Ann. of Math. (2) \textbf{117} (1983), no.~2, 207--234.

\bibitem[Wol85]{Wol85}
\bysame, \emph{On the {W}eil--{P}etersson geometry of the moduli space of
  curves}, Amer. J. Math. \textbf{107} (1985), no.~4, 969--997.

\bibitem[Wol86]{Wolp_WPclass}
\bysame, \emph{Chern forms and the {R}iemann tensor for the moduli space of
  curves}, Invent. Math. \textbf{85} (1986), no.~1, 119--145.

\end{thebibliography}
\end{document}